\documentclass[a4paper,12pt]{amsart}
\usepackage[utf8]{inputenc}
\usepackage{amsmath}
\usepackage{amsfonts}
\usepackage{amsthm}
\usepackage{amssymb}
\usepackage[mathscr]{euscript}
\usepackage{xcolor}
\usepackage{enumerate}
\usepackage{tikz}

\usepackage[
margin=1in,
marginpar=2cm,
includefoot,
footskip=30pt,
]{geometry}

\newtheorem{theorem}[equation]{Theorem}
\newtheorem{lemma}[equation]{Lemma}
\newtheorem{proposition}[equation]{Proposition}
\newtheorem{corollary}[equation]{Corollary}

\theoremstyle{definition}
\newtheorem{definition}[equation]{Definition}

\theoremstyle{remark}
\newtheorem{example}[equation]{Example}
\newtheorem{remark}[equation]{Remark}

\numberwithin{equation}{section}

\newcommand{\osf}{{\normalfont \textsf{X}}}

\newcommand{\lang}{\CL_{\osf}}
\newcommand{\algshift}{\CA_R(\osf)}
\newcommand{\ualgshift}{\TCA_R(\osf)}
\newcommand{\dalgshift}{\CD_R(\osf)}
\newcommand{\udalgshift}{\TCD_R(\osf)}

\newcommand{\calgshift}{\CO_{\osf}}

\newcommand{\alf}{\mathscr{A}}

\newcommand{\N}{\mathbb{N}}
\newcommand{\F}{\mathbb{F}}

\newcommand{\CA}{\mathcal{A}}
\newcommand{\CB}{\mathcal{B}}

\newcommand{\CD}{\mathcal{D}}
\newcommand{\CE}{\mathcal{E}}
\newcommand{\CF}{\mathcal{F}}
\newcommand{\CG}{\mathcal{G}}

\newcommand{\CK}{\mathcal{K}}
\newcommand{\CL}{\mathcal{L}}
\newcommand{\CM}{\mathcal{M}}
\newcommand{\CO}{\mathcal{O}}
\newcommand{\CZ}{\mathcal{Z}}
\newcommand{\powerset}[1]{\text{$P(#1)$}}								

\newcommand{\Img}{\operatorname{Im}}

\newcommand{\TCA}{\widetilde{\CA}}
\newcommand{\TCB}{\mathcal{U}}
\newcommand{\TCD}{\widetilde{\CD}}

\newcommand{\HTCB}{\widehat{\TCB}}
\newcommand{\HCB}{\widehat{\CB}}

\newcommand{\hsig}{\widehat{\sigma}}
\newcommand{\hh}{\widehat{h}}

\newcommand{\tauh}{\widehat{\tau}}
\newcommand{\varphih}{\widehat{\varphi}}

\newcommand{\lspace}{(\CE,\CL,\CB)}
\newcommand{\tlspace}{(\CE,\CL,\TCB)}

\newcommand{\nn}{\mathbb{N}}
\newcommand{\zn}{\mathbb{Z}}

\newcommand{\scj}{\subseteq}

\newcommand{\eword}{\omega}

\newcommand{\vecspan}{\operatorname{span}}

\newcommand{\Lc}{\operatorname{Lc}}
\newcommand{\dom}{\operatorname{dom}}
\newcommand{\supp}{\operatorname{supp}}

\title{Algebras of one-sided subshifts over arbitrary alphabets}
\author[G. Boava]{Giuliano Boava}
\author[G.G. de Castro]{Gilles G. de Castro}
\author[D. Gonçalves]{Daniel Gonçalves}
\address[Giuliano Boava, Gilles G. de Castro and Daniel Gonçalves]{Departamento de Matem\'atica, Universidade Federal de Santa Catarina, 88040-970 Florian\'opolis SC, Brazil. }
\email{g.boava@ufsc.br \\ gilles.castro@ufsc.br \\ daemig@gmail.com}
\author[D.W. van Wyk]{Daniel W. van Wyk}
\address[Daniel W. van Wyk]{Department of Mathematics and Statistics, Amherst College, Amherst, MA 01002 USA, and  \\
Department of Mathematics and Applied Mathematics, University of the Free State, Park West, Bloemfontein, 9301, South Africa.}
\email{dvanwyk@amherst.edu }

\begin{document}

\keywords{Shift spaces, subshifts, infinite alphabets, groupoid algebras, Leavitt path algebras, partial skew group rings}
\subjclass[2020]{Primary: 16S10. Secondary: 37B10, 16S35, 16S88, 22A22, 06E15}

\thanks{The second and third authors were partially supported by Capes-Print Brazil. The third author was partially supported by CNPq - Conselho Nacional de Desenvolvimento Científico e Tecnológico - Brazil.
 }

\begin{abstract}
We introduce two algebras associated with a subshift over an arbitrary alphabet. One is unital, and the other not necessarily. We focus on the unital case and describe a conjugacy between Ott-Tomforde-Willis subshifts in terms of a homeomorphism between the Stone duals of suitable Boolean algebras, and in terms of a diagonal-preserving isomorphism of the associated unital algebras. For this, we realise the unital algebra associated with a subshift as a groupoid algebra and a partial skew group ring.
\end{abstract}

\maketitle

\section{Introduction}

The rich interplay between non-commutative algebras and symbolic dynamics dates back to the seminal work of Cuntz and Krieger \cite{CuntzKrieger}, where a C*-algebra (now called the Cuntz-Krieger algebra) is associated with a subshift of finite type (given by a finite matrix). Among its applications, these algebras are invariants for shift conjugacy and are essential in the study of (two-sided) continuous orbit equivalence, see \cite{MatuiMatsumoto}. Moreover, in \cite{MatsumotoConjugacy}, Matsumoto shows that two one-sided subshifts associated with irreducible and nonpermutation \{0, 1\}-matrices are topologically conjugate if and only if there is a diagonal-preserving isomorphism, that commutes with the diagonal action, between their associated C*-algebras. 

A subshift of finite type over a finite alphabet can be seen as the edge subshift associated with a graph. In \cite{BrixCarlsen1}, one-sided conjugacy of subshifts of finite type is characterised in terms of the Cuntz–Krieger algebra's diagonal and a completely positive map. Orbit equivalence of subshifts associated with directed graphs is characterised using graph C*-algebras in \cite{BCW}. The algebraic analogues of graph C*-algebras, called Leavitt path algebras, have attracted researchers' attention from a broad spectrum of Mathematics, as these algebras proved to have close connections with symbolic dynamics and, of course, their analytical counterparts, graph C*-algebras. Most of the invariance results mentioned above have algebraic counterparts obtained via groupoid techniques, as in \cite{GroupoidLeavitt}. 
Moreover, important problems in symbolic dynamics, such as the Williams problem regarding shift equivalence and strong shift equivalence, can be recast as a problem in graph algebras and their graded theory, \cite{HazratClassificationLPA}.

Recently, in \cite{BrixCarlsen}, Brix and Carlsen characterise conjugacy of subshifts (not necessarily of finite type) over finite alphabets in terms of the C*-algebras that Carlsen defines in \cite{CarlsenShift}. The critical problem in this setting is that, for a general subshift, the shift map is not a local homeomorphism. Hence, the established theory of Deaconu-Renault systems (see \cite{Renault00}) does not apply. Brix and Carlsen circumvent this problem by introducing the notion of a cover space. They describe the conjugacy of subshifts in terms of the conjugacy of their cover spaces, which in turn are described in terms of the associated groupoids and C*-algebras.

The main purpose of this paper is to obtain an algebraic description of conjugacy between subshifts over an arbitrary alphabet (including infinite alphabets). For finite alphabets, our conjugacy results may be interpreted as purely algebraic versions of the C*-algebraic results in \cite{BrixCarlsen}. Our approach consists of introducing algebras associated with subshifts over arbitrary alphabets and suitable topological spaces that, although different from the cover spaces in \cite{BrixCarlsen}, play the same role as these spaces.

In the context of a subshift $\osf$ defined over a finite alphabet, the C*-algebra $\calgshift$ studied in \cite{BrixCarlsen} was originally defined in \cite{CarlsenShift} in terms of a C*-correspondence. 
In order to formulate a suitable definition for the purely algebraic counterpart, we adapt the universal property established in \cite[Theorem~7.2]{CarlsenShift}. To accommodate infinite alphabets as well, we make slight modifications to the relations presented in \cite[Theorem~7.2]{CarlsenShift}. However, if we were to define a C*-algebra using our resulting relations, as outlined in Definition~\ref{diachuvoso}, this definition would subsume $\calgshift$ for finite alphabets. Furthermore, our definition readily extends to arbitrary alphabets. The authors will explore these C*-algebras for one-sided subshifts over arbitrary alphabets in an upcoming paper. We decided to first focus on the purely algebraic setting since, even for finite alphabets, associated algebras were not yet defined. This decision was motivated by the study of Leavitt path algebras, which, although connected to the corresponding graph C*-algebras, presents techniques and results that are often different. For instance, there are graphs with isomorphic graph C*-algebras, but there does not exist a *-isomorphism between their associated Leavitt path algebras over the ring of integers \cite{MR3517565}.

An important step toward our goals is to give a groupoid model for the subshift algebra. In Theorem~\ref{melaoflutuante}, we prove that our subshift algebra is isomorphic to the Leavitt labelled path algebra of a certain labelled graph, which can be used in conjunction with \cite{MR4583730} to describe the subshift algebra as a partial skew group ring and as a Steinberg algebra. The results of \cite{MR4583730} are based on \cite{BoavaDeCastroMortari1,Gil3,GillesDanie} and use the theory of inverse semigroups. An alternative approach is to use Boolean dynamical systems and results from \cite{COP,GillesEunJi2,GillesEunJi1}, which utilise inverse semigroups and topological correspondences. However, we have decided to take a more direct and self-contained approach. For the Steinberg algebra description, we use a groupoid whose unit space is just the Stone dual of the Boolean algebra that appears in the definition of the subshift algebra. For the partial skew group ring, we define a topological partial action using the same space and a set-theoretical partial action, which generalises that of \cite{MishaRuy} from subshifts of finite alphabets to arbitrary alphabets.

In symbolic dynamics, the study of subshifts over infinite alphabets is the subject of intense research, with practical applications \cite{KitchensBook, LindMarcus}. The main difficulty for infinite alphabets with the discrete topology is that the infinite product space is not compact (not even locally compact). This difficulty has motivated several approaches for subshifts over infinite alphabets, such as countable Markov subshifts (where the full shift is the usual shift with the product topology) or compactifications of subshifts, among others; see \cite{OTW} for an overview. In \cite{OTW}, Ott, Tomforde, and Willis (from now on referenced as OTW) introduce a new subshift associated with an infinite countable alphabet. They show that conjugacy of subshifts associated with infinite graphs implies isomorphism of the graph C*-algebras. In \cite{MR3938320}, ultragraphs are used to propose a notion of subshifts of finite type over infinite alphabets. This notion is closely related to the ideas of OTW, and \cite{MR3938320} shows that conjugacy of ultragraph shifts implies isomorphism of their algebras. The relation between ultragraphs and OTW subshifts is studied in \cite{MR3600124}, and many results for ultragraphs shifts are described in \cite{MR4443753}. Nevertheless, the connection between the above notions of subshifts over infinite alphabets and their non-commutative algebras is, in general,  not fully described. With our algebras, we describe such a connection. We describe the conjugacy of OTW subshifts in terms of the conjugacy of our analogues of cover spaces. These, in turn, imply a diagonal-preserving graded isomorphism of the subshift algebras. Therefore, we establish a two-way connection between OTW subshifts and non-commutative algebras.

The problem of topological conjugacy for subshifts over finite alphabet was explored in \cite{BrixCarlsen,CarlsenShift,MishaRuy}. Carlsen proved in \cite[Theorem~8.6]{CarlsenShift} that $\calgshift$ is an invariant for topological conjugacy, whereas Brix and Carlsen established a complete characterisation of topological conjugacy in terms of $\calgshift$ in \cite[Theorem~4.4]{BrixCarlsen}. One of the main results of our paper, Theorem~\ref{theone}, adds different characterisations of topological conjugacy of subshifts over finite alphabets to those in \cite[Theorem~4.4]{BrixCarlsen}. Our result is also a generalisation for subshifts over infinite alphabets. As for the results of Dokuchaev and Exel in \cite{MishaRuy}, they study two C*-algebras associated with a subshift $\osf$, namely $\CM_X$ and $\calgshift$ (the second being the same as the one above by \cite[Theorem~10.2]{MishaRuy}). The algebra $\CM_X$ is defined using operators on a Hilbert space. It can be seen as a quotient of $\CO_X$ and there is no known description of it via a universal property. Dokuchaev and Exel prove that $\CM_X$ is an invariant for topological conjugacy, but they mention that their method ``does not seem appropriate'' to give a different proof for \cite[Theorem~8.6]{CarlsenShift}. In a way, our Theorem~\ref{isogroupoid} can be considered the missing piece needed to establish that $\CO_X$ serves as an invariant for topological conjugacy when employing a partial action approach.

Before we describe the structure of the paper, we point out that we define two algebras associated with a subshift: one is unital by definition, and the other not necessarily. When the latter is unital, they coincide. Otherwise, its unitization coincides with a unital subshift algebra (see Proposition~\ref{jabutirapido}). We focus on the unital case because it is related to OTW subshifts (see Proposition~\ref{algebra OTW} and Theorem~\ref{theone}). The non-unital case is interesting when studying Leavitt path algebras of graphs with infinite vertices since these are not unital. Under certain conditions on the graph, we show that the algebra of the corresponding edge subshift is isomorphic to the Leavitt path algebra of the graph (see Proposition \ref{LPA}). We also consider the case of Leavitt path algebras of ultragraphs (see Proposition \ref{LPAU}).

We now give a detailed overview of the paper. In Section~\ref{eleicoes}, we provide the reader with preliminary definitions and auxiliary results used throughout. In Subsection~\ref{symbolic}, we present basic elements of symbolic dynamics and define subshifts over an arbitrary alphabet. Following~\cite{OTW}, we recall the definition of OTW subshifts in Subsection~\ref{OTW}. In Subsection~\ref{s:stone}, we develop auxiliary results concerning Boolean algebras, filters, and algebras generated by idempotents. In Subsection~\ref{mariokart}, we recall the definition of a Leavitt labelled path algebras, as in~\cite{MR4583730}, which we later connect with subshift algebras.

In Section~\ref{unital}, we define the unital subshift algebra and describe some of its properties, which include a $\mathbb{Z}$-grading and its realisation as a Leavitt labelled path algebra.

As mentioned before, we define two algebras associated with a subshift. The definition of the second algebra (which is not necessarily unital) is presented in Section~\ref{s:nonunital}. Key results in this section are the descriptions, under mild conditions, of Leavitt path algebras associated with graphs or ultragraphs as subshift algebras (Propositions~\ref{LPA} and \ref{LPAU}).

In Section~\ref{skewrings}, we give two descriptions of the unital subshift algebra as a partial skew group ring. One description arises from a set-theoretic partial action and the other from a topological partial action (Theorems~\ref{thm:set-theoretic-partial-action} and \ref{thm:shift_alg_partil_skew}).

From the partial skew group ring characterisation mentioned above, in Section~\ref{s:groupoid picture}, we define a groupoid and describe unital subshifts algebras as Steinberg algebras (Theorem~\ref{Steinberg}). 

Finally, in Section~\ref{s:conjugacy}, we describe a conjugacy between OTW subshifts in terms of a homeomorphism between the Stone duals of the Boolean algebras used in the definition of the unital algebras and in terms of a diagonal-preserving isomorphism of the associated unital algebras (Theorem~\ref{theone}).

\section{Preliminaries}\label{eleicoes}

In this section, we establish notation and present some results that we require in this paper. Firstly, we fix basic terminology and notation related to symbolic dynamics. Secondly, we recall the definition of Ott-Tomforde-Willis (OTW) subshift. Following that, we revisit the Stone Duality Theorem, which is used in Section~\ref{skewrings}. We finish this section stating the definition of a Leavitt labelled path algebras, as in \cite{MR4583730}.

Throughout the paper, $R$ stands for a commutative unital ring, $\nn=\{0,1,2,\ldots\}$ and $\nn^*=\{1,2,\ldots\}$.

\subsection{Symbolic Dynamics}\label{symbolic}

Let $\alf$ be a non-empty set, called an \emph{alphabet}, and let $\sigma$ be the \emph{shift map} on $\alf^\N$, that is, $\sigma$ is the map from $\alf^\N$ to $\alf^\N$ given by $\sigma(x)=(y_n)$, where $x=(x_n)$ and  $y_n=x_{n+1}$. Elements of $\alf^*:=\bigcup_{k=0}^\infty \alf^k$ are called \emph{blocks} or \emph{words}, and $\omega$ stands for the empty word. We also set $\alf^+=\alf^*\setminus\{\eword\}$. Given $\alpha\in\alf^*\cup\alf^{\N}$, $|\alpha|$ denotes the length of $\alpha$ and for $1\leq i,j\leq |\alpha|$, we define $\alpha_{i,j}:=\alpha_i\cdots\alpha_j$ if $i\leq j$, and $\alpha_{i,j}=\eword$ if $i>j$. If moreover $\beta\in\alf^*$, then $\beta\alpha$ denotes the usual concatenation. A subset $\osf \subseteq \alf^\N$ is \emph{invariant} for $\sigma$ if $\sigma (\osf)\subseteq \osf$. For an invariant subset $\osf \subseteq \alf^\N$, we define $\CL_n(\osf)$ as the set of all words of length $n$ that appear in some sequence of $\osf$, that is, $$\CL_n(\osf):=\{(a_0\ldots a_{n-1})\in \alf^n:\ \exists \ x\in \osf \text{ s.t. } (x_0\ldots x_{n-1})=(a_0\ldots a_{n-1})\}.$$ Clearly, $\CL_n(\alf^\N)=\alf^n$ and we always have that $\CL_0(\osf)=\{\omega\}$.
The \emph{language} of $\osf$ is the set $\lang$, which consists of all finite words that appear in some sequence of $\osf$, that is,
$$\lang:=\bigcup_{n=0}^\infty\CL_n(\osf).$$

Given $F\subseteq \alf^*$, we define the \emph{subshift} $\osf_F\subseteq \alf^\N$ as the set of all sequences $x$ in $\alf^\N$ such that no word of $x$ belongs to $F$. Usually, the set $F$ will not play a role, so we will say $\osf$ is a subshift with the implication that $\osf=\osf_F$ for some $F$. We also point out that subshifts are also called shift spaces in the literature.

Next, we define the key sets that will be used in the definition of the algebra associated with a subshift and describe some of their properties.

\begin{definition}\label{Arapaima}
Let $\osf$ be a subshift for an alphabet $\alf$. Given $\alpha,\beta\in \lang$, define \[C(\alpha,\beta):=\{\beta x\in\osf:\alpha x\in\osf\}.\] In particular, we denote $C(\eword,\beta)$ by $Z_{\beta}$ and call it a \emph{cylinder set}. Moreover, we denote $C(\alpha,\eword)$ by $F_{\alpha}$ and call it a \emph{follower set}. Notice that $\osf=C(\eword,\eword)$.
\end{definition}

\subsection{Ott-Tomforde-Willis subshifts}\label{OTW}

In this subsection, we briefly recall the construction of subshifts over an arbitrary alphabet as done in \cite{OTW}. 

If $\alf$ is a finite alphabet, we define $\Sigma_\alf=\alf^\N$. If $\alf$ is an infinite alphabet, define a new symbol $\infty$, not in $\alf$, let $\tilde \alf:=\alf\cup\{\infty\}$ and 
\[
\Sigma_\alf = \{(x_i)_{i\in \N}\in \tilde \alf^\N: x_i = \infty \text{ implies }x_{i+1}= \infty \}.
\]
In both cases,
\[
\Sigma_\alf^{\text{inf}} = \alf^\N, \text{ and }\Sigma_\alf^{\text{fin}} = \Sigma_\alf\setminus\Sigma_\alf^{\text{inf}}.
\]
When the alphabet is infinite, the set $\Sigma_\alf^{\text{fin}}$ is identified with the finite sequences in $\alf$ via the identification
\begin{equation}\label{xfin}
(x_0x_1\dots x_k\infty\infty\infty\dots)\equiv (x_0x_1\ldots x_k).    
\end{equation}

The sequence $(\infty \infty\infty\ldots)$ is denoted by $\vec{0}$ and is called the {\em empty sequence}.

Next, we recall the construction of OTW-subshifts (OTW stands for Ott-Tomforde-Willis).

\begin{definition}
Let $F \subseteq  \alf^*$. We define
\begin{align*}
\osf^{inf}_F &:= \{ x \in \alf^\N : \text{ no block of $x$ is in $F$} \} \\
\osf^{fin}_F &:= \{ x \in \Sigma_\alf^{\text{fin}} : \text{ there are infinitely many $a \in \alf$ for which}\\
& \qquad \qquad \qquad \qquad \text{there exists $y \in \alf^\N$ such that $xay \in \osf^{inf}_F$}\}. \\
\end{align*}
The \emph{OTW-subshift associated with $F$} is defined as $\osf_F^{OTW}:= \osf^{inf}_F \cup \osf^{fin}_F$. The shift map $\sigma:\osf_F^{OTW}\to \osf_F^{OTW}$ is defined as
\[\sigma(x)=\begin{cases}
x_1x_2\ldots, & \text{if }x=x_0x_1x_2\ldots\in\osf_F^{inf} \\
x_1\ldots x_{n-1}, & \text{if }x=x_0\ldots x_{n-1}\in\osf_F^{fin}\text{ and }n\geq 2 \\
\vec{0}, & \text{if }x=x_0\in\osf_F^{fin}\text{ or }x=\vec{0}.
\end{cases}
\]
In the case that $F=\emptyset$, we have that $\osf_F^{OTW}=\Sigma_\alf$, which we call the OTW full shift.
\end{definition}

\begin{remark}\label{computer}
Notice that $\osf^{inf}_F$ coincides with the subshift associated with $F$ in Subsection~\ref{symbolic}. As in the case of a subshift, we omit the subscript $F$ and write $\osf^{OTW}$ for $\osf^{OTW}_F$.
Similarly to what is done in Section \ref{symbolic}, we can define the language of an OTW-subshift, denoted by $\mathcal{L}_{\osf^{OTW}}$. Note that $\lang$ and $\mathcal{L}_{\osf^{OTW}}$ are the same and therefore there is no ambiguity in writing $\alpha\in\lang$ when working in $\osf^{OTW}$. Moreover, using the identification given by \eqref{xfin}, we may view $\osf^{fin}$ as a subset of $\lang$.
\end{remark}

We also need the notion of follower and generalised cylinder sets for OTW-subshifts.

\begin{definition}\label{peixe}
Let $\osf^{OTW}$ be an OTW-subshift and $\alpha\in \lang$. The \emph{follower set} of $\alpha$, denoted by $\CF(\alpha)$, is defined as the set \[\CF(\alpha):=\{y\in \osf^{OTW}: \alpha y \in \osf^{OTW}\}. \] For a finite set $F\subseteq \alf$ and $\alpha\in  \lang$, we define the \emph{generalised cylinder set} as
\[
\CZ(\alpha,F):=\{y\in\osf^{OTW}:\ y_i=\alpha_i\ \forall \, 1\leq i\leq |\alpha|,\ y_{|\alpha|+1}\notin F\}.
\]
To simplify notation, we denote  $\CZ(\alpha,\emptyset)$ by $\CZ(\alpha)$.
\end{definition}

Endowed with the topology generated by the generalised cylinders, $\osf^{OTW}$ is a compact totally disconnected Hausdorff space and, in this topology, the generalised cylinders are compact and open. We note that the shift map is continuous everywhere, except possibly at $\vec{0}$ (if $\vec{0}\in\osf^{OTW}$). Also, if the alphabet is finite, then $\osf^{OTW}$ is the usual subshift and the topology given by the generalised cylinders is the product topology (see \cite[Remark~2.26]{OTW}).

The following two lemmas will be useful in the upcoming work.

\begin{lemma}\label{lem:backandforth}
Let $\osf^{OTW}$ be an OTW-subshift. Let $F,G\scj \alf$ be finite sets, $\alpha,\beta\in\lang$ and let $n\in\nn$ be such that $n\leq|\alpha|$. Then,\begin{enumerate}[(i)]
    \item\label{i:forward} $\sigma^n(\CZ(\alpha,F))=\CZ(\alpha_{n+1,|\alpha|},F)\cap\CF(\alpha_{1,n})$;
    \item\label{i:backward} $\CZ(\alpha,F)\cap\sigma^{-n}(\CZ(\beta,G))$ is either empty or is equal to $\CZ(\gamma,H)$ for some $\gamma\in\lang$ and $H\scj\alf$ finite.
\end{enumerate}
\end{lemma}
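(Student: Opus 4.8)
The plan is to prove both parts by unwinding the definitions of $\CZ$, $\CF$ and $\sigma$ into elementary statements about prefixes, and then to argue by double inclusion for \eqref{i:forward} and by a case analysis for \eqref{i:backward}. Throughout I read ``$y\in\CZ(\mu,E)$'' as ``$y$ begins with the word $\mu$ and the symbol of $y$ immediately following this initial $\mu$ lies outside $E$'', and ``$y\in\CF(\mu)$'' as ``$\mu y\in\osf^{OTW}$''. The single observation driving everything is that for $y$ beginning with $\alpha$ and $n\le|\alpha|$ one has $y=\alpha_{1,n}\,\sigma^n(y)$, and conversely $\sigma^n(\alpha_{1,n}z)=z$ for every $z\in\osf^{OTW}$.

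For \eqref{i:forward}, given $y\in\CZ(\alpha,F)$, shifting off the first $n$ symbols shows that $\sigma^n(y)$ begins with $\alpha_{n+1,|\alpha|}$ and that its next symbol is the symbol of $y$ immediately after $\alpha$, which is not in $F$; hence $\sigma^n(y)\in\CZ(\alpha_{n+1,|\alpha|},F)$, while the identity $y=\alpha_{1,n}\sigma^n(y)\in\osf^{OTW}$ gives $\sigma^n(y)\in\CF(\alpha_{1,n})$. Conversely, for $z$ in the right-hand side, the membership $z\in\CF(\alpha_{1,n})$ says exactly that $y:=\alpha_{1,n}z\in\osf^{OTW}$; this $y$ begins with $\alpha_{1,n}\alpha_{n+1,|\alpha|}=\alpha$, its next symbol is the symbol of $z$ following $\alpha_{n+1,|\alpha|}$ (not in $F$, because $z\in\CZ(\alpha_{n+1,|\alpha|},F)$), and $\sigma^n(y)=z$, so $z\in\sigma^n(\CZ(\alpha,F))$. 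I would dispatch the degenerate indices $n=0$ and $n=|\alpha|$ (where $\alpha_{n+1,|\alpha|}$ or $\alpha_{1,n}$ equals $\eword$ and $\CF(\eword)=\osf^{OTW}$) and the finite-sequence cases uniformly, by invoking the OTW convention that a finite sequence is padded by $\infty\notin\alf$, so that a ``next symbol'' condition against a set $E\scj\alf$ holds automatically whenever the relevant position has fallen off the end of the sequence.

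For \eqref{i:backward}, an element $y$ lies in $\CZ(\alpha,F)\cap\sigma^{-n}(\CZ(\beta,G))$ exactly when $y$ begins with $\alpha$, its symbol in position $|\alpha|+1$ avoids $F$, the shifted sequence $\sigma^n(y)$ begins with $\beta$, and the symbol of $y$ in position $n+|\beta|+1$ avoids $G$. I would organise the argument by comparing $|\alpha|$ with $n+|\beta|$. If $n+|\beta|<|\alpha|$, the $\beta$-block sits inside the already-prescribed $\alpha$-block, so the $\beta,G$-conditions are either incompatible (forcing the empty set, via $\alpha_{n+1,n+|\beta|}\neq\beta$ or the symbol of $\alpha$ in position $n+|\beta|+1$ lying in $G$) or redundant, and the set is $\CZ(\alpha,F)$. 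If $n+|\beta|=|\alpha|$, the two ``next-symbol'' constraints collide at position $|\alpha|+1$, yielding $\CZ(\alpha,F\cup G)$. If $n+|\beta|>|\alpha|$, the word $\beta$ protrudes past $\alpha$; setting $\gamma:=\alpha\,\beta_{|\alpha|-n+1,|\beta|}$ (the common extension, which requires the overlap compatibility $\alpha_{n+1,|\alpha|}=\beta_{1,|\alpha|-n}$ together with $\beta_{|\alpha|-n+1}\notin F$, failing which the set is empty) one checks that the set equals $\CZ(\gamma,G)$. In every nonempty case the computed $\gamma$ is a prefix of an actual point of $\osf^{OTW}$, hence $\gamma\in\lang$, so that $\CZ(\gamma,H)$ is legitimately defined.

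The bulk of the work is routine bookkeeping; the main obstacle I anticipate is the careful handling of the boundary and degenerate configurations. Chief among these is the coincidence $n+|\beta|=|\alpha|$, where the two forbidden sets must be merged into $F\cup G$ rather than kept separate, and the finite and empty sequences, where positions run off the end and one must verify that the $\infty$-padding convention makes the ``$\notin E$'' conditions behave as intended instead of producing either spurious emptiness or a loss of points.
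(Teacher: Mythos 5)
Your proposal is correct and follows essentially the same route as the paper: part \eqref{i:forward} by directly unwinding the definitions (which the paper dismisses as immediate), and part \eqref{i:backward} by a case analysis on how the $\beta$-block overlaps the $\alpha$-block, producing the same explicit $\gamma$ and $H$ in each nonempty case. Your organisation by comparing $n+|\beta|$ with $|\alpha|$ merely repackages the paper's split into $n=|\alpha|$ versus $n<|\alpha|$ followed by the comparison of $|\beta|$ with $|\alpha|-n$, and your attention to the $\infty$-padding convention for finite sequences is a welcome (if routine) extra precaution.
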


\begin{proof}
\eqref{i:forward} This is immediate from the definitions of the generalised cylinders and $\sigma$.

\eqref{i:backward} Let $Y=\CZ(\alpha,F)\cap\sigma^{-n}(\CZ(\beta,G))$. We divide the proof into two cases. 

First, we suppose that $n=|\alpha|$. If $\beta_1\in F$ or $\alpha\beta\notin \lang$ then $Y=\emptyset$, otherwise we have that $Y=\CZ(\alpha\beta,G)$.

Secondly, suppose that $n<|\alpha|$. If $|\beta|>|\alpha|-n$, then for $Y$ not to be empty we must have that $\beta_{1,|\alpha|-n}=\alpha_{n+1,|\alpha|}$ and $\beta_{|\alpha|-n+1}\notin F$. In this case, we have that $Y=\CZ(\alpha_{1,n}\beta,G)$. If $|\beta|=|\alpha|-n$ and $Y$ is not empty, then $\beta=\alpha_{n+1,|\alpha|}$, in which case, $Y=\CZ(\alpha,F\cup G)$. Finally, if $|\beta|<|\alpha|-n$ and $Y$ is not empty, then $\beta=\alpha_{n+1,n+|\beta|}$ and $\alpha_{n+|\beta|+1}\notin G$. In this case $Y=\CZ(\alpha,F)$.
\end{proof}

In the next lemma, we compare the cylinder and follower sets in the OTW subshift $\osf^{OTW}$, denoted by $\CZ$ and $\CF$, as in Definition~\ref{peixe} with the cylinder and follower sets in the subshift $\osf$, denoted by $Z$ and $F$, as in Definition~\ref{Arapaima}.

\begin{lemma}\label{l:CFZ}
Let $\osf^{OTW}$ be an OTW-subshift. Let $F\scj \alf$ be a finite set and $\alpha,\beta\in\lang$. Then
\begin{enumerate}[(i)]
    \item $\CZ(\alpha,F)\cap\osf^{inf}=Z_{\alpha}\setminus\left(\bigcup_{a\in F}Z_{
    \alpha a}\right)$;
    \item $\CF(\alpha)\cap \osf^{inf}=F_{\alpha}$;
    \item $\CZ(\beta)\cap\sigma^{-|\beta|}(\CF(\alpha))\cap\osf^{inf}=C(\alpha,\beta)$.
\end{enumerate}
\end{lemma}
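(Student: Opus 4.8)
The plan is to prove all three identities by directly unwinding the definitions, the only substantive observations being: that $\osf^{inf}$ coincides with the subshift $\osf$ of Subsection~\ref{symbolic} (Remark~\ref{computer}); that an infinite sequence (i.e.\ an element of $\alf^\N$) lies in $\osf^{OTW}=\osf^{inf}\cup\osf^{fin}$ if and only if it lies in $\osf^{inf}$, since $\osf^{fin}$ consists only of genuinely finite sequences; and that $\osf$ is shift-invariant, so that $\gamma x\in\osf$ forces $x=\sigma^{|\gamma|}(\gamma x)\in\osf$ for any $\gamma\in\lang$. Intersecting everything with $\osf^{inf}$ throughout means we never have to engage with the finite part of the OTW-subshift, which is what makes the statements clean.

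For (i), I would first record that $Z_\gamma=\{y\in\osf: y \text{ begins with } \gamma\}$ for every $\gamma\in\lang$: if $y=\gamma x\in\osf$ then $x\in\osf$ by shift-invariance, so $y\in Z_\gamma$, and the reverse inclusion is immediate. With this, $\bigcup_{a\in F}Z_{\alpha a}$ is exactly the set of $y\in\osf$ beginning with $\alpha$ whose symbol immediately after $\alpha$ belongs to $F$. On the other hand, $\CZ(\alpha,F)\cap\osf^{inf}$ is the set of $y\in\osf$ that begin with $\alpha$ and whose symbol in position $|\alpha|+1$ is not in $F$; since such $y$ are infinite, this symbol always exists and lies in $\alf$, so the two descriptions agree and yield $Z_\alpha\setminus\bigcup_{a\in F}Z_{\alpha a}$.

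For (ii), $\CF(\alpha)\cap\osf^{inf}=\{y\in\osf:\alpha y\in\osf^{OTW}\}$; since $y$ is infinite, $\alpha y$ is infinite, so $\alpha y\in\osf^{OTW}$ if and only if $\alpha y\in\osf$, which is precisely $F_\alpha=\{y\in\osf:\alpha y\in\osf\}$. For (iii), an element of the left-hand side is a $y\in\osf$ beginning with $\beta$ and satisfying $\sigma^{|\beta|}(y)\in\CF(\alpha)$; writing $y=\beta x$, shift-invariance gives $x=\sigma^{|\beta|}(y)\in\osf$, and $x\in\CF(\alpha)$ means $\alpha x\in\osf^{OTW}$, equivalently $\alpha x\in\osf$ because $\alpha x$ is infinite. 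Thus the left-hand side equals $\{\beta x\in\osf:\alpha x\in\osf\}=C(\alpha,\beta)$.

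I do not expect a genuine obstacle here, as the content is entirely definitional. The only point requiring care is the bookkeeping between finite and infinite sequences together with the indexing conventions for the generalised cylinders; restricting to $\osf^{inf}$ removes the finite-sequence subtleties of the OTW construction, so in each case the identity reduces to the equivalence ``$\gamma z\in\osf^{OTW}\iff\gamma z\in\osf$ for infinite $z$'' combined with the shift-invariance of $\osf$.
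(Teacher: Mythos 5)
Your proof is correct, and since the paper itself declares this lemma ``straightforward'' and leaves it to the reader, your direct unwinding of the definitions (using shift-invariance of $\osf$ and the fact that an infinite sequence lies in $\osf^{OTW}$ iff it lies in $\osf^{inf}$) is precisely the intended argument. No gaps.
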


\begin{proof}
The proof is straightforward and it is left to the reader.
\end{proof}

\subsection{Stone duality}\label{s:stone}

We define a \emph{Boolean algebra} as a distributive lattice $\CB$ with least element and which is relatively complemented. We do not assume that $\CB$ has a maximum element and denote the join and meet by $\cup$ and $\cap$, respectively. When $\CB$ has a maximum element, we say that $\CB$ is a \emph{unital Boolean algebra}.

A \emph{filter} is a non-empty proper subset $\xi$ of $\CB$ such that $A\cap B\in\xi$ for all $A,B\in\xi$, and if $A,B\in\CB$ are such that $A\scj B$ and $A\in\xi$, then $B\in\xi$. An \emph{ultrafilter} is a filter that is maximal with respect to inclusions. In a Boolean algebra, an ultrafilter is equivalently defined as being a \emph{prime filter}, that is, a filter $\xi$ such that if $A,B\in\CB$ are such that $A\cup B\in\xi$, then $A\in\xi$ or $B\in\xi$.

The set of all ultrafilters of $\CB$ will be denoted by $\HCB$. For each $A\in\CB$, we define $O_A:=\{\xi\in\HCB:A\in\xi\}$. We have that $O_{A\cap B}=O_A\cap O_B$, $O_{A\cup B}=O_A\cup O_B$ and $O_{A\setminus B}=O_A\setminus O_B$ for every $A,B\in\CB$. In particular, the family $\{O_A\}_{A\in\CB}$ is a basis for a topology on $\HCB$. We will always assume that $\HCB$ has this topology, and we call $\HCB$ the \emph{Stone dual} of $\CB$.

In what follows, for a topological space $X$, we denote by $\CK(X)$ the set of all compact-opens subsets of $X$. For convenience, we state below the well known Stone duality theorem.

\begin{theorem}[Stone duality]\label{thm:stone}
    Let $\CB$ be a Boolean algebra. Then, $\HCB$ is a Hausdorff space and $\{O_A\}_{A\in\CB}$ is the set of all compact-open sets of $\CB$. Moreover, the map $A\in\CB\mapsto O_A\in\CK(\HCB)$ is an isomorphism of Boolean algebras. Reciprocally, if $X$ is a Hausdorff space such that $\CK(X)$ is a basis for its topology, then the map that sends $x\in X$ to the ultrafilter $\xi_x=\{A\in\CK(X):x\in A\}$ is a homeomorphism between $X$ and $\widehat{\CK(X)}$, with inverse given by $\xi\in\widehat{\CK(X)}\mapsto \cap_{A\in\xi}A\in X$. Moreover, if $\Phi:\CB_1\to\CB_2$ is an isomorphism of Boolean algebras, then the map $\widehat{\Phi}:\widehat{\CB_2}\to\widehat{\CB_1}$, given by $\widehat{\Phi}(\xi)=\phi^{-1}(\xi)$, is a homeomorphism.
\end{theorem}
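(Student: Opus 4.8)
The plan is to prove the Stone duality theorem by assembling standard pieces, since this is a ``well known'' result being recorded for reference. I would organize the proof around the four assertions in the statement: (1) $\HCB$ is Hausdorff and $\{O_A\}_{A\in\CB}$ is exactly $\CK(\HCB)$, with $A\mapsto O_A$ a Boolean algebra isomorphism; (2) the reconstruction of a space $X$ from $\CK(X)$; (3) functoriality, i.e.\ that an isomorphism $\Phi$ induces a homeomorphism $\widehat{\Phi}$.

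For assertion (1), I would first verify that $\HCB$ is Hausdorff: given distinct ultrafilters $\xi\neq\eta$, there is some $A$ in one but not the other, and using that ultrafilters are prime filters one produces disjoint basic opens separating them (the relative complement in the Boolean algebra is the key tool here, replacing the global complement one would use in the unital case). Next I would show each $O_A$ is compact: a cover of $O_A$ by basic opens $O_{B_i}$, if it had no finite subcover, would generate a filter containing $A$ but missing all $B_i$, which extends to an ultrafilter in $O_A$ avoiding the cover --- a contradiction (this is where a Zorn's lemma argument for extending filters to ultrafilters is invoked). Conversely every compact-open set is a finite union of basic opens $O_{B_i}$, hence equals $O_{\cup B_i}$, so $\CK(\HCB)=\{O_A\}_{A\in\CB}$. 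The map $A\mapsto O_A$ is a lattice homomorphism by the displayed identities $O_{A\cap B}=O_A\cap O_B$ etc.; injectivity follows because if $A\not\subseteq B$ (or vice versa) an ultrafilter separates them, and surjectivity onto $\CK(\HCB)$ is what we just established.

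For assertion (2), given $X$ Hausdorff with $\CK(X)$ a basis, I would check that $\xi_x=\{A\in\CK(X):x\in A\}$ is genuinely an ultrafilter (primeness uses that $\CK(X)$ is a basis and $X$ Hausdorff), that $x\mapsto\xi_x$ is continuous and open (preimages and images of basic sets correspond), and that it is a bijection with the stated inverse $\xi\mapsto\bigcap_{A\in\xi}A$ --- here one must argue this intersection is a single point, using compactness (the finite intersection property of the filter) for nonemptiness and Hausdorffness for uniqueness. Assertion (3) is then essentially formal: $\widehat{\Phi}(\xi)=\Phi^{-1}(\xi)$ is well defined because $\Phi^{-1}$ carries ultrafilters to ultrafilters (Boolean isomorphisms preserve all the relevant structure), it is a bijection with inverse $\widehat{\Phi^{-1}}$, and continuity in both directions follows from $\widehat{\Phi}^{-1}(O_A)=O_{\Phi(A)}$.

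I expect the main obstacle to be the non-unital bookkeeping: because $\CB$ need not have a top element, several standard arguments that rely on taking complements must be rephrased in terms of relative complements within the lattice, and one must be careful that ultrafilters (= prime filters) behave correctly without a maximum --- in particular that the extension-to-ultrafilter step and the primeness characterization still go through. The compactness of $O_A$ and the ``single point'' claim for $\bigcap_{A\in\xi}A$ are the technical hearts of the argument, both resting on the interplay between the filter property and compactness/Hausdorffness; everything else is routine verification of the displayed Boolean identities. Given that the paper calls this ``well known,'' I would likely cite a standard reference for the bulk and only highlight the adjustments needed for the non-unital setting.
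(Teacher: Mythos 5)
The paper offers no proof of this statement at all: it is recorded verbatim as ``the well known Stone duality theorem'' and used as a black box, so there is nothing internal to compare your argument against. Your outline is a correct rendition of the standard proof, and it correctly isolates the two genuinely technical points (compactness of $O_A$ via extending a filter with the finite intersection property to an ultrafilter, and the fact that $\bigcap_{A\in\xi}A$ is a single point via compactness plus Hausdorffness) as well as the one adjustment the paper's conventions force, namely replacing global complements by relative complements since $\CB$ need not be unital. Two small remarks: primeness of $\xi_x=\{A\in\CK(X):x\in A\}$ is actually trivial ($x\in A\cup B$ immediately gives $x\in A$ or $x\in B$); what the Hausdorff and basis hypotheses are really needed for is that $\CK(X)$ is closed under intersections and relative complements (compact sets are closed in a Hausdorff space) and that the map $x\mapsto\xi_x$ is injective and surjective. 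Also, in the separation argument for Hausdorffness of $\HCB$ and for the singleton claim, the clean way to phrase the relative-complement step is: given $A\in\xi\setminus\eta$ and any $B\in\eta$, primeness of $\eta$ applied to $B=(A\cap B)\cup(B\setminus A)$ forces $B\setminus A\in\eta$, and $O_A\cap O_{B\setminus A}=\emptyset$. With those details filled in, your proof is complete and is exactly the argument a standard reference would give.
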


A Hausdorff space with a basis of compact-open sets will be called a \emph{Stone space} (it is sometimes called a \emph{Boolean space}). For a Stone space $X$, we let $\Lc(X,R)$ denote the $R$-algebra of compactly supported locally constant functions, with pointwise operations. For a non-zero element $f\in\Lc(X,R)$, its image is a finite set. Let $\{r_1,\ldots,r_n\}$ be the set of non-zero elements in the image of $f$. Then, for each $i=1,\ldots,n$, we have that $A_i=f^{-1}(r_i)$ is compact-open. Moreover, $f=\sum_{i=1}^n r_i1_{A_i}$, where $1_{A_i}$ represents the characteristic function of $A_i$.

Suppose that $\CA$ is a commutative $R$-algebra. The set $E(\CA)$ of idempotents of $\CA$ is a Boolean algebra, where the order is given by $e\leq f$ if $ef=e$ for $e,f\in E(\CA)$. In this case $e\cap f = ef$ and $e \cup f = e + f - ef$ for $e,f\in E(\CA)$. The following theorem is essentially due to Keimel \cite{Keimel}.

\begin{theorem}\label{IsoLc}
    Let $\CA$ be a commutative $R$-algebra generated by its idempotent elements. Suppose that for $r\in R$ and $e\in E(\CA)$, $re=0$ implies that $r=0$ or $e=0$. Then, there is an isomorphism $\Phi:\CA\to\Lc(\widehat{E(\CA)},R)$ of $R$-algebras such that $\Phi(e)=1_{O_e}$ for every $e\in E(\CA)$.
\end{theorem}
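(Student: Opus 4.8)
The plan is to construct the isomorphism $\Phi$ by combining two separate identifications: first a Stone-dual description of the Boolean algebra $E(\CA)$ of idempotents, and second the representation of elements of $\Lc(\widehat{E(\CA)},R)$ as finite $R$-linear combinations of characteristic functions of compact-open sets, as described in the paragraph preceding the statement. By the Stone duality theorem (Theorem~\ref{thm:stone}), the map $e\in E(\CA)\mapsto O_e\in\CK(\widehat{E(\CA)})$ is an isomorphism of Boolean algebras onto the compact-open sets of the Stone space $\widehat{E(\CA)}$. Since $1_{O_e}\in\Lc(\widehat{E(\CA)},R)$ for each idempotent $e$, and these characteristic functions satisfy $1_{O_e}1_{O_f}=1_{O_{e\cap f}}=1_{O_{ef}}$ and $1_{O_e}+1_{O_f}-1_{O_e}1_{O_f}=1_{O_{e\cup f}}=1_{O_{e+f-ef}}$, the assignment $e\mapsto 1_{O_e}$ respects the Boolean operations, hence extends to a well-defined $R$-algebra homomorphism on the subalgebra generated by the idempotents. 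Because $\CA$ is generated by $E(\CA)$, this determines $\Phi$ uniquely, provided it is well-defined.

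First I would verify that $\Phi$ is well-defined, i.e.\ that the prescription $\Phi(e)=1_{O_e}$ on idempotents extends consistently to all $R$-linear combinations. The key point is that a general element of $\CA$ can be written as $\sum_{i=1}^n r_i e_i$ with $e_i\in E(\CA)$, but such expressions are far from unique, so I must show that $\sum r_i e_i=0$ in $\CA$ forces $\sum r_i 1_{O_{e_i}}=0$ in $\Lc(\widehat{E(\CA)},R)$. The standard device is to refine any finite set of idempotents $\{e_1,\dots,e_n\}$ into a finite family of pairwise orthogonal idempotents: using the Boolean operations in $E(\CA)$ one produces orthogonal $f_1,\dots,f_m$ such that each $e_i$ is a sum of a subset of the $f_j$. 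Rewriting $\sum r_i e_i$ in the orthogonal basis gives $\sum_j s_j f_j$ with the $f_j$ orthogonal, and similarly $\Phi$ sends this to $\sum_j s_j 1_{O_{f_j}}$ with the $O_{f_j}$ pairwise disjoint. For orthogonal idempotents the hypothesis that $re=0$ implies $r=0$ or $e=0$ shows that $\sum_j s_j f_j=0$ forces $s_j f_j=0$ and hence $s_j=0$ whenever $f_j\neq 0$; this yields that the corresponding function vanishes, establishing well-definedness.

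Next I would check surjectivity and injectivity. Surjectivity is immediate from the representation of $\Lc(\widehat{E(\CA)},R)$ recalled before the theorem: every nonzero $f$ equals $\sum_{i=1}^n r_i 1_{A_i}$ with $A_i$ compact-open, and by Stone duality each $A_i=O_{e_i}$ for a unique $e_i\in E(\CA)$, so $f=\Phi(\sum r_i e_i)$. For injectivity I would run the well-definedness argument in reverse: given $\Phi(\sum r_i e_i)=0$, pass to the orthogonal refinement, read off that each coefficient $s_j$ with $f_j\neq 0$ must vanish because the disjoint sets $O_{f_j}$ are nonempty (as $f_j\neq 0$ means $O_{f_j}\neq\emptyset$ by Stone duality), and conclude $\sum r_i e_i=\sum_j s_j f_j=0$ in $\CA$. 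Thus well-definedness and injectivity are two sides of the same orthogonalisation argument.

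The main obstacle is the orthogonalisation step together with the careful bookkeeping that ties the hypothesis $re=0\Rightarrow r=0$ or $e=0$ to the behaviour of coefficients on disjoint compact-open sets. The hypothesis is exactly what prevents torsion phenomena from collapsing nonzero functions, so I would be careful to apply it only after reducing to orthogonal idempotents, where products vanish and the coefficients decouple cleanly. Everything else (multiplicativity, additivity, $R$-linearity of $\Phi$) then follows routinely from the fact that $e\mapsto O_e$ is a Boolean isomorphism and that $1_{O_{e}}$ multiply and add according to the same Boolean rules.
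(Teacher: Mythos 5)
Your proposal is correct. Note that the paper itself gives no proof of this statement: it simply attributes the result to Keimel and moves on, so there is no in-paper argument to compare against. Your orthogonalisation strategy is the standard (and essentially the only) way to prove it, and every step checks out: multiplying $\sum_j s_j f_j=0$ by $f_k$ isolates $s_kf_k=0$, the hypothesis then kills either the coefficient or the idempotent, and in either case the summand $s_j 1_{O_{f_j}}$ vanishes (using that $f_j\neq 0$ implies $O_{f_j}\neq\emptyset$, which is part of the Stone duality statement). You correctly observe that well-definedness and injectivity are the same computation run in opposite directions, and that surjectivity is immediate from the canonical form $f=\sum r_i 1_{A_i}$ together with $\CK(\widehat{E(\CA)})=\{O_e\}$. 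The only point worth making explicit is that $E(\CA)$ need not be unital (the paper's Boolean algebras have no assumed top element), so the orthogonal refinement of $e_1,\dots,e_n$ must be carried out below the local unit $e_1\cup\dots\cup e_n$; this is routine and your phrase ``using the Boolean operations in $E(\CA)$'' covers it.
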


If $R$ and $\CA$ satisfy the hypothesis of Theorem~\ref{IsoLc} and $\CA\neq 0$, then $R$ must be an indecomposable ring, that is, $0$ and $1$ are its only idempotents. Indeed if $r\in E(R)\setminus\{0\}$ and $e\in E(\CA)\setminus\{0\}$, then $(1-r)re=0$, from where it follows that $r=1$.

Sometimes, it is useful to work with another topological space in place of $\widehat{E(\CA)}$. We define the \emph{character space} of $\CA$ as the set $\widehat{\CA}$ of all $R$-algebra homomorphisms from $\CA$ to $R$. The topology on $\widehat{\CA}$ is the one induced from the product $R^{\CA}$, where $R$ has the discrete topology. In other words, it is the topology of pointwise convergence. Note that a subbase for the topology on $\widehat{\CA}$ is given by the clopen sets $U_{a,r}:=\{\phi\in\widehat{\CA}:\phi(a)=r\}$ for $a\in \CA$ and $r\in R$.

\begin{proposition}\label{jamesbond}
On the conditions of Theorem~\ref{IsoLc}, $\widehat{E(\CA)}$ is homeomorphic to $\widehat{\CA}$.
\end{proposition}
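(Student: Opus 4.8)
The plan is to construct a homeomorphism directly by relating both spaces to the idempotents of $\CA$, using Theorem~\ref{IsoLc} as the bridge. Recall that Theorem~\ref{IsoLc} gives an isomorphism $\Phi\colon\CA\to\Lc(\widehat{E(\CA)},R)$ with $\Phi(e)=1_{O_e}$. Composing an $R$-algebra homomorphism $\phi\colon\CA\to R$ with $\Phi^{-1}$ yields a homomorphism $\Lc(\widehat{E(\CA)},R)\to R$, so it suffices to understand the character space of $\Lc(X,R)$ for a Stone space $X=\widehat{E(\CA)}$. The expectation is that every such character is evaluation at a point of $X$, which will give the desired identification.

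First I would define the candidate map. Given a point $\xi\in\widehat{E(\CA)}$, evaluation $\mathrm{ev}_\xi$ on $\Lc(\widehat{E(\CA)},R)$, transported back via $\Phi$, gives an element of $\widehat{\CA}$; explicitly this sends $a\in\CA$ to $\Phi(a)(\xi)$. Call this map $\Psi\colon\widehat{E(\CA)}\to\widehat{\CA}$. One checks $\Psi$ is a homomorphism since $\Phi$ is an algebra isomorphism and evaluation is multiplicative. On idempotents, $\Psi(\xi)(e)=1_{O_e}(\xi)$, which equals $1$ if $e\in\xi$ and $0$ otherwise; this is the key computation that lets me recover $\xi$ from $\Psi(\xi)$, and it shows $\Psi$ is injective because an ultrafilter of $E(\CA)$ is determined by the idempotents it contains.

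For surjectivity I would take an arbitrary $\phi\in\widehat{\CA}$ and restrict it to $E(\CA)$. Since $\phi$ is an $R$-algebra homomorphism into an indecomposable ring $R$ (as noted after Theorem~\ref{IsoLc}, $R$ must be indecomposable), $\phi(e)\in E(R)=\{0,1\}$ for each idempotent $e$. The set $\xi_\phi:=\{e\in E(\CA):\phi(e)=1\}$ is then an ultrafilter of the Boolean algebra $E(\CA)$: it is closed under meets because $\phi(ef)=\phi(e)\phi(f)$, upward closed because $e\leq f$ gives $ef=e$ hence $\phi(f)\phi(e)=\phi(e)=1$ forces $\phi(f)=1$, and prime because $\phi(e\cup f)=\phi(e)+\phi(f)-\phi(e)\phi(f)=1$ forces $\phi(e)=1$ or $\phi(f)=1$; it is proper since $\phi(0)=0$. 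Since $\CA$ is generated by its idempotents, $\phi$ is determined by its values on $E(\CA)$, so $\phi=\Psi(\xi_\phi)$, giving surjectivity.

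Finally I would verify that $\Psi$ is a homeomorphism. Continuity of $\Psi$ follows by checking preimages of the subbasic sets $U_{a,r}$: for a single idempotent, $\Psi^{-1}(U_{e,1})=O_e$ and $\Psi^{-1}(U_{e,0})=\widehat{E(\CA)}\setminus O_e$ are open, and the general case reduces to idempotents via the representation $a=\sum_i r_i e_i$. Since both $\widehat{E(\CA)}$ and $\widehat{\CA}$ are compact Hausdorff spaces (the former by Stone duality, the latter as a closed subset of $R^{\CA}$), a continuous bijection between them is automatically a homeomorphism. The main obstacle I anticipate is verifying that an arbitrary character is genuinely determined by, and extends from, its restriction to idempotents; this rests on $\CA$ being generated by idempotents and on $R$ being indecomposable so that $\phi$ lands in $\{0,1\}$ on $E(\CA)$, and care is needed to confirm the resulting $\xi_\phi$ is a bona fide ultrafilter rather than merely a filter.
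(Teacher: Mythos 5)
Your construction of the bijection is essentially the one in the paper, run in the opposite direction: the paper defines $\Psi:\widehat{\CA}\to\widehat{E(\CA)}$ by $\phi\mapsto\xi_\phi$ and uses $\xi\mapsto\Phi(\cdot)(\xi)$ to prove surjectivity, which is exactly your map; your injectivity, surjectivity and continuity checks coincide with the paper's (your computation of $\Psi^{-1}(U_{a,r})$ for general $a$ via the decomposition $a=\sum_i r_ie_i$ is precisely the paper's argument that its $\Psi$ is an open map). Up to the final step the argument is sound.

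The gap is in the last step. Under the hypotheses of Theorem~\ref{IsoLc} neither space need be compact: the paper's Boolean algebras are not assumed to have a maximum element, and by Theorem~\ref{thm:stone} the compact-open subsets of $\widehat{\CB}$ are exactly the sets $O_A$, so $\widehat{E(\CA)}$ is compact only when $E(\CA)$ has a top element, i.e.\ only when $\CA$ is unital. Likewise $R^{\CA}$ with $R$ discrete is compact only when $R$ is finite, so being a closed subset of $R^{\CA}$ does not make $\widehat{\CA}$ compact. (The paper itself flags this in the proof of Proposition~\ref{algebra OTW}, where compactness of the character space is justified by the unitality of the algebra there.) Hence the ``continuous bijection between compact Hausdorff spaces'' shortcut fails for non-unital $\CA$ or infinite $R$, which the statement is meant to cover. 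The repair is cheap and is what the paper does: prove openness directly. Since $\{O_e\}_{e\in E(\CA)}$ is a basis of $\widehat{E(\CA)}$ and your $\Psi$ is a bijection with $\Psi(\xi)(e)=1$ iff $e\in\xi$, one gets $\Psi(O_e)=U_{e,1}$, which is open in $\widehat{\CA}$; this replaces the compactness appeal.
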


\begin{proof}
If $\phi\in\widehat{\CA}$, it is straightforward to check that $\xi_{\phi}:=\{e\in E(\CA):\phi(e)=1\}$ is a prime filter and therefore an ultrafilter. We then obtain a map $\Psi:\widehat{\CA}\to\widehat{E(\CA)}$, given by $\Psi(\phi)=\xi_\phi$, which is injective because $\CA$ is generated by $E(\CA)$. For surjectivity, take $\xi\in\widehat{E(\CA)}$ and let $\phi_{\xi}:A\to R$ be given by $\phi_{\xi}(a)=\Phi(a)(\xi)$, where $\Phi$ is the isomorphism of Theorem~\ref{IsoLc}. Since $\Phi(e)=1_{O_e}$ for all $e\in E(\CA)$, we have that $\Psi(\phi_{\xi})=\xi$.

For each $e\in E(\CA)$, we have that $\Psi^{-1}(O_e)=U_{e,1}$ and hence $\Psi$ is continuous. To show that $\Psi$ is open, we first observe that $U_{0,r}$ is either the empty set (if $r\neq 0$) or $\widehat{\CA}$ (if $r=0$), and hence $\Psi(U_{0,r})$ is open for every $r\in\CA$. Now, given $a\in\CA\setminus\{0\}$, we will describe $U_{a,r}$ and $\Psi(U_{a,r})$. First, we let $\{r_1,\ldots,r_n\}$ be the distinct non-zero elements of the image of $\Phi(a)$. For each $i=1,\ldots,n$, we let $e_i\in E(\CA)$ be such that $O_{e_i}=\Phi(a)^{-1}(r_i)$. Then $a=\sum_{i=1}^n r_ie_i$ and $e_ie_j=0$ if $i\neq j$. As observed above, $R$ must be an indecomposable ring and, therefore, $\phi(e)$ is either $0$ or $1$ for each $e\in E(\CA)$ and $\phi\in\widehat{\CA}$. Hence, for $\phi\in\widehat{\CA}$, $\phi(a)=0$ or $\phi(a)=r_i$ for the unique $i$ such that $\phi(e_i)=1$. There are a few possibilities for $U_{a,r}$, which we describe next. If $r=r_i$ for some $i=1,\ldots,n$, then $U_{a,r}=U_{e_i,1}$ and $\Psi(U_{a,r})=O_{e_i}$. If $r=0$, then $U_{a,r}=\widehat{\CA}\setminus\bigcup_{i=1}^nU_{e_i,1}$ and $\Psi(U_{a,r})=\widehat{E(\CA)}\setminus\bigcup_{i=1}^nO_{e_i}$. Finally, if $r\notin\{0,r_1,\ldots,r_n\}$, then $U_{a,r}=\emptyset$ and $\Psi(U_{a,r})=\emptyset$. We have proved that $\Psi(U_{a,r})$ is open for every $a\in\CA$ and $r\in R$, and hence $\Psi$ is an open map.
\end{proof}

For indecomposable rings, we can also describe the idempotents of $\Lc(X,R)$ for a Stone space $X$.

\begin{lemma}\label{idempotents Lc}
If $R$ is also an indecomposable ring and $X$ is a Stone space, then the idempotents of $\Lc(X,R)$ must be of the form $1_A$ for some compact-open subset $A$ of $X$.
\end{lemma}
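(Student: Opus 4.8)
The plan is to prove that every idempotent of $\Lc(X,R)$ has the form $1_A$ for a compact-open set $A$, using the assumption that $R$ is indecomposable. First I would take an arbitrary idempotent $f\in\Lc(X,R)$ and use the structure theorem recalled just before Theorem~\ref{IsoLc}: since $f$ is a nonzero element of $\Lc(X,R)$, its image is a finite set, and if $\{r_1,\ldots,r_n\}$ are the distinct nonzero values, then $f=\sum_{i=1}^n r_i 1_{A_i}$ where each $A_i=f^{-1}(r_i)$ is compact-open and the $A_i$ are pairwise disjoint. The goal is then to show that in fact $n=1$ and $r_1=1$, so that $f=1_{A_1}$.

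The key computation is to evaluate $f^2$ pointwise. Because the sets $A_i$ are pairwise disjoint, we have $1_{A_i}1_{A_j}=0$ for $i\neq j$ and $1_{A_i}^2=1_{A_i}$, so $f^2=\sum_{i=1}^n r_i^2 1_{A_i}$. The idempotent equation $f^2=f$ then forces, by comparing the values on each $A_i$ (where the functions $1_{A_j}$ for $j\neq i$ vanish and $1_{A_i}=1$), the scalar identity $r_i^2=r_i$ in $R$ for every $i$. Thus each $r_i$ is an idempotent of $R$.

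Here is where indecomposability of $R$ enters, which I expect to be the only substantive step rather than a genuine obstacle: since the only idempotents of $R$ are $0$ and $1$, and each $r_i$ was chosen to be a nonzero value of $f$, we must have $r_i=1$ for every $i$. But the values $r_1,\ldots,r_n$ were assumed distinct, so $n\leq 1$. If $f$ is the zero function then $f=1_{\emptyset}$ and $\emptyset$ is compact-open, while if $f\neq 0$ then $n=1$ and $r_1=1$, giving $f=1_{A_1}$ with $A_1=f^{-1}(1)$ compact-open. This completes the argument. The proof is essentially a short pointwise calculation, so I would keep it brief and fold the degenerate $f=0$ case into the statement explicitly to avoid any edge-case ambiguity.
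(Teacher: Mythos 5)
Your argument is correct and follows essentially the same route as the paper's proof: write a nonzero idempotent as $f=\sum_{i=1}^n r_i 1_{A_i}$ with distinct nonzero values $r_i$ and disjoint compact-open level sets, compute $f^2=\sum_i r_i^2 1_{A_i}$ to get $r_i^2=r_i$, and invoke indecomposability to force $r_i=1$ and hence $n=1$. The separate treatment of $f=0$ as $1_\emptyset$ also matches the paper.
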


\begin{proof}
Obviously $0=1_{\emptyset}$. For a non-zero function $f\in\Lc(X,R)$, we write $f=\sum_{i=1}^nr_i1_{A_i}$, where $r_1,\ldots,r_n$ are the distinct non-zero elements in the image of $f$ and $A_i=f^{-1}(r_i)$. Suppose that $f$ is idempotent. Then, $f=f^2=\sum_{i=1}^nr_i^21_{A_i}$, which implies that $r_i^2=r_i$ for each $i$. Since $R$ is indecomposable and $r_i\neq 0$ for all $i$, we have that $n=1$ and $r_1=1$. Hence $f=1_{A_1}$.
\end{proof}

For a homeomorphism $h:X\to Y$ between Stone spaces, it is easy to see that $\phi:\Lc(Y,R)\to\Lc(X,R)$, given by $\phi(f)=f\circ h$, is an isomorphism of $R$-algebras, with inverse given by $\phi(g)=g\circ h^{-1}$. We now prove a converse for this result in the case of indecomposable rings. First, we need a lemma.

\begin{lemma}\label{Dual Lc(X,R)}
Let $X$ be a Stone space, suppose that $R$ is also an indecomposable ring and let $\CA=\Lc(X,R)$. Then, the map that sends $x\in X$ to $\{1_A:A\in\CK(X),x\in A\}\in\widehat{E(\CA)}$ is a homeomorphism.
\end{lemma}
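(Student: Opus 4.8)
The plan is to realise the given map as a composition of two homeomorphisms supplied by Stone duality, so that no direct topological estimate is needed. The first ingredient is an explicit description of the Boolean algebra $E(\CA)$ of idempotents of $\CA=\Lc(X,R)$. By Lemma~\ref{idempotents Lc}, every idempotent of $\CA$ is of the form $1_A$ for a unique compact-open set $A\in\CK(X)$ (uniqueness because $1_A=1_B$ forces $A=B$). I would check that the assignment $\Phi\colon A\mapsto 1_A$ is an isomorphism of Boolean algebras from $\CK(X)$ onto $E(\CA)$: it is a bijection by Lemma~\ref{idempotents Lc}, and it respects the operations since $1_{A\cap B}=1_A1_B$, $1_{A\cup B}=1_A+1_B-1_A1_B$ and $1_{A\setminus B}=1_A-1_A1_B$, which are exactly the meet, join and relative complement in $E(\CA)$ as recorded before Theorem~\ref{IsoLc}.

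The second ingredient is Stone duality itself. Since $X$ is a Stone space, the second part of Theorem~\ref{thm:stone} gives a homeomorphism $\eta\colon X\to\widehat{\CK(X)}$, $x\mapsto\xi_x=\{A\in\CK(X):x\in A\}$. On the other hand, the functorial part of Theorem~\ref{thm:stone} applied to the Boolean isomorphism $\Phi$ yields a homeomorphism between the Stone duals $\widehat{\CK(X)}$ and $\widehat{E(\CA)}$; concretely this map carries an ultrafilter $\xi\scj\CK(X)$ to $\Phi(\xi)=\{1_A:A\in\xi\}$, which is an ultrafilter of $E(\CA)$.

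Finally I would compose the two homeomorphisms. Tracking a point $x\in X$ through $\eta$ and then through the induced map, we obtain $x\mapsto\xi_x\mapsto\{1_A:A\in\xi_x\}=\{1_A:A\in\CK(X),\ x\in A\}$, which is precisely the map in the statement; being a composition of homeomorphisms, it is a homeomorphism. I do not anticipate a serious obstacle here. The only points requiring care are verifying that $\Phi$ is a Boolean isomorphism (where indecomposability of $R$, via Lemma~\ref{idempotents Lc}, is what guarantees surjectivity onto the idempotents) and keeping straight the direction of the induced map on Stone duals, so that the composite matches the stated formula rather than its inverse.
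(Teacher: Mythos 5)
Your proposal is correct and follows essentially the same route as the paper: both establish that $A\mapsto 1_A$ is a Boolean isomorphism $\CK(X)\to E(\CA)$ (using Lemma~\ref{idempotents Lc} for surjectivity) and then compose the Stone-duality homeomorphism $X\to\widehat{\CK(X)}$ with the induced homeomorphism $\widehat{\CK(X)}\to\widehat{E(\CA)}$. Your attention to the direction of the induced map (it is $\widehat{\Phi^{-1}}$, acting as $\xi\mapsto\Phi(\xi)$) matches the paper's use of $\widehat{\phi^{-1}}$ exactly.
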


\begin{proof}
Consider the map $\phi:\CK(X)\to E(\CA)$ given by $\phi(A)=1_A$. Clearly, it is an injective homomorphism of Boolean algebras. By Lemma~\ref{idempotents Lc}, this map is also surjective. By the Stone duality, we have homeomorphisms $\psi:X\to\widehat{\CK(X)}$ given by $\psi(x)=\{A\in\CK(X):x\in A\}$ and $\widehat{\phi^{-1}}:\widehat{\CK(X)}\to \widehat{E(\CA)}$ given by $\widehat{\phi^{-1}}(\xi)=\phi(\xi)$. Composing these two maps we conclude that the map that sends $x\in X$ to $\{1_A:A\in\CK(X),x\in A\}\in\widehat{E(\CA)}$ is a homeomorphism.
\end{proof}

\begin{proposition}\label{givemeanh}
    Let $X$ and $Y$ be Stone spaces and suppose that $R$ is also an indecomposable ring. If $\phi:\Lc(Y,R)\to\Lc(X,R)$ is an isomorphism of $R$-algebras, then there exists a homeomorphism $h:X\to Y$, such that $\phi(f)=f\circ h$ for all $f\in\Lc(Y,R)$.
\end{proposition}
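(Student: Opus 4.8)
The plan is to transport the algebra isomorphism $\phi$ through the duality machinery developed above, recovering $h$ as the Stone dual of $\phi$ restricted to idempotents. First I would restrict $\phi$ to the Boolean algebras of idempotents. Since $\phi$ is an $R$-algebra isomorphism, it maps $E(\Lc(Y,R))$ bijectively onto $E(\Lc(X,R))$ and preserves the Boolean operations $e\cap f=ef$ and $e\cup f=e+f-ef$; hence $\psi:=\phi|_{E(\Lc(Y,R))}\colon E(\Lc(Y,R))\to E(\Lc(X,R))$ is an isomorphism of Boolean algebras. Because $R$ is indecomposable, Lemma~\ref{idempotents Lc} tells us that these idempotent lattices consist exactly of the characteristic functions $1_B$ with $B\in\CK(Y)$ (respectively $1_A$ with $A\in\CK(X)$), so $\psi$ is genuinely a map between the compact-open lattices of the two Stone spaces, expressed in function form.

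Next I would dualise. By Stone duality (Theorem~\ref{thm:stone}), the isomorphism $\psi$ induces a homeomorphism $\widehat{\psi}\colon\widehat{E(\Lc(X,R))}\to\widehat{E(\Lc(Y,R))}$ given by $\widehat{\psi}(\xi)=\psi^{-1}(\xi)$. On the other hand, Lemma~\ref{Dual Lc(X,R)} provides homeomorphisms $\eta_X\colon X\to\widehat{E(\Lc(X,R))}$, $\eta_X(x)=\{1_A:A\in\CK(X),\,x\in A\}$, and similarly $\eta_Y\colon Y\to\widehat{E(\Lc(Y,R))}$. I then define $h:=\eta_Y^{-1}\circ\widehat{\psi}\circ\eta_X\colon X\to Y$, which is a composition of homeomorphisms and hence a homeomorphism. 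The remaining task is to show $\phi(f)=f\circ h$ for all $f$, and the crux is the identity on characteristic functions: writing $\phi(1_B)=1_A$ for $B\in\CK(Y)$ (so $\psi(1_B)=1_A$), I claim $A=h^{-1}(B)$.

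To verify the claim I would chase the identifications. For $x\in X$ we have $x\in h^{-1}(B)$ iff $1_B\in\eta_Y(h(x))$, and since $\eta_Y\circ h=\widehat{\psi}\circ\eta_X$ this equals $\psi^{-1}(\eta_X(x))$; thus $1_B$ lies in it iff $\psi(1_B)=1_A\in\eta_X(x)$, i.e.\ iff $x\in A$. Hence $1_B\circ h=1_{h^{-1}(B)}=1_A=\phi(1_B)$. Finally, every $f\in\Lc(Y,R)$ is a finite combination $f=\sum_{i=1}^n r_i 1_{B_i}$ with $B_i\in\CK(Y)$, so by $R$-linearity of $\phi$ and of precomposition with $h$,
\[
\phi(f)=\sum_{i=1}^n r_i\,\phi(1_{B_i})=\sum_{i=1}^n r_i\,(1_{B_i}\circ h)=f\circ h,
\]
as desired. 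I expect the main obstacle to be purely bookkeeping: keeping track of the contravariant direction of the Stone dual and carefully unwinding the two point-to-ultrafilter identifications $\eta_X,\eta_Y$ so that the chain of equivalences in the computation $x\in h^{-1}(B)\iff x\in A$ comes out correctly; once the setup is pinned down, the reduction to idempotents and the extension by linearity are routine.
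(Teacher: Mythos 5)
Your proposal is correct and follows essentially the same route as the paper's proof: restrict $\phi$ to the idempotent Boolean algebras, identify them with $\CK(Y)$ and $\CK(X)$ via Lemma~\ref{idempotents Lc}, compose the Stone dual of this restriction with the homeomorphisms of Lemma~\ref{Dual Lc(X,R)} to obtain $h$, and verify $\phi(f)=f\circ h$ first on characteristic functions and then by $R$-linearity. The bookkeeping of the contravariant dual and the point-to-ultrafilter identifications is carried out correctly.
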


\begin{proof}
Let $\CA_X=\Lc(X,R)$ and $\CA_Y=\Lc(Y,R)$. We note that $\phi$ restricts to an isomorphism of Boolean algebras between $E(\CA_Y)$ and $E(\CA_X)$. We let $\widehat{\phi}:\widehat{E(\CA_X)}\to \widehat{E(\CA_Y)}$ be the corresponding homeomorphism. We also let $h_X:X\to \widehat{E(\CA_X)}$ and $h_Y:Y\to \widehat{E(\CA_Y)}$ be the homeomorphisms given by Lemma \ref{Dual Lc(X,R)}. We then define $h=h_Y^{-1}\circ\widehat{\phi}\circ h_X$, which is a homeomorphism between $X$ and $Y$.

Given $x\in X$, let us describe $h(x)$. First, we let  $\xi=\widehat{\phi}(h_X(x))=\{\phi^{-1}(1_A):A\in\CK(X),x\in A\}$. By Lemma \ref{idempotents Lc}, there is an ultrafilter $\eta$ of $\CK(Y)$ such that $\xi=\{1_B:B\in\eta\}$. Then, $h(x)=\bigcap_{B\in\eta}B$.

Let $f\in\Lc(Y,R)$. If $f=0$, then clearly $\phi(f)=f\circ h$. Suppose that $f\neq 0$ and write $f=\sum_{i=1}^nr_i1_{B_i}$, where $r_1,\ldots,r_n$ are the non-zero elements in the image of $f$ and $B_i=f^{-1}(r_i)$. For each $i$, by Lemma \ref{idempotents Lc}, we have that $\phi(1_{B_i})=1_{A_i}$ for some $A_i\in \CK(X)$. In this case, $\phi(f)=\sum_{i=1}^n r_i1_{A_i}$. It follows from the description of $h$ that $x\in A_i$ if and only if $h(x)\in B_i$, for every $x\in X$ and each $i$. Hence
\[(f\circ h)(x)=\sum_{i=1}^nr_i1_{B_i}(h(x))=\sum_{i=1}^nr_i1_{A_i}(x)=\phi(f)(x),\]
for each $x\in X$. Therefore, $\phi(f)=f\circ h$ for every $f\in\Lc(Y,R)$.
\end{proof}

\subsection{Leavitt labelled path algebras}\label{mariokart}

In this subsection, we recall the definition of Leavitt labelled path algebras, as defined in \cite{MR4583730}.

A (directed) \emph{graph} is a quadruple $(\CE^0,\CE^1,s,r)$ where $\CE^0,\CE^1$ are sets,  $s:\CE^1\to\CE^0$ and $r:\CE^1\to\CE^0$ are maps.

Given a set $\alf$, which is thought of as a set of letters, an (edge-)\emph{labelling} on a graph $\CE$ is a surjective map $\CL:\CE^1\to\alf$. We call the pair $(\CE,\CL)$ a \emph{labelled graph}.	A \emph{path} $\lambda$ on $\CE$ is a sequence (finite or infinite) of edges $\lambda=\lambda_1\ldots\lambda_n(\ldots)$ such that $r(\lambda_i)=s(\lambda_{i+1})$ $\forall i$.  We can extend the map $\CL$ to any path $\lambda$ by $\CL(\lambda)=\CL(\lambda_1)\ldots\CL(\lambda_n)(\ldots)$. An element $\alpha=\CL(\lambda)$ is called a \emph{labelled path}. We also include the empty word $\eword$ as a labelled path. For $A\scj\CE^0$, we define {$\CL(A\CE^1)=\{\CL(e): e\in\CE^1,\ s(e)\in A\}$}.

For $\alpha\in\alf^*$ and $A\in\powerset{\CE^0}$ (where $\powerset{\CE^0}$ denotes the powerset of $\CE^0)$, the \emph{relative range of $\alpha$ with respect to $A$} is
{\[r(A,\alpha)=\{r(\lambda):\lambda\in\CE^{\ast},\ \CL(\lambda)=\alpha,\ s(\lambda)\in A\}\]}
if $\alpha\in\alf^+$ and $r(A,\alpha)=A$ if $\alpha=\eword$. We define $r(\alpha):=r(\CE^0,\alpha)$.  Note that $r(A,a)\neq\emptyset$ if and only if $a\in\CL(A\CE^1)$.

\begin{definition}
	A \emph{normal labelled space} is a triple $\lspace$, where $(\CE,\CL)$ is a labelled graph and $\CB\subseteq\powerset{\CE^0}$ is a Boolean algebra such that
	\begin{itemize}
		\item $r(\alpha)\in\CB$ and $r(A,\alpha)\in\CB$ for all $\alpha\in\alf^+$ and $A\in\CB$,
		\item {$r(A\cap B,\alpha)=r(A,\alpha)\cap r(B,\alpha)$} for all $\alpha\in\alf^+$ and $A,B\in\CB$.
	\end{itemize} 

	We say that $A\in\CB$ is \emph{regular} if for all $B\in\CB\setminus\{\emptyset\}$ such that $B\scj A$, we have that $0<|\CL(B\CE^1)|<\infty$. The set of regular sets is denoted by \emph{$\CB_{reg}$}. 
\end{definition}

\begin{definition}\label{def:LPA}
Let $\lspace$ be a normal labelled space. The \emph{Leavitt labelled path algebra associated with $\lspace$ with coefficients in $R$}, denoted by $L_R\lspace$, is the universal $R$-algebra with generators $\{p_A : A\in \CB\}$ and $\{s_a,s_a^* : a\in\alf\}$ subject to the relations
\begin{enumerate}[(i)]
	\item $p_{A\cap B}=p_Ap_B$, $p_{A\cup B}=p_A+p_B-p_{A\cap B}$ and $p_{\emptyset}=0$, for every $A,B\in\CB$;
	\item $p_As_a=s_ap_{r(A,a)}$ and $s_a^{*}p_A=p_{r(A,a)}s_a^{*}$, for every $A\in\CB$ and $a\in\alf$;
	\item $s_a^*s_a=p_{r(a)}$ and $s_b^*s_a=0$ if $b\neq a$, for every $a,b\in\alf$;
	\item $s_as_a^*s_a=s_a$ and $s_a^{*}s_as_a^{*}=s_a^{*}$ for every $a\in\alf$;
	\item For every $A\in\CB_{reg}$,
	\[p_A=\sum_{a\in\CL(A\CE^1)}s_ap_{r(A,a)}s_a^*.\]
\end{enumerate}
\end{definition}

\section{Unital algebras of subshifts}\label{unital}

In this section, we define and study a unital algebra associated with a general subshift $\osf$. To define this algebra, we associate a projection with each element in the Boolean algebra generated by $C(\alpha,\beta)$, $\alpha,\beta\in\lang$ (see Definition~\ref{Arapaima}).

\begin{definition}\label{diachuvoso}
Let $\osf$ be a subshift. Define $\TCB$ to be the Boolean algebra of subsets of $\osf$ generated by all $C(\alpha,\beta)$ for $\alpha,\beta\in\lang$, that is, $\TCB$ is the collections of sets obtained from finite unions, finite intersections, and complements of the sets $C(\alpha,\beta)$.
\end{definition}

\begin{remark}\label{U_elements}
Notice that each element of $\TCB$ is a finite union of elements of the form \[C(\alpha_1,\beta_1)\cap\ldots\cap C(\alpha_n,\beta_n)  \cap C(\mu_1,\nu_1)^c\cap \ldots \cap C(\mu_m,\nu_m)^c.\]
\end{remark}

Next, we define the unital $R$-algebra associated with the subshift $\osf$.

\begin{definition}\label{gelado}
Let $\osf$ be a subshift. We define the \emph{unital subshift algebra} $\ualgshift$ as the universal unital $R$-algebra  with generators $\{p_A: A\in\TCB\}$ and $\{s_a,s_a^*: a\in\alf\}$, subject to the relations:
\begin{enumerate}[(i)]
    \item $p_{\osf}=1$, $p_{A\cap B}=p_Ap_B$, $p_{A\cup B}=p_A+p_B-p_{A\cap B}$ and $p_{\emptyset}=0$, for every $A,B\in\TCB$;
    \item $s_as_a^*s_a=s_a$ and $s_a^*s_as_a^*=s_a^*$ for all $a\in\alf$;
    \item $s_{\beta}s^*_{\alpha}s_{\alpha}s^*_{\beta}=p_{C(\alpha,\beta)}$ for all $\alpha,\beta\in\lang$, where $s_{\eword}:=1$ and, for $\alpha=\alpha_1\ldots\alpha_n\in\lang$, $s_\alpha:=s_{\alpha_1}\cdots s_{\alpha_n}$ and $s_\alpha^*:=s_{\alpha_n}^*\cdots s_{\alpha_1}^*$.
\end{enumerate}
\end{definition}

 \begin{remark}\label{sera_frio?}
From Item (iii) in Definition~\ref{gelado}, taking $\beta = \omega$, we obtain that $s_\alpha^* s_\alpha = p_{C(\alpha,\omega)}=p_{F_\alpha}$, for all $\alpha \in \lang$. Taking $\alpha = \omega$, we obtain that $s_\beta s_\beta^* = p_{C(\omega,\beta)}= p_{Z_\beta}$ for all $\beta \in \lang$.
\end{remark}

\begin{remark}\label{muito_frio} A unital C*-algebra associated with a subshift $\osf$ can be defined in the same manner as we defined $\ualgshift$, replacing the sentence ``universal unital $R-$algebra'' with ``universal unital C*-algebra''. Such a C*-algebra generalises, to the infinite alphabet case, the C*-algebra associated with a subshift over a finite alphabet defined by Carlsen \cite{CarlsenShift}.  Most of the analysis we do regarding $\ualgshift$ in this paper passes on to the C*-algebraic version, $C^*(\osf)$, but usually not automatically. We intend to study $C^*(\osf)$ (and its non-unital version) in a follow-up paper.
\end{remark}

In the next result, we describe multiplicative properties of elements of  $\ualgshift$.

\begin{proposition}\label{lemma.algebra.unital} Let $\osf$ be a subshift and $\ualgshift$ its associated unital subshift algebra. Then,
\begin{enumerate}[(i)]
    \item $s_a^*s_b= \delta_{a,b} p_{F_a}$, for all $a,b\in \alf$;
    \item $s_\alpha^*s_\alpha$ and $s_\beta^*s_\beta$ commute for all $\alpha,\beta\in\lang$;
    \item $s_\alpha^*s_\alpha$ and $s_\beta s_\beta^*$ commute for all $\alpha,\beta\in\lang$;
    \item $s_\alpha s_\beta=0$ for all $\alpha,\beta\in\lang$ such that $\alpha\beta\notin\lang$;
    \item $\ualgshift$ is generated as an $R$-algebra by the set $\{s_a, s_a^*: a\in\alf\}\cup\{1\}$.
\end{enumerate}
\end{proposition}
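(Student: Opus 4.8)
The plan is to reduce every item to the defining relations of Definition~\ref{gelado} together with the two identities $s_\alpha^*s_\alpha=p_{F_\alpha}$ and $s_\beta s_\beta^*=p_{Z_\beta}$ recorded in Remark~\ref{sera_frio?}, and to the fact that the idempotents commute, since $p_Ap_B=p_{A\cap B}=p_Bp_A$. The diagonal case of (i) is exactly Remark~\ref{sera_frio?}. For $a\neq b$ I would first note the one-sided absorptions $s_a^*=s_a^*p_{Z_a}$ and $s_b=p_{Z_b}s_b$, both obtained by combining relation (ii) with $s_as_a^*=p_{Z_a}$ and $s_bs_b^*=p_{Z_b}$; then $s_a^*s_b=s_a^*p_{Z_a}p_{Z_b}s_b=s_a^*p_{Z_a\cap Z_b}s_b$, which vanishes because distinct letters give disjoint cylinders $Z_a\cap Z_b=\emptyset$. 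Items (ii) and (iii) are then purely formal: by Remark~\ref{sera_frio?} each of $s_\alpha^*s_\alpha$, $s_\beta^*s_\beta$, $s_\beta s_\beta^*$ is of the form $p_A$ with $A\in\TCB$, and any two such elements commute by the idempotent relations.

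The heart of the argument, and the main obstacle, is (iv): the defining relations only assert the partial-isometry identity $s_as_a^*s_a=s_a$ for \emph{single} letters, whereas I will need its word-level analogue. So I would first prove, by induction on $|\mu|$, that $s_\mu s_\mu^*s_\mu=s_\mu$ for every $\mu\in\lang$ (equivalently $s_\mu p_{F_\mu}=s_\mu$). Writing $\mu=\nu a$ with $a$ the last letter and using $s_as_a^*=p_{Z_a}$, $s_\nu^*s_\nu=p_{F_\nu}$, the triple product telescopes:
\[
s_\mu s_\mu^*s_\mu=s_\nu(s_as_a^*)(s_\nu^*s_\nu)s_a=s_\nu\,p_{Z_a}p_{F_\nu}\,s_a=s_\nu\,p_{F_\nu}(p_{Z_a}s_a)=s_\nu\,p_{F_\nu}s_a=s_\nu s_a=s_\mu,
\]
where I commuted the idempotents, applied $p_{Z_a}s_a=s_a$, and invoked the induction hypothesis $s_\nu p_{F_\nu}=s_\nu$.

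With this in hand, given $\alpha,\beta\in\lang$ with $\alpha\beta\notin\lang$, I would let $\mu$ be the longest prefix of $\alpha\beta$ lying in $\lang$ and $c$ the following letter, so that $\mu\in\lang$, $c\in\lang$, and $\mu c\notin\lang$; writing $\alpha\beta=\mu c\delta$ gives $s_\alpha s_\beta=s_{\mu c}s_\delta$, so it suffices to show $s_{\mu c}=0$. The crucial observation is that $F_\mu\cap Z_c=\emptyset$: any $x$ in this intersection would start with $c$ and satisfy $\mu x\in\osf$, forcing $\mu c\in\lang$, a contradiction. Consequently
\[
p_{F_\mu}s_c=p_{F_\mu}(s_cs_c^*s_c)=p_{F_\mu}p_{Z_c}s_c=p_{F_\mu\cap Z_c}s_c=0,
\]
and finally, using the word-level partial-isometry identity, $s_{\mu c}=s_\mu s_c=(s_\mu p_{F_\mu})s_c=s_\mu(p_{F_\mu}s_c)=0$, whence $s_\alpha s_\beta=0$.

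For (v) I would show every generator $p_A$ lies in the subalgebra generated by $\{s_a,s_a^*\}\cup\{1\}$. Relation (iii) gives $p_{C(\alpha,\beta)}=s_\beta s_\alpha^*s_\alpha s_\beta^*$, while the relations in Definition~\ref{gelado}(i) convert Boolean operations into algebraic ones: $p_{A\cap B}=p_Ap_B$, $p_{A\cup B}=p_A+p_B-p_Ap_B$, and $p_{A^c}=1-p_A$ (from $p_A+p_{A^c}=p_\osf=1$). By Remark~\ref{U_elements} every $A\in\TCB$ is a finite union of finite intersections of sets $C(\alpha,\beta)$ and their complements, so iterating these identities expresses $p_A$ as a polynomial in the $s_a,s_a^*$ and $1$; since the $s_a,s_a^*$ are themselves generators, the subalgebra generated by $\{s_a,s_a^*\}\cup\{1\}$ contains all defining generators and hence equals $\ualgshift$. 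I expect (v) to be routine once Remark~\ref{U_elements} is invoked; the only genuine difficulty in the whole proposition is the word-level partial-isometry identity underlying (iv).
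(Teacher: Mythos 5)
Your proof is correct and follows essentially the same route as the paper: each item is reduced to the relations of Definition~\ref{gelado} via the identities $s_\alpha^*s_\alpha=p_{F_\alpha}$ and $s_\beta s_\beta^*=p_{Z_\beta}$ of Remark~\ref{sera_frio?}, commuting idempotents, and the emptiness of the relevant intersections of cylinder and follower sets. The only real difference is that you prove explicitly, by induction, the word-level identity $s_\mu s_\mu^*s_\mu=s_\mu$ — which the paper's proof of (iv) uses implicitly when it writes $s_\alpha s_\beta=s_\alpha s_\alpha^*s_\alpha s_\beta s_\beta^*s_\beta$ and then concludes from $F_\alpha\cap Z_\beta=\emptyset$ directly, without your longest-prefix reduction.
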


\begin{proof}
For (i), notice that, from Remark~\ref{sera_frio?} and Item~(ii) in Definition~\ref{gelado}, we can write $s_a^* s_b= s_a^*p_{Z_a}p_{Z_b} s_b = s_a^* p_{Z_a\cap Z_b} s_b$, from where the result follows.

Items (ii) and (iii) follow from Remark~\ref{sera_frio?} and Item~(i) in Definition~\ref{gelado}.

For (iv), let $\alpha,\beta\in\lang$ be such that $\alpha\beta\notin\lang$. In this case $F_{\alpha}\cap Z_{\beta}=\emptyset$. Then
\[s_\alpha s_\beta=s_\alpha s_\alpha^*s_\alpha s_\beta s_\beta^*s_\beta=s_\alpha p_{F_\alpha}p_{Z_\beta}s_\beta=s_\alpha p_{F_\alpha\cap Z_\beta}s_\beta=0.\]

For (v), notice that the projection associated to every generator of the Boolean  algebra $\TCB$ can be written as  $p_{C(\alpha,\beta)} = s_{\beta}s^*_{\alpha}s_{\alpha}s^*_{\beta}$ (from Item~(iii) in Definition~\ref{gelado}), which implies that it belongs to the $R$-algebra generated by the set $\{s_a, s_a^* : a\in\alf\}$. The result now follows from Item~(i) in Definition~\ref{gelado}.
\end{proof}

The grading of a combinatorial algebra (such as the Leavitt path algebra of a graph, ultragraph, labelled graph, etc.) plays a key role in its study; see \cite{MR4634309, Hazrat98, Vas} for a few examples. Below we recall the notion of grading and describe a $\mathbb {Z}$-grading of $\ualgshift$.

\begin{definition}\label{dfn:grading}
A \textit{$\mathbb{Z}$-graded ring} is a ring $S$ with a collection of additive subgroups $\{S_n\}_{n\in \mathbb{Z}}$ of $S$ such that 
    \begin{enumerate}
        \item $S=\bigoplus_{n\in \mathbb{Z}}S_n$, and 
        \item $S_mS_n \scj S_{m+n}$ for all $m,n\in \mathbb{Z}$.
    \end{enumerate}
The subgroup $S_n$ is called the \textit{homogeneous component of $S$ of degree $n$}, and the collection $\{S_n\}_{n\in \mathbb{Z}}$ is called a \emph{$\mathbb{Z}$-grading} of $S$.
\end{definition}

\begin{definition}
If $S$ is a $\mathbb{Z}$-graded ring, then an ideal $I\scj S$ is a \textit{$\mathbb{Z}$-graded ideal} if $I=\bigoplus_{n\in \mathbb{Z}} (I\cap S_n)$. If $\phi:S\to T$ is a ring homomorphism between   $\mathbb{Z}$-graded rings, then $\phi$ is \textit{$\mathbb{Z}$-graded homomorphism} if $\phi(S_n)\scj  T_n$ for every $n\in \mathbb{Z}$.
\end{definition}

\begin{proposition}\label{prop:grading} Let $\osf$ be a subshift. The unital subshift algebra $\ualgshift$ is $\zn$-graded, with grading given by
    \[\ualgshift_n = \vecspan_R\{s_\alpha p_A s_\beta^* : \alpha,\beta \in \lang,\ A\in\TCB \ \mbox{and} \ |\alpha|-|\beta|=n\}.\]
\end{proposition}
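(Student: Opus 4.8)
The plan is to establish the $\zn$-grading of $\ualgshift$ by invoking its universal property. Since $\ualgshift$ is defined as a universal unital $R$-algebra on the generators $\{p_A\}$, $\{s_a\}$, $\{s_a^*\}$ subject to the relations in Definition~\ref{gelado}, the standard strategy is to construct a $\zn$-grading on the \emph{free} unital $R$-algebra on these generators, check that each defining relation is homogeneous with respect to this grading, and conclude that the grading descends to the quotient $\ualgshift$. Concretely, I would assign degree $1$ to each generator $s_a$, degree $-1$ to each $s_a^*$, and degree $0$ to each $p_A$. This makes the free algebra $\zn$-graded, with the degree of a monomial being the number of $s$-factors minus the number of $s^*$-factors.

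First I would verify that every relation in Definition~\ref{gelado} is homogeneous. Relation~(i) involves only $p_A$'s and scalars, so every term has degree $0$; relation~(ii), $s_as_a^*s_a=s_a$ and $s_a^*s_as_a^*=s_a^*$, has both sides of degree $1$ (resp.\ $-1$); and relation~(iii), $s_\beta s_\alpha^* s_\alpha s_\beta^* = p_{C(\alpha,\beta)}$, has left-hand side of degree $|\beta|-|\alpha|+|\alpha|-|\beta|=0$, matching the degree-$0$ right-hand side. Since the ideal of relations is generated by homogeneous elements, it is a graded ideal, and hence the quotient inherits a well-defined $\zn$-grading in which $\ualgshift_n$ is the image of the degree-$n$ component of the free algebra. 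It remains to identify this image with the span described in the statement.

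To pin down the homogeneous components explicitly, I would use Proposition~\ref{lemma.algebra.unital}(v), which tells us that $\ualgshift$ is generated as an $R$-algebra by $\{s_a,s_a^*:a\in\alf\}\cup\{1\}$, together with the fact that $p_A$ (for $A\in\TCB$) lies in the $R$-span of products $s_\beta s_\alpha^* s_\alpha s_\beta^*=p_{C(\alpha,\beta)}$ and their Boolean combinations. The key normal-form step is to show that an arbitrary product of generators can be rewritten as an $R$-linear combination of terms of the form $s_\gamma p_A s_\delta^*$. For this I would use the commutation relations in Proposition~\ref{lemma.algebra.unital}: relation (i), $s_a^*s_b=\delta_{a,b}p_{F_a}$, collapses any ``wrong-order'' product $s^* s$ into a projection (or zero), and relations~(ii)--(iii) of that proposition let me commute projections $s_\alpha^*s_\alpha$ and $s_\beta s_\beta^*$ past one another. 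Pushing all projections to the middle and all $s$-letters to the left and $s^*$-letters to the right yields the canonical form $s_\gamma p_A s_\delta^*$, whose degree is $|\gamma|-|\delta|$. Thus the span in the statement is exactly $\ualgshift_n$.

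The main obstacle I anticipate is the normal-form reduction: showing that every monomial reduces to the form $s_\gamma p_A s_\delta^*$ and that such a reduction respects degree. The subtlety is that repeatedly applying $s_a^*s_b=\delta_{a,b}p_{F_a}$ and moving projections around must be shown to terminate and to land in $\TCB$ at each step — in particular, one must check that the relative range and follower-set computations keep the resulting projection index inside the Boolean algebra $\TCB$ (which is guaranteed since $\TCB$ is closed under the relevant operations and $F_a$, $C(\alpha,\beta)\in\TCB$). Once the normal form is available, the containment $\ualgshift_n\subseteq\vecspan_R\{s_\alpha p_A s_\beta^*:|\alpha|-|\beta|=n\}$ follows because each such term has degree $n$, and the reverse containment is immediate since each spanning element is manifestly homogeneous of degree $n$.
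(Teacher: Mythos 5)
Your proposal is correct and follows exactly the standard technique the paper points to: the paper omits the proof as routine, citing \cite[Corollary 2.1.5]{AbrAraMol} and \cite[Proposition 3.8]{MR4583730}, where the grading is obtained precisely by declaring degrees $1$, $-1$, $0$ on $s_a$, $s_a^*$, $p_A$ in the free algebra, checking that the defining relations are homogeneous, and identifying the components via the normal form $s_\gamma p_A s_\delta^*$. Your identification of the normal-form reduction (in particular the need for $p_A s_a = s_a p_{r(A,a)}$, which the paper later derives from the relations of Definition~\ref{gelado} in the proof of Theorem~\ref{melaoflutuante}) as the only nontrivial step is exactly right.
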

\begin{proof}
The proof is routine and we omit it (see \cite[Corollary 2.1.5]{AbrAraMol} or \cite[Proposition 3.8]{MR4583730} for examples of the techniques employed).
\end{proof}

As in \cite{MR3614028}, we define a normal labelled space $\tlspace$ associated with a subshift $\osf$ as follows: the graph $\CE$ is given by $\CE^0=\osf$, $\CE^1=\{(x,a,y)\in\osf\times\alf\times\osf: x=ay\}$, $s(x,a,y)=x$ and $r(x,a,y)=y$. The labelling map is given by $\CL(x,a,y)=a$, and the accommodating family $\TCB$ is the Boolean algebra defined above. Then the triple $\tlspace$ is a normal labelled space \cite[Lemma 5.5]{MR3614028}.

We notice that the above graph has no sinks and for $A\in \TCB$ we have $\CL(A\CE^1)=\{a\in\alf: Z_a\cap A\neq\emptyset\}$. This implies that $A\in\TCB_{reg}$ if and only if $Z_a\cap A\neq\emptyset$ for finitely many $a\in\alf$. In particular, if the alphabet is finite, then all elements of $\TCB$ are regular. Also, since $\osf=\bigsqcup_{a\in\alf}Z_a$, we have that if $A\in\TCB$, then $A=\bigsqcup_{a\in\CL(A\CE^1)}Z_a\cap A$.

For $\alpha,\beta\in\lang$ such that $\beta\neq\eword$ and $a\in\alf$, we have from  \cite[Equation (6)]{MR3614028} that the relative range for sets $C(\alpha,\beta)$ is given by 

\begin{equation}\label{eq:rel.range}
r(C(\alpha,\beta),a)=\begin{cases}
C(a,\eword)\cap C(\alpha,\beta_2\ldots\beta_{|\beta|}) & \text{if } \beta=a\beta_2,\ldots,\beta_{|\beta|}, \\
\emptyset & \text{otherwise}.
\end{cases}    
\end{equation}

Also,
\[r(C(\alpha,\eword),a)=\begin{cases}
C(\alpha a,\eword) & \text{if }\alpha a\in \lang, \\
\emptyset & \text{if }\alpha a\notin \lang.
\end{cases}\]
More generally, \[r(A,\alpha)=\{x\in\osf: \alpha x\in A\}.\] This last equality implies that $r(\alpha)=r(\osf,\alpha)=F_{\alpha}=C(\alpha,\eword)$.
Furthermore, the following holds. 

\begin{lemma}\label{aguaquecida}
Let $\osf$ be a subshift, $A_i\in \TCB$, $i=1,2$, and $a\in \alf$. Then,
\begin{enumerate}[(a)]
 \item $r(A_1,a)\cap r(A_2,a)=r(A_1\cap A_2,a)$;
 \item $r(A_1,a)\cup r(A_2,a)=r(A_1\cup A_2,a)$.
\end{enumerate}

\end{lemma}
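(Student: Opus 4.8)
The plan is to work directly from the concrete description of the relative range established earlier in the excerpt, namely the set-theoretic formula
\[
r(A,\alpha)=\{x\in\osf:\alpha x\in A\}.
\]
Specialising to $\alpha=a\in\alf$, this reads $r(A,a)=\{x\in\osf:ax\in A\}$, and both parts of the lemma reduce to an elementary membership argument. I expect no genuine obstacle here; the content is simply that the map $x\mapsto ax$ interacts well with the Boolean operations on $\TCB$, and the main point to be careful about is that the sets $r(A_1,a)$, $r(A_2,a)$ and their combinations all land back inside $\TCB$, which is guaranteed by the normal labelled space axioms already verified for $\tlspace$.

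For part (a), I would prove the two inclusions of the set equality via a single chain of equivalences. Fix $x\in\osf$. Then $x\in r(A_1\cap A_2,a)$ means $ax\in A_1\cap A_2$, which holds if and only if $ax\in A_1$ and $ax\in A_2$, i.e. if and only if $x\in r(A_1,a)$ and $x\in r(A_2,a)$, that is, $x\in r(A_1,a)\cap r(A_2,a)$. This is exactly the second axiom in the definition of a normal labelled space (stated there as $r(A\cap B,\alpha)=r(A,\alpha)\cap r(B,\alpha)$), so for part (a) one may either cite that axiom directly for $\tlspace$ or give the one-line membership proof above.

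For part (b), I would run the analogous chain of equivalences: $x\in r(A_1\cup A_2,a)$ means $ax\in A_1\cup A_2$, equivalently $ax\in A_1$ or $ax\in A_2$, equivalently $x\in r(A_1,a)$ or $x\in r(A_2,a)$, i.e. $x\in r(A_1,a)\cup r(A_2,a)$. The only mild subtlety is that compatibility with unions is \emph{not} part of the normal labelled space axioms (only compatibility with intersections is assumed), so unlike part (a) this cannot simply be quoted and must be derived from the explicit formula $r(A,a)=\{x:ax\in A\}$. Since that formula makes $r(-,a)$ literally the preimage under $x\mapsto ax$, and preimages commute with all Boolean operations, the union statement is immediate. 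I would therefore present both parts together, emphasising that everything follows from viewing $r(-,a)$ as a preimage map, which is the cleanest way to package the argument.
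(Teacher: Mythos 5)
Your argument is correct and is exactly the intended one: the paper itself omits the proof as ``straightforward,'' and the explicit formula $r(A,\alpha)=\{x\in\osf:\alpha x\in A\}$ stated just before the lemma makes $r(-,a)$ a preimage map, so both parts follow immediately as you describe.
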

\begin{proof}
The proof is straightforward.
\end{proof}

\begin{theorem}\label{melaoflutuante}
Let $\osf$ be a subshift and $\tlspace$ be the labelled space defined above. Then, $\ualgshift\cong L_R\tlspace$ as $\zn$-graded $R$-algebras. 
\end{theorem}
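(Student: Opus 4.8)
The plan is to establish the isomorphism by invoking the universal properties of both algebras. Since $\ualgshift$ is defined as a universal unital $R$-algebra with generators $\{p_A\}_{A\in\TCB}$, $\{s_a,s_a^*\}_{a\in\alf}$ and $L_R\tlspace$ is defined (Definition \ref{def:LPA}) as a universal $R$-algebra with generators $\{p_A\}_{A\in\TCB}$, $\{s_a,s_a^*\}_{a\in\alf}$, I would construct a homomorphism in each direction by checking that the images of the generators satisfy the defining relations of the target algebra, and then verify that the two composites are the identity on generators. Because both generating sets carry the same names, I will denote the generators of $L_R\tlspace$ by $q_A, t_a, t_a^*$ to avoid confusion.

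\emph{Forward map $\Phi:\ualgshift\to L_R\tlspace$.} First I would define $\Phi$ on generators by $\Phi(p_A)=q_A$, $\Phi(s_a)=t_a$, $\Phi(s_a^*)=t_a^*$, and check that the images satisfy relations (i)--(iii) of Definition \ref{gelado}. Relation (i) is immediate from Definition \ref{def:LPA}(i) together with the fact that $q_\osf=1$: since the graph $\CE$ has no sinks and $\osf=\bigsqcup_{a\in\alf}Z_a$, unitality must be checked, using that $p_\osf$ should map to the identity (for finite alphabets $\osf\in\TCB_{reg}$ and relation (v) gives $q_\osf=\sum_a q_{Z_a}$; for infinite alphabets one needs $\osf\in\TCB$ to act as the unit, which requires care). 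Relation (ii) of Definition \ref{gelado} is exactly Definition \ref{def:LPA}(iv). The main content is relation (iii), $s_\beta s_\alpha^* s_\alpha s_\beta^*=p_{C(\alpha,\beta)}$; I would prove that $t_\beta t_\alpha^* t_\alpha t_\beta^*=q_{C(\alpha,\beta)}$ in $L_R\tlspace$ by induction on $|\alpha|+|\beta|$, using Definition \ref{def:LPA}(ii)--(iii) to commute the $q$'s past the $t$'s via the relative range formula \eqref{eq:rel.range}, and the identity $r(A,\alpha)=\{x:\alpha x\in A\}$ to identify the resulting set with $C(\alpha,\beta)$.

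\emph{Reverse map $\Psi:L_R\tlspace\to\ualgshift$.} Conversely I would set $\Psi(q_A)=p_A$, $\Psi(t_a)=s_a$, $\Psi(t_a^*)=s_a^*$, and verify Definition \ref{def:LPA}(i)--(v). Items (i), (iv) are immediate; item (iii), $t_a^*t_a=q_{r(a)}$ and $t_b^*t_a=0$ for $b\neq a$, follows from Remark \ref{sera_frio?} (which gives $s_a^*s_a=p_{F_a}=p_{C(a,\eword)}=p_{r(a)}$) and Proposition \ref{lemma.algebra.unital}(i). Item (ii), the commutation relations $q_A t_a=t_a q_{r(A,a)}$, I would derive from Definition \ref{gelado}(iii) by computing $p_A s_a$ and moving $p_A$ through using the established formula $s_\beta s_\alpha^* s_\alpha s_\beta^*=p_{C(\alpha,\beta)}$ and \eqref{eq:rel.range}. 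The Cuntz-type relation (v), $p_A=\sum_{a\in\CL(A\CE^1)}s_a p_{r(A,a)} s_a^*$ for $A\in\TCB_{reg}$, is the most delicate: using $\osf=\bigsqcup_{a\in\alf}Z_a$ and finiteness of $\CL(A\CE^1)$ for regular $A$, I would write $p_A=\sum_{a\in\CL(A\CE^1)}p_{Z_a\cap A}$ and then show $p_{Z_a\cap A}=s_a p_{r(A,a)}s_a^*$ using $s_a s_a^*=p_{Z_a}$ and the relative range identity.

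The main obstacle I anticipate is relation (iii) of Definition \ref{gelado} under $\Phi$ and the Cuntz relation (v) under $\Psi$: both require careful bookkeeping with the relative range maps and the decomposition $\osf=\bigsqcup_{a\in\alf}Z_a$, and both must be handled uniformly for infinite alphabets, where $\osf\in\TCB$ need not be regular and $p_\osf$ must be verified to act as the unit by an argument not relying on relation (v). Once both $\Phi$ and $\Psi$ are shown to be well-defined homomorphisms, checking $\Phi\circ\Psi=\mathrm{id}$ and $\Psi\circ\Phi=\mathrm{id}$ is immediate on generators, and since the proposed gradings agree on generators ($s_\alpha p_A s_\beta^*\mapsto t_\alpha q_A t_\beta^*$ has degree $|\alpha|-|\beta|$ in both, matching Proposition \ref{prop:grading} and \cite[Proposition 3.8]{MR4583730}), the isomorphism is automatically $\zn$-graded.
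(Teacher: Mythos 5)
Your proposal is correct and follows essentially the same route as the paper: two homomorphisms built from the universal properties, with relation (iii) of Definition~\ref{gelado} verified by induction on word length in one direction, and relations (ii) and (v) of Definition~\ref{def:LPA} verified via the relative-range formula \eqref{eq:rel.range} and the decomposition $A=\bigsqcup_{a}A\cap Z_a$ in the other. The one point you flag as requiring care, namely that $q_{\osf}=1$ in $L_R\tlspace$ for infinite alphabets, is settled in the paper by citing \cite[Corollary~6.5]{MR4583730}, using that $\osf$ is the top element of $\TCB$ and $r(\osf,a)=r(a)$ for all $a\in\alf$.
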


\begin{proof}
We use $t_a$ and $q_A$ for the generators of $L_R\tlspace$. Let us first a build a map from $L_R\tlspace$ to $\ualgshift$ by showing that the elements $\{p_A: A\in\TCB\}$ and $\{s_a,s_a^*: a\in\alf\}$ satisfy the relations defining $L_R\tlspace$.

Items (i) and (iv) of the definition of a Leavitt labelled path algebra are immediate.

To check Item~(iii), notice that we have  $s_a^*s_a=p_{F_a}=p_{r(a)}$. If $a\neq b$ then
\[s_b^*s_a=s_b^*s_bs_b^*s_as_a^*s_a=s_b^*p_{Z_b}p_{Z_a}s_a=s_b^*p_{Z_b\cap Z_a}s_a=s_b^*p_{\emptyset}s_a=0.\]

Next, we consider Item~(ii). We prove it first for elements of $\TCB$ of the form $C(\alpha,\beta)$. Suppose first that $\beta=\eword$. Then,
\[    p_{C(\alpha,\eword)}s_a =s_{\alpha}^*s_\alpha s_a=s_{\alpha}^*s_\alpha s_as_a^*s_a=s_as_a^*s_{\alpha}^*s_\alpha s_a=s_ap_{r(C(\alpha,\eword),a)},
\]
where the last equality holds if $\alpha a\in\lang$ by definition, and if $\alpha a\notin\lang$ because in this case $s_\alpha s_a=0$ and $r(C(\alpha,\eword),a)=\emptyset$. Analogously $s_a^*p_{C(\alpha,\eword)}=p_{r(C(\alpha,\eword),a)}s_a^*$. Suppose now that $\beta\neq\eword$. If $a\neq\beta_1$ then, using (iii) and the fact that $r(C(\alpha,\beta),a)=\emptyset$, we have that
\[p_{C(\alpha,\beta)}s_a=s_{\beta}s_{\alpha}^*s_\alpha s_{\beta}^* s_a=0=s_ap_{r(C(\alpha,\beta),a)}.\]
If $a=\beta_1$, then using (iii), we have that
\begin{align*}
    p_{C(\alpha,\beta)}s_a &=s_{\beta}s_{\alpha}^*s_\alpha s_{\beta}^* s_a \\
    &=s_as_{\beta_2}\cdots s_{\beta_n}s_{\alpha}^*s_\alpha s_{\beta_n}^*\cdots s_{\beta_2}^*s_a^*s_a\\
    &=s_ap_{C(\alpha,\beta_2\ldots\beta_n)}p_{C(a,\eword)} \\
    &=s_ap_{r(C(\alpha,\beta),a)}.
\end{align*}

Hence, Item~(ii) is proved for elements of $\TCB$ of the form $C(\alpha,\beta)$. Next, we argue that it is also valid for any $A\in \TCB$. If $A$ is of the form $A=\cap_{i=1}^{n}C(\alpha_i,\beta_i)$ the result follows from Item (i) in Definition~\ref{gelado}, the fact that (ii) is proved for elements of the form $C(\alpha,\beta)$ and from Item~(a) of Lemma~\ref{aguaquecida}. For elements of $\TCB$ that involve only $C(\alpha,\beta)^c$ we notice that $p_{C(\alpha,\beta)^c}s_a= (1-p_{C(\alpha,\beta)})s_a = s_a-s_ap_{r(C(\alpha,\beta),a)} = s_a p_{r(C(\alpha,\beta),a)^c}$. Finally, the intersections of terms of the form $C(\alpha,\beta)$ and $C(\alpha,\beta)^c$ are dealt with in a similar manner, and their finite unions are dealt with by applying Item (i) in Definition~\ref{gelado}, the fact that (ii) is proved for elements of the form $C(\alpha,\beta)$ and Item~(b) of Lemma~\ref{aguaquecida}.

For (v), we use (ii) and the fact that if $A\in\TCB_{reg}$ then $A$ can be decomposed as a finite disjoint union $A=\bigsqcup_{a\in\CL(A\CE^1)}A\cap Z_a$, and hence
\[p_A=\sum_{a\in\CL(A\CE^1)}p_{A\cap Z_a}=\sum_{a\in\CL(A\CE^1)}p_Ap_{Z_a}=\sum_{a\in\CL(A\CE^1)}p_As_as_a^*=\sum_{a\in\CL(A\CE^1)}s_ap_{r(A,a)}s_a^*.\]
This completes the proof that the generating sets of $\ualgshift$ satisfy the defining relations of $L_R\tlspace$. Thus, by the universal property of $L_R\tlspace$, there is a homomorphism from $L_R\tlspace$ into  $\ualgshift$ mapping $q_A$ to $p_A$, $t_a$ to $s_a$ and $t^*_a$ to $s^*_a$.

Now, let us build a map from $\ualgshift$ to $L_R\tlspace$ by showing that the elements $\{q_A: A\in\TCB\}$ and $\{t_a,t_a^*: a\in\alf\}$ satisfies the relations defining $\ualgshift$.

For (i), it remains to show that $t_{\osf}=1$. This follows from \cite[Corollary 6.5]{MR4583730} since $\osf$ is the top element of $\TCB$ and $r(\osf,a)=r(a)$ for all $a\in\alf$. Item (ii) is immediate.

For (iii), we fix $\alpha\in\lang$ and prove that $t_{\beta}t_{\alpha}^*t_{\alpha}t_{\beta}^*=q_{C(\alpha,\beta)}$ using induction on $|\beta|$. If $|\beta|=0$, then $\beta=\eword$ and $t_{\alpha}^*t_{\alpha}=q_{r(\alpha)}=q_{C(\alpha,\eword)}$. Fix $n\in\nn$ and suppose that $t_{\beta'}t_{\alpha}^*t_{\alpha}t_{\beta'}^*=q_{C(\alpha,\beta')}$ for all $\beta'\in\lang$ such that $|\beta'|=n$. Let $\beta\in\lang$ be such that $|\beta|=n+1$. If $C(\alpha,\beta)\neq\emptyset$, by Equation~\eqref{eq:rel.range}, we have that $\CL(C(\alpha,\beta)\CE^1)=\{\beta_1\}$. It follows that
\begin{align*}
    q_{C(\alpha,\beta)} &= t_{\beta_1}q_{r(C(\alpha,\beta),\beta_1)}t_{\beta_1}^*\\
    &=t_{\beta_1}q_{C(\beta_1,\eword)}q_{C(\alpha,\beta_2\ldots\beta_{n+1})}t_{\beta_1}^* \\
    &=t_{\beta_1}t_{\beta_1}^*t_{\beta_1}t_{\beta_2}\cdots t_{\beta_{n+1}}t_{\alpha}^*t_\alpha t_{\beta_{n+1}}^*\cdots t_{\beta_2}^*t_{\beta_1}^*\\
    &=t_{\beta}t_{\alpha}^*t_{\alpha}t_{\beta}^*,
\end{align*}
where in the third equality, we used the induction hypothesis. If $C(\alpha,\beta)=\emptyset$, then $F_\alpha\cap F_\beta=\emptyset$ and
\[t_{\beta}t_{\alpha}^*t_{\alpha}t_{\beta}^* =t_{\beta}t_{\beta}^*t_{\beta}t_{\alpha}^*t_{\alpha}t_{\beta}^* = t_\beta q_{F_\beta}q_{F_\alpha}t_{\beta}^*=t_\beta q_{F_\beta \cap F_\alpha}t_{\beta}^*=0=q_{C(\alpha,\beta)}.\]
By the universal property of $\ualgshift$ there is a homomorphism from $\ualgshift$ into $L_R\tlspace$ that maps $p_A$ to $q_A$, $s_a$ to $t_a$ and $s_a^*$ to $t_a^*$.

It is clear that the maps obtained above are inverses of each other and preserve the grading.
\end{proof}

\begin{remark}
When convenient, we use the above labelled space and the identification in Theorem~\ref{melaoflutuante} without further mention. In particular, we use the relations defining $L_R\lspace$ applied to elements of $\ualgshift$.
\end{remark}

\begin{remark}
    In an upcoming paper, the authors will consider the C*-algebraic setting as in Remark~~\ref{muito_frio} and connect it to the C*-algebra of the labelled space $\tlspace$ as defined by Bates and Pask in \cite{BP1}.
\end{remark}

\begin{corollary}[Graded Uniqueness Theorem] \label{GUTshift}
Let $\osf$ be a subshift. If $S$ is a $\mathbb{Z}-$graded ring and $\eta: \ualgshift\rightarrow S$ is a graded ring homomorphism with $\eta(rp_A)\neq 0$ for all non-empty $A\in \TCB$ and all non-zero $r \in R$, then $\eta$ is injective.
\end{corollary}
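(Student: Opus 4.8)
The plan is to deduce the result from the graded isomorphism $\ualgshift \cong L_R\tlspace$ established in Theorem~\ref{melaoflutuante}, combined with the Graded Uniqueness Theorem already available for Leavitt labelled path algebras in \cite{MR4583730}. Write $\Psi : L_R\tlspace \to \ualgshift$ for the graded $R$-algebra isomorphism of Theorem~\ref{melaoflutuante}, which sends the generator $q_A$ to $p_A$ for each $A \in \TCB$. Since $\eta$ is a graded homomorphism and $\Psi$ is a graded isomorphism, the composite $\eta \circ \Psi : L_R\tlspace \to S$ is again a graded ring homomorphism, and $\eta$ is injective if and only if $\eta \circ \Psi$ is.

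First I would verify that $\eta \circ \Psi$ satisfies the hypothesis of the Graded Uniqueness Theorem for Leavitt labelled path algebras. For every non-empty $A \in \TCB$ and every non-zero $r \in R$, the fact that $\Psi$ is an $R$-algebra homomorphism with $\Psi(q_A) = p_A$ gives $(\eta\circ\Psi)(r q_A) = \eta(r p_A)$, which is non-zero by assumption. As $\TCB$ is precisely the accommodating Boolean algebra of the normal labelled space $\tlspace$, this is exactly the non-vanishing condition on the idempotents $q_A$ required by the labelled-path version of the theorem. Invoking that result yields that $\eta\circ\Psi$ is injective, whence $\eta = (\eta\circ\Psi)\circ\Psi^{-1}$ is injective as well.

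The only point that genuinely requires care is matching the hypotheses exactly: I must confirm that the Graded Uniqueness Theorem in \cite{MR4583730} demands non-vanishing of the images $r p_A$ for \emph{all} non-empty $A$ in the full Boolean algebra $\TCB$, rather than only for the regular sets $\TCB_{reg}$ or for some generating subfamily. Since the labelled space $\tlspace$ has no sinks and the idempotents $\{p_A : A\in\TCB\}$ together with $\{s_a,s_a^*\}$ generate the algebra (Proposition~\ref{lemma.algebra.unital}(v)), the condition stated here is the natural and strongest one and coincides with the hypothesis used in the reference. I expect this bookkeeping, rather than any substantive argument, to be the main obstacle; the real content is carried entirely by Theorem~\ref{melaoflutuante} and the established theory of \cite{MR4583730}.
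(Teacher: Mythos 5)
Your proposal is correct and is exactly the paper's argument: the paper also deduces the corollary by transporting $\eta$ through the graded isomorphism of Theorem~\ref{melaoflutuante} and applying the Graded Uniqueness Theorem for Leavitt labelled path algebras from \cite{MR4583730}. Your extra care in matching the non-vanishing hypothesis on $rp_A$ for all non-empty $A\in\TCB$ is sound and consistent with how the cited result is used.
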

\begin{proof}
The result follows from the isomorphism above and \cite[Corollay~5.5]{MR4583730}.
\end{proof}

\begin{corollary}\label{cachorroquente}
Let $\osf$ be subshift and $\ualgshift$ its unital subshift algebra. If $r\in R\setminus\{0\}$ and $A\in\TCB\setminus\{\emptyset\}$, then $rp_A\neq 0$.
\end{corollary}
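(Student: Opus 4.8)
The plan is to realise $\ualgshift$ faithfully enough on a concrete module: I will build a single $R$-algebra homomorphism $\pi$ out of $\ualgshift$ for which $\pi(rp_A)\neq 0$ whenever $r\neq 0$ and $A\neq\emptyset$. Since $\pi$ sends $0$ to $0$, the nonvanishing of $\pi(rp_A)$ immediately forces $rp_A\neq 0$ in $\ualgshift$, which is exactly the claim.

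Concretely, let $M$ be the free $R$-module with basis $\{e_x:x\in\osf\}$ and define $R$-linear endomorphisms of $M$ on basis elements by
\[
S_a(e_x)=\begin{cases} e_{ax}, & ax\in\osf,\\ 0, & \text{otherwise},\end{cases}
\qquad
S_a^*(e_x)=\begin{cases} e_{\sigma(x)}, & x_0=a,\\ 0, & \text{otherwise},\end{cases}
\]
and, for $A\in\TCB$, set $P_A(e_x)=e_x$ if $x\in A$ and $P_A(e_x)=0$ otherwise (so $P_A$ is multiplication by the indicator of $A$). These are well defined because $\osf$ is shift invariant, so $\sigma(x)\in\osf$ whenever $x\in\osf$. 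I then need to check that $\{P_A\}_{A\in\TCB}$ and $\{S_a,S_a^*\}_{a\in\alf}$ satisfy the defining relations of Definition~\ref{gelado} inside the unital $R$-algebra $\operatorname{End}_R(M)$.

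Relation (i) is just inclusion–exclusion for indicator functions together with $P_\osf=\operatorname{id}$ and $P_\emptyset=0$, and relation (ii) is a direct check on basis vectors. The only computation of substance is relation (iii). Reading $s_\beta s_\alpha^* s_\alpha s_\beta^*$ from right to left on a basis vector $e_x$: the operator $S_\beta^*$ produces $e_z$ precisely when $x=\beta z$ (and then $z=\sigma^{|\beta|}(x)\in\osf$), next $S_\alpha$ produces $e_{\alpha z}$ precisely when $\alpha z\in\osf$, then $S_\alpha^*$ returns $e_z$, and finally $S_\beta$ returns $e_{\beta z}=e_x$. Hence the composite fixes $e_x$ exactly when $x=\beta z\in\osf$ with $\alpha z\in\osf$, i.e.\ when $x\in C(\alpha,\beta)$, and annihilates $e_x$ otherwise; this is precisely $P_{C(\alpha,\beta)}$, as required. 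I expect this bookkeeping --- arranging $S_a,S_a^*$ so that relation (iii) reproduces the sets $C(\alpha,\beta)$ --- to be the only delicate point; everything else is routine.

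With the relations verified, the universal property of $\ualgshift$ supplies a homomorphism $\pi\colon\ualgshift\to\operatorname{End}_R(M)$ with $\pi(p_A)=P_A$, $\pi(s_a)=S_a$ and $\pi(s_a^*)=S_a^*$. Now fix $r\in R\setminus\{0\}$ and $A\in\TCB\setminus\{\emptyset\}$, and choose any $x\in A$. Then $\pi(rp_A)(e_x)=r\,P_A(e_x)=r e_x\neq 0$ in the free module $M$, so $\pi(rp_A)\neq 0$ and therefore $rp_A\neq 0$. As an alternative route, one could instead transport the question through the isomorphism $\ualgshift\cong L_R\tlspace$ of Theorem~\ref{melaoflutuante} and quote the corresponding faithfulness statement for Leavitt labelled path algebras from \cite{MR4583730}; the self-contained representation above avoids this dependence.
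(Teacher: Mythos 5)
Your proof is correct, but it takes a genuinely different route from the paper. The paper's proof is a one-liner: it transports $rp_A$ through the isomorphism $\ualgshift\cong L_R\tlspace$ of Theorem~\ref{melaoflutuante} and quotes the corresponding nonvanishing result for Leavitt labelled path algebras (\cite[Lemma~4.12]{MR4583730}) --- i.e.\ exactly the ``alternative route'' you mention in your last sentence. Your main argument instead builds an explicit representation of $\ualgshift$ on the free $R$-module with basis $\{e_x:x\in\osf\}$, and the verification goes through: relations (i) and (ii) are immediate, and for (iii) the only point needing care --- that the intermediate truncations $\alpha_{i,|\alpha|}z$ lie in $\osf$ so that $S_\alpha(e_z)=e_{\alpha z}$ precisely when $\alpha z\in\osf$ --- is exactly covered by shift invariance of $\osf$, which you invoke. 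Note also that Definition~\ref{gelado} imposes no compatibility relation between $p_A$ and $s_a$ beyond (iii), so letting $P_A$ act diagonally by the indicator of $A$ is all that is required, and $re_x\neq 0$ in a free module settles the conclusion. What each approach buys: the paper's proof is essentially free once Theorem~\ref{melaoflutuante} is established, but rests on an external citation; yours is self-contained, elementary, and produces a concrete representation that is faithful on the diagonal, which could be reused elsewhere (e.g.\ it is close in spirit to the evaluation maps $\phi^y$ used in the proof of Proposition~\ref{algebra OTW}).
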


\begin{proof}
Notice that the isomorphism of Theorem~\ref{melaoflutuante} takes $rp_A$ in $\ualgshift$ to $rp_A$ in $L_R\tlspace$, which is non-zero by \cite[Lemma~4.12]{MR4583730}.
\end{proof}

The following proposition relates the unital subshift algebra $\ualgshift$ of a subshift $\osf_F$ with the OTW-subshift $\osf^{OTW}_F$.

\begin{proposition}\label{algebra OTW}
Let $\osf\subseteq \alf^\N$ be the subshift given by a set of forbidden words $F$, and let $\osf^{OTW}_F$ be the corresponding OTW-subshift. Suppose that $R$ is also an indecomposable ring. Then,  $\vecspan_R\{s_\alpha s_\alpha^*:\alpha \in \lang\}\cong\Lc(\osf^{OTW}_F,R)$.
\end{proposition}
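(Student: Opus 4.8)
The plan is to realise $\CD:=\vecspan_R\{s_\alpha s_\alpha^*:\alpha\in\lang\}$ as an algebra of compactly supported locally constant functions via Theorem~\ref{IsoLc}, and then to identify the relevant Stone dual with $\osf^{OTW}_F$. By Remark~\ref{sera_frio?} we have $s_\alpha s_\alpha^*=p_{Z_\alpha}$, so $\CD=\vecspan_R\{p_{Z_\alpha}:\alpha\in\lang\}$. Let $\CB_Z\scj\TCB$ denote the Boolean subalgebra generated by the cylinder sets $\{Z_\alpha:\alpha\in\lang\}$ (note $\osf=Z_{\eword}\in\CB_Z$ is its top element). Using the relations in Definition~\ref{gelado}(i) (inclusion--exclusion for $p_{A\cup B}$, the identity $p_{A\cap B}=p_Ap_B$, and $p_{A\setminus B}=p_A-p_{A\cap B}$) together with the fact that every element of $\CB_Z$ is a finite Boolean combination of cylinders, one checks that $p_A\in\CD$ for all $A\in\CB_Z$, so that $\CD=\vecspan_R\{p_A:A\in\CB_Z\}$. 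In particular $\CD$ is a commutative $R$-algebra generated by the idempotents $\{p_A:A\in\CB_Z\}$, making it a candidate for Theorem~\ref{IsoLc}.

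Next I would pin down $E(\CD)$ and verify the non-degeneracy hypothesis of Theorem~\ref{IsoLc} in one stroke. Given $e\in E(\CD)$, write $e=\sum_i r_ip_{A_i}$ with $A_i\in\CB_Z$; since $A_1,\dots,A_n$ generate a \emph{finite} Boolean subalgebra, we may re-expand $e=\sum_j c_jp_{B_j}$ over the finitely many pairwise disjoint nonempty atoms $B_j$. For disjoint nonempty sets $p_{B_j}p_{B_k}=\delta_{jk}p_{B_j}$, and Corollary~\ref{cachorroquente} shows that $\{p_{B_j}\}_j$ is $R$-linearly independent; comparing $e^2=e$ then forces $c_j^2=c_j$, so each $c_j$ is an idempotent of $R$. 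As $R$ is indecomposable, $c_j\in\{0,1\}$, whence $e=p_A$ with $A=\bigsqcup_{c_j=1}B_j\in\CB_Z$. Thus $E(\CD)=\{p_A:A\in\CB_Z\}$, the map $A\mapsto p_A$ is a Boolean isomorphism $\CB_Z\to E(\CD)$ (a homomorphism by Definition~\ref{gelado}(i), injective by Corollary~\ref{cachorroquente}), and $re=rp_A=0$ with $r\neq0$ forces $A=\emptyset$, i.e. $e=0$. Theorem~\ref{IsoLc} then yields $\CD\cong\Lc(\widehat{E(\CD)},R)$.

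It remains to produce a homeomorphism $\widehat{E(\CD)}\cong\osf^{OTW}_F$. Since the generalised cylinders form a basis of compact-open sets, $\osf^{OTW}_F$ is a Stone space, so by Theorem~\ref{thm:stone} it is homeomorphic to $\widehat{\CK(\osf^{OTW}_F)}$; combined with $E(\CD)\cong\CB_Z$ and the functoriality of Stone duality, it suffices to exhibit a Boolean isomorphism $\CB_Z\cong\CK(\osf^{OTW}_F)$. The natural candidate is the restriction map $\theta\colon\CK(\osf^{OTW}_F)\to\CB_Z$, $\theta(A)=A\cap\osf^{inf}$, which is a Boolean homomorphism. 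It lands in $\CB_Z$ and is surjective because Lemma~\ref{l:CFZ}(i) gives $\CZ(\alpha,F')\cap\osf^{inf}=Z_\alpha\setminus\bigcup_{a\in F'}Z_{\alpha a}$, and in particular $\theta(\CZ(\alpha))=Z_\alpha$, while the generalised cylinders generate $\CK(\osf^{OTW}_F)$ and the $Z_\alpha$ generate $\CB_Z$.

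The main obstacle is the injectivity of $\theta$, which amounts to showing that $\osf^{inf}$ is dense in $\osf^{OTW}_F$, so that any open set disjoint from $\osf^{inf}$ is empty. Concretely I would prove that every nonempty basic open set $\CZ(\alpha,F')$ meets $\osf^{inf}$: if $\CZ(\alpha,F')\cap\osf^{inf}=\emptyset$ then by Lemma~\ref{l:CFZ}(i) the set $\{a\in\alf:Z_{\alpha a}\neq\emptyset\}\scj F'$ is finite, and unwinding the definition of $\osf^{fin}_F$ (a finite word lies in $\osf^{fin}_F$ only if it admits infinitely many one-letter extensions continuing to an infinite sequence) shows that no finite sequence of $\osf^{OTW}_F$ can lie in $\CZ(\alpha,F')$ either, forcing $\CZ(\alpha,F')=\emptyset$. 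Hence $\theta$ is a Boolean isomorphism, Stone duality delivers homeomorphisms $\widehat{E(\CD)}\cong\widehat{\CB_Z}\cong\widehat{\CK(\osf^{OTW}_F)}\cong\osf^{OTW}_F$, and composing the $\Lc$-isomorphism induced by such a homeomorphism $h$ (via $f\mapsto f\circ h$) with Theorem~\ref{IsoLc} gives $\CD\cong\Lc(\osf^{OTW}_F,R)$, as claimed.
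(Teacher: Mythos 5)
Your proposal is correct, and it takes a genuinely different route from the paper's. The paper passes to the character space $\widehat{A}$ of $A=\vecspan_R\{s_\alpha s_\alpha^*\}$ via Proposition~\ref{jamesbond} and then builds an explicit map $\Psi:\widehat{A}\to\osf^{OTW}_F$ by reading off, from each character $\phi$, the longest $\alpha$ with $\phi(s_\alpha s_\alpha^*)=1$; it must then verify by hand that $\Psi$ lands in $\osf^{OTW}_F$, that it is injective, that it is surjective (which requires constructing the characters $\phi^y$ and checking well-definedness and multiplicativity against Corollary~\ref{cachorroquente}), and that it is continuous, finishing with a compactness argument. You instead stay entirely on the Boolean-algebra side: you identify $E(\CD)$ with the subalgebra $\CB_Z$ generated by the cylinders (your atom/idempotent argument for $E(\CD)=\{p_A:A\in\CB_Z\}$ is sound and correctly uses indecomposability of $R$ and Corollary~\ref{cachorroquente}), and you reduce everything to the single Boolean isomorphism $\CK(\osf^{OTW}_F)\cong\CB_Z$ given by restriction to $\osf^{inf}$, whose injectivity is the density of $\osf^{inf}$ in $\osf^{OTW}_F$. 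The combinatorial kernel is the same in both proofs — a finite word lies in $\osf^{fin}_F$ precisely when it admits infinitely many one-letter extensions into $\osf^{inf}_F$, which is the paper's ``$G_\alpha$ is infinite'' step and your density step — but your packaging via Stone duality replaces the pointwise verifications (well-definedness and multiplicativity of $\phi^y$, continuity, compactness) with citations of Theorems~\ref{IsoLc} and \ref{thm:stone}. What the paper's approach buys is an explicit description of the resulting homeomorphism (closely related to the map $\pi$ used later in Section~\ref{s:groupoid picture}); what yours buys is a shorter, more structural argument with fewer ad hoc checks.
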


\begin{proof}
Denote $\vecspan_R\{s_\alpha s_\alpha^*:\alpha \in \lang\}$ by $A$, which is unital because $1=s_{\eword}s_{\eword}^*$. By Theorem~\ref{IsoLc} and Proposition \ref{jamesbond}, $A$ is isomorphic to $\Lc(\hat{A}, R)$, where $\hat{A}$ denotes the character space of $A$. Note that $\hat{A}$ is compact because $A$ is unital. We will define a map $\Psi$ from $\hat{A}$ to $\osf^{OTW}_F$, but before we prove some properties of elements $\phi$ in $\hat A$. Because $R$ is indecomposable, $\phi(s_\alpha s_\alpha^*)\in \{0,1\}$, for each $\alpha \in \lang$, since each $s_\alpha s_\alpha^*$ is a idempotent. Moreover, we have the following.

{\bf Claim:} Let $\phi\in \hat A$. If $\phi(s_\alpha s_\alpha^*)=1 $ then $\phi(s_\beta s_\beta^*)=0$ for every $\beta\neq \alpha$ such that $|\beta|=|\alpha|$. Moreover, for every initial segment $\gamma $ of $\alpha$ we have that $\phi( s_\gamma s_\gamma^*)=1$.

To prove this claim, suppose that $\phi(s_\alpha s_\alpha^*)=1 $. Then, $\phi(s_\beta s_\beta^*)= \phi(s_\alpha s_\alpha^*)\phi( s_\beta s_\beta^*)=\phi(s_\alpha s_\alpha^* s_\beta s_\beta^*) =\phi(0)=0$. Next, let $\gamma$ be an initial segment of $\alpha$. Then, $1= \phi(s_\alpha s_\alpha^*)=\phi(s_\gamma s_\gamma^* s_\alpha s_\alpha^*) = \phi(s_\gamma s_\gamma^*)\phi( s_\alpha s_\alpha^*)$ and the claim is proved.

We now proceed to define $\Psi$. Let $\phi\in \hat{A}$ and define $N$ as the supremum of all natural numbers $n$ for which there exists $\alpha\in \lang$ such that $|\alpha|=n$ and $\phi(s_\alpha s_\alpha^*)=1$. Notice that $N$ is well-defined (and can be equal to $\infty$). Indeed, $\phi(s_\omega s_\omega^*)=1$, since otherwise, if $\phi(s_\omega s_\omega^*)=0$, then $\phi=0$. 

Suppose that $N=\infty$. In this case, we define $\Psi(\phi)=y_1y_2\ldots \in \alf^{\N}$, where for every $i\in \N$ we have that $y_1\ldots y_i = \alpha_1\ldots \alpha_i$, and $\alpha_1\ldots \alpha_i$ is the unique element of length $i$ in $\lang$ such that $\phi(s_{\alpha_1\ldots \alpha_i}s_{\alpha_1\ldots \alpha_i}^*)=1$. Using the claim, it is not hard to check that $\Psi(\phi)\in \osf^{OTW}_F$.

Next, suppose that $N<\infty$. Let $\alpha\in \lang$ be such that $|\alpha|=N$ and $\phi(s_\alpha s_\alpha^*)=1$. In this case, we define $\Psi(\phi)=\alpha$ (with the convention that $\omega=\vec{0}$, to include the case $N=0$). We have to check that $\alpha\in \osf^{fin}_F$. 

Let \[G_\alpha=\{b\in \alf: C(\alpha b, \eword) = F_{\alpha b}\neq \emptyset\}.\]
If $G_\alpha$ is finite then, since $C(\eword,\alpha)= \bigsqcup_{b\in G_\alpha} C(\eword, \alpha b)$, we have that $p_{C(\eword,\alpha)} = \sum_{b\in G_\alpha} p_{C(\eword, \alpha b)}$. So, $\phi(s_\alpha s_\alpha^*)= \sum_{b\in G_\alpha} \phi(s_{\alpha b}s_{\alpha b}^*)$ and hence there is one, and only one, $b\in G_\alpha$ such that $\phi(s_{a\alpha b}s_{\alpha b}^*) =1$. But this contradicts the maximality of $N$. Therefore $G_\alpha$ must be infinite and this implies that $\alpha\in \osf^{fin}_F$ as desired.

It remains to prove that $\Psi$ is a homeomorphism. 

If $\phi_1\neq \phi_2$ are characters, then they must differ on some generator $s_\alpha s_\alpha^*$, where $\alpha=\alpha_1\ldots \alpha_{|\alpha|}$. Without loss of generality, suppose that $\phi_1(s_\alpha s_\alpha^*)=1$. Then, for every initial segment $\beta$ of $\alpha$, we have that $\phi_1(s_\beta s_\beta^*)=1$. This implies that $\Psi(\phi_1)_i = \alpha_i$ for $1\leq i \leq |\alpha|$. On the other hand, since $\phi_2(s_\alpha s_{\alpha}^*)=0$ we have that either $\Psi(\phi_2)$ has length less than $|\alpha|$ or $\Psi(\phi_2)_{|\alpha|} \neq \alpha_{|\alpha|}$. So, $\Psi$ is injective.

To see that $\Psi$ is surjective, let $y\in \osf^{OTW}_F$. For each $\alpha \in \lang$ define $\phi^y(s_\alpha s_\alpha^*)$ as $1$ if $\alpha$ is an initial segment of $y$ and zero otherwise (again we are identifying $\omega$ with $\vec{0}$). Extend $\phi^y$ linearly to $A$. Clearly $\Psi(\phi^y)= y$, but it is necessary to check that $\phi^y$ is well-defined and multiplicative.

Suppose that $\sum_{i=1}^n \lambda_i s_{\alpha_i}s_{\alpha_i}^* = 0$. We have to prove that $\phi^y(\sum_{i=1}^n \lambda_i s_{\alpha_i}s_{\alpha_i}^*) = 0$. 

By the definition of $\phi^y$, we may assume without loss of generality that each $\alpha_i$ is an initial segment of $y$ and $\alpha_n$ has the greatest length among all of them. In this case, notice that $0=\sum_{i=1}^n \lambda_i s_{\alpha_i}s_{\alpha_i}^* s_{\alpha_n}s_{\alpha_n}^* = \left(\sum_{i=1}^n \lambda_i\right)s_{\alpha_n}s_{\alpha_n}^*$. By Corollary~\ref{cachorroquente}, we obtain that $ \sum_{i=1}^n \lambda_i=0$, and hence 
\[\phi^y\left(\sum_{i=1}^n \lambda_i s_{\alpha_i}s_{\alpha_i}^*\right) = \sum_{i=1}^n\lambda_i=0,\] as desired.

By linearity, it is enough to check the multiplicativity of $\phi^y$ on elements of the form $s_\alpha s_\alpha^*$, $\alpha\in \lang$. Notice that, for $\alpha, \beta \in \lang$, we have that \[s_\alpha s_\alpha^* s_\beta s_\beta^* = p_{C(\omega,\alpha)\cap C(\omega,\beta)}= p_{Z_\alpha \cap Z_\beta}.\] Suppose, without loss of generality, that $|\alpha|\geq |\beta|$. If the product $s_\alpha s_\alpha^* s_\beta s_\beta^*$ is zero, this means that $\beta$ is not an initial segment of $\alpha$. Hence, either $\alpha$ or $\beta$ (or both) is not an initial segment of $y$, and so   $\phi^y(s_\alpha s_\alpha^*)=0$ or $\phi^y(s_\beta s_\beta^*)=0$.
If $s_\alpha s_\alpha^* s_\beta s_\beta^*\neq 0$, then $\beta$ is an initial segment of $\alpha$ and $s_\alpha s_\alpha^* s_\beta s_\beta^*=s_\alpha s_\alpha^*$. If $\alpha$ is an initial segment of $y$, then $\beta$ is also an initial segment of $y$ and $\phi^y(s_\alpha s_\alpha^* s_\beta s_\beta^*)=\phi^y(s_\alpha s_\alpha^*)=1=\phi^y(s_\alpha s_\alpha^*)\phi^y(s_\beta s_\beta^*)$. If $\alpha$ is not an initial segment of $y$, then $\phi^y(s_\alpha s_\alpha^* s_\beta s_\beta^*)=\phi^y(s_\alpha s_\alpha^*)=0=\phi^y(s_\alpha s_\alpha^*)\phi^y(s_\beta s_\beta^*)$. Hence, the multiplicativity of $\phi^y$ follows.

Next, we prove that $\Psi $ is continuous. Suppose that $(\phi_j)_{j\in J}$ is a net converging to $\phi$ in $\hat{A}$, that is, $(\phi_j)_{j\in J}$ converges pointwise to $\phi$ (see Subsection \ref{s:stone}).

Suppose that $y:=\Psi(\phi)\in \osf_F^{inf}$. Given $k>0$, let $j_0\in J$ be such that $\phi_j(s_{y_1\ldots y_k}s_{y_1\ldots y_k}^*)=1=\phi(s_{y_1\ldots y_k}s_{y_1\ldots y_k}^*)$ for every $j\geq j_0$. So, $\Psi(\phi_j)$ agrees with $\Psi(\phi)$ in the first $k$ letters for every $j\geq j_0$ and this implies that $(\Psi(\phi_j))_{j\in J}$ converges to $\Psi(\phi)$.

We are left with the case where $\Psi(\phi)\in \osf_F^{fin}$, say $\Psi(\phi)=y_1\ldots y_m$. Let $G\subseteq \alf$ be finite. Take $j_0\in J$ such that, for every $j\geq j_0$, we have $\phi_j(s_{y_1\ldots y_m}s_{y_1\ldots y_m}^*)=\phi(s_{y_1\ldots y_m}s_{y_1\ldots y_m}^*)$ and $\phi_j(s_{y_1\ldots y_m a}s_{y_1\ldots y_ma}^*)=\phi(s_{y_1\ldots y_m a}s_{y_1\ldots y_m a}^*)$ for every $a\in G$. It follows that, for $j\geq j_0$, the first $m$ letters of $\Psi(\phi_j)$ agree with $y_1\ldots y_m $ and the $m+1$ entry of $\Psi(\phi_j)$ is not in $G$, that is, $(\Psi(\phi_j))_{j\in J}$ converges to $\Psi(\phi)$ as desired. 

Finally, since $\hat{A}$ is compact and $\osf^{OTW}_F$ is Hausdorff, it follows that $\Psi$ is a homeomorphism and the proof is finished.
\end{proof}

\begin{definition}\label{diagonalalgebra}
    Let $\osf$ be subshift and $\ualgshift$ its unital subshift algebra. The \emph{diagonal subalgebra} of $\ualgshift$ is the subalgebra $\vecspan_R\{s_\alpha p_A s_\alpha^*: A\in\TCB,\ \alpha \in \lang\}$.
\end{definition}

Next, we want to identify the diagonal subalgebra of $ \ualgshift$. For this, recall that $\widehat{\TCB}$ represents the Stone dual of the Boolean algebra $\TCB$ (see Subsection~\ref{s:stone}).

\begin{proposition}\label{ohcoconuts} Let $\osf\subseteq \alf^\N$ be a subshift and suppose that $R$ is also an indecomposable ring. Then,
$\vecspan_R\{s_\alpha p_A s_\alpha^*: A\in\TCB,\ \alpha \in \lang\}=\vecspan_R\{p_A: A\in\TCB\}\cong\Lc(\widehat{\TCB},R)$. 
\end{proposition}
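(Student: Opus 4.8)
The statement consists of an equality of $R$-submodules together with an algebra isomorphism, and I would establish them separately.

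\textbf{The equality.} The inclusion $\vecspan_R\{p_A:A\in\TCB\}\subseteq\vecspan_R\{s_\alpha p_A s_\alpha^*\}$ is trivial, taking $\alpha=\eword$ so that $s_\eword p_A s_\eword^*=p_A$. The substantive point is the reverse inclusion, which I would obtain from the conjugation identity $s_\alpha p_A s_\alpha^*=p_{\alpha A}$, where $\alpha A:=\{\alpha x\in\osf:x\in A\}$ and $\alpha A\in\TCB$. By iterating one letter at a time (and using that $\osf$ is shift-invariant so the intermediate conditions are automatic), it suffices to treat a single letter $a$. On the generators $C(\mu,\nu)$ of $\TCB$ I would compute directly from relation (iii) of Definition~\ref{gelado}, using $s_as_\nu=s_{a\nu}$:
\[
s_a p_{C(\mu,\nu)} s_a^*=s_{a\nu}s_\mu^*s_\mu s_{a\nu}^*=p_{C(\mu,a\nu)},
\]
and note $aC(\mu,\nu)=C(\mu,a\nu)$, both sides being empty when $a\nu\notin\lang$. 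To pass to an arbitrary $A\in\TCB$ I would check that $A\mapsto aA$ is a Boolean homomorphism into $\TCB$ (since $x\mapsto ax$ is injective it respects $\cap$, $\cup$, and complements, with $a\osf=Z_a$), and that conjugation respects the same operations: linearity handles unions; for complements one uses $s_as_a^*=p_{Z_a}$ to get $s_a(1-p_A)s_a^*=p_{Z_a}-p_{aA}=p_{Z_a\setminus aA}$; and for intersections one uses $s_a^*s_a=p_{F_a}$ together with $s_a=s_ap_{F_a}$, so that $s_ap_Cs_a^*=s_ap_{C\cap F_a}s_a^*$. A structural induction over the description in Remark~\ref{U_elements} then yields $s_ap_As_a^*=p_{aA}$ for all $A$, hence $s_\alpha p_A s_\alpha^*=p_{\alpha A}$, and the equality follows.

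\textbf{The isomorphism.} Write $\CA:=\vecspan_R\{p_A:A\in\TCB\}$. It is a commutative unital ($1=p_\osf$) $R$-subalgebra closed under products because $p_Ap_B=p_{A\cap B}$, and it is generated by the idempotents $p_A$. The heart of the argument is that $A\mapsto p_A$ is a Boolean-algebra isomorphism $\TCB\to E(\CA)$. It is a homomorphism by relation (i) of Definition~\ref{gelado}, and injective by Corollary~\ref{cachorroquente}, since $p_A=p_B$ forces $p_{A\setminus B}=p_A(1-p_B)=0$, whence $A\setminus B=\emptyset$ and, symmetrically, $A=B$. For surjectivity I would invoke indecomposability of $R$: given an idempotent $e\in\CA$, replace the finitely many sets appearing in $e$ by the atoms $B_1,\dots,B_m$ of the finite Boolean subalgebra they generate, writing $e=\sum_j c_j p_{B_j}$ with the $B_j\in\TCB$ pairwise disjoint and non-empty. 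Then $e=e^2=\sum_j c_j^2 p_{B_j}$, and multiplying by $p_{B_k}$ and using Corollary~\ref{cachorroquente} gives $c_k=c_k^2$, so $c_k\in\{0,1\}$. Hence $e=p_B$ with $B=\bigsqcup_{c_k=1}B_k\in\TCB$, proving $E(\CA)=\{p_A:A\in\TCB\}$.

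\textbf{Conclusion and main obstacle.} With $E(\CA)=\{p_A\}$ identified, the hypothesis of Theorem~\ref{IsoLc} (that $re=0$ forces $r=0$ or $e=0$ for idempotents $e$) is exactly Corollary~\ref{cachorroquente}, so Theorem~\ref{IsoLc} yields $\CA\cong\Lc(\widehat{E(\CA)},R)$. Finally, the Boolean isomorphism $\TCB\cong E(\CA)$ dualises, by Stone duality (Theorem~\ref{thm:stone}), to a homeomorphism $\widehat{E(\CA)}\cong\widehat{\TCB}$, under which $\Lc(\widehat{E(\CA)},R)\cong\Lc(\widehat{\TCB},R)$; composing gives $\CA\cong\Lc(\widehat{\TCB},R)$. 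I expect the two genuinely substantive steps to be the conjugation identity $s_\alpha p_A s_\alpha^*=p_{\alpha A}$ with $\alpha A\in\TCB$ (verifying compatibility with complements and intersections) and the surjectivity claim that every idempotent of $\CA$ is some $p_A$; everything else is bookkeeping through the Stone-duality machinery already in place.
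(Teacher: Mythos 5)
Your proof is correct, and while the first half mirrors the paper, the second half takes a genuinely different route. For the equality, the paper also reduces to generators of $\TCB$ and to compatibility of conjugation with $\cap$, $\cup$, $\setminus$, but it conjugates by the whole word at once, computing $s_\alpha p_{C(\beta,\gamma)}s_\alpha^*=s_\alpha s_\gamma s_\beta^* s_\beta s_\gamma^* s_\alpha^*=p_{C(\beta,\alpha\gamma)}$ (or $p_\emptyset$ if $\alpha\gamma\notin\lang$) directly from relation (iii); your letter-by-letter identity $s_ap_As_a^*=p_{aA}$ is the same computation sliced differently, and your care with intersections via $s_a=s_ap_{F_a}$ and with complements via $s_as_a^*=p_{Z_a}$ is exactly what is needed. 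For the isomorphism, the paper instead realises $\Lc(\widehat{\TCB},R)$ as a Leavitt labelled path algebra (citing \cite[Example 7.8]{MR4583730}), uses its universal property to produce a graded homomorphism $1_A\mapsto p_A$ into $\ualgshift$, and gets injectivity from Corollary~\ref{cachorroquente} together with the Graded Uniqueness Theorem \cite[Corollary 5.5]{MR4583730}; you stay entirely within the Stone-duality toolkit of Subsection~\ref{s:stone}, identifying $E(\CA)=\{p_A:A\in\TCB\}$ (your atom decomposition plus indecomposability of $R$ is a correct analogue of Lemma~\ref{idempotents Lc}) and then applying Theorem~\ref{IsoLc} and Theorem~\ref{thm:stone}. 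Your route is more self-contained and makes explicit where indecomposability of $R$ enters (in pinning down the idempotents), at the cost of an extra structural argument; the paper's route is shorter on the page but outsources the work to the labelled-path-algebra machinery and its uniqueness theorem. Both are sound.
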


\begin{proof}
We start observing that for $\alpha\in\lang$ and $A,B\in\TCB$, we have that $s_\alpha p_{A\cap B}s_\alpha^*=s_\alpha p_{A}s_\alpha^*s_\alpha p_{B}s_\alpha^*$, $s_\alpha p_{A\cup B}s_\alpha^*=s_\alpha p_{A}s_\alpha^*+s_\alpha p_{B}s_\alpha^*-s_\alpha p_{A\cap B}s_\alpha^*$ and $s_\alpha p_{A\setminus B}s_\alpha^*=s_\alpha p_{A}s_\alpha^*-s_\alpha p_{A\cap B}s_\alpha^*$. Therefore, to prove the required equality, since $\TCB$ is generated by sets of the form $C(\alpha,\beta)$, it is sufficient to prove that for $\alpha,\beta,\gamma\in\lang$, we have that $s_\alpha p_{C(\beta,\gamma)}s_\alpha^*=p_A$ for some $A\in\TCB$. Indeed
\[s_\alpha p_{C(\beta,\gamma)}s_\alpha^*=s_\alpha s_\gamma s_\beta^*s_\beta s_\gamma^* s_\alpha^*=\begin{cases}
p_{C(\beta,\alpha\gamma)}, & \text{if }\alpha\gamma\in\lang \\
p_{\emptyset}, & \text{otherwise.}
\end{cases}\]

For the isomorphism, by \cite[Example 7.8]{MR4583730}, $\Lc(\widehat{\TCB},R)$ can be seen as a Leavitt labelled path algebra, with a trivial $\mathbb{Z}$-grading. The universal property of $\Lc(\widehat{\TCB},R)$ then implies that the map that sends $1_A\in \Lc(\widehat{\TCB},R)$ to $p_A\in\ualgshift$ extends to a graded $R$-algebra homomorphism $\phi:\Lc(\widehat{\TCB},R)\to\ualgshift$, which is injective by Corollary~\ref{cachorroquente} and the Graded Uniqueness Theorem for Leavitt labelled path algebras \cite[Corollary 5.5]{MR4583730}. It is clear that the image of $\phi$ is $\vecspan_R\{p_A: A\in\TCB\}$ from where the isomorphism $\vecspan_R\{p_A: A\in\TCB\}\cong\Lc(\widehat{\TCB},R)$ follows.
\end{proof}

\section{Algebras of subshifts}\label{s:nonunital}

In this section, we define another $R$-algebra $\algshift$ associated with a subshift $\osf$, which may be non-unital. It coincides with $\ualgshift$ when it is unital, and its unitization coincides with $\ualgshift$ when it is not unital (see Proposition~\ref{jabutirapido}).

To define $\algshift$, we let $\CB$ be the Boolean algebra of subsets of $\osf$ generated by all $C(\alpha,\beta)$, for $\alpha,\beta\in\lang$ not both simultaneously equal to $\eword$. Comparing $\CB$ with $\TCB$ from Definition~\ref{diachuvoso}, the only difference is that we are removing $\osf$ as a generator for the Boolean algebra $\CB$. In some instances, $\CB$ and $\TCB$ agree. For example, when the alphabet is finite (since then $\osf=\bigcup_{a\in\alf}Z_a$), or when there is a letter such that its follower set is the whole $\osf$ (see, for instance, Example~\ref{renovando}). However, there are other instances where they are different, as in Example~\ref{theorPropfail}. 

\begin{definition}\label{nonunitalalgebra}
We define the \emph{subshift algebra} $\algshift$ as the universal  $R$-algebra with generators $\{p_A: A\in\CB\}$ and $\{s_a,s_a^*: a\in\alf\}$ subject to the relations:
\begin{enumerate}[(i)]
    \item $p_{A\cap B}=p_Ap_B$, $p_{A\cup B}=p_A+p_B-p_{A\cap B}$ and $p_{\emptyset}=0$, for every $A,B\in\CB$;
    \item $s_as_a^*s_a=s_a$ and $s_a^*s_as_a^*=s_a^*$ for all $a\in\alf$;
    \item $s_{\beta}s^*_{\alpha}s_{\alpha}s^*_{\beta}=p_{C(\alpha,\beta)}$ for all $\alpha,\beta\in\lang\setminus\{\eword\}$, where for $\alpha=\alpha_1\ldots\alpha_n\in\lang \setminus \{\omega\}$, $s_\alpha:=s_{\alpha_1}\cdots s_{\alpha_n}$ and $s_\alpha^*:=s_{\alpha_n}^*\cdots s_{\alpha_1}^*$; 
    \item $s_\alpha^* s_\alpha = p_{C(\alpha,\omega)}$ for all $\alpha \in \lang \setminus \{\omega\}$;
    \item $s_\beta s_\beta^* = p_{C(\omega,\beta)}$ for all $\beta \in\lang \setminus \{\omega\}$.
\end{enumerate}
\end{definition}

\begin{remark}
 Note that $s_\eword$ does not appear in the definition of $\algshift$. However, to ease the notational burden, we often include terms of the form $s_\alpha p_A s_\eword^*$, $s_\eword p_A s_\beta^*$ and $s_\eword p_As_\eword^*$, which should be interpreted as $s_\alpha p_A$, $p_A s_\beta^*$ and $p_A$, respectively.
\end{remark}

The following results up to Corollary~\ref{GUTshift no unit} are analogues of the unital case. Their proofs follow the same line of thought, with minor modifications. For Theorem~\ref{soumavez}, as with the unital case, if $(\CE,\CL)$ is the labelled graph defined in Section \ref{unital}, then $(\CE,\CL,\CB)$ is a normal labelled space.

\begin{proposition}\label{gen no unit}
    Let $\osf$ be a subshift. Then:
    \begin{enumerate}[(i)]
        \item $s_a^*s_b= \delta_{a,b} p_{F_a}$, for all $a,b\in \alf$;
        \item $\algshift$ is generated by $\{s_a,s_a^*:a\in\alf\}$.
    \end{enumerate}
\end{proposition}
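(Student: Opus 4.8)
The plan is to mirror the proofs of parts (i) and (v) of Proposition~\ref{lemma.algebra.unital}, with the single adjustment that relations (iv) and (v) of Definition~\ref{nonunitalalgebra} now play the role that Remark~\ref{sera_frio?} played in the unital setting (recall that $s_a^*s_a = p_{F_a}$ and $s_as_a^* = p_{Z_a}$ are now instances of relations (iv) and (v) for the single letter $a\in\alf\setminus\{\eword\}$).

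For part (i), I would first use relation (ii) together with relation (v) to rewrite $s_a^* = s_a^*(s_as_a^*) = s_a^* p_{Z_a}$ and $s_b = (s_bs_b^*)s_b = p_{Z_b}s_b$. Multiplying and applying relation (i) then gives
\[
s_a^*s_b = s_a^* p_{Z_a}p_{Z_b}s_b = s_a^* p_{Z_a \cap Z_b}s_b.
\]
If $a \neq b$, the cylinders $Z_a$ and $Z_b$ are disjoint, so $p_{Z_a\cap Z_b}=p_\emptyset=0$ and hence $s_a^*s_b=0$. If $a=b$, then $Z_a\cap Z_b = Z_a$ and the expression collapses, via relations (ii) and (iv), to $s_a^*s_a = p_{F_a}$. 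This establishes $s_a^*s_b = \delta_{a,b}p_{F_a}$.

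For part (ii), the key observation is that every generator $p_{C(\alpha,\beta)}$ of the Boolean algebra $\CB$ lies in the subalgebra generated by $\{s_a,s_a^*:a\in\alf\}$. Since $\CB$ is generated by the sets $C(\alpha,\beta)$ with $\alpha,\beta\in\lang$ not both equal to $\eword$, I would split into three cases according to which of $\alpha,\beta$ equals $\eword$: when both are non-empty, $p_{C(\alpha,\beta)} = s_\beta s_\alpha^* s_\alpha s_\beta^*$ by relation (iii); when $\beta=\eword$ (and $\alpha\neq\eword$), $p_{C(\alpha,\eword)}=p_{F_\alpha}=s_\alpha^*s_\alpha$ by relation (iv); and when $\alpha=\eword$ (and $\beta\neq\eword$), $p_{C(\eword,\beta)}=p_{Z_\beta}=s_\beta s_\beta^*$ by relation (v). Relation (i) then expresses every $p_A$ with $A\in\CB$ as a polynomial in these generators, so $\algshift$ is generated by $\{s_a,s_a^*:a\in\alf\}$.

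The only point requiring care is the case distinction in part (ii): unlike the unital case, relation (iii) is available only for non-empty $\alpha,\beta$, so the generators of the form $C(\alpha,\eword)$ and $C(\eword,\beta)$ must be handled separately via relations (iv) and (v). This is precisely why $\osf=C(\eword,\eword)$ is excluded as a generator of $\CB$: there is no relation expressing $p_{\osf}$ in terms of the $s_a$, and indeed $\algshift$ need not be unital. No other obstacle arises.
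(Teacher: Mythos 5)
Your proposal is correct and matches the paper's intent: the paper explicitly states that this result is proved by the same argument as Proposition~\ref{lemma.algebra.unital}, items (i) and (v), "with minor modifications," and you have correctly identified those modifications, namely that relations (iv) and (v) of Definition~\ref{nonunitalalgebra} replace Remark~\ref{sera_frio?} and that the generators $C(\alpha,\eword)$ and $C(\eword,\beta)$ must be treated separately from the case where both words are non-empty. No gap.
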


\begin{proposition} Let $\osf$ be a subshift. The subshift algebra $\algshift$ is $\zn$-graded, with grading given by
    \[\algshift_n = \vecspan_R\{s_\alpha p_A s_\beta^* : \alpha,\beta \in \lang,\ A\in\CB \ \mbox{and} \ |\alpha|-|\beta|=n\}.\]
\end{proposition}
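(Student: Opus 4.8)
The plan is to establish the $\mathbb{Z}$-grading of $\algshift$ by following the same strategy as in the unital case, namely showing that the proposed decomposition is well-defined via the universal property. First I would define, for each $n\in\mathbb{Z}$, the $R$-submodule
\[
\algshift_n := \vecspan_R\{s_\alpha p_A s_\beta^* : \alpha,\beta \in \lang,\ A\in\CB \ \mbox{and} \ |\alpha|-|\beta|=n\},
\]
and argue that $\algshift=\sum_{n\in\mathbb{Z}}\algshift_n$. This spanning claim follows from Proposition~\ref{gen no unit}, which tells us that $\algshift$ is generated by $\{s_a,s_a^*:a\in\alf\}$; an arbitrary word in the generators, after inserting the relations of Definition~\ref{nonunitalalgebra} (in particular using $s_a^*s_b=\delta_{a,b}p_{F_a}$ and relations (iii)--(v) to absorb any $p_A$ factors and to collapse mismatched products), can be rewritten in the normal form $s_\alpha p_A s_\beta^*$. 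The multiplicative property $\algshift_m\algshift_n\subseteq\algshift_{m+n}$ is checked directly on products $s_\alpha p_A s_\beta^*\cdot s_\gamma p_B s_\delta^*$: one reduces the middle $s_\beta^* s_\gamma$ using the relations, which either produces a $p_C$ term (when $\beta$ and $\gamma$ are comparable as words) or vanishes, and in the surviving cases the net change in $|\alpha|-|\delta|$ is exactly $m+n$.

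The genuinely nontrivial part is the \emph{directness} of the sum, i.e.\ that $\algshift=\bigoplus_{n\in\mathbb{Z}}\algshift_n$ rather than merely $\sum_n\algshift_n$. The clean way to obtain this is the standard universal-property grading argument: I would define an $R$-algebra $S=\bigoplus_{n\in\mathbb{Z}}S_n$ that is $\mathbb{Z}$-graded by construction, exhibit elements of $S$ satisfying the defining relations (i)--(v) of $\algshift$ with $s_a$ of degree $1$, $s_a^*$ of degree $-1$, and each $p_A$ of degree $0$, and then invoke universality to get a homomorphism $\algshift\to S$. Concretely, one assigns a $\mathbb{Z}$-degree to each generator and checks that every defining relation is homogeneous, so the grading descends. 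The induced degree map lets one project any element onto its homogeneous components, which forces the sum to be direct.

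Rather than build $S$ by hand, the cleanest route is to quote the already-established unital result together with Theorem~\ref{melaoflutuante}'s analogue for the non-unital case. The excerpt states that the results up to Corollary~\ref{GUTshift no unit} are analogues of the unital case with minor modifications, and that $(\CE,\CL,\CB)$ is a normal labelled space, so Theorem~\ref{soumavez} provides an isomorphism $\algshift\cong L_R(\CE,\CL,\CB)$. Since Leavitt labelled path algebras carry a canonical $\mathbb{Z}$-grading (as recalled around Proposition~\ref{prop:grading}, referencing \cite[Proposition 3.8]{MR4583730}), I would transport that grading across the isomorphism; the grading on $L_R(\CE,\CL,\CB)$ places the generators $t_a,t_a^*$ in degrees $\pm 1$ and the $q_A$ in degree $0$, matching the proposed formula for $\algshift_n$ verbatim.

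In short, the proof is routine and mirrors Proposition~\ref{prop:grading}: the spanning and the inclusion $\algshift_m\algshift_n\subseteq\algshift_{m+n}$ are immediate from the relations, while the directness of the decomposition is secured by the universal property (assigning homogeneous degrees to the generators and verifying the relations are degree-homogeneous) or, equivalently and more economically, by transporting the canonical grading of the associated Leavitt labelled path algebra through the isomorphism of Theorem~\ref{soumavez}. The only point demanding genuine care is confirming that relations (iii)--(v) of Definition~\ref{nonunitalalgebra}—especially the length bookkeeping $|\alpha|-|\beta|$ attached to $C(\alpha,\beta)$ and the convention handling $s_\eword$—are indeed degree-homogeneous, which is where I expect the main (though still minor) obstacle to lie; once that homogeneity is verified, the grading follows formally, so as in the unital case the detailed computation can reasonably be omitted with a pointer to \cite[Corollary 2.1.5]{AbrAraMol} or \cite[Proposition 3.8]{MR4583730}.
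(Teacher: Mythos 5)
Your proposal is correct and matches the paper's (omitted) argument: the paper declares this proposition an analogue of Proposition~\ref{prop:grading}, whose proof is called routine and deferred to exactly the standard universal-property/homogeneous-relations technique you describe, citing the same sources. Your observation that the defining relations of Definition~\ref{nonunitalalgebra} are degree-homogeneous under $\deg s_a=1$, $\deg s_a^*=-1$, $\deg p_A=0$ is precisely the point the paper leaves to the reader, so nothing further is needed.
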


\begin{theorem}\label{soumavez}
Let $\osf$ be a subshift and $(\CE,\CL,\CB)$ be the labelled space as above. Then, $\algshift\cong L_R(\CE,\CL,\CB)$ as $\zn$-graded $R$-algebras. In particular, if $r\in R\setminus\{0\}$ and $A\in\CB\setminus\{\emptyset\}$, then $rp_A\neq 0$.
\end{theorem}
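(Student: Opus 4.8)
The plan is to mirror the proof of Theorem~\ref{melaoflutuante} exactly, since $\algshift$ and $\ualgshift$ differ only in that $\CB$ omits $\osf$ as a generator and the defining relations of $\algshift$ restrict to nonempty words. I would establish the isomorphism $\algshift\cong L_R(\CE,\CL,\CB)$ by constructing mutually inverse graded homomorphisms via the universal properties, and then deduce the nonvanishing of $rp_A$ as an immediate corollary, exactly as in Corollary~\ref{cachorroquente}.

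First I would build a map $L_R(\CE,\CL,\CB)\to\algshift$ by verifying that the generators $\{p_A\}_{A\in\CB}$ and $\{s_a,s_a^*\}_{a\in\alf}$ of $\algshift$ satisfy the Leavitt labelled path relations (i)--(v) of Definition~\ref{def:LPA}. Relations (i) and (iv) are immediate from Definition~\ref{nonunitalalgebra}. Relation (iii), namely $s_a^*s_a=p_{r(a)}=p_{F_a}$ and $s_b^*s_a=0$ for $a\neq b$, follows from Definition~\ref{nonunitalalgebra}(iv) and Proposition~\ref{gen no unit}(i). Relation (ii) is the multiplicative compatibility $p_As_a=s_ap_{r(A,a)}$; I would prove it first for generators $C(\alpha,\beta)$ using the explicit relative-range formula \eqref{eq:rel.range} and then propagate to arbitrary $A\in\CB$ through intersections, complements, and finite unions via Lemma~\ref{aguaquecida}, precisely as in the proof of Theorem~\ref{melaoflutuante}. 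Relation (v) uses (ii) together with the disjoint decomposition $A=\bigsqcup_{a\in\CL(A\CE^1)}A\cap Z_a$ valid for regular $A$.

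Next I would build the reverse map $\algshift\to L_R(\CE,\CL,\CB)$ by checking that the generators $t_a,q_A$ of $L_R(\CE,\CL,\CB)$ satisfy relations (i)--(v) of Definition~\ref{nonunitalalgebra}. The crucial difference from the unital case is that I no longer need to verify $t_\osf=1$ (relation (i) of Definition~\ref{gelado}), which is precisely the step that invoked \cite[Corollary~6.5]{MR4583730} and required $\osf\in\CB$ to be a top element; this is what allows $\CB$ to drop $\osf$. Relations (iv) and (v) of Definition~\ref{nonunitalalgebra} are $t_\alpha^*t_\alpha=q_{C(\alpha,\eword)}=q_{r(\alpha)}$ and $t_\beta t_\beta^*=q_{C(\eword,\beta)}$, which I would obtain by the same induction on $|\beta|$ used in Theorem~\ref{melaoflutuante}, and relation (iii) follows by combining these. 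By the respective universal properties, the two maps are mutually inverse graded isomorphisms.

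The final assertion, that $rp_A\neq0$ for $r\in R\setminus\{0\}$ and $A\in\CB\setminus\{\emptyset\}$, then follows immediately: the isomorphism sends $rp_A\in\algshift$ to $rp_A\in L_R(\CE,\CL,\CB)$, which is nonzero by \cite[Lemma~4.12]{MR4583730}. I expect the only genuine subtlety, and hence the main point requiring care, to be confirming that none of the inductive or compatibility arguments secretly used $\eword$ as an admissible word or $\osf$ as a Boolean-algebra element; everywhere the unital proof manipulated $s_\eword=1$ or $p_\osf=1$, the nonunital version must instead rely on relations (iv) and (v) of Definition~\ref{nonunitalalgebra} restricted to $\lang\setminus\{\eword\}$. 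Since $(\CE,\CL,\CB)$ is a normal labelled space by the cited \cite[Lemma~5.5]{MR3614028} argument and the relative-range computations in \eqref{eq:rel.range} involve only nonempty words, this bookkeeping goes through without obstruction.
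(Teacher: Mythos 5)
Your proposal is correct and follows exactly the route the paper intends: the paper itself omits the proof of Theorem~\ref{soumavez}, stating only that it follows the same line of thought as Theorem~\ref{melaoflutuante} with minor modifications, and your write-up fleshes out precisely those modifications (dropping the verification of $t_{\osf}=1$, replacing uses of $s_\eword=1$ and $p_\osf=1$ by relations (iv) and (v) of Definition~\ref{nonunitalalgebra}, and handling relative rather than absolute complements when propagating relation (ii) over $\CB$). The deduction of $rp_A\neq 0$ via \cite[Lemma~4.12]{MR4583730} likewise matches the paper's treatment of Corollary~\ref{cachorroquente}.
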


\begin{remark}
As with the unital case, when convenient, we use the above labelled space and the identification in Theorem~\ref{soumavez} without further mentioning it.
\end{remark}
 
\begin{corollary}[Graded Uniqueness Theorem] \label{GUTshift no unit}
Let $\osf$ be a subshift. If $S$ is a $\mathbb{Z}-$graded ring and $\eta: \algshift\rightarrow S$ is a graded ring homomorphism with $\eta(rp_A)\neq 0$ for all non-empty $A\in \CB$ and all non-zero $r \in R$, then $\eta$ is injective.
\end{corollary}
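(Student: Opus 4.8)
The plan is to deduce this from the corresponding Graded Uniqueness Theorem for Leavitt labelled path algebras, exactly as in the proof of Corollary~\ref{GUTshift} for the unital case, transporting the statement along the graded isomorphism furnished by Theorem~\ref{soumavez}. The whole point is that $\algshift$ has already been identified with a Leavitt labelled path algebra, so the uniqueness result should require no fresh work beyond matching hypotheses across that identification.

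First I would let $\Phi\colon \algshift \to L_R(\CE,\CL,\CB)$ denote the graded $R$-algebra isomorphism of Theorem~\ref{soumavez}; as in the unital case (Theorem~\ref{melaoflutuante}), this isomorphism identifies the generator $p_A$ of $\algshift$ with the corresponding generator $q_A$ of $L_R(\CE,\CL,\CB)$ for each $A \in \CB$. Given a graded homomorphism $\eta\colon \algshift \to S$ as in the statement, I would consider the composite $\eta \circ \Phi^{-1}\colon L_R(\CE,\CL,\CB)\to S$, which is again a graded homomorphism since both $\eta$ and $\Phi^{-1}$ are graded (the isomorphism of Theorem~\ref{soumavez} being an isomorphism of $\zn$-graded algebras).

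The key step is to check that the nondegeneracy hypothesis transfers. For a non-empty $A \in \CB$ and a non-zero $r \in R$ we have $(\eta\circ\Phi^{-1})(rq_A)=\eta(rp_A)\neq 0$ by assumption, so $\eta\circ\Phi^{-1}$ satisfies the hypotheses of the Graded Uniqueness Theorem for Leavitt labelled path algebras \cite[Corollary 5.5]{MR4583730}. That theorem then yields that $\eta\circ\Phi^{-1}$ is injective, and since $\Phi$ is an isomorphism it follows that $\eta=(\eta\circ\Phi^{-1})\circ\Phi$ is injective as well.

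I do not expect a serious obstacle here: the only point requiring attention is confirming that the isomorphism of Theorem~\ref{soumavez} indeed carries $p_A$ to $q_A$ (so that the hypothesis $\eta(rp_A)\neq 0$ matches verbatim the nondegeneracy condition on the idempotent generators demanded by \cite[Corollary 5.5]{MR4583730}), and that the relevant Boolean algebra in that citation is precisely $\CB$ rather than $\TCB$. Both are immediate from the construction in Theorem~\ref{soumavez}, so the argument is essentially identical to the unital Corollary~\ref{GUTshift}.
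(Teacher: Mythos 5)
Your proposal is correct and matches the paper's approach: the paper derives the unital Corollary~\ref{GUTshift} from Theorem~\ref{melaoflutuante} together with \cite[Corollary 5.5]{MR4583730}, and explicitly states that Corollary~\ref{GUTshift no unit} follows by the same argument using Theorem~\ref{soumavez} in place of Theorem~\ref{melaoflutuante}. Transporting the hypothesis $\eta(rp_A)\neq 0$ along the graded isomorphism that sends $p_A$ to $q_A$ is exactly the intended (and only) point to check.
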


For a non-unital $R$-algebra $A$, we understand the unitization of $A$ as the $R$-algebra $A\oplus R$ with coordinate-wise addition and multiplication given by $(a,r)(b,s)=(ab+sa+rb,rs)$.

\begin{proposition}\label{jabutirapido}
Let $\osf$ be a subshift. If $\algshift$ is unital, then it is isomorphic to $\ualgshift$. If $\algshift$ is not unital, then its unitization is isomorphic to $\ualgshift$.
\end{proposition}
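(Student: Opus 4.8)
The plan is to compare the two algebras $\algshift$ and $\ualgshift$ through their defining universal properties, using the only structural difference between them: the Boolean algebras $\CB$ and $\TCB$ differ precisely by whether $\osf = C(\eword,\eword)$ is included as a generator. First I would dispose of the unital case. If $\algshift$ is unital with unit $1_{\algshift}$, then I claim $\CB$ already contains $\osf$, so $\CB = \TCB$. Indeed, by Proposition~\ref{gen no unit}(ii), $\algshift$ is generated by $\{s_a, s_a^*\}$, and the unit must act as an identity on each $p_A$; since $p_A s_a s_a^* = p_A p_{Z_a}$ and each $Z_a \in \CB$, one shows $1_{\algshift}$ is a finite sum/combination of the $p_A$'s, forcing $p_\osf \in \algshift$ and hence $\osf \in \CB$. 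Once $\CB = \TCB$, the defining relations of Definition~\ref{nonunitalalgebra} together with unitality yield exactly the relations of Definition~\ref{gelado} (relation (i) with $p_\osf = 1$, and (iii) collapsing (iii)--(v) back together via $s_\eword := 1$), so the universal properties match and $\algshift \cong \ualgshift$.

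For the non-unital case, the strategy is to exhibit the unitization $\algshift \oplus R$ as satisfying the universal property of $\ualgshift$, and conversely. Writing $\widetilde{A} = \algshift \oplus R$, I would define a candidate family of generators in $\widetilde{A}$: set $\widetilde{s_a} = (s_a, 0)$, $\widetilde{s_a^*} = (s_a^*,0)$, and for $A \in \TCB$ define $\widetilde{p_A}$ by cases according to whether $A \in \CB$ or $A = B \cup \osf$-type complements. The key observation is Remark~\ref{U_elements}: every element of $\TCB$ is either already in $\CB$ or is the complement (within $\osf$) of an element of $\CB$ together with unions thereof; concretely, since $\TCB$ is generated over $\CB$ by adjoining the top element $\osf$, each $A \in \TCB$ can be written as $A = B$ or $A = \osf \setminus B' = B' {}^c$ with $B, B' \in \CB$. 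Accordingly I set $\widetilde{p_B} = (p_B, 0)$ for $B \in \CB$ and $\widetilde{p_{\osf}} = (0,1)$, extending so that $\widetilde{p_{B^c}} = (−p_B, 1)$, and more generally $\widetilde{p_A} = (p_{A \cap \bigcup_a Z_a} - \ldots, \varepsilon_A)$ where $\varepsilon_A \in \{0,1\}$ records whether $A$ ``contains the part of $\osf$ outside $\CB$''. I would then verify that these elements satisfy relations (i)--(iii) of Definition~\ref{gelado}: relation $\widetilde{p_\osf} = (0,1) = 1_{\widetilde{A}}$ holds by construction, the Boolean relations (i) follow from the Boolean relations in $\CB$ plus the arithmetic of the unitization, (ii) is immediate, and (iii) splits into the case where both $\alpha,\beta \neq \eword$ (handled by Definition~\ref{nonunitalalgebra}(iii)) and the cases where one or both equal $\eword$ (handled by (iv),(v) and the convention $s_\eword = 1$). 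The universal property of $\ualgshift$ then gives a homomorphism $\ualgshift \to \widetilde{A}$. For the reverse map, I restrict the inclusion $\algshift \hookrightarrow \ualgshift$ (which exists because the $\algshift$-generators satisfy the $\ualgshift$-relations) and extend by sending $(0,1)$ to $1_{\ualgshift} = p_\osf$, using the universal property of the unitization, and check the two composites are the identity on generators.

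The main obstacle, and the step requiring the most care, is the \textbf{well-definedness of the assignment $A \mapsto \widetilde{p_A}$ on all of $\TCB$} and the verification that it respects the Boolean relations (i). The subtlety is that $\TCB$ is not simply $\CB$ with one extra element freely adjoined: $\osf$ may or may not already be expressible in $\CB$, and elements of $\TCB$ can be complicated unions/intersections/complements mixing $\CB$-sets with $\osf$. To handle this cleanly I would exploit the decomposition in Remark~\ref{U_elements} and the fact (noted just before Lemma~\ref{aguaquecida}) that $\osf = \bigsqcup_{a \in \alf} Z_a$: every $A \in \TCB$ satisfies either $A \in \CB$, or $A^c \in \CB$ where the complement is taken in $\osf$. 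This dichotomy lets me define $\widetilde{p_A} = (p_A, 0)$ in the first case and $\widetilde{p_A} = (-p_{A^c}, 1)$ in the second, and then I must check consistency (when both $A, A^c \in \CB$, i.e.\ when $\CB = \TCB$, the two formulas must agree, which forces $p_A + p_{A^c} = (0,1)$ — but this is exactly the unital situation already handled) and additivity/multiplicativity across the two cases. I expect this casework, rather than any deep idea, to be where the proof's length concentrates; the conceptual content is entirely contained in the observation that $\ualgshift$ is the result of formally adjoining a unit $p_\osf$ to the relations of $\algshift$.

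Since the excerpt explicitly flags these as ``analogues of the unital case'' with proofs following ``the same line of thought, with minor modifications,'' I would keep the write-up brief, citing Theorem~\ref{melaoflutuante}, Theorem~\ref{soumavez}, and Corollary~\ref{cachorroquente} to transfer the non-degeneracy needed so that the constructed homomorphisms are genuinely mutually inverse rather than merely surjective.
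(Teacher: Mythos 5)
Your treatment of the unital case has a genuine gap. You assert that ``one shows $1_{\algshift}$ is a finite sum/combination of the $p_A$'s,'' but the justification offered ($p_As_as_a^*=p_Ap_{Z_a}$) does not yield this: a priori the unit only lies in the degree-zero component $\vecspan_R\{s_\alpha p_As_\beta^*:|\alpha|=|\beta|\}$, which is strictly larger than $\vecspan_R\{p_A:A\in\CB\}$, and eliminating the terms with $|\alpha|=|\beta|>0$ is not automatic. Even granting that claim, the inference ``forcing $p_\osf\in\algshift$ and hence $\osf\in\CB$'' needs an argument (one must show the union of the sets appearing in the expansion of the unit is all of $\osf$, using that every $Z_{x_0}$ lies in $\CB$). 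The paper sidesteps both issues: by Theorem~\ref{soumavez} and \cite[Corollary~6.5]{MR4583730}, unitality of the Leavitt labelled path algebra forces $\CB$ to have a top element $I$, and $I=\osf$ because otherwise some $Z_{x_0}\in\CB$ would fail to sit below $I$. Your conclusion $\CB=\TCB$ is correct, but the key step is asserted rather than proved.

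For the non-unital case your route is genuinely different from the paper's and, once the casework is carried out, it works. You build an explicit two-sided inverse by exploiting the dichotomy $\TCB=\CB\cup\{B^c:B\in\CB\}$ (which is correct, since $\CB$ is an ideal of $\TCB$ and $\TCB$ is generated by $\CB$ together with the top element), setting $\widetilde{p_B}=(p_B,0)$ and $\widetilde{p_{B^c}}=(-p_B,1)$, and verifying the relations of Definition~\ref{gelado} in the unitization. The paper instead takes the map $\Phi(a,r)=a+r1$ (surjective by Proposition~\ref{lemma.algebra.unital}, multiplicative by inspection) and concentrates all effort on injectivity, which it extracts from the $\zn$-grading: if $r1=-a$ then $a$ is homogeneous of degree zero, and multiplying by $p_A$ for $A$ the complement of the union of the sets $C(\eword,\beta_i)$ and $B_j$ appearing in a degree-zero expansion of $a$ (non-empty precisely because $\algshift$ is not unital) gives $rp_A=0$ and hence $r=0$ by Theorem~\ref{soumavez}. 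Your approach avoids the grading argument at the price of the relation-by-relation casework; the paper's is shorter and is the one actually written. One small confusion at the end: if your two composites are the identity on generating sets, they are the identity, so no additional non-degeneracy input from Corollary~\ref{cachorroquente} is needed at that stage.
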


\begin{proof}
By the definitions of the algebras, Proposition~\ref{gen no unit}, Corollary~\ref{GUTshift no unit} and the inclusion $\CB\scj\TCB$, we have that $\algshift$ is isomorphic to the subalgebra of $\ualgshift$ generated by $\{s_a,s_a^*:a\in\alf\}$. Therefore, we do not need to make the distinction between the generators of  $\algshift$ and $\ualgshift$ and we can consider $\algshift\scj\ualgshift$. Moreover, this inclusion preserves the grading.

Suppose first that $\algshift$ is unital. By Theorem~\ref{soumavez} and \cite[Corollary~6.5]{MR4583730}, $\CB$ has a top element $I$. Suppose that $I\neq\osf$. Then there exists $x=(x_0x_1\cdots)\in\osf\setminus I$, which implies that $C(\eword,x_0)\nsubseteq I$. This is a contradiction, therefore, $I=\osf$ and $\CB=\TCB$. It follows that $\algshift=\ualgshift$.

We are left with the case in which $\algshift$ is not unital. By the universal property of $\algshift\oplus R$ as an $R$-module, there exists a linear map $\Phi:\algshift\oplus R\to\ualgshift$ given by $\Phi(a,r)=a+r1$, which is surjective by Proposition~\ref{lemma.algebra.unital}. It is straightforward to check that $\Phi$ is multiplicative. To show injectivity, suppose that $\Phi(a,r)=0$ for some $(a,r)\in\algshift\oplus R$. Then $r1=-a\in\algshift$. Since $r1$ is of degree 0 in the $\mathbb{Z}-$grading of $\ualgshift$, $-a$ is also of degree 0 (in both the $\mathbb{Z}-$gradings of $\ualgshift$ and $\algshift$). Therefore, we can write
\[-a = \sum_{j=1}^m \lambda_j p_{B_j} + \sum_{i=1}^n \gamma_i s_{\alpha_i}p_{A_i}s_{\beta_i}^*,\]
where $|\alpha_i|=|\beta_i|>0$, $\lambda_j,\gamma_i\in R\setminus\{0\}$ and $B_j, A_i\in\CB$ for each $i,j$. 
Set $A = \osf\setminus(\bigcup_{i=1}^n C(\omega,\beta_i)\cup \bigcup_{j=1}^m B_j)$ and observe that $A\neq\emptyset$, since $\algshift$ is not unital. By Definition~\ref{nonunitalalgebra}, items (i) and (v), we have
\[-ap_A = \left(\sum_{j=1}^m \lambda_j p_{B_j} + \sum_{i=1}^n \gamma_i s_{\alpha_i}p_{A_i}s_{\beta_i}^*\right)p_A = 0.\]
Then $rp_A = -ap_A=0$ and by Theorem~\ref{soumavez}, we conclude that $r=0$. Since $a=-r1$, we have $a=0$, proving that $\Phi$ is injective, as desired.
\end{proof}

\subsection{Graph algebras}
In this subsection, we show that a large class of graph algebras can be seen as subshift algebras. For the reader's convenience, we recall the definition of the Leavitt path algebra associated with a graph. 

\begin{definition}\label{def:graph_LPA}
Let $\CE=(\CE^0,\CE^1,r,s)$ be a directed graph. The \emph{Leavitt path algebra associated with $\CE$ with coefficients in $R$}, denoted by $L_R(\CE)$, is the universal $R$-algebra with generators $\{v : v\in  \CE^0\}$ and $\{e,e^* : e\in \CE^1\}$ subject to the relations
\begin{itemize}
	\item[(V)] $v v' = \delta_{v,v'} v$, for all $v\in \CE^0$;
	\item[(E1)] $s(e)e=er(e)=e$, for all $e\in \CE^1$;
	\item[(E2)] $r(e) e^*=e^*s(e)=e^* $, for all $e\in \CE^1$;
	\item[(CK1)] $e^*e'=\delta_{e,e'}r(e)$, for all $e, e'\in \CE^1$;
	\item[(CK2)] $v=\sum_{e:s(e)=v} e e^*$ for all $v\in \CE^0_{reg}$, that is, the set of vertices $v\in\CE^0$ such that $0<|s^{-1}(v)|<\infty$, called \emph{regular vertices}.
\end{itemize}
\end{definition}

For a graph $\CE$, its associated one-sided edge subshift is the set of all infinite paths, which is the subshift over the alphabet $\alf=\CE^1$ given by the family of forbidden words $\{ef\in\alf^2:r(e)\neq s(f)\}$.

\begin{proposition}\label{LPA}
Let $\CE$ be a graph with no sinks and with no vertex that is simultaneously a source and an infinite emitter. Let $\osf$ be the associated one-sided edge subshift of $\CE$. Then, $\CA_R(\osf)\cong L_R(\CE)$.
\end{proposition}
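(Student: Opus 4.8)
The plan is to build a graded $R$-algebra homomorphism $\pi\colon L_R(\CE)\to\algshift$ from the universal property of $L_R(\CE)$ and to prove it is a graded isomorphism. Identify $\alf=\CE^1$; since $\CE$ has no sinks, every finite path extends to an infinite one, so $\lang$ is exactly the set of finite paths of $\CE$. I would begin by recording the shape of the relevant sets for the edge subshift. For $\beta\in\lang$ the cylinder $Z_\beta$ consists of the infinite paths with prefix $\beta$, while $C(\alpha,\beta)=\{\beta x\in\osf:\alpha x\in\osf\}$ forces $s(x_0)=r(\alpha)=r(\beta)$; hence for $\alpha,\beta\in\lang\setminus\{\eword\}$ one has $C(\alpha,\beta)=Z_\beta$ if $r(\alpha)=r(\beta)$ and $C(\alpha,\beta)=\emptyset$ otherwise. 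In particular $F_\alpha=C(\alpha,\eword)$ is the set $P_{r(\alpha)}:=\{x\in\osf:s(x_0)=r(\alpha)\}$ of infinite paths emitted by the vertex $r(\alpha)$.

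The key structural step, and the only place where the hypotheses are used, is to show that $P_v:=\{x\in\osf:s(x_0)=v\}$ lies in $\CB$ (so that the generator $p_{P_v}$ exists) and is non-empty, for every $v\in\CE^0$. If $v$ is not a source, choose $e\in\CE^1$ with $r(e)=v$; then $P_v=F_e=C(e,\eword)\in\CB$. If $v$ is a source, then by hypothesis $v$ is a finite emitter, and since there are no sinks $s^{-1}(v)=\{e_1,\dots,e_k\}$ is finite and non-empty, so that $P_v=\bigsqcup_{i=1}^kZ_{e_i}$ is a finite union of cylinders and again lies in $\CB$. In both cases $P_v\neq\emptyset$, because the absence of sinks produces an infinite path out of $v$. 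It is precisely a vertex that is simultaneously a source and an infinite emitter that would make $P_v$ an infinite union of cylinders not expressible as a follower set, hence possibly outside $\CB$; this is what the hypotheses rule out.

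I would then define $\pi$ on generators by $v\mapsto p_{P_v}$, $e\mapsto s_e$, $e^*\mapsto s_e^*$ and check the five relations of Definition~\ref{def:graph_LPA} inside $\algshift$. Relation (V) follows from $P_v\cap P_{v'}=\emptyset$ for $v\neq v'$ together with Definition~\ref{nonunitalalgebra}(i). For (E1) and (E2) I use $Z_e\subseteq P_{s(e)}$ (giving $p_{P_{s(e)}}p_{Z_e}=p_{Z_e}$), the identities $s_es_e^*=p_{Z_e}$ and $s_e^*s_e=p_{F_e}=p_{P_{r(e)}}$ from Definition~\ref{nonunitalalgebra}(v),(iv), and relation (ii): for instance $p_{P_{s(e)}}s_e=p_{P_{s(e)}}p_{Z_e}s_e=p_{Z_e}s_e=s_es_e^*s_e=s_e$ (the first equality using $s_e=p_{Z_e}s_e$) and $s_ep_{P_{r(e)}}=s_ep_{F_e}=s_es_e^*s_e=s_e$, the starred identities being symmetric. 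Relation (CK1) is exactly Proposition~\ref{gen no unit}(i), since $s_e^*s_{e'}=\delta_{e,e'}p_{F_e}=\delta_{e,e'}p_{P_{r(e)}}$. Finally, if $v$ is regular then $s^{-1}(v)$ is finite, $P_v=\bigsqcup_{s(e)=v}Z_e$ is a finite disjoint union, and $p_{P_v}=\sum_{s(e)=v}p_{Z_e}=\sum_{s(e)=v}s_es_e^*$, which is (CK2). The universal property of $L_R(\CE)$ then yields $\pi$, and $\pi$ is graded since $p_{P_v},s_e,s_e^*$ are homogeneous of degrees $0,1,-1$.

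It remains to prove $\pi$ is bijective. Surjectivity is immediate: the image contains every $s_e$ and $s_e^*$, and these generate $\algshift$ by Proposition~\ref{gen no unit}(ii). For injectivity I would apply the graded uniqueness theorem for Leavitt path algebras: $\pi$ is a graded homomorphism, and for $r\in R\setminus\{0\}$ and $v\in\CE^0$ we have $\pi(rv)=rp_{P_v}\neq0$, because $P_v\in\CB\setminus\{\emptyset\}$ and $rp_A\neq0$ for every non-empty $A\in\CB$ by Theorem~\ref{soumavez}. Hence $\pi$ is injective, and $\algshift\cong L_R(\CE)$ as $\zn$-graded $R$-algebras. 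As an alternative to the uniqueness theorem, the forced assignment $s_a\mapsto a$, $s_a^*\mapsto a^*$, $p_{Z_\beta}\mapsto\beta\beta^*$, $p_{F_\alpha}\mapsto r(\alpha)$ can be checked to extend to an inverse homomorphism, the simplification $C(\alpha,\beta)\in\{Z_\beta,\emptyset\}$ making the Boolean algebra $\CB$ tractable; but the route via Theorem~\ref{soumavez} is shorter. The main obstacle is the structural step of the second paragraph---identifying the correct idempotents $p_{P_v}$ and proving $P_v\in\CB\setminus\{\emptyset\}$, which is exactly what the absence of sinks and of source infinite-emitters guarantees. Once this is in place, the Leavitt relations and the bijectivity argument are routine.
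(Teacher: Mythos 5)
Your proposal is correct and follows essentially the same route as the paper: associate to each vertex $v$ the idempotent of the set $P_v=\{x\in\osf:s(x_0)=v\}$, realised as the follower set $C(e,\eword)$ for $e\in r^{-1}(v)$ when $v$ is not a source and as the finite disjoint union $\bigsqcup_{e\in s^{-1}(v)}Z_e$ when $v$ is a source (this is exactly where both hypotheses enter), verify the Leavitt path algebra relations, and conclude via surjectivity onto the generators $s_e,s_e^*$ together with the graded uniqueness theorem and the non-vanishing $rp_A\neq 0$ from Theorem~\ref{soumavez}. Your uniform packaging of the vertex idempotents as $p_{P_v}$ with $P_v\in\CB\setminus\{\emptyset\}$ streamlines the relation checks slightly, but the argument is the same as the paper's.
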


\begin{proof}
Notice that for $e\in\CE^1$, we have that $C(e,\eword)=\{x \in\osf: s(x)=r(e)\}$. Hence, for $v\in\CE^0$ that is not a source, there exists $e\in r^{-1}(v)$. In this case, $\{x\in\osf: s(x)=v\} = C(e,\eword)$.

With this in mind, we build a map from $L_R(\CE)$ to $\CA_R(\osf)$ as follows. For $e\in\CE$, we set $t_e:=s_e$ and $t_{e^*}:=s_e^*$. For $v\in\CE^0$ that is not a source, we let $e\in r^{-1}(v)$ and set $q_v:=p_{C(e,\eword)}$. This does not depend on $e$ since $C(e,\eword)$ depends only on $r(e)=v$. If $v\in\CE^0$ is a source, then it is not an infinite emitter (by hypothesis) and we define $q_v := \sum_{e\in s^{-1}(v)} s_e s_e^*$. By Items~(i) and (v) of Definition~\ref{nonunitalalgebra}, we can also write $q_v=\sum_{e\in s^{-1}(v)}p_{C(\eword,e)}=p_{\bigcup_{e\in s^{-1}(v)}C(\eword,e)}$.

We prove that the families $\{t_e,t_{e^*}: e\in\CE^1\}$ and $\{q_v: v\in\CE^0\}$ satisfy the relations of $L_R(\CE)$ in Definition~\ref{def:graph_LPA}.

(V) For $v\in\CE^0$, $q_vq_v=q_v$ because $p_A$ is idempotent for all $A\in\CB$.

Next, let $v,v'\in\CE^0$ be such that $v\neq v'$ and note that $r^{-1}(v)\cap r^{-1}(v')=\emptyset$. If $v$ and $v'$ are not sources, then, for $e\in r^{-1}(v)$ and $e'\in r^{-1}(v')$, we have that $C(e,\eword)\cap C(e',\eword)=\emptyset$, from where we get $q_vq_{v'}=0$. Suppose next that $v$ is a source and $v'$ is not a source. Let $e'\in r^{-1}( v')$. Then, $q_v q_{v'} = \sum_{e\in s^{-1}(v)} p_{C(\eword,e)} p_{C(e',\eword)} = 0$, since if $e\in s^{-1}(v)$ then $C(e',\eword)\cap C(\eword,e)=\emptyset$, otherwise there exists an element $e'ex\in\osf$, implying $v$ is not a source. For the last case, suppose that both $v$ and $v'$ are sources. Since $v\neq v'$, the result follows from Items (i) and (v) in Definition~\ref{nonunitalalgebra}.

(E1) Let $e\in\CE^1$. If $s(e)$ is a source, then it follows from Item (ii) in Definition~\ref{nonunitalalgebra} and Proposition~\ref{gen no unit} that $q_{s(e)} t_e = t_e$. Suppose that $s(e)$ is not a source. Take $f\in\CE^1$ such that $r(f)=s(e)$. Notice that $C(e,\eword)=C(fe,\eword)$. We then have that
\[q_{s(e)}t_e=p_{C(f,\eword)}s_e=s_ep_{r(C(f,\eword),e)}=s_ep_{C(fe,\eword)}=s_ep_{C(e,\eword)}=s_es_e^*s_e=s_e=t_e.\]
Since $q_{r(e)}=p_{C(e,\eword)}$, the last part of the above computation also shows that $t_eq_{r(e)}=t_e$.

(E2) This is analogous to (E1).

(CK1) For $e,f\in\CE^1$, by Proposition~\ref{gen no unit}, we have that
\[t_e^*t_f=s_e^*s_f=\delta_{e,f}p_{C(e,\eword)}=\delta_{e,f}q_{r(e)}.\]

(CK2) If $v\in\CE^0_{reg}$ is a source then (CK2) is clearly satisfied. Suppose that $v\in\CE^0_{reg}$ and $e\in r^{-1}(v)$. Then, $C(e,\eword)$ can be written as a finite disjoint union as $C(e,\eword)=\bigsqcup_{f\in s^{-1}(v)} C(\eword,f)$. Hence
\[q_v=p_{C(e,\eword)}=\sum_{f\in s^{-1}(v)}p_{C(\eword,f)}=\sum_{f\in s^{-1}(v)}s_fs_f^*=\sum_{f\in s^{-1}(v)}t_ft_f^*.\]

By the universal property of $L_R(\CE)$, we obtain an $R$-algebra homomorphism $\Phi:L_R(\CE)\to\CA_R(\osf)$ which is surjective because the set $\{s_e,s_e^*: e\in\CE^1\}$ generates $\CA_R(\osf)$ by Proposition~\ref{gen no unit}. It is easy to see that this homomorphism is $\zn$-graded. Moreover, if $r\in R\setminus\{0\}$ and $v\in\CE^0$ is not a source, then $rq_v=rp_{C(e,\eword)}$ for some $e\in r^{-1}(v)$. Because the graph has no sinks, $C(e,\eword)\neq\emptyset$, which implies that $rq_v\neq 0$ by Theorem~\ref{soumavez}. Similarly, if $r\in R\setminus\{0\}$ and $v\in\CE^0$ is a source, $rq_v=rp_{\bigcup_{e\in s^{-1}(v)}C(\eword,e)}\neq 0$. Using the Graded Uniqueness Theorem for Leavitt path algebras \cite[Theorem~5.3]{LPA_ring}, we conclude that $\Phi$ is an isomorphism.
\end{proof}

Proposition~\ref{LPA} may fail to be true if there is a vertex that is simultaneously a source and an infinite emitter.

\begin{example}\label{theorPropfail}
Consider a graph $\CE$ such that $\CE^0=\{v,w\}$, $\CE^1=\{e_n\}_{n\in\nn}\cup\{f\}$, $s(e_n)=v$ and $r(e_n)=w=s(f)=r(f)$ for all $n\in\nn$. We have that $L_R(\CE)$ is unital because $\CE^0$ is finite \cite[Section~4.2]{LPA_ring}. On the other hand, we note that for every $\alpha,\beta\in\lang$ both not simultaneously the empty word, the set $C(\alpha,\beta)$ is either empty or a singleton. This implies that $\CB$ is the family of finite subsets of $\osf$, so that $\osf\notin\CB$. By Proposition~\ref{jabutirapido}, $\algshift$ is not unital. Therefore, we cannot have that $L_R(\CE)$ is isomorphic to $\algshift$.
\end{example}

\subsection{Ultragraph algebras} In this subsection, we focus on ultragraph algebras, which include algebras associated with infinite matrices (see \cite{imanfar2017leavitt}).

\begin{definition}\label{def of ultragraph}
	An \emph{ultragraph} is a quadruple $\mathcal{G}=(G^0, \mathcal{G}^1, r,s)$ consisting of two countable sets $G^0, \mathcal{G}^1$, a map $s:\mathcal{G}^1 \to G^0$, and a map $r:\mathcal{G}^1 \to \powerset{G^0}\setminus \{\emptyset\}$, where $\powerset{G^0}$ is the power set of $G^0$.
\end{definition}

\begin{definition}\label{prop_vert_gen}
	Let $\mathcal{G}$ be an ultragraph. Define $\mathcal{G}^0$ to be the smallest subset of $\powerset{G^0}$ that contains $\{v\}$ for all $v\in G^0$, contains $r(e)$ for all $e\in \mathcal{G}^1$, contains $\emptyset$, and is closed under finite unions and finite intersections. Elements of $\mathcal G^0$ are called \emph{generalised vertices}.
\end{definition}

For an ultragraph $\CG$, its associated one-sided edge subshift, $\osf_\CG$, is the set of all infinite paths, which is the subshift over the alphabet $\alf=\CG^1$ given by the family of forbidden words $\{ef\in\alf^2:s(f)\notin r(e)\}$.

\begin{example}
We can recode the vertex subshift $\osf_A$ associated with a matrix $A$ as the edge subshift of an ultragraph. Indeed, given a matrix $A$, let $\mathcal G$ be the associated ultragraph (as defined by Tomforde in \cite{MR2050134}). Then the map $(e_i)\in \osf_\mathcal G \rightarrow (s(e_i))\in \osf_A$ is a bijective 1-step code between the subshifts, so the shift algebras associated are the same.
\end{example}

The following description of $\mathcal G^0$ will be useful.

\begin{lemma}\label{description}\cite[Lemma~2.12]{MR2050134}
If $\mathcal{G}$ is an ultragraph, then \begin{align*} \mathcal{G}^0 = \{
\bigcap_{e
\in X_1} r(e)
\cup \ldots 
\cup \bigcap_{e \in X_n} r(e) \cup F : & \ \text{$X_1,
\ldots, X_n$ are finite subsets of $\mathcal{G}^1$} \\ & \text{ and $F$
is a finite subset of $G^0$} \}.
\end{align*}  
Furthermore, $F$ may be chosen to
be disjoint from $\bigcap_{e \in X_1} r(e) \cup \ldots 
\cup \bigcap_{e \in X_n} r(e)$.
\end{lemma}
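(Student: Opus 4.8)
The plan is to prove the equality by the standard argument that identifies a family defined as the smallest set closed under given operations with an explicit description. Write $\mathcal{C}$ for the collection of all subsets of $G^0$ of the displayed form $\bigcup_{i=1}^n\bigcap_{e\in X_i}r(e)\cup F$, where we allow $n=0$ (interpreting the empty union as $\emptyset$) and $F$ a finite subset of $G^0$. I will show that $\mathcal{C}\scj\mathcal{G}^0$ and, conversely, that $\mathcal{C}$ contains all the generators listed in Definition~\ref{prop_vert_gen} and is closed under finite unions and finite intersections; minimality of $\mathcal{G}^0$ then forces $\mathcal{G}^0\scj\mathcal{C}$, which gives the equality.

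The inclusion $\mathcal{C}\scj\mathcal{G}^0$ is immediate from the closure properties in Definition~\ref{prop_vert_gen}: each $r(e)$ lies in $\mathcal{G}^0$, so each $\bigcap_{e\in X_i}r(e)$ lies in $\mathcal{G}^0$ by closure under finite intersections; each $\{v\}$ lies in $\mathcal{G}^0$, so $F=\bigcup_{v\in F}\{v\}$ lies in $\mathcal{G}^0$ by closure under finite unions; and hence any finite union of such sets lies in $\mathcal{G}^0$. For the reverse inclusion, note first that $\mathcal{C}$ contains the generators: $\emptyset$ arises with $n=0$ and $F=\emptyset$, the singleton $\{v\}$ with $n=0$ and $F=\{v\}$, and $r(e)$ with $n=1$, $X_1=\{e\}$ and $F=\emptyset$. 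Closure of $\mathcal{C}$ under finite unions is clear, since one may simply concatenate the two lists of intersection terms and replace the two finite sets by their union.

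The one step requiring care is closure under finite intersection, and this is where I expect the only real (though still routine) work to lie. Writing two elements of $\mathcal{C}$ as $S=U\cup F$ and $T=V\cup G$, with $U=\bigcup_i\bigcap_{e\in X_i}r(e)$ and $V=\bigcup_j\bigcap_{e\in Y_j}r(e)$, distributivity of $\cap$ over $\cup$ gives
\[S\cap T=(U\cap V)\cup(U\cap G)\cup(F\cap V)\cup(F\cap G).\]
Here $U\cap V=\bigcup_{i,j}\bigcap_{e\in X_i\cup Y_j}r(e)$ is again a finite union of range-intersections, while the remaining three terms are each contained in the finite set $F$ or $G$ and hence are finite subsets of $G^0$; their union can therefore be absorbed into the finite-set part. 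Thus $S\cap T\in\mathcal{C}$, which establishes $\mathcal{G}^0\scj\mathcal{C}$ and completes the proof of the displayed description.

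Finally, for the ``Furthermore'' clause, given any $S=U\cup F\in\mathcal{C}$ I would simply replace $F$ by $F\setminus U$: this changes neither the set $U\cup F$ nor the finiteness of the second summand, and the new finite set is by construction disjoint from $U=\bigcup_i\bigcap_{e\in X_i}r(e)$.
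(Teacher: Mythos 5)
Your proof is correct and complete: the paper itself gives no argument for this lemma, citing it directly from Tomforde's work, and your verification (both inclusions via minimality, with the only substantive step being closure under intersection handled by distributivity and absorption of the finite leftover terms into $F$) is exactly the standard argument behind the cited result. The ``Furthermore'' clause via replacing $F$ by $F\setminus U$ is also handled correctly.
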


\begin{example}\label{renovando} Let $\mathcal G$ be the ultragraph associated with the renewal shift (see, for instance, \cite{MR1822107}), that is, the ultragraph with a countable set of vertices, say $G^0=\{1,2,\ldots\}$, and a countable set of edges, say $\{e_1, e_2,\ldots\}$, such that $s(e_i)=i$, for all $i$, $r(e_1)=G^0$ and $r(e_j)=j-1$ for $j>1$. This ultragraph is depicted below. Let $\osf_\CG$ be the associated subshift and note that $\TCB=\CB$ because $C(e_1,\eword)=\osf_\CG$. Hence $\CA_R(\osf_\CG)=\TCA_R(\osf_\CG)$ by Proposition~\ref{jabutirapido}.

\begin{figure}[!htb]
\begin{center}
\begin{tikzpicture}[scale=1.0]
\draw   (0,0) -- (7,0);
\node[circle, draw=black, fill=white, inner sep=1pt,minimum size=5pt] (1) at (0,0) {1};
\node[circle, draw=black, fill=white, inner sep=1pt,minimum size=5pt] (2) at (1,0) {2};
\node[circle, draw=black, fill=white, inner sep=1pt,minimum size=5pt] (3) at (2,0) {3};
\node[circle, draw=black, fill=white, inner sep=1pt,minimum size=5pt] (4) at (3,0) {4};
\node[circle, draw=black, fill=white, inner sep=1pt,minimum size=5pt] (5) at (4,0) {5};
\node[circle, draw=black, fill=white, inner sep=1pt,minimum size=5pt] (6) at (5,0) {6};
\node[circle, draw=black, fill=white, inner sep=1pt,minimum size=5pt] (7) at (6,0) {7};
\node (8) at (7.5,0) {$\cdots$};
\draw[->, >=stealth] (2)  to (1);
\draw[->, >=stealth] (3)  to (2);
\draw[->, >=stealth] (4)  to (3);
\draw[->, >=stealth] (5)  to (4);
\draw[->, >=stealth] (6)  to (5);
\draw[->, >=stealth] (7)  to (6);
\node [minimum size=10pt,inner sep=0pt,outer sep=0pt] {} edge [in=200,out=100,loop, >=stealth] (0);
\draw[->, >=stealth] (1)  to [out=90,in=90, looseness=1] (2);
\draw[->, >=stealth] (1)  to [out=90,in=90, looseness=1] (3);
\draw[->, >=stealth] (1)  to [out=90,in=90, looseness=1] (4);
\draw[->, >=stealth] (1)  to [out=90,in=90, looseness=1] (5);
\draw[->, >=stealth] (1)  to [out=90,in=90, looseness=1] (6);
\draw[->, >=stealth] (1)  to [out=90,in=90, looseness=1] (7);
\end{tikzpicture}
\end{center}
\end{figure}

\end{example}

Next, we will show that, for an ultragraph with only regular vertices (that is, vertices $v$ such that $0<|s^{-1}(v)|<\infty$), the subshift algebra associated with its one-sided edge subshift is isomorphic to the associated ultragraph Leavitt path algebra. In particular, this will include algebras associated with infinite matrices, as the ultragraph associated with an infinite matrix has only regular vertices. Before we proceed, we recall the definition of ultragraph Leavitt path algebras below; see \cite{GDD2, goncalves_royer_2019, imanfar2017leavitt}.

\begin{definition}\label{def of ultragraph algebra}
	Let $\mathcal{G}$ be an ultragraph. The Leavitt path algebra of $\mathcal{G}$, denoted by $L_R(\mathcal{G})$, is the universal $R$-algebra with generators $\{s_e,s_e^*:e\in \mathcal{G}^1\}\cup\{p_A:A\in \mathcal{G}^0\}$ and relations
	\begin{enumerate}
		\item $p_\emptyset=0,  p_Ap_B=p_{A\cap B},  p_{A\cup B}=p_A+p_B-p_{A\cap B}$, for all $A,B\in \mathcal{G}^0$;
		\item $p_{s(e)}s_e=s_ep_{r(e)}=s_e$ and $p_{r(e)}s_e^*=s_e^*p_{s(e)}=s_e^*$ for each $e\in \mathcal{G}^1$;
		\item $s_e^*s_f=\delta_{e,f}p_{r(e)}$ for all $e,f\in \mathcal{G}$;
		\item $p_v=\sum\limits_{s(e)=v}s_es_e^*$ whenever $0<\vert s^{-1}(v)\vert< \infty$.
	\end{enumerate}
\end{definition}

\begin{proposition}\label{LPAU}
Let $\mathcal G$ be an ultragraph such that every vertex is regular and let  $\osf_\CG$ be its one-sided edge shift. Then $\CA_R(\osf_\CG)\cong L_R(\CG)$.
\end{proposition}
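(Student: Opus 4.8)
The plan is to build mutually inverse $\zn$-graded homomorphisms between $L_R(\CG)$ and $\CA_R(\osf_\CG)$, following the same strategy as in Proposition~\ref{LPA} but exploiting the hypothesis that every vertex is regular to handle the ultragraph structure on ranges. The key simplification afforded by regularity is that there are no sinks and no sources that are infinite emitters (regularity forces $0<|s^{-1}(v)|<\infty$ for every $v$), so the pathologies of Example~\ref{theorPropfail} cannot occur; moreover every vertex $v$ satisfies the (CK2)-type relation, and $\CB=\widetilde{\CB}$ need not hold, so one should work with $\CA_R(\osf_\CG)$ throughout.

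\textbf{Constructing the map $L_R(\CG)\to\CA_R(\osf_\CG)$.} First I would define, for each edge $e\in\CG^1$, $t_e:=s_e$ and $t_{e^*}:=s_e^*$, and for each generalised vertex $A\in\CG^0$ a projection $q_A\in\CA_R(\osf_\CG)$. The crux is choosing $q_A$ correctly. Using Lemma~\ref{description}, every $A\in\CG^0$ is a finite union of finite intersections of ranges $r(e)$, together with a finite set $F\subseteq G^0$ of singletons. For a single range set $r(e)$, the natural guess is $q_{r(e)}:=p_{C(e,\eword)}$, since $C(e,\eword)=\{x\in\osf_\CG:s(x)\in r(e)\}$, i.e. the cylinder of infinite paths whose first edge starts in $r(e)$. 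For a singleton $\{v\}$, since $v$ is regular (hence not an infinite emitter), I would set $q_{\{v\}}:=\sum_{e\in s^{-1}(v)}s_es_e^*=p_{\bigcup_{e\in s^{-1}(v)}C(\eword,e)}$, exactly as the source case in Proposition~\ref{LPA}. Then I would extend $q$ to all of $\CG^0$ via the lattice relations (i) of Definition~\ref{nonunitalalgebra}, \emph{after} verifying that these assignments are compatible with finite unions and intersections; the compatibility for intersections is the delicate point, because $C(e,\eword)\cap C(f,\eword)$ must correspond to $r(e)\cap r(f)$, which amounts to checking $C(e,\eword)\cap C(f,\eword)=\{x\in\osf_\CG:s(x)\in r(e)\cap r(f)\}$ and that this agrees with the appropriate element of $\CB$.

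\textbf{Verifying the relations and the reverse map.} With $q$ defined, I would check the four relations of Definition~\ref{def of ultragraph algebra} for the families $\{t_e,t_{e^*}\}$ and $\{q_A\}$. Relation (1) follows from relation (i) of $\CA_R(\osf_\CG)$ once the lattice compatibility above is established. Relations (2) and (3) mirror (E1), (E2), (CK1) in Proposition~\ref{LPA}: for instance $s_e^*s_f=\delta_{e,f}p_{C(e,\eword)}=\delta_{e,f}q_{r(e)}$ by Proposition~\ref{gen no unit}, and $q_{s(e)}t_e=t_e$ uses that $s(e)$ regular gives $q_{s(e)}=\sum_{g\in s^{-1}(s(e))}s_gs_g^*$ together with relations (ii) and (v). For relation (4), regularity of $v$ lets one write $C(e,\eword)$ (for any $e$ with $v\in r(e)$) or, more directly, $q_{\{v\}}=\sum_{s(e)=v}s_es_e^*=\sum_{s(e)=v}t_et_e^*$ by construction. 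This yields a graded homomorphism $\Phi:L_R(\CG)\to\CA_R(\osf_\CG)$ by the universal property, which is surjective since $\{s_e,s_e^*\}$ generate $\CA_R(\osf_\CG)$ by Proposition~\ref{gen no unit}. For injectivity I would apply the Graded Uniqueness Theorem for ultragraph Leavitt path algebras, checking $r q_{\{v\}}\neq 0$ for $r\neq 0$; since $v$ is regular there is an edge out of $v$, so $\bigcup_{e\in s^{-1}(v)}C(\eword,e)\neq\emptyset$ and Theorem~\ref{soumavez} gives $rq_{\{v\}}\neq0$.

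\textbf{The main obstacle} I expect is the well-definedness and intersection-compatibility of $A\mapsto q_A$ on $\CG^0$: because $\CG^0$ is generated from the range sets $r(e)$ under \emph{set-theoretic} unions and intersections, while $\CB$ is generated by cylinder/follower sets $C(\alpha,\beta)$, one must show that the correspondence $r(e)\leftrightarrow C(e,\eword)$ and $\{v\}\leftrightarrow\bigcup_{s(e)=v}C(\eword,e)$ respects all the Boolean identities holding in $\CG^0$, and in particular that distinct representations of the same $A\in\CG^0$ (as in Lemma~\ref{description}) give the same $q_A$. Overcoming this requires a careful translation between the ultragraph's Boolean algebra $\CG^0$ and the subshift Boolean algebra $\CB$, likely realising both inside $\powerset{\osf_\CG}$ via the identification $A\mapsto\{x\in\osf_\CG:s(x)\in A\}$ and showing this is a Boolean homomorphism $\CG^0\to\CB$. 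Once this identification is in place, the reverse homomorphism $\CA_R(\osf_\CG)\to L_R(\CG)$ is built analogously (sending $p_{C(\alpha,\beta)}$ to the corresponding product of $s_e,s_e^*$ in $L_R(\CG)$ and checking Definition~\ref{nonunitalalgebra}), and one verifies the two maps are mutually inverse graded isomorphisms.
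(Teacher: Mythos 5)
Your proposal is correct and follows essentially the same route as the paper: the paper defines $q_A:=p_{A'}$ where $A'=\{x\in\osf_\CG:s(x)\in A\}$, which is exactly the Boolean-homomorphism identification $\CG^0\to\CB$ you single out as the key point (with $r(e)'=C(e,\eword)$ and $\{v\}'=\bigcup_{e\in s^{-1}(v)}C(\eword,e)$, using Lemma~\ref{description}), and then verifies the four relations and concludes via surjectivity and the Graded Uniqueness Theorem for ultragraph Leavitt path algebras. The only cosmetic difference is that the paper checks $rq_A\neq 0$ for all non-empty $A\in\CG^0$ rather than only for singletons, and does not need to construct an explicit inverse map.
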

\begin{proof}

Notice that for $e\in\mathcal G^1$, we have that $F_e= C(e,\eword)=\{x \in\osf_\CG: s(x)\in r(e)\}$.

For each $A\in P(G^0)$, we let $A'=\{x\in\osf_\CG: s(x)\in A\}$. Clearly, for $A,B\in P(G^0)$, we have that $(A\cup B)'=A'\cup B'$ and $(A\cap B)'=A'\cap B'$. Also, for $v\in G^0$, because $v$ is regular, we have $\{v\}'=\bigcup_{e\in s^{-1}(v)}C(\eword,e)\in\CB$. And for $e\in \CG^1$, we have $r(e)'=C(e,\eword)\in\CB$. It follows from Lemma~\ref{description} that $A'\in\CB$ for all $A\in\CG^0$.

With this in mind, we build a map from $L_R(\mathcal G)$ to $\CA_R(\osf_\CG)$ as follows. For $e\in\mathcal G^1$, we set $t_e:=s_e$ and $t_{e}^*:=s_e^*$. For each $A\in\CG^0$, we set $q_A:=p_{A'}$.

We prove that the families $\{t_e,t_{e}^*: e\in\mathcal G^1\}$ and $\{q_A: A\in\mathcal G^0\}$ satisfy the relations defining $L_R(\mathcal G)$.

(1) For the empty set, we have $q_{\emptyset}=p_{\emptyset'}=p_{\emptyset}=0$. If $A,B\in\CG^0$, then $q_Aq_B=p_{A'}p_{B'}=p_{A'\cap B'}=p_{(A\cap B)'}=q_{A\cap B}$. Similarly, we see that $q_{A\cup B}=q_A+q_B-q_{A\cap B}$.

(2)  Let $e\in\mathcal G^1$ and set $v=s(e)$. Then,
\[q_{s(e)}t_e=\sum_{f\in s^{-1}(v)}p_{C(\eword,f)} s_e=\sum_{f\in s^{-1}(v)}s_fs_f^* s_e =s_e=t_e, \] and
\[t_eq_{r(e)}=s_e p_{C(e,\eword)}= s_e s_e^* s_e=s_e=t_e. \]
The relations involving $t_{e^*}$ follow analogously.

(3) For $e,f\in\mathcal G_1$, using Proposition~\ref{gen no unit}, we have that
\[t_e^*t_f=s_e^*s_f=\delta_{e,f}p_{C(e,\eword)}=\delta_{e,f}q_{r(e)}.\]

(4) If $v$ is a regular vertex, then 
\[q_v=\sum_{f\in s^{-1}(v)}p_{C(\eword,f)}=\sum_{f\in s^{-1}(v)}s_fs_f^*=\sum_{f\in s^{-1}(v)}t_ft_f^*.\]

By the universal property of $L_R(\mathcal G)$, we obtain an $R$-algebra homomorphism $\Phi:L_R(\mathcal G)\to\CA_R(\osf_\CG)$ which is surjective because the set $\{s_e,s_e^*: e\in\mathcal G^1\}$ generates $\CA_R(\osf_\CG)$ by Proposition~\ref{gen no unit}. It is easy to see that this homomorphism is $\zn$-graded. Moreover, if $r\in R\setminus\{0\}$ and $A \in\mathcal G^0$ is non-empty, then $rq_A\neq 0$ by Theorem~\ref{soumavez}. Applying the Graded Uniqueness Theorem for ultragraphs \cite[Theorem 5.4]{GDD2}, we conclude that $\Phi$ is an isomorphism.
\end{proof}

The class of Leavitt path algebras in Proposition \ref{LPAU} contains algebras that cannot be obtained as the Leavitt path algebra of a graph. For example, the ultragraph with vertices $\{v\}\cup\{w_i: i\in \N\}$, and edges $\{e\}\cup\{f_i: i\in \N\}$, with $s(e)=v$, $r(e)=\{w_i:i\in\N\}$, and $s(f_i)=r(f_i)=\{w_i\}$ belongs to the aforementioned class and, by  \cite[Proposition~2.7]{NamGonc} the associated Leavitt path algebra is not isomorphic to the Leavitt path algebra of any graph.

\section{Unital algebras of subshifts via partial actions}\label{skewrings}

In this section we give two descriptions of $\ualgshift$ as partial skew group rings, one arising from a set-theoretic partial action (Subsection \ref{theoretic.pa}) and the other from a topological partial action (Subsection \ref{topological.pa}). We refer the reader to \cite{ExelBook} for background on partial actions. See also \cite[Definition 4.2]{ExelLaca03} for the definitions of semi-saturated and orthogonal partial actions.

Let $\osf$ be a subshift over an alphabet $\alf$.

\subsection{The partial skew group ring $\udalgshift\rtimes_{\tau}\F$}\label{theoretic.pa}
 Let $\CF(\osf,R)$ denote the $R$-algebra of functions from $\osf$ to $R$ with pointwise operations. Then we let $\udalgshift$ be the subalgebra of $\CF(\osf,R)$ generated by the characteristic functions of the sets $C(\alpha,\beta)$, where $\alpha,\beta\in\lang$. Let $\F$ be the free group generated by $\alf$ with the empty word $\eword$ as the identity of $\F$.

Our goal in this subsection is to show that there is a partial action $\tau$ of $\F$ on $\udalgshift$ such that $\ualgshift$ is isomorphic to the partial skew group ring $\udalgshift\rtimes_{\tau}\F$.

For $a\in\alf$, we define $\tauh_{a}:C(a,\eword)\to C(\eword,a)$ by 
\begin{equation}\label{eq:partial1}
    \tauh_{a}(x)=a x
\end{equation}
and $\tauh_{a^{-1}}:C(\eword,a)\to C(a,\eword)$ by 
\begin{equation}\label{eq:partial2}
    \tauh_{a^{-1}}(ax)= x.
\end{equation}

\begin{proposition}\label{set.partial.action}
 The maps $\tauh_a$ and $\tauh_{a^{-1}}$, with $a\in\alf$, define a unique orthogonal and semi-saturated partial action
$\tauh=\left(\{W_t\}_{t\in\F},\{\tauh_t\}_{t\in\F}  \right)$ of $\F$ on $\osf$ such that  $W_{\beta\alpha^{-1}} = C(\alpha,\beta)$ and
\[\tauh_{\alpha\beta^{-1}}(\beta x)=\alpha x\]
for every $\beta x\in C(\alpha,\beta)$ and $\alpha,\beta\in\lang$ with $\beta\alpha^{-1}$ in reduced form. Moreover, if $t\neq \alpha\beta^{-1}$ for every $\alpha,\beta\in\lang$, then $W_t=\emptyset$.
\end{proposition}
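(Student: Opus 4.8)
The plan is to build $\tauh$ by \emph{forcing} semi-saturation. Since $\F$ is free on $\alf$, every $t\in\F$ has a unique reduced expression $t=c_1\cdots c_n$ with $c_i\in\alf\cup\alf^{-1}$, and no cancellation occurs in any partial product $c_1\cdots c_k$; hence any semi-saturated partial action extending the generator maps \eqref{eq:partial1}--\eqref{eq:partial2} must satisfy $\tauh_t=\tauh_{c_1}\circ\cdots\circ\tauh_{c_n}$. I would therefore take this composition of partial bijections as the \emph{definition} of $\tauh_t$ (with $\tauh_e=\mathrm{id}_\osf$), set $W_t:=\Img(\tauh_t)$ and $W_{t^{-1}}:=\dom(\tauh_t)$, and then verify that the resulting family is a partial action. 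Uniqueness is then automatic from the forcing argument, and the formula at length one reproduces \eqref{eq:partial1}--\eqref{eq:partial2} (taking $\alpha=a,\beta=\eword$ and $\alpha=\eword,\beta=a$).

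The core is an explicit computation of these composites, by induction on $n=|t|$. Each $\tauh_a$ prepends the letter $a$ to an element of $C(a,\eword)=F_a$, while each $\tauh_{a^{-1}}$ strips a leading $a$ from an element of $C(\eword,a)=Z_a$; tracking the accumulating domain constraints, I would show that for a reduced word $t=\beta\alpha^{-1}$ with $\alpha,\beta\in\lang$ (so $\beta$ is the maximal positive prefix and $\alpha^{-1}$ the negative suffix, reducedness meaning $\alpha,\beta$ do not end in a common letter) one gets exactly $W_t=C(\alpha,\beta)$ and $\tauh_{\alpha\beta^{-1}}(\beta x)=\alpha x$. The emptiness clause falls out of the same analysis: a reduced $t$ that is \emph{not} of the form $\beta\alpha^{-1}$ must contain a subword $a^{-1}b$ with $a\neq b$, and there the composite requires an intermediate point in $\Img(\tauh_b)\cap\dom(\tauh_{a^{-1}})=Z_b\cap Z_a=\emptyset$, so $W_t=\emptyset$. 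This simultaneously settles the ``Moreover'' statement and yields orthogonality, since the ranges $W_a=Z_a$ are pairwise disjoint because $\osf=\bigsqcup_{a\in\alf}Z_a$.

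With the explicit description $\tauh_{\alpha\beta^{-1}}\colon C(\alpha,\beta)\to C(\beta,\alpha)$, $\beta x\mapsto\alpha x$, in hand, I would finish by checking the partial-action axioms. That each $\tauh_t$ is a bijection with $\tauh_{t^{-1}}=\tauh_t^{-1}$ is immediate, since $\beta x\mapsto\alpha x$ and $\alpha x\mapsto\beta x$ are mutually inverse on the relevant sets $C(\alpha,\beta)$ and $C(\beta,\alpha)$. Semi-saturation is essentially built into the definition: when $|st|=|s|+|t|$ the reduced word for $st$ is the concatenation of those for $s$ and $t$, and associativity of composition of partial bijections gives $\tauh_{st}=\tauh_s\circ\tauh_t$. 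It then remains to prove the general cocycle inclusion $\tauh_t\circ\tauh_s\subseteq\tauh_{ts}$ for all $s,t\in\F$, from which $\tauh_t(W_{t^{-1}}\cap W_s)=W_t\cap W_{ts}$ follows in the standard way.

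I expect this last inclusion to be the main obstacle: translating the free-group cancellation combinatorics in the product $ts$ into the correct manipulations of the sets $C(\alpha,\beta)$, $Z_\beta$ and $F_\alpha$, and handling the cases according to how far the positive suffix of the reduced form of $s$ cancels against the negative prefix of that of $t$ (or vice versa). Once the emptiness clause discards every word containing an interior $a^{-1}b$ valley, only a few cancellation patterns survive, and each is verified by rewriting both sides as a single $C(\cdot,\cdot)$ using the definition of $C(\alpha,\beta)$ in Definition~\ref{Arapaima} together with the disjointness of the cylinders $Z_a$.
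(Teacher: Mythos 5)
Your proposal is correct. Be aware, though, that the paper does not actually write out a proof of this proposition: its entire argument is the sentence that the partial action ``is essentially that of \cite[Section~4]{MishaRuy}, but for an arbitrary alphabet'', with the details left to the reader, so what you have produced is the argument the authors are alluding to rather than an alternative to it. Your construction --- define $\tauh_t$ as the composite of the generator partial bijections along the unique reduced word for $t$, check that for $t=\alpha\beta^{-1}$ in reduced form this composite is exactly $\beta x\mapsto\alpha x$ with domain $W_{\beta\alpha^{-1}}=C(\alpha,\beta)$, and observe that any reduced word containing a subword $a^{-1}b$ yields the empty composite because $Z_a\cap Z_b=\emptyset$ --- is sound, and that last observation correctly delivers both orthogonality and the ``moreover'' clause, provided you also note that $C(\alpha,\beta)=\emptyset$ whenever $\alpha$ or $\beta$ fails to lie in $\lang$. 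One economy worth pointing out: the existence and uniqueness of a semi-saturated partial action of a free group extending prescribed partial bijections on the generators is precisely \cite[Proposition~4.10]{ExelBook}, which the paper invokes in the very next paragraph for the dual action $\tau$; citing it here as well would spare you the cocycle inclusion $\tauh_t\circ\tauh_s\subseteq\tauh_{ts}$ --- the step you rightly flag as the main labour --- and reduce the proof to the explicit computation of the composites along reduced words.
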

\begin{proof}
The partial action is essentially that of \cite[Section~4]{MishaRuy}, but for an arbitrary alphabet. We leave the details to the reader. 
\end{proof}

Next, we associate an algebraic partial action with $\tauh$, similar to the dual action of a topological partial action. For $\alpha,\beta\in \lang$ such that $\alpha\beta^{-1}$ is in reduced form, let $1_{\alpha\beta^{-1}}$ denote the characteristic function of $W_{\alpha\beta^{-1}}=C(\beta,\alpha)$ and $D_{\alpha\beta^{-1}}$ the ideal of $\udalgshift$ generated by $1_{\alpha\beta^{-1}}$. Note that $D_{\alpha\beta^{-1}}$ is the ideal of functions in $\udalgshift$ that vanish on $C(\beta,\alpha)^c$ and has unit $1_{\alpha\beta^{-1}}$. Define $\tau_{\alpha\beta^{-1}}:D_{\beta\alpha^{-1}}\to D_{\alpha\beta^{-1}}$ by $\tau_{\alpha\beta^{-1}}(f)=f\circ\tauh_{\beta\alpha^{-1}}$, where $f\in D_{\beta\alpha^{-1}}$. Since $\tauh_{\alpha\beta^{-1}}$ is a bijection that maps $W_{\beta\alpha^{-1}}=C(\alpha,\beta)$ onto $W_{\alpha\beta^{-1}}=C(\beta,\alpha)$, it follows that $\tau_{\alpha\beta^{-1}}$ is an isomorphism that maps the ideal $D_{\beta\alpha^{-1}}$ onto the ideal $D_{\alpha\beta^{-1}}$. In particular, $\tau_{\alpha\beta^{ -1}}(1_{\beta\alpha^{-1}})=1_{\alpha\beta^{-1}}$. If $t$ cannot be expressed in the form $\alpha\beta^{-1}$, we define $D_t=\{0\}$ and $\tau_t$ equals the zero function. Hence, we have an algebraic partial action $\tau=\left( \{D_t\}_{t\in \F}, \{\tau_t\}_{t\in\F} \right)$  of $\F$ on $\udalgshift$. This partial action is semi-saturated; to see this, apply \cite[Proposition~4.10]{ExelBook} to $\{\tau_{a}, a\in\alf\}$, and appeal to the uniqueness of the partial action given by \cite[Proposition~4.10]{ExelBook}. Moreover,  if $a,b\in\alf$ and $a\neq b$, then $C(\eword,a)\cap C(\eword,b) =\emptyset$, which implies that $D_a\cap D_{b} = \{0\}$. That is, $\tau$ is an orthogonal partial action.

\begin{remark}\label{lem:reduced_forms}
  Note that, for $t\in\F$, we have that $D_t\neq\{0\}$ if and only if there exist $\alpha,\beta\in\lang$ such that $t=\alpha\beta^{-1}$ and $C(\beta,\alpha)\neq\emptyset$.
\end{remark}

The partial skew group ring associated with $\tau$ is defined as 
\[\udalgshift\rtimes_{\tau}\F= \bigoplus_{t\in\F} D_t=\left\{\sum_{} f_t\delta_t: t\in \F, f_t\in D_t \right\}, \]
where it is understood that $f_t$ is non-zero for finitely many terms and $\delta_t$ merely serves as a placeholder to remind us that $f_t\in D_t$. Multiplication is defined by 
\begin{equation}\label{eq:partial_multiplication}
    (f_s\delta_s)(g_t\delta_t) = \tau_s(\tau_s^{-1}(f_s)g_t )\delta_{st}.
\end{equation}

The following three lemmas describe some products, a $\mathbb{Z}$-grading, and a set of generators of $\udalgshift\rtimes_{\tau}\F$. Since the techniques used in the proofs are very similar to those used in \cite{GillesDanie}, we omit them here.

\begin{lemma}\label{siames} Consider the partial skew group ring $\udalgshift\rtimes_{\tau}\F$. For every $\alpha\in\lang\setminus\{\eword\}$, we have that
\begin{enumerate}[(i)]
    \item\label{break alpha} $(1_{\alpha_1}\delta_{\alpha_1})\cdots(1_{\alpha_{|\alpha|}}\delta_{\alpha_{|\alpha|}})=1_\alpha\delta_\alpha$,
    \item\label{break alpha-1} $(1_{\alpha_{|\alpha|}^{-1}}\delta_{\alpha_{|\alpha|}^{-1}})\cdots(1_{\alpha_1^{-1}}\delta_{\alpha_1^{-1}})=1_{\alpha^{-1}}\delta_{\alpha^{-1}}$,
    \item\label{alpha alpha-1} $(1_\alpha\delta_\alpha) (1_{\alpha^{-1}}\delta_{\alpha^{-1}})=1_{\alpha}\delta_{\eword}$,
    \item\label{alpha-1 alpha} $(1_{\alpha^{-1}}\delta_{\alpha^{-1}}) (1_\alpha\delta_\alpha) = 1_{\alpha^{-1}}\delta_{\eword}$,
    \item\label{alpha beta-1} $(1_{\alpha}\delta_{\alpha})(1_{\beta^{-1}}\delta_{\beta^{-1}}) = 1_{\alpha\beta^{-1}}\delta_{\alpha\beta^{-1}}$ for $\beta\in\lang$ such that $\alpha\beta^{-1}$ is in reduced form.
\end{enumerate}
\end{lemma}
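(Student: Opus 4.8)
The plan is to prove all five identities by direct computation using the multiplication formula \eqref{eq:partial_multiplication}, together with the explicit description of the partial action $\tau$ recorded just before the lemma. Throughout I would use three basic facts: first, that $\tau_s(1_{s^{-1}})=1_s$ for every $s$ of the form $\alpha\beta^{-1}$ (since $\tau_s$ maps the ideal $D_{s^{-1}}$ isomorphically onto $D_s$ and sends the unit to the unit); second, that $\tau_s^{-1}=\tau_{s^{-1}}$; and third, the set-theoretic identities $W_{\alpha\beta^{-1}}=C(\beta,\alpha)$ from Proposition~\ref{set.partial.action}. The computation $(f_s\delta_s)(g_t\delta_t)=\tau_s(\tau_s^{-1}(f_s)g_t)\delta_{st}$ reduces in our setting to products of the idempotent units $1_s$, so each step amounts to identifying an intersection of cylinder/follower sets and rewriting it as a single $C(\alpha,\beta)$.

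I would begin with \eqref{alpha alpha-1} and \eqref{alpha-1 alpha}, the single-letter-word level identities applied to general $\alpha$, as these are the cleanest: for \eqref{alpha alpha-1}, formula \eqref{eq:partial_multiplication} gives $(1_\alpha\delta_\alpha)(1_{\alpha^{-1}}\delta_{\alpha^{-1}})=\tau_\alpha\!\left(\tau_{\alpha^{-1}}(1_\alpha)\,1_{\alpha^{-1}}\right)\delta_{\eword}$, and since $\tau_{\alpha^{-1}}(1_\alpha)=1_{\alpha^{-1}}$, the inner product is $1_{\alpha^{-1}}\cdot 1_{\alpha^{-1}}=1_{\alpha^{-1}}$, so the result is $\tau_\alpha(1_{\alpha^{-1}})\delta_{\eword}=1_\alpha\delta_{\eword}$. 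Item \eqref{alpha-1 alpha} is symmetric. Then I would do \eqref{alpha beta-1}: here one checks that when $\alpha\beta^{-1}$ is reduced, $\tau_{\alpha^{-1}}(1_\alpha)=1_{\alpha^{-1}}$ has domain $D_{\alpha^{-1}}$ corresponding to $C(\eword,\alpha)$, multiplying by $1_{\beta^{-1}}$ (supported on $C(\eword,\beta)$) and applying $\tau_\alpha$ yields the unit of $D_{\alpha\beta^{-1}}$, i.e. $1_{\alpha\beta^{-1}}\delta_{\alpha\beta^{-1}}$; the key point is that reducedness guarantees $st=\alpha\beta^{-1}$ has no cancellation.

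The parts \eqref{break alpha} and \eqref{break alpha-1} I would handle by induction on $|\alpha|$. The base case $|\alpha|=1$ is trivial, and the inductive step for \eqref{break alpha} uses \eqref{alpha beta-1} (or a direct application of the multiplication rule) to combine $1_{\alpha_1\cdots\alpha_{k}}\delta_{\alpha_1\cdots\alpha_k}$ with $1_{\alpha_{k+1}}\delta_{\alpha_{k+1}}$; one verifies via \eqref{eq:partial_multiplication} that the product is $1_{\alpha_1\cdots\alpha_{k+1}}\delta_{\alpha_1\cdots\alpha_{k+1}}$, the essential computation being that $\tau_{\alpha_1\cdots\alpha_k}^{-1}(1_{\alpha_1\cdots\alpha_k})\cdot 1_{\alpha_{k+1}}$ pushed forward lands exactly on $W_{\alpha_1\cdots\alpha_{k+1}}=C(\eword,\alpha_1\cdots\alpha_{k+1})$, using the semi-saturatedness of $\tau$. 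Item \eqref{break alpha-1} is the mirror image.

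The main obstacle I anticipate is purely bookkeeping: being careful with reduced forms and with which domain each $\tau_t$ acts on, since the formula \eqref{eq:partial_multiplication} silently requires restricting $f_s$ to the intersection of supports before transporting it. One must confirm at each step that the relevant products of the form $\tau_s^{-1}(1_s)\,1_t$ are nonzero and that the resulting group element $st$ is itself of the form $\mu\nu^{-1}$ with $\mu,\nu\in\lang$, so that $D_{st}\neq\{0\}$ and the identity does not collapse to zero. This is exactly where the hypothesis that $\osf$ is a genuine subshift (so the relevant cylinder sets $C(\eword,\alpha_1\cdots\alpha_{k+1})$ are nonempty whenever $\alpha\in\lang$) and the semi-saturatedness of $\tau$ are used; once these are tracked correctly, each identity follows mechanically. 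Since, as the paper notes, the techniques mirror those in \cite{GillesDanie}, the bulk of the verification is routine and I would present only the representative computations above, indicating that the remaining cases follow by the same method.
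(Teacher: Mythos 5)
Your approach---direct computation from the multiplication rule \eqref{eq:partial_multiplication}, the identity $\tau_s(1_{s^{-1}})=1_s$, and the identification $W_{\alpha\beta^{-1}}=C(\beta,\alpha)$---is exactly the one the paper intends: it omits the proof precisely because these computations mirror those of \cite{GillesDanie}. Your computations for (iii) and (iv) and the induction for (i) and (ii) are correct; for the inductive step one indeed finds $\tau_{\mu^{-1}}(1_\mu)1_a=1_{F_\mu\cap Z_a}$ and $\tauh_\mu(F_\mu\cap Z_a)=Z_{\mu a}=W_{\mu a}$, so no subtlety about vanishing arises because $\mu a$ is a prefix of $\alpha\in\lang$ and hence $Z_{\mu a}\neq\emptyset$.

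There is, however, one indexing slip in your sketch of (v) that must be corrected. By the convention $W_{\gamma\delta^{-1}}=C(\delta,\gamma)$, the function $1_{\alpha^{-1}}$ is the characteristic function of $W_{\alpha^{-1}}=C(\alpha,\eword)=F_\alpha$ (a follower set), not of $C(\eword,\alpha)=Z_\alpha$, and likewise $1_{\beta^{-1}}$ is supported on $F_\beta$, not on $C(\eword,\beta)=Z_\beta$. Taken literally, your version would make the inner product $\tau_{\alpha^{-1}}(1_\alpha)\,1_{\beta^{-1}}$ the characteristic function of $Z_\alpha\cap Z_\beta$, which is empty whenever neither of $\alpha,\beta$ is a prefix of the other, even though $C(\beta,\alpha)$ can be nonempty in that case; the computation would then collapse to $0$. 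The correct step is $\tau_{\alpha^{-1}}(1_\alpha)\,1_{\beta^{-1}}=1_{F_\alpha}1_{F_\beta}=1_{F_\alpha\cap F_\beta}$, followed by $\tauh_\alpha(F_\alpha\cap F_\beta)=\{\alpha x:\alpha x\in\osf,\ \beta x\in\osf\}=C(\beta,\alpha)=W_{\alpha\beta^{-1}}$, which yields $1_{\alpha\beta^{-1}}\delta_{\alpha\beta^{-1}}$ as claimed (and subsumes (iii) as the case $\beta=\alpha$... rather, as the computation with $\beta=\alpha$ landing in $\delta_\eword$). With that correction the argument is complete and matches the intended proof.
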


\begin{lemma}
    The algebra $\udalgshift\rtimes_{\tau}\F$ has a $\mathbb{Z}$-grading, with the homogeneous component of degree $n$ given by 
    \[ (\udalgshift\rtimes_{\tau}\F)_n = \vecspan _R\{f_{\alpha\beta^{-1}}\delta_{\alpha\beta^{-1}} 
    : \alpha,\beta\in \lang \text{ and } |\alpha| -|\beta|= n\}.\]
\end{lemma}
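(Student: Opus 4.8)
The plan is to produce the grading from a single degree homomorphism on the group $\F$, which makes all three verifications transparent. First I would introduce the group homomorphism $\partial:\F\to\zn$ determined by $\partial(a)=1$ for every generator $a\in\alf$; since $\F$ is free on $\alf$, this extends uniquely to $\F$, and for $\alpha,\beta\in\lang$ it satisfies $\partial(\alpha\beta^{-1})=|\alpha|-|\beta|$. The key consequence is that $|\alpha|-|\beta|$ depends only on the element $t=\alpha\beta^{-1}\in\F$ and not on the particular way $t$ is written as a positive word times the inverse of a positive word. This is exactly the well-definedness needed for the homogeneous components to make sense.

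With $\partial$ available, I would set $(\udalgshift\rtimes_{\tau}\F)_n:=\bigoplus_{\{t\in\F\,:\,\partial(t)=n\}}D_t\delta_t$. Since by construction $\udalgshift\rtimes_{\tau}\F=\bigoplus_{t\in\F}D_t\delta_t$, grouping the summands according to the value of $\partial(t)$ immediately yields the internal direct sum $\udalgshift\rtimes_{\tau}\F=\bigoplus_{n\in\zn}(\udalgshift\rtimes_{\tau}\F)_n$, establishing condition (1) of Definition~\ref{dfn:grading}. For condition (2), I would take homogeneous generators $f_s\delta_s$ and $g_t\delta_t$ with $\partial(s)=m$ and $\partial(t)=n$; by the multiplication rule \eqref{eq:partial_multiplication} their product is $\tau_s(\tau_s^{-1}(f_s)g_t)\delta_{st}\in D_{st}\delta_{st}$, and because $\partial$ is a homomorphism, $\partial(st)=m+n$. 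Thus the product is homogeneous of degree $m+n$, and by bilinearity $(\udalgshift\rtimes_{\tau}\F)_m(\udalgshift\rtimes_{\tau}\F)_n\subseteq(\udalgshift\rtimes_{\tau}\F)_{m+n}$.

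It then remains to match this homogeneous component with the stated span. By Remark~\ref{lem:reduced_forms}, $D_t\neq\{0\}$ forces $t=\alpha\beta^{-1}$ for some $\alpha,\beta\in\lang$, in which case $|\alpha|-|\beta|=\partial(t)$. Hence the nonzero summands $D_t\delta_t$ occurring in $(\udalgshift\rtimes_{\tau}\F)_n$ are precisely those with $t=\alpha\beta^{-1}$ and $|\alpha|-|\beta|=n$; each such summand is the $R$-span of the elements $f_{\alpha\beta^{-1}}\delta_{\alpha\beta^{-1}}$ with $f_{\alpha\beta^{-1}}\in D_{\alpha\beta^{-1}}$, while every representation $\alpha\beta^{-1}$ with $D_{\alpha\beta^{-1}}=\{0\}$ contributes only the zero element. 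Collecting these gives exactly $(\udalgshift\rtimes_{\tau}\F)_n=\vecspan_R\{f_{\alpha\beta^{-1}}\delta_{\alpha\beta^{-1}}:\alpha,\beta\in\lang,\ |\alpha|-|\beta|=n\}$, as claimed.

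The only genuine point of care is the well-definedness flagged at the outset: the naive formula $|\alpha|-|\beta|$ might seem to depend on the chosen factorization of $t$. Encoding the degree as the value of the homomorphism $\partial$ dispatches this at once and, as a bonus, delivers the identity $\partial(st)=\partial(s)+\partial(t)$ that drives condition (2). Everything else is a routine bookkeeping of the direct-sum and multiplication structures, which is presumably why the authors defer to the analogous arguments in \cite{GillesDanie}.
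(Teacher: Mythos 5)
Your proof is correct and is essentially the standard argument the paper defers to (it omits the proof, citing analogous techniques elsewhere): defining the degree via the homomorphism $\partial:\F\to\zn$ with $\partial(a)=1$ both settles the well-definedness of $|\alpha|-|\beta|$ on elements $t=\alpha\beta^{-1}$ and makes condition (2) of Definition~\ref{dfn:grading} immediate from $D_sD_t$-type containment in $D_{st}$, while condition (1) is just a regrouping of the defining direct sum $\bigoplus_{t\in\F}D_t$. The identification of $\bigoplus_{\partial(t)=n}D_t\delta_t$ with the stated span via Remark~\ref{lem:reduced_forms} is also handled correctly.
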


\begin{lemma}\label{lem:partial_skew_gen}
The partial skew group ring $\udalgshift\rtimes_{\tau}\F$ is generated as an $R$-algebra by  \[\{1_{a}\delta_{a},1_{a^{-1}}\delta_{a^{-1}}: a\in \alf\cup\{\eword\}\}.\]
\end{lemma}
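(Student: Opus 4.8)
The plan is to let $\mathcal{S}$ denote the $R$-subalgebra of $\udalgshift\rtimes_{\tau}\F$ generated by $\{1_{a}\delta_{a},1_{a^{-1}}\delta_{a^{-1}}:a\in\alf\cup\{\eword\}\}$ and to show that $\mathcal{S}$ exhausts the whole partial skew group ring. Since $\udalgshift\rtimes_{\tau}\F=\bigoplus_{t\in\F}D_t\delta_t$ and $\mathcal{S}$ is closed under addition, it suffices to prove that every homogeneous element $f\delta_t$ with $f\in D_t$ lies in $\mathcal{S}$. By Remark~\ref{lem:reduced_forms}, $D_t=\{0\}$ unless $t=\alpha\beta^{-1}$ (in reduced form) for some $\alpha,\beta\in\lang$, so only elements indexed by such $t$ need to be treated.

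The heart of the argument is to first capture the entire coefficient algebra in degree $0$, i.e. to show $\{f\delta_{\eword}:f\in\udalgshift\}\subseteq\mathcal{S}$. Because $f\mapsto f\delta_{\eword}$ is an $R$-algebra homomorphism (for $s=t=\eword$ the multiplication \eqref{eq:partial_multiplication} is just pointwise multiplication, as $\tau_{\eword}=\mathrm{id}$ and $D_{\eword}=\udalgshift$) and $\udalgshift$ is generated by the characteristic functions $1_{C(\gamma,\delta)}$, it is enough to show that each $1_{C(\gamma,\delta)}\delta_{\eword}$ lies in $\mathcal{S}$. Using Lemma~\ref{siames}\eqref{break alpha}--\eqref{break alpha-1}, the words $1_\delta\delta_\delta$, $1_{\delta^{-1}}\delta_{\delta^{-1}}$, $1_\gamma\delta_\gamma$ and $1_{\gamma^{-1}}\delta_{\gamma^{-1}}$ all belong to $\mathcal{S}$; I would then verify, directly from \eqref{eq:partial_multiplication}, the identity
\[
(1_\delta\delta_\delta)\,(1_{\gamma^{-1}}\delta_{\gamma^{-1}})\,(1_\gamma\delta_\gamma)\,(1_{\delta^{-1}}\delta_{\delta^{-1}})=1_{C(\gamma,\delta)}\delta_{\eword}.
\]
In practice one collapses the middle product to $1_{F_\gamma}\delta_{\eword}$ via Lemma~\ref{siames}\eqref{alpha-1 alpha} and then computes the remaining sandwich, using that $\tauh_{\delta}$ and $\tauh_{\delta^{-1}}$ implement the bijection between $C(\delta,\eword)$ and $C(\eword,\delta)$. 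This yields $\{f\delta_{\eword}:f\in\udalgshift\}\subseteq\mathcal{S}$.

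With the coefficient algebra in hand, the two remaining steps are short. By Lemma~\ref{siames}\eqref{alpha beta-1}, for any reduced $\alpha\beta^{-1}$ we have $1_{\alpha\beta^{-1}}\delta_{\alpha\beta^{-1}}=(1_\alpha\delta_\alpha)(1_{\beta^{-1}}\delta_{\beta^{-1}})\in\mathcal{S}$, where the two factors lie in $\mathcal{S}$ by Lemma~\ref{siames}\eqref{break alpha}--\eqref{break alpha-1}. Finally, given $f\in D_{\alpha\beta^{-1}}$, the function $f$ is supported on $W_{\alpha\beta^{-1}}$, so $f\cdot 1_{\alpha\beta^{-1}}=f$, and \eqref{eq:partial_multiplication} gives $(f\delta_{\eword})(1_{\alpha\beta^{-1}}\delta_{\alpha\beta^{-1}})=f\delta_{\alpha\beta^{-1}}$. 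Both factors lie in $\mathcal{S}$, hence so does $f\delta_{\alpha\beta^{-1}}$, and summing over $t$ proves $\mathcal{S}=\udalgshift\rtimes_{\tau}\F$.

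The main obstacle is the coefficient-algebra step: the generators $1_{C(\gamma,\delta)}$ of $\udalgshift$ are indexed by arbitrary pairs $\gamma,\delta$, and when $\delta\gamma^{-1}$ fails to be reduced one cannot simply read off $1_{C(\gamma,\delta)}\delta_{\delta\gamma^{-1}}$ from Lemma~\ref{siames}\eqref{alpha beta-1}. The sandwich identity above sidesteps this by producing $C(\gamma,\delta)$ in degree $0$, where the group element is forced to be $\eword$ irrespective of any cancellation; the only care needed is the bookkeeping of the domains and ranges of $\tauh_{\delta}$ and $\tauh_{\delta^{-1}}$ when applying \eqref{eq:partial_multiplication}.
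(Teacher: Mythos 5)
Your proof is correct: the paper itself omits the argument (deferring to the techniques of the reference cited before Lemma~\ref{siames}), and your route — capturing all of $D_\eword\delta_\eword$ via the sandwich identity $(1_\delta\delta_\delta)(1_{\gamma^{-1}}\delta_{\gamma^{-1}})(1_\gamma\delta_\gamma)(1_{\delta^{-1}}\delta_{\delta^{-1}})=1_{C(\gamma,\delta)}\delta_\eword$, then producing $1_{\alpha\beta^{-1}}\delta_{\alpha\beta^{-1}}$ from Lemma~\ref{siames}\eqref{alpha beta-1} and multiplying on the left by $f\delta_\eword$ — is exactly the intended one; indeed your key identity is the very computation the authors carry out later in the proof of Theorem~\ref{thm:set-theoretic-partial-action}. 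The only (routine) bookkeeping left implicit is the degenerate cases $\gamma=\eword$ or $\delta=\eword$, which reduce to Lemma~\ref{siames}\eqref{alpha alpha-1}--\eqref{alpha-1 alpha} and the convention $s_\eword=1$.
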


\begin{theorem}\label{thm:set-theoretic-partial-action}
Let $\osf$ be a subshift. Then, $\ualgshift\cong\udalgshift\rtimes_{\tau}\F$ via an isomorphism that sends $s_a$ to $1_a\delta_a$ and $s^*_a$ to $1_{a^{-1}}\delta_{a^{-1}}$.
\end{theorem}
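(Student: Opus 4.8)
The plan is to establish the isomorphism $\ualgshift \cong \udalgshift \rtimes_\tau \F$ by constructing homomorphisms in both directions via universal properties and showing they are mutually inverse, then invoking the Graded Uniqueness Theorem to guarantee injectivity. First I would build a map $\ualgshift \to \udalgshift \rtimes_\tau \F$ by verifying that the elements $1_a\delta_a$ (for $s_a$), $1_{a^{-1}}\delta_{a^{-1}}$ (for $s_a^*$), and $1_{C(\alpha,\beta)}\delta_\eword$ (for $p_{C(\alpha,\beta)}$, extended to all $p_A$ via the Boolean relations) satisfy the three defining relations of Definition~\ref{gelado}. Relation (i) follows from the pointwise algebra structure of $\udalgshift$ sitting in degree $\eword$; relation (ii) is a direct computation using \eqref{eq:partial_multiplication} and the fact that $\tauh_a$ is a bijection onto its image; and relation (iii) is exactly the content of Lemma~\ref{siames}, since combining parts \eqref{break alpha}, \eqref{break alpha-1}, \eqref{alpha alpha-1}, \eqref{alpha-1 alpha} gives $s_\beta s_\alpha^* s_\alpha s_\beta^* \mapsto 1_{C(\alpha,\beta)}\delta_\eword = p_{C(\alpha,\beta)}$. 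By the universal property of $\ualgshift$, this yields a $\zn$-graded homomorphism $\Phi$.

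For the reverse direction I would use Lemma~\ref{lem:partial_skew_gen}, which says $\udalgshift \rtimes_\tau \F$ is generated by the elements $1_a\delta_a$ and $1_{a^{-1}}\delta_{a^{-1}}$. The natural candidate is the map sending $1_a\delta_a \mapsto s_a$ and $1_{a^{-1}}\delta_{a^{-1}} \mapsto s_a^*$. To make this rigorous, I would invoke the universal property of the partial skew group ring: a partial action $\tau$ induces homomorphisms out of $\udalgshift \rtimes_\tau \F$ whenever one has a covariant representation of $\tau$ inside the target algebra. Concretely, I would check that the assignment $1_{C(\alpha,\beta)} \mapsto p_{C(\alpha,\beta)}$ defines a homomorphism $\udalgshift \to \ualgshift$ (using that $\udalgshift$ is the function algebra generated by the characteristic functions, together with Corollary~\ref{cachorroquente} to ensure the relations among the $1_{C(\alpha,\beta)}$ are exactly mirrored by the $p_{C(\alpha,\beta)}$) and that the elements $s_\alpha s_\beta^*$ implement the partial isomorphisms $\tau_{\alpha\beta^{-1}}$ in the sense required for covariance. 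This produces a graded homomorphism $\Psi$ in the opposite direction.

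The two maps are mutually inverse on generators: $\Psi \circ \Phi$ fixes $s_a$ and $s_a^*$, which generate $\ualgshift$ by Proposition~\ref{lemma.algebra.unital}(v), while $\Phi \circ \Psi$ fixes the generators $1_a\delta_a, 1_{a^{-1}}\delta_{a^{-1}}$ of $\udalgshift \rtimes_\tau \F$ by Lemma~\ref{lem:partial_skew_gen}; hence both composites are the identity. Since both maps are $\zn$-graded and $\Phi(rp_A) = r\,1_A\delta_\eword \neq 0$ for nonzero $r$ and nonempty $A$, the Graded Uniqueness Theorem (Corollary~\ref{GUTshift}) gives injectivity of $\Phi$, and surjectivity follows from the generation result, so $\Phi$ is the desired isomorphism.

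The main obstacle I anticipate is rigorously setting up the reverse map $\Psi$ via the universal property of the partial skew group ring. Unlike $\ualgshift$, which has a clean presentation by generators and relations, $\udalgshift \rtimes_\tau \F$ requires producing a covariant representation of the partial action $(\udalgshift, \tau)$ inside $\ualgshift$; the delicate point is verifying the covariance identity $\Psi(f_s\delta_s)\,\Psi(g_t\delta_t) = \Psi\big(\tau_s(\tau_s^{-1}(f_s)g_t)\delta_{st}\big)$ respects the multiplication \eqref{eq:partial_multiplication}, particularly tracking how the $s_\alpha, s_\alpha^*$ conjugate the diagonal projections $p_{C(\gamma,\delta)}$ to reproduce the pullback $\tau_{\alpha\beta^{-1}}$. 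This is where the bookkeeping with reduced forms $\alpha\beta^{-1}$ and the relative-range computations of Equation~\eqref{eq:rel.range} must be handled carefully; the authors' remark that they follow \cite{GillesDanie} suggests this is precisely the step they treat as routine but which carries the real technical weight.
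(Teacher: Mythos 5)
Your forward map $\Phi$ is constructed exactly as in the paper: verify relations (i)--(iii) of Definition~\ref{gelado} for $P_A=1_A\delta_\eword$, $S_a=1_a\delta_a$, $S_a^*=1_{a^{-1}}\delta_{a^{-1}}$ using Lemma~\ref{siames}, then get surjectivity from Lemma~\ref{lem:partial_skew_gen} and injectivity from Corollary~\ref{GUTshift} after checking $\Phi(rp_A)=r1_A\delta_\eword\neq 0$. That is the entire proof in the paper, and it is already complete: once you have a surjective graded homomorphism whose injectivity is guaranteed by the Graded Uniqueness Theorem, you are done. The whole second half of your plan --- building a reverse map $\Psi$ out of $\udalgshift\rtimes_\tau\F$ via a covariant representation of the partial action, and then checking the composites are the identity --- is redundant, and it is precisely the step you single out as the main technical obstacle. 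The paper never establishes (or needs) a universal property for the partial skew group ring, so your anticipated difficulty evaporates if you simply drop $\Psi$. One small point on the forward direction that you compress too quickly: in verifying relation (iii) you cannot directly read off $S_\beta S_\alpha^* S_\alpha S_\beta^* = 1_{C(\alpha,\beta)}\delta_\eword$ from Lemma~\ref{siames} alone, because $\beta\alpha^{-1}$ need not be in reduced form; the paper writes $\alpha=\alpha'\gamma$, $\beta=\beta'\gamma$ with $\beta'\alpha'^{-1}$ reduced and uses $C(\eword,\beta)\cap C(\alpha',\beta')=C(\alpha,\beta)$ to finish the computation. With that adjustment, and with the reverse map excised, your argument coincides with the paper's.
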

\begin{proof}
We use the universal property of $\ualgshift$ in order to build a homomorphism $\Phi:\ualgshift\to \udalgshift\rtimes_{\tau}\F$. Let $A\in\TCB$. By \cite[Lemma~2.2]{GDD2}, we have that $1_A\in \udalgshift$. Then, $P_A:=1_A\delta_{\eword}\in \udalgshift\rtimes_{\tau}\F$. For $a\in\alf$, let $S_a:=1_a\delta_{a}\in \udalgshift\rtimes_{\tau}\F$ and let $S_a^*:=1_{a^{-1}}\delta_{a^{-1}}\in \udalgshift\rtimes_{\tau}\F$. We claim that $P_A$ and $S_a$ satisfy the relations in Definition~\ref{gelado}. Indeed, Item (i) follows from basic properties of algebraic operations on characteristic functions, and Item (ii) follows from Lemma~\ref{siames}. For Item (iii), let $\alpha,\beta\in\lang$ and put $S_{\alpha} = S_{\alpha_1}\cdots S_{\alpha_{|\alpha|}}$, $S_{\beta} = S_{\beta_1}\cdots S_{\beta_{|\beta|}}$, $S_{\alpha}^* = S_{\alpha_{|\alpha|}}^*\cdots S_{\alpha_1}^*$ and $S_{\beta}^* = S_{\beta_{|\beta|}}^*\cdots S_{\beta_1}^*$. Write $\alpha=\alpha'\gamma$ and $\beta=\beta'\gamma$ in such a way that $\beta'\alpha'^{-1}$ is in reduced form. Note that $C(\eword,\beta)\cap C(\alpha',\beta')=C(\alpha,\beta)$. Then, by Lemma~\ref{siames}, we have that
\begin{align*}
S_{\beta}S_{\alpha}^*S_{\alpha}S_{\beta}^* & = (1_\beta\delta_\beta)(1_{\alpha^{-1}}\delta_{\alpha^{-1}})(1_\alpha\delta_\alpha)(1_{\beta^{-1}}\delta_{\beta^{-1}}) \\
& = (1_\beta\delta_\beta)(1_{\alpha^{-1}}\delta_\eword)(1_{\beta^{-1}}\delta_{\beta^{-1}}) \\
& = \tau_\beta(1_{\beta^{-1}}1_{\alpha^{-1}})\delta_\eword \\
&= 1_{\beta}1_{\beta\alpha^{-1}}\delta_\eword \\
& = 1_\beta1_{\beta'\alpha'^{-1}}\delta_\eword \\
&= 1_{C(\eword,\beta)}1_{C(\alpha',\beta')}\delta_{\eword} \\
&= 1_{C(\alpha,\beta)}\delta_\eword\\
&= P_{C(\alpha,\beta)},
\end{align*}
which shows that Item (iii) holds. Then, by the universal property of $\ualgshift$, there exists a homomorphism $\Phi:\ualgshift\to \udalgshift\rtimes_{\tau}\F$ taking $s_a\mapsto S_a$, $s_a^*\mapsto S_a^*$ and $p_A\mapsto P_A$. It follows from Lemma~\ref{lem:partial_skew_gen} that $\Phi$ is surjective, and it is not hard to see that  $\Phi$ is $\mathbb{Z}$-graded.

Also, note that for every non-zero $r\in R$ and non-empty $A\in\TCB$,
\[\Phi(rp_A) = r1_A\delta_\eword \neq 0,\]
since $1_A$ is a characteristic function of a non-empty set. By Corollary~\ref{GUTshift}, $\Psi$ is injective.
\end{proof} 

We can use Theorem~\ref{thm:set-theoretic-partial-action} to also characterise $\algshift$ as a partial skew group ring. For that, we let $\CB$ be the Boolean algebra defined in Section \ref{s:nonunital} and $\dalgshift$ the subalgebra of $\udalgshift$ generated by $\{1_{A}\}_{A\in\CB}$. In fact, $\dalgshift$ is an ideal of $\udalgshift$, so we can restrict the partial action $\tau$ to $\dalgshift$. More precisely, for each $t\in\F$, we let \[D'_t=\{f\in D_t\cap\dalgshift:\tau_{t^{-1}}(f)\in\dalgshift\}\]
and $\tau'_t:D'_{t^{-1}}\to D'_t$ the restriction of $\tau_t$.

\begin{theorem}
    Let $\osf$ be a subshift. Then $\algshift\cong \dalgshift\rtimes_{\tau'}\F$.
\end{theorem}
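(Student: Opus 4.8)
The plan is to obtain the desired isomorphism as a restriction of the isomorphism $\Phi\colon\ualgshift\to\udalgshift\rtimes_\tau\F$ furnished by Theorem~\ref{thm:set-theoretic-partial-action}, which sends $s_a\mapsto 1_a\delta_a$ and $s_a^*\mapsto 1_{a^{-1}}\delta_{a^{-1}}$. By Proposition~\ref{jabutirapido} we may view $\algshift$ as the subalgebra of $\ualgshift$ generated by $\{s_a,s_a^*:a\in\alf\}$; hence $\Phi$ restricts to an injective $\zn$-graded homomorphism whose image is the subalgebra $\CC$ of $\udalgshift\rtimes_\tau\F$ generated by $\{S_a,S_a^*:a\in\alf\}$, where $S_a:=1_a\delta_a$ and $S_a^*:=1_{a^{-1}}\delta_{a^{-1}}$. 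It then suffices to prove $\CC=\dalgshift\rtimes_{\tau'}\F$: injectivity of $\Phi|_{\algshift}$ is automatic, so this identification completes the proof.

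For the inclusion $\CC\subseteq\dalgshift\rtimes_{\tau'}\F$, I would first check that the generators land in the restricted skew group ring. Since $C(\eword,a)=Z_a$ and $C(a,\eword)=F_a$ both lie in $\CB$, the idempotents $1_a$ and $1_{a^{-1}}$ belong to $\dalgshift$, and one verifies directly that $1_a\in D'_a$ and $1_{a^{-1}}\in D'_{a^{-1}}$. As $\tau'$ is the restriction of $\tau$, the ring $\dalgshift\rtimes_{\tau'}\F$ embeds as a subalgebra of $\udalgshift\rtimes_\tau\F$, so it contains the subalgebra $\CC$ generated by $S_a,S_a^*$.

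The reverse inclusion is the crux. Using the direct sum decomposition $\dalgshift\rtimes_{\tau'}\F=\bigoplus_{t\in\F}D'_t\delta_t$, it is enough to place each $f\delta_t$ (with $f\in D'_t$) in $\CC$. Here I would exploit the factorisation
\[f\delta_t=(f\delta_\eword)(1_{W_t}\delta_t),\qquad f\in D'_t,\]
which holds because $f=f\cdot 1_{W_t}$ and $\tau_\eword=\mathrm{id}$. Since $f\in D'_t\subseteq\dalgshift=D'_\eword$ and $\dalgshift=\vecspan_R\{1_A:A\in\CB\}$, the element $f\delta_\eword$ is an $R$-linear combination of terms $1_A\delta_\eword$ with $A\in\CB$; because $\CB$ is generated by the sets $C(\alpha,\beta)$ under Boolean operations, and $1_{C(\alpha,\beta)}\delta_\eword=S_\beta S_\alpha^*S_\alpha S_\beta^*$ (or $S_\alpha^*S_\alpha$, resp.\ $S_\beta S_\beta^*$, when $\beta$, resp.\ $\alpha$, is empty) exactly as computed in the proof of Theorem~\ref{thm:set-theoretic-partial-action}, every $1_A\delta_\eword$ lies in $\CC$; thus $f\delta_\eword\in\CC$. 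Finally, writing $t=\alpha\beta^{-1}$ in reduced form with $\alpha,\beta\in\lang$, Lemma~\ref{siames} gives $1_{W_t}\delta_t=S_\alpha S_\beta^*\in\CC$ (with the obvious conventions when $\alpha$ or $\beta$ equals $\eword$). Hence $f\delta_t\in\CC$, and $\dalgshift\rtimes_{\tau'}\F\subseteq\CC$.

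I expect the main obstacle to be exactly this surjectivity step, and within it the bookkeeping that guarantees all the idempotents $1_{W_t}$ and $1_A$ occurring above genuinely lie in $\dalgshift$. This is why the degree-zero part must be produced from the generators $C(\alpha,\beta)$ (which belong to $\CB$ whenever $\alpha,\beta$ are not both empty) rather than from $1_\osf$: in contrast to the unital case, $\osf$ need not be a member of $\CB$, so $\dalgshift\rtimes_{\tau'}\F$ may lack an identity. Once the factorisation above is in hand, the remaining verifications run parallel to Theorem~\ref{thm:set-theoretic-partial-action} and are routine.
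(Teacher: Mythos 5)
Your argument is correct, but it reaches the conclusion by a different route than the paper. The paper's proof is a one-line remark: one reruns the universal-property argument of Theorem~\ref{thm:set-theoretic-partial-action} for the non-unital relations of Definition~\ref{nonunitalalgebra}, proves an analogue of Lemma~\ref{lem:partial_skew_gen} in which $1_\eword\delta_\eword$ is no longer available as a generator, and invokes the Graded Uniqueness Theorem (Corollary~\ref{GUTshift no unit}) for injectivity. You instead reuse the unital isomorphism wholesale: by Proposition~\ref{jabutirapido} you regard $\algshift$ as the subalgebra of $\ualgshift$ generated by $\{s_a,s_a^*\}$, restrict $\Phi$ to it (so injectivity is free), and then identify the image with $\dalgshift\rtimes_{\tau'}\F$ by a two-sided inclusion. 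This buys you a cleaner treatment of well-definedness and injectivity, at the cost of having to pin down the image exactly --- and that step is where the genuine content lives. Your factorisation $f\delta_t=(f\delta_\eword)(1_{W_t}\delta_t)$, together with the reduction of $f\delta_\eword$ to the idempotents $1_{C(\alpha,\beta)}\delta_\eword=S_\beta S_\alpha^*S_\alpha S_\beta^*$ with $\alpha,\beta$ not both empty, is precisely the unit-free generation lemma the paper alludes to, so the two proofs share their computational core. Two small points you lean on without comment, both harmless: that $\tau'$ is genuinely a partial action on the ideal $\dalgshift$ (so that $\dalgshift\rtimes_{\tau'}\F$ sits inside $\udalgshift\rtimes_{\tau}\F$ as a subalgebra with the same product), which the paper also asserts without proof; and that $D'_t=\{0\}$ whenever $t$ is not of the form $\alpha\beta^{-1}$ with $\alpha,\beta\in\lang$ (Remark~\ref{lem:reduced_forms}), which disposes of the group elements your reduced-form factorisation does not cover.
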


\begin{proof}
    The proof follows the same line of thought as Lemma~\ref{lem:partial_skew_gen} and Theorem~\ref{thm:set-theoretic-partial-action}. The main difference is that $1_\eword\delta_\eword$ does not appear as a generator of $\dalgshift\rtimes_{\tau'}\F$.
\end{proof}

\subsection{The partial skew group ring $\Lc(\HTCB,R)\rtimes_{\varphi}\F$}\label{topological.pa}
Next, we construct a topological partial action such that its dual algebraic partial action gives a partial skew ring isomorphic to $\udalgshift\rtimes_{\tau}\F$. Recall from Subsection \ref{s:stone} that $\widehat{\TCB}$ denotes the Stone dual of $\TCB$ and the topology on $\widehat{\TCB}$ has a basis given by the sets $O_A=\{\xi\in \widehat{\TCB}: A\in \xi\}$, where $A\in\TCB$. 
For each $a\in \alf$ we put $V_a:=O_{Z_a}$ and $V_{a^{-1}}:=O_{F_a}$. Then we define  $\varphih_a: V_{a^{-1}} \to V_{a}$ by
\begin{equation}\label{eq:top_part_act_1}
 \varphih_{a}(\eta)=\{A\in \TCB: r(A,a)\in \eta \} ,
\end{equation}
and we define $\varphih_{a^{-1}}: V_{a} \to V_{a^{-1}}$  by 
\begin{equation}\label{eq:top_part_act_2}
\varphih_{a^{-1}}(\xi)=\{B\in \TCB: r(A,a)\subseteq B \text{ for some } A\in \xi \}.
\end{equation}

We need the following lemma to prove that the maps $\varphih_a$ and $\varphih_{a^{-1}}$ are well-defined homeomorphisms.

\begin{lemma}\label{brokenmic}
For each $a\in\alf$ and $A\in\TCB$, let $aA=\{ax\in\osf: x\in A\}$. Then, $aA\in\TCB$. Moreover, $r(aA,a)\scj A$ and if $B\in\TCB$ is such that $B\scj Z_a$ and $r(B,a)\scj A$, then $B\scj aA$.
\end{lemma}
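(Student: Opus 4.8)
The plan is to establish the three assertions in turn, with essentially all of the work going into the first, that $aA\in\TCB$. For this I would study how the operation $A\mapsto aA$ interacts with the generators $C(\alpha,\beta)$ and with the Boolean operations. The starting point is the identity
\[a\,C(\alpha,\beta)=C(\alpha,a\beta)\qquad(\alpha,\beta\in\lang).\]
The inclusion $\scj$ is immediate from the definitions. For the reverse inclusion the crucial input is that $\osf$ is shift-invariant: if $a\beta x\in\osf$, then $\beta x=\sigma(a\beta x)\in\osf$, so an element $a\beta x$ of $C(\alpha,a\beta)$ automatically satisfies $\beta x\in C(\alpha,\beta)$, giving $C(\alpha,a\beta)\scj a\,C(\alpha,\beta)$.

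Next I would record the compatibility of $A\mapsto aA$ with Boolean operations. Using injectivity of the concatenation $x\mapsto ax$, one checks directly that $a(A\cup B)=aA\cup aB$ and $a(A\cap B)=aA\cap aB$ for arbitrary $A,B\scj\osf$. The subtle point — and the one genuine obstacle — is the complement: the map is \emph{not} a Boolean homomorphism, since $a\osf=Z_a$ rather than $\osf$. Instead, because $aA$ and $a(A^c)$ partition $a\osf=Z_a$, one obtains $a(A^c)=Z_a\setminus aA$. Combining this with the generator identity and Remark~\ref{U_elements} (which expresses any element of $\TCB$ as a finite union of finite intersections of sets $C(\alpha_i,\beta_i)$ and complements $C(\mu_j,\nu_j)^c$), I can push $a(\cdot)$ through the normal form: each $a\,C(\alpha_i,\beta_i)=C(\alpha_i,a\beta_i)\in\TCB$, each $a\bigl(C(\mu_j,\nu_j)^c\bigr)=Z_a\setminus C(\mu_j,a\nu_j)\in\TCB$, and finite unions and intersections keep us inside $\TCB$. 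The care needed here is simply to carry the $Z_a$ factor produced by each complement through the expression rather than treating $a(\cdot)$ as a lattice homomorphism; with that bookkeeping, $aA\in\TCB$ follows.

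For the inclusion $r(aA,a)\scj A$ I would compute directly: since $r(aA,a)=\{y\in\osf: ay\in aA\}$ and concatenation is injective, $ay\in aA$ holds precisely when $ay\in\osf$ and $y\in A$, so $r(aA,a)=A\cap F_a\scj A$. Finally, for the last assertion let $B\in\TCB$ satisfy $B\scj Z_a$ and $r(B,a)\scj A$, and take $z\in B$. As $B\scj Z_a$, the sequence $z$ begins with $a$, say $z=ax$ with $x\in\osf$; then $x\in r(B,a)\scj A$, whence $z=ax\in aA$. This gives $B\scj aA$ and completes the proof.
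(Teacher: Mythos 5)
Your proposal is correct and follows essentially the same route as the paper: verify $aC(\alpha,\beta)=C(\alpha,a\beta)$ on generators, observe that complements must be taken relative to $Z_a$ (the paper's claim $aC(\alpha,\beta)^c=Z_a\setminus C(\alpha,a\beta)$ is exactly your partition argument), distribute over finite unions and intersections, and then compute $r(aA,a)$ and the maximality property directly. The only cosmetic differences are that the paper separates the case $a\beta\notin\lang$ (where $aC(\alpha,\beta)=\emptyset$) explicitly, while you absorb it into the identity, and your observation $r(aA,a)=A\cap F_a$ is slightly sharper than the inclusion the paper states.
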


\begin{proof}
Suppose first that $A=C(\alpha,\beta)$. If $a\beta\in\lang$, then $aA=C(\alpha,a\beta)\in\TCB$. And if $a\beta\notin\lang$, then $aA=\emptyset\in\TCB$. We claim that $aC(\alpha,\beta)^c=Z_a\setminus C(\alpha,a\beta)$ if $a\beta\in\lang$. Suppose first that $ax\in aC(\alpha,\beta)^c$, then clearly $ax\in Z_a$. Supposing that $ax\in C(\alpha,a\beta)$, we would get $ax=a\beta y$ for some $y\in\osf$ such that $\alpha y\in\osf$. This would imply that $x\in C(\alpha,\beta)$, which is a contradiction. Similarly, if $ax\in Z_a\setminus C(\alpha,a\beta)$, we get that $x\in C(\alpha,\beta)^c$ so that $ax\in aC(\alpha,\beta)^c$.

It is easy to see that for $A_1,A_2\in\TCB$, we have that $a(A_1\cup A_2)=aA_1\cup aA_2$ and $a(A_1\cap A_2)=aA_1\cap aA_2$.

Since sets of the form $C(\alpha,\beta)$ generate $\TCB$, the first part of the result follows.

For the second part, we have
\[r(aA,a)=\{x\in\osf: ax\in aA\}\scj A.\]
Now, let $B\in\TCB$ be such that $B\scj Z_a$ and $r(B,a)\scj A$. For $x\in B$, we have that $x=ax'$ for some $x'\in\osf$. Hence $x'\in r(B,a)\scj A$ so that $x=ax'\in aA$. 
\end{proof}
 
\begin{proposition} \label{prop:partial-homeom}
For every $a\in \alf$ the map $\varphih_a$ is a homeomorphism of $V_{a^{-1}}$ onto $V_a$, with inverse $\varphih_{a^{-1}}$.  
\end{proposition}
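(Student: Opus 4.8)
The plan is to verify four things in turn: that $\varphih_a$ maps $V_{a^{-1}}$ into $V_a$ and $\varphih_{a^{-1}}$ maps $V_a$ into $V_{a^{-1}}$ (both landing in genuine ultrafilters), that the two maps are mutually inverse as set maps, and finally that both are continuous. The computational engine throughout will be the elementary identity $r(A,a)=\{x\in\osf:ax\in A\}\in\TCB$, together with the facts following from Lemma~\ref{aguaquecida} that $r(\cdot,a)$ preserves finite unions and intersections and satisfies $r(\osf,a)=r(Z_a,a)=F_a$ and $r(\emptyset,a)=\emptyset$; and, on the other side, the identities for the map $A\mapsto aA$ of Lemma~\ref{brokenmic}, namely $r(aB,a)=B\cap F_a$, $a\,r(A,a)=A\cap Z_a$, and $aF_a=a\osf=Z_a$, this map also preserving finite unions and intersections.

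For well-definedness of $\varphih_a$, fix $\eta\in V_{a^{-1}}$, so $F_a\in\eta$. Since $r(\cdot,a)$ commutes with $\cap$ and $\cup$ and $r(\emptyset,a)=\emptyset$, the collection $\varphih_a(\eta)=\{A:r(A,a)\in\eta\}$ inherits the filter and primeness properties of $\eta$; for instance $r(A\cup B,a)=r(A,a)\cup r(B,a)\in\eta$ and primeness of $\eta$ give $r(A,a)\in\eta$ or $r(B,a)\in\eta$. Because $r(Z_a,a)=F_a\in\eta$ we also get $Z_a\in\varphih_a(\eta)$, so $\varphih_a(\eta)\in V_a$. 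For $\varphih_{a^{-1}}$, the first step is to replace the defining formula by the cleaner description
\[
\varphih_{a^{-1}}(\xi)=\{B\in\TCB:aB\in\xi\},
\]
which follows from Lemma~\ref{brokenmic}: if $aB\in\xi$ then $A=aB$ witnesses $r(A,a)=B\cap F_a\scj B$; conversely, if $r(A,a)\scj B$ with $A\in\xi$ then, after intersecting $A$ with $Z_a\in\xi$, the implication in Lemma~\ref{brokenmic} yields $A\scj aB$, whence $aB\in\xi$. With this description, well-definedness of $\varphih_{a^{-1}}$ as an ultrafilter in $V_{a^{-1}}$ is immediate, since $A\mapsto aA$ preserves $\cap$ and $\cup$, sends $\emptyset$ to $\emptyset$ and $\osf$ to $Z_a\in\xi$, and satisfies $aF_a=Z_a\in\xi$.

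The two maps are mutually inverse by direct substitution. Writing $\xi=\varphih_a(\eta)$ and using $r(aB,a)=B\cap F_a$, we obtain $\varphih_{a^{-1}}(\xi)=\{B:aB\in\xi\}=\{B:B\cap F_a\in\eta\}=\eta$, the last equality because $F_a\in\eta$. Conversely, writing $\eta=\varphih_{a^{-1}}(\xi)$ and using $a\,r(A,a)=A\cap Z_a$, we obtain $\varphih_a(\eta)=\{A:A\cap Z_a\in\xi\}=\xi$, the last equality because $Z_a\in\xi$. For continuity it suffices to compute preimages of the basic open sets: one finds $\varphih_a^{-1}(O_C)=O_{r(C,a)}\cap V_{a^{-1}}$ and, from the clean description, $\varphih_{a^{-1}}^{-1}(O_B)=O_{aB}\cap V_a$, both open. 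Hence each map is a continuous bijection whose inverse is the other, so each is a homeomorphism of $V_{a^{-1}}$ onto $V_a$.

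The main obstacle is establishing that $\varphih_{a^{-1}}(\xi)$ is genuinely an ultrafilter rather than merely an upward-closed collection, since its defining formula as the upward closure of $\{r(A,a):A\in\xi\}$ does not obviously transport primeness. Reducing it to the transparent form $\{B:aB\in\xi\}$ is exactly what removes this difficulty, and the second half of Lemma~\ref{brokenmic}, the implication $B\scj Z_a$, $r(B,a)\scj A\Rightarrow B\scj aA$, is the crucial input making that reduction work.
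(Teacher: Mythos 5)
Your proof is correct, and it reorganizes the argument around one idea that the paper does not isolate: the reformulation $\varphih_{a^{-1}}(\xi)=\{B\in\TCB: aB\in\xi\}$. Once that is in place, both maps become preimages of ultrafilters under the lattice homomorphisms $A\mapsto r(A,a)$ and $B\mapsto aB$, so the ultrafilter property, the mutual-inverse identities, and continuity all follow by substitution using $r(aB,a)=B\cap F_a$ and $a\,r(A,a)=A\cap Z_a$. The paper instead works with the original upward-closure definition of $\varphih_{a^{-1}}$ throughout: it checks primeness of $\varphih_{a^{-1}}(\xi)$ by hand (invoking Lemma~\ref{brokenmic} at exactly the point where you invoke it to prove the reformulation), establishes that the maps are mutually inverse by proving one inclusion and appealing to maximality of ultrafilters, and proves openness of $\varphih_{a^{-1}}$ via the identities $\varphih_{a^{-1}}(O_A)=O_{r(A,a)}$ and $\varphih_a(O_B)=O_{aB}$, the latter again requiring the ``largest subset'' clause of Lemma~\ref{brokenmic}. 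Your route buys a mechanical verification of every remaining step and avoids the maximality trick; the paper's route avoids proving the sharper identities $r(aB,a)=B\cap F_a$ and $a\,r(A,a)=A\cap Z_a$, which you attribute to Lemma~\ref{brokenmic} but which are not literally stated there (the lemma only gives $r(aA,a)\scj A$ and the maximality property); they are, however, immediate from the definitions, so this is a citation imprecision rather than a gap.
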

\begin{proof}
We begin by showing $\varphih_{a^{-1}}$ is well-defined. Let $\xi\in V_{a}$. Then $Z_a\in \xi$ and, 
since $r(Z_a,a) = C(a,\omega)=F_a$, we have that $F_a\in \varphih_{a^{-1}}(\xi)$. Hence $\varphih_{a^{-1}}(\xi)\in V_{a^{-1}}$. To see that $\varphih_{a^{-1}}(\xi)$ is a filter, suppose that $B_1, B_2 \in \varphih_{a^{-1}}(\xi)$. Then, there exists $A_1$ and $A_2$ in $\xi$ such that $r(A_i,a)\subseteq B_i$, $i=1,2$. Then, $r(A_1\cap A_2, a) = r(A_1,a)\cap r(A_2,a) \subseteq B_1\cap B_2$. So, $B_1\cap B_2\in \varphih_{a^{-1}}(\xi). $ It is clear that $\varphih_{a^{-1}}(\xi)$ is upward-closed. For $B_1,B_2\in\TCB$, suppose that $r(A,a)\scj B_1\cup B_2$ for some $A\in\xi$. Then, $A\cap Z_a\scj aB_1\cup aB_2$, by Lemma~\ref{brokenmic}, and hence $aB_1\cup aB_2\in\xi$. Because $\xi$ is an ultrafilter, either $aB_1\in\xi$ or $aB_2\in\xi$. Note that $r(aB_i,a)\scj B_i$, $i=1,2$, so that either $B_1\in\varphih_{a^{-1}}(\xi)$ or $B_2\in\varphih_{a^{-1}}(\xi)$. Also, for $A\in\xi$, because $Z_a\in\xi$, we have that $\emptyset\neq r(A\cap Z_a,a)= r(A,a)$ so that $\emptyset\notin \varphih_{a^{-1}}(\xi)$. Hence $\varphih_{a^{-1}}(\xi)$ is an ultrafilter, which proves that $\varphih_{a^{-1}}$ is well-defined. 

Next, we show that $\varphih_{a}$ is well-defined.
Let $\eta\in\HTCB$ such that $F_a\in \eta$. Then $r(Z_a,a)=F_a$ implies that $Z_a\in\ \varphih_{a}(\eta)$ and thus $\varphih_{a}(\eta)\in O_{Z_a}$. To see that  $\varphih_{a}(\eta)$ is an ultrafilter, notice that $\emptyset\notin\varphih_{a}(\eta)$, because $r(\emptyset,a)=\emptyset\notin \eta$. Let $A,B\in\varphih_{a}(\eta)$. Then $r(A\cap B,a)=r(A,a)\cap r(B,a)\in\eta$ so that $A\cap B\in\varphih_{a}(\eta)$. If $A\in\varphih_{a}(\eta)$ and $B\in\TCB$ is such that $A\scj B$, then $r(A,a)\scj r(B,a)$ so that $r(B,a)\in\eta$. Hence $B\in\varphih_{a}(\eta)$. Finally, if $A,B\in\TCB$ are such that $A\cup B\in\varphih_{a}(\eta)$, then $r(A,a)\cup r(B,a)=r(A\cup B,a)\in\eta$. Because $\eta$ is an ultrafilter, $r(A,a)\in\eta$ or $r(B,a)\in\eta$, from where we get $A\in\varphih_{a}(\eta)$ or $B\in\varphih_{a}(\eta)$. Hence $\varphih_{a}(\eta)$ is an ultrafilter and well-defined.

We show that $\varphih_{a^{-1}}$ and $\varphih_{a}$ are bijections and inverses of each other. First we show that $\varphih_{a^{-1}}(\varphih_{a}(\eta))=\eta$. Because these maps take ultrafilters to ultrafilters, it  suffices to show $\varphih_{a^{-1}}(\varphih_{a}(\eta))\scj\eta$. Let $B\in\varphih_{a^{-1}}(\varphih_{a}(\eta))$. Then there exists $A\in\varphih_{a}(\eta)$ such that $r(A,a)\scj B$. By the definition of $\varphih_{a}(\eta)$, we have that $r(A,a)\in\eta$ and hence $B\in\eta$, proving that $\varphih_{a^{-1}}(\varphih_{a}(\eta))\scj\eta$. Secondly, to see that $\varphih_{a}(\varphih_{a^{-1}}(\xi))=\xi$ for all $\xi\in V_a=O_{Z_a}$, let $\xi\in\HTCB$ be such that $Z_a\in \xi$. For $A\in\xi$, we have that $r(A,a)\in\varphih_{a^{-1}}(\xi)$ and hence $A\in\varphih_{a}(\varphih_{a^{-1}}(\xi))$, proving that $\xi\scj\varphih_{a}(\varphih_{a^{-1}}(\xi))$. Since these are ultrafilters, it follows that   $\varphih_{a}(\varphih_{a^{-1}}(\xi))=\xi$. Hence, $\varphih_{a^{-1}}$ and $\varphih_{a}$ are bijections and inverses of each other. 

Finally we show that $\varphih_{a^{-1}}$ is a homeomorphism of $V_a=O_{Z_a}$ onto $V_{a^{-1}}=O_{F_a}$. If $A\scj Z_a$, then  $\varphih_{a^{-1}}(O_A)=O_{r(A,a)}\scj O_{F_a}$. If $B\scj F_a$, then we claim that $\varphih_{a}(O_B)=O_{aB}$. For $\xi\in O_{aB}$, we have that $\xi\in O_{Z_a}$, because $aB\scj Z_a$. Also, $\varphih_{a^{-1}}(\xi)\in O_B$, because $r(aB,a)\scj B$, and hence $\xi=\varphih_{a}(\varphih_{a^{-1}}(\xi))\in\varphih_{a}(O_B)$. On the other hand, let $\xi\in \varphih_{a}(O_B)$ so that $\xi\in O_{Z_a}$ and $\varphih_{a^{-1}}(\xi)\in O_B$. Then there exists $A\in\xi$ such that $r(A,a)\scj B$. Also, $r(A\cap Z_a,a)=r(A,a)\scj B$. By Lemma~\ref{brokenmic}, $aB$ is the largest subset of $Z_a$ such that $r(aB,a)\scj B$, so that $A\cap Z_a\scj aB$ and $aB\in\xi$. Hence $\xi\in O_{aB}$. Therefore, since the sets $O_A$ form a basis for the topology in $\HTCB$, $\varphih_{a^{-1}}$ is a homeomorphism of $V_a=O_{Z_a}$ onto $V_{a^{-1}}=O_{F_a}$ and the proof is complete. 
\end{proof}

\begin{proposition}\label{top.partial.action}
The maps $\varphih_a$ and $\varphih_{a^{-1}}$, with $a\in\alf$, define an orthogonal semi-saturated topological partial action $\varphih=\left( \{V_t\}_{t\in \F}, \{\varphih_t\}_{t\in\F} \right)$ of $\F$ on $\HTCB$ such that $V_{\beta\alpha^{-1}}=O_{C(\alpha,\beta)}$ and
\[\varphih_{\alpha\beta^{-1}}(\xi)=\{A\in\TCB:r(B,\beta)\scj r(A,\alpha)\text{ for some }B\in\xi\}\] 
for every $\xi\in V_{\beta\alpha^{-1}}$ and $\alpha,\beta\in\lang$ such that $\alpha \beta^{-1}$ is in reduced form.
\end{proposition}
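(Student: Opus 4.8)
The plan is to realise $\varphih$ as the unique semi-saturated partial action of $\F$ whose generators are the homeomorphisms $\varphih_a$, in complete analogy with the set-theoretic action $\tauh$ of Proposition~\ref{set.partial.action}, and then to read off the domains and maps by induction on word length. Proposition~\ref{prop:partial-homeom} shows that each $\varphih_a\colon V_{a^{-1}}\to V_a$ is a homeomorphism between open subsets of $\HTCB$ with inverse $\varphih_{a^{-1}}$. Since $\F$ is free on $\alf$, applying \cite[Proposition~4.10]{ExelBook} to the family $\{\varphih_a\}_{a\in\alf}$ produces a unique semi-saturated topological partial action $\varphih=(\{V_t\}_{t\in\F},\{\varphih_t\}_{t\in\F})$ extending these generators. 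Orthogonality is immediate, as for distinct $a,b\in\alf$ we have $V_a\cap V_b=O_{Z_a}\cap O_{Z_b}=O_{Z_a\cap Z_b}=O_\emptyset=\emptyset$, since distinct cylinders are disjoint.

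Next I would establish two closed forms for positive words, both by induction on $|\alpha|$, using the semi-saturation identity $\varphih_{st}=\varphih_s\circ\varphih_t$ (valid whenever no cancellation occurs) and the relative-range composition law $r(r(A,a),\gamma)=r(A,a\gamma)$, which follows from the description $r(A,\alpha)=\{x\in\osf:\alpha x\in A\}$. Writing $\alpha=\alpha_1\gamma$ and using \eqref{eq:top_part_act_1}, one gets $\varphih_\alpha(\eta)=\{A\in\TCB:r(A,\alpha)\in\eta\}$ for $\eta\in V_{\alpha^{-1}}$; writing $\beta=\beta_1\gamma$ and using \eqref{eq:top_part_act_2} together with the monotonicity of $A\mapsto r(A,\gamma)$, one gets $\varphih_{\beta^{-1}}(\xi)=\{C\in\TCB:r(B,\beta)\scj C\text{ for some }B\in\xi\}$ for $\xi\in V_\beta$. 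The same induction, with $\varphih_a(O_B)=O_{aB}$ and $\varphih_{a^{-1}}(O_A)=O_{r(A,a)}$ (both shown inside Proposition~\ref{prop:partial-homeom}), yields $V_\alpha=O_{Z_\alpha}$ and $V_{\alpha^{-1}}=O_{F_\alpha}$.

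With these in hand, the domain and map formulas for a reduced word $\alpha\beta^{-1}$ fall out. For the domain, semi-saturation gives $V_{\beta\alpha^{-1}}=\varphih_\beta(V_{\beta^{-1}}\cap V_{\alpha^{-1}})=\varphih_\beta(O_{F_\beta\cap F_\alpha})$, and iterating $\varphih_a(O_B)=O_{aB}$ (legitimate because $F_\beta\cap F_\alpha\scj F_{\beta_m}$, and so on, by shift-invariance of $\osf$) together with Lemma~\ref{brokenmic} identifies this with $O_{\beta(F_\beta\cap F_\alpha)}=O_{C(\alpha,\beta)}$. For the map, for $\xi\in V_{\beta\alpha^{-1}}=O_{C(\alpha,\beta)}\scj O_{Z_\beta}=V_\beta$ we compute $\varphih_{\alpha\beta^{-1}}(\xi)=\varphih_\alpha(\varphih_{\beta^{-1}}(\xi))$, so that $A\in\varphih_{\alpha\beta^{-1}}(\xi)$ if and only if $r(A,\alpha)\in\varphih_{\beta^{-1}}(\xi)$, that is, if and only if $r(B,\beta)\scj r(A,\alpha)$ for some $B\in\xi$, which is exactly the asserted formula.

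I expect the main obstacle to be the inductive proof of the two closed forms for positive words, and in particular the asymmetry between $\varphih_a$ (expressed through the membership $r(A,a)\in\eta$) and $\varphih_{a^{-1}}$ (expressed through the inequality $r(B,a)\scj C$): the bookkeeping must combine the composition law $r(r(A,a),\gamma)=r(A,a\gamma)$ with the monotonicity of relative ranges, and it must invoke semi-saturation only over genuinely non-cancelling concatenations, which is precisely where the reduced-form hypothesis on $\alpha\beta^{-1}$ is essential.
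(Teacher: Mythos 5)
Your proposal is correct and follows essentially the same route as the paper: both obtain the unique semi-saturated partial action from \cite[Proposition~4.10]{ExelBook} applied to the generators of Proposition~\ref{prop:partial-homeom}, check orthogonality via disjoint cylinders, derive the closed forms on positive words by induction using $r(r(A,\alpha),\beta)=r(A,\alpha\beta)$, and then decompose $\varphih_{\alpha\beta^{-1}}=\varphih_\alpha\circ\varphih_{\beta^{-1}}$ for reduced words. The only cosmetic difference is that you identify $V_{\beta\alpha^{-1}}$ by iterating $\varphih_a(O_B)=O_{aB}$, whereas the paper uses the equivalent membership criterion via $r(C(\alpha,\beta),\beta)=C(\alpha,\eword)\cap C(\beta,\eword)$.
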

\begin{proof}
Since each $\varphih_a$, with $a\in \alf$, is a bijection of $V_{a^{-1}}$ onto $V_{a}$ by Proposition~\ref{prop:partial-homeom}, it follows from \cite[Proposition 4.10]{ExelBook} that there is a unique semi-saturated set-theoretic partial action $\varphih=\left( \{V_t\}_{t\in \F}, \{\varphih_t\}_{t\in\F} \right)$ of $\F$ on $\HTCB$. Moreover, this partial action is orthogonal, since $V_a\cap V_b=O_{Z_a}\cap O_{Z_b}=O_{Z_a\cap Z_b}=O_{\emptyset}=\emptyset$ for $a,b\in \alf$ and $a\neq b$. Appealing again to Proposition~\ref{prop:partial-homeom} and the fact that $\varphih$ is semi-saturated, we see that $\varphih$ is a topological partial action.

Note that for every $\alpha,\beta\in\lang$ and $A\in\TCB$, $r(r(A,\alpha),\beta)=r(A,\alpha\beta)$. By induction, applying \eqref{eq:top_part_act_1} and \eqref{eq:top_part_act_2} iteratively and using that the partial action is semi-saturated, for every $\alpha\in\lang$, we have that $V_{\alpha}=O_{Z_\alpha}$, $V_{\alpha^{-1}}=O_{F_\alpha}$, and for every $\eta\in V_{\alpha^{-1}}$ and $\xi\in V_{\alpha}$, we have that 
\begin{equation}\label{eq:partial_phi_on_word}
    \varphih_{\alpha}(\eta)=\{A\in \TCB: r(A,\alpha)\in \eta \}
\end{equation}
and
\[\varphih_{\alpha^{-1}}(\xi)=\{B\in \TCB: r(A,\alpha)\subseteq B \text{ for some } A\in \xi \}.\]
Thus, for $\alpha,\beta\in\lang$ such that $\alpha\beta^{-1}$ is in reduced form, again using that the partial action is semi-saturated, we have that
\[\varphih_{\alpha\beta^{-1}}(\xi)=\varphih_{\alpha}\circ\varphih_{\beta^{-1}}(\xi)=\{A\in\TCB:r(B,\beta)\scj r(A,\alpha)\text{ for some }B\in\xi\}\] 
for every $\xi\in V_{\beta\alpha^{-1}}$.

We now prove that $V_{\beta\alpha^{-1}}=O_{C(\alpha,\beta)}$. By \cite[Proposition~2.6]{ExelBook}, we have that $V_{\beta\alpha^{-1}}\cap V_{\beta} = \varphih_{\beta}(V_{\alpha^{-1}}\cap V_{\beta^{-1}})$. Because the partial action is semi-saturated, $V_{\beta\alpha^{-1}}\scj V_{\beta}$, so that $V_{\beta\alpha^{-1}} = \varphih_{\beta}(V_{\alpha^{-1}}\cap V_{\beta^{-1}})$. Thus, 
\begin{eqnarray*}
V_{\beta\alpha^{-1}} & = & \{\varphih_{\beta}(\eta): \eta\in                                         V_{\alpha^{-1}}\cap V_{\beta^{-1}} \} \\
 &=& \{\varphih_{\beta}(\eta): C(\alpha,\eword)\cap C(\beta,\eword) \in \eta\}. 
\end{eqnarray*} 
It follows from Equation~(\ref{eq:rel.range}) that 
$r(C(\alpha,\beta),\beta) = C(\alpha,\eword)\cap C(\beta,\eword)$. Therefore,
$\varphih_{\beta}(\eta) \in V_{\beta\alpha^{-1}}$ if and only if $r(C(\alpha,\beta),\beta)\in \eta$, which holds if and only if $C(\alpha,\beta)\in \varphih_{\beta}(\eta)$, by Equation~(\ref{eq:partial_phi_on_word}). That is, $\varphih_{\beta}(\eta) \in V_{\beta\alpha^{-1}}$ if and only if $\varphih_{\beta}(\eta)\in O_{C(\alpha,\beta)}$. This completes the proof.
\end{proof}

The partial action of Proposition \ref{top.partial.action} can be seen as an extension of the partial action of Proposition \ref{set.partial.action}. For that, notice that for each $x\in\osf$, the set $\xi_x=\{A\in\TCB:x\in A\}$ is an element of $\HTCB$. We can then define a map $\iota:\osf\to\HTCB$ by $\iota(x)=\xi_x$.

\begin{proposition}\label{pa.extension}
The map $\iota:\osf\to\HTCB$ is injective and equivariant with respect to $\tauh$ and $\varphih$. Moreover, the image of $\iota$ is dense in $\HTCB$.
\end{proposition}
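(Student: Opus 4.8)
The plan is to verify the three assertions—injectivity, equivariance, and density—separately, reducing each to the explicit descriptions of the sets $C(\alpha,\beta)$ and to the generator-level formulas \eqref{eq:partial1}, \eqref{eq:partial2}, \eqref{eq:top_part_act_1} and \eqref{eq:top_part_act_2}. For injectivity I would use the cylinder sets, which lie in $\TCB$. If $x\neq y$ in $\osf$, let $n$ be the least index at which they differ; then $x\in Z_{x_0\cdots x_n}$ while $y\notin Z_{x_0\cdots x_n}$, so $Z_{x_0\cdots x_n}\in\xi_x\setminus\xi_y$ and hence $\iota(x)\neq\iota(y)$.

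For equivariance, the key observation is that it suffices to check the intertwining relation on the generators $a$ and $a^{-1}$, $a\in\alf$, because both $\tauh$ and $\varphih$ are semi-saturated (Propositions~\ref{set.partial.action} and~\ref{top.partial.action}) and are therefore determined on every $t\in\F$ by composing the generator maps along a reduced word. First I would record that $\iota$ carries domains into domains: since $W_t=\emptyset$ unless $t=\beta\alpha^{-1}$, and then $W_{\beta\alpha^{-1}}=C(\alpha,\beta)$ while $V_{\beta\alpha^{-1}}=O_{C(\alpha,\beta)}$, for $x\in C(\alpha,\beta)$ we have $C(\alpha,\beta)\in\xi_x$, whence $\iota(x)=\xi_x\in O_{C(\alpha,\beta)}=V_{\beta\alpha^{-1}}$. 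For the intertwining itself, fix $a\in\alf$ and $x\in W_{a^{-1}}=C(a,\eword)=F_a$. Using $r(A,a)=\{z\in\osf:az\in A\}$, one computes
\[\varphih_a(\xi_x)=\{A\in\TCB:r(A,a)\in\xi_x\}=\{A\in\TCB:x\in r(A,a)\}=\{A\in\TCB:ax\in A\}=\xi_{ax}=\iota(\tauh_a(x)),\]
and applying the inverse homeomorphism $\varphih_{a^{-1}}=\varphih_a^{-1}$ gives $\varphih_{a^{-1}}(\xi_{ax})=\xi_x=\iota(\tauh_{a^{-1}}(ax))$ for $ax\in W_a=Z_a$. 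Composing these identities along a reduced word $t=c_1\cdots c_n$ (with each $c_i\in\alf\cup\alf^{-1}$), and using the domain inclusions above at each stage, then yields $\iota\circ\tauh_t=\varphih_t\circ\iota$ on $W_{t^{-1}}$ for every $t\in\F$.

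For density, I would argue that every nonempty basic open set meets $\iota(\osf)$. A basic open set has the form $O_A$ with $A\in\TCB$; if $O_A\neq\emptyset$ then $A\neq\emptyset$ (as $O_\emptyset=\emptyset$), so choosing any $x\in A$ gives $A\in\xi_x$, that is, $\iota(x)=\xi_x\in O_A$. Hence $\iota(\osf)$ intersects every nonempty basic open set, so it is dense in $\HTCB$.

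I expect the only genuine bookkeeping to be in the equivariance step: one must check that the reduced-word decomposition supplied by semi-saturatedness is compatible on the two sides and that the domains $W_{t^{-1}}$ and $V_{t^{-1}}$ are respected at each composition stage, so that the composed maps really agree as partial maps. The single-letter computations are immediate once the formula $r(A,a)=\{z\in\osf:az\in A\}$ is invoked, so the substance of this step is organizational rather than technical, while injectivity and density are entirely routine.
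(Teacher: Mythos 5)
Your proof is correct, and the injectivity and density arguments coincide with the paper's (least index of disagreement gives a separating cylinder set; any point of a nonempty $A\in\TCB$ gives an ultrafilter in $O_A$). The equivariance step, however, takes a genuinely different route. The paper works directly with a general group element $t=\alpha\beta^{-1}$ in reduced form: it invokes the closed-form description $\varphih_{\alpha\beta^{-1}}(\xi)=\{A\in\TCB:r(B,\beta)\scj r(A,\alpha)\text{ for some }B\in\xi\}$ from Proposition~\ref{top.partial.action}, shows the single containment $\varphih_{\alpha\beta^{-1}}(\iota(\beta x))\scj\iota(\tauh_{\alpha\beta^{-1}}(\beta x))$, and concludes equality because an ultrafilter contained in another must equal it. You instead verify the intertwining only on the generators $a$ and $a^{-1}$ via the identity $r(A,a)=\{z\in\osf:az\in A\}$, which yields the clean formula $\varphih_a(\xi_x)=\xi_{ax}$, and then propagate to all of $\F$ by composing along reduced words using semi-saturatedness of both partial actions. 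Your route localizes the computation to single letters at the cost of the partial-map bookkeeping you correctly flag (domains must be respected at each stage, and one should note that $W_{t^{-1}}=\emptyset$ unless $t=\beta\alpha^{-1}$, so only those words carry content); the paper's route avoids that bookkeeping but leans on the explicit formula for $\varphih_{\alpha\beta^{-1}}$, which was itself derived by the same semi-saturation argument. Both are sound, and in fact the ultrafilter-maximality trick the paper uses is implicitly available to you as well, since equality of the ultrafilters $\varphih_a(\xi_x)$ and $\xi_{ax}$ follows from one containment.
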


\begin{proof}
To see that $\iota$ is injective, let $x,y\in\osf$ be such that $x\neq y$. Then, there exists $n\in\nn$ such that $x_{0,n}\neq y_{0,n}$ and hence $Z_{x_{0,n}}\in\xi_x\setminus\xi_y$, which implies that $\xi_x\neq\xi_y$.

To show that $\iota$ is equivariant, it is enough to consider $t=\alpha\beta^{-1}$ for $\alpha,\beta\in\lang$, where $t$ is written in reduced form. Let $\beta x\in C(\alpha,\beta)$. On the one hand, by Proposition \ref{top.partial.action}, we have that
\[\varphih_{\alpha\beta^{-1}}(\iota(\beta x))=\{A\in\TCB:r(B,\beta)\scj r(A,\alpha)\text{ for some }B\ni \beta x\}.\]
On the other hand, by Proposition~\ref{set.partial.action},
\[\iota(\tauh_{\alpha\beta^{-1}}(\beta x))=\{A\in\TCB:\alpha x\in A\}.\]
Let $A\in \varphih_{\alpha\beta^{-1}}(\iota(\beta x))$ and let $B$ be such that $\beta x\in B$ and $r(B,\beta)\scj r(A,\alpha)$. By definition, we have that $x\in r(B,\beta)$ and hence $\alpha x\in A$, that is, $A\in \iota(\tauh_{\alpha\beta^{-1}}(\beta x))$. Because we are dealing with ultrafilters, we conclude that $\varphih_{\alpha\beta^{-1}}(\iota(\beta x))=\iota(\tauh_{\alpha\beta^{-1}}(\beta x))$.

For the last part, let $\xi\in\HTCB$. A basic open neighborhood of $\xi$ is a set of the form $O_A$ for some non-empty $A\in\TCB$. Then, for any $x\in A$, we have that $\iota(x)\in O_A$.
\end{proof}

We now build the dual algebraic partial action of $\varphih$ as follows. For $t\in\F$, we let $I_t=\Lc(V_{t},R)$. Then $I_{t}$ is the unital ideal in $\Lc(\HTCB,R)$ generated by the characteristic function $1_{V_{t}}$. Define $\varphi_t:I_{t^{-1}}\to I_{t}$ by 
\[\varphi_t(g)= g\circ\varphih_{t^{-1}}\]
for $g\in I_{t^{-1}}$. Then $\varphi=\left( \{I_t\}_{t\in\F},\{\varphi_t\}_{t\in\F} \right)$ is an algebraic partial action of $\F$ on $\Lc(\HTCB,R)$. Our next goal is to prove that $\algshift$ is isomorphic to the partial skew group ring $\Lc(\HTCB,R)\rtimes_{\varphi}\F$ associated with $\varphi$.

If $Y$ is a set and $\mathcal{C}\subseteq \mathcal{P}(Y)$, then we let $\mathcal{F}_{\mathcal{C}}$ denote the subalgebra of $\CF(Y,R)$ generated by $\{1_{C}\}_{C\in\mathcal{C}}$. 

\begin{lemma}\label{lem:DRX_gen}
The algebras  $\udalgshift$ and  $\mathcal{F}_{\TCB}$ coincide.
\end{lemma}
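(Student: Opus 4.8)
The plan is to prove the two inclusions separately, the crux being that $\udalgshift$ contains the constant function $1$.

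One inclusion is essentially by definition: each generating set $C(\alpha,\beta)$ of the Boolean algebra $\TCB$ is itself an element of $\TCB$, so its characteristic function $1_{C(\alpha,\beta)}$ is among the generators of $\mathcal{F}_{\TCB}$. Since $\udalgshift$ is by definition the subalgebra of $\CF(\osf,R)$ generated by exactly these characteristic functions, we immediately obtain $\udalgshift\subseteq\mathcal{F}_{\TCB}$.

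For the reverse inclusion $\mathcal{F}_{\TCB}\subseteq\udalgshift$ it suffices to show that $1_A\in\udalgshift$ for every $A\in\TCB$. The key observation is that $\osf=C(\eword,\eword)$, so the constant function $1=1_\osf$ is one of the generating characteristic functions and hence lies in $\udalgshift$; in particular $\udalgshift$ is unital. I would then argue by induction on the Boolean-algebra complexity of $A$, using the standard translation of set operations into algebraic operations on characteristic functions, namely $1_{A\cap B}=1_A 1_B$, $1_{A\cup B}=1_A+1_B-1_A 1_B$, and $1_{A^c}=1-1_A$. The base case $A=C(\alpha,\beta)$ is a generator, and each of the three Boolean operations preserves membership in $\udalgshift$; crucially, the complement step requires the unit $1\in\udalgshift$, which we have just secured. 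Alternatively, one may invoke the normal form of Remark~\ref{U_elements} directly: every $A\in\TCB$ is a finite union of sets of the form $\bigcap_i C(\alpha_i,\beta_i)\cap\bigcap_j C(\mu_j,\nu_j)^c$, whose characteristic function is the product $\prod_i 1_{C(\alpha_i,\beta_i)}\cdot\prod_j(1-1_{C(\mu_j,\nu_j)})\in\udalgshift$, and then the finite union is handled by inclusion--exclusion, which also stays inside $\udalgshift$.

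The argument carries no real obstacle; the only point to watch is that the complement operation needs the unit, and this is exactly where the unital setting (and the presence of the generator $C(\eword,\eword)=\osf$) is used. By contrast, the analogous identification for the non-unital algebra $\dalgshift$ and the Boolean algebra $\CB$ would require more care, precisely because $\osf$ is removed as a generator there.
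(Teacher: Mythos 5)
Your proof is correct and follows essentially the same route as the paper, which simply delegates the whole argument to a cited general lemma (\cite[Lemma~2.2]{GDD2}) stating that the algebra generated by the characteristic functions of the generators of an algebra of sets equals the algebra generated by the characteristic functions of all its elements; your write-up just supplies the standard induction behind that citation. You also correctly isolate the one point that makes the argument go through here, namely that $\osf=C(\eword,\eword)$ puts the unit $1=1_{\osf}$ into $\udalgshift$ so that complements can be handled.
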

\begin{proof}
Since $\TCB$ is the algebra of sets generated by $\{C(\alpha,\beta): \alpha,\beta\in\lang\}$, it follows from \cite[Lemma 2.2]{GDD2} that $\udalgshift$ is generated as an algebra by $\{1_{A}: A\in\TCB\}$.
\end{proof}

\begin{lemma}\label{sleep}
Let $\alpha,\beta,\gamma,\delta\in\lang$ be such $\delta=\beta\delta'$ for some $\delta'\in\lang$ and $\alpha\beta^{-1}$ is in reduced form in $\F$. Then,
\begin{enumerate}[(i)]
    \item\label{sleep1} $\tauh_{\alpha\beta^{-1}}(C(\gamma,\delta)\cap C(\alpha,\beta))=C(\gamma,\alpha\delta')\cap C(\beta,\alpha)$,
    \item\label{sleep2} $\varphih_{\alpha\beta^{-1}}(O_{C(\gamma,\delta)\cap C(\alpha,\beta)})=O_{C(\gamma,\alpha\delta')\cap C(\beta,\alpha)}$.
\end{enumerate}
\end{lemma}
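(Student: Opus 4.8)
The plan is to prove the two identities in order, obtaining part (ii) from part (i) by transporting it to the Stone dual via the equivariant embedding $\iota$ of Proposition~\ref{pa.extension}.

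For part (i), I would argue by double inclusion, unwinding the definitions of the sets $C(\cdot,\cdot)$ and of $\tauh_{\alpha\beta^{-1}}$ from Proposition~\ref{set.partial.action}. Take $z\in C(\gamma,\delta)\cap C(\alpha,\beta)$. Since $z\in C(\alpha,\beta)$ I may write $z=\beta x$ with $\beta x,\alpha x\in\osf$, and since $z\in C(\gamma,\delta)$ with $\delta=\beta\delta'$ I may write $z=\delta y=\beta\delta' y$ with $\delta y,\gamma y\in\osf$; comparing the two expressions forces $x=\delta' y$. Applying the map gives $\tauh_{\alpha\beta^{-1}}(z)=\alpha x=\alpha\delta' y$, and I would then read off directly that $\alpha\delta' y\in C(\beta,\alpha)$ (because $\alpha\delta' y\in\osf$ and $\beta\delta' y=z\in\osf$) and that $\alpha\delta' y\in C(\gamma,\alpha\delta')$ (because $\alpha\delta' y\in\osf$ and $\gamma y\in\osf$). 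For the reverse inclusion I would run the same bookkeeping through the inverse $\tauh_{\beta\alpha^{-1}}$: given $w=\alpha u\in C(\gamma,\alpha\delta')\cap C(\beta,\alpha)$, write $w=\alpha\delta' v$ so that $u=\delta' v$, and verify that $\tauh_{\beta\alpha^{-1}}(w)=\beta u=\delta v$ lands in $C(\gamma,\delta)\cap C(\alpha,\beta)$. The only point requiring care is the shift-invariance bookkeeping, that is, tracking which concatenations are guaranteed to lie in $\osf$, but every membership needed is already encoded in the hypotheses defining the four sets.

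For part (ii), the cleanest route is to push part (i) through $\iota$. First I would record the elementary density fact that for any nonempty $A\in\TCB$ the image $\iota(A)$ is dense in $O_A$: a nonempty basic open subset of $O_A$ is of the form $O_{A\cap A'}$ with $A\cap A'\neq\emptyset$, and any $x\in A\cap A'$ gives $\iota(x)\in O_{A\cap A'}$. Writing $U_1=\varphih_{\alpha\beta^{-1}}(O_{C(\gamma,\delta)\cap C(\alpha,\beta)})$ and $U_2=O_{C(\gamma,\alpha\delta')\cap C(\beta,\alpha)}$, both are clopen in $\HTCB$ (for $U_1$, because $\varphih_{\alpha\beta^{-1}}$ is a homeomorphism between the clopen sets $V_{\beta\alpha^{-1}}$ and $V_{\alpha\beta^{-1}}$). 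Set $D=\iota\!\left(C(\gamma,\alpha\delta')\cap C(\beta,\alpha)\right)$. On one hand $D$ is dense in $U_2$ by the density fact. On the other hand, equivariance of $\iota$ together with part (i) gives
\[
D=\iota\!\left(\tauh_{\alpha\beta^{-1}}\!\left(C(\gamma,\delta)\cap C(\alpha,\beta)\right)\right)=\varphih_{\alpha\beta^{-1}}\!\left(\iota\!\left(C(\gamma,\delta)\cap C(\alpha,\beta)\right)\right),
\]
and since $\iota(C(\gamma,\delta)\cap C(\alpha,\beta))$ is dense in $O_{C(\gamma,\delta)\cap C(\alpha,\beta)}$ and $\varphih_{\alpha\beta^{-1}}$ is a homeomorphism, $D$ is dense in $U_1$. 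As $U_1$ and $U_2$ are closed sets each containing the common dense subset $D$, both coincide with the closure $\overline{D}$ of $D$ in $\HTCB$, whence $U_1=U_2$. The degenerate case is immediate: if $C(\gamma,\delta)\cap C(\alpha,\beta)=\emptyset$, then part (i) forces $C(\gamma,\alpha\delta')\cap C(\beta,\alpha)=\emptyset$, and both sides of (ii) are empty.

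I expect the main obstacle to be organisational rather than conceptual: in (i) it is the care needed to ensure each produced sequence genuinely lies in $\osf$ (using $\delta=\beta\delta'$ and invariance), and in (ii) it is making the density transfer airtight, namely confirming that $\iota(A)$ is dense in $O_A$ and that a homeomorphism preserves this density. Should one prefer a computation carried out entirely inside $\HTCB$, an alternative is to factor $\varphih_{\alpha\beta^{-1}}=\varphih_{\alpha}\circ\varphih_{\beta^{-1}}$ (valid since $\alpha\beta^{-1}$ is reduced and the action is semi-saturated), use $C(\gamma,\delta)\cap C(\alpha,\beta)\scj Z_\beta$ to obtain $\varphih_{\beta^{-1}}(O_{C(\gamma,\delta)\cap C(\alpha,\beta)})=O_{r(C(\gamma,\delta)\cap C(\alpha,\beta),\beta)}$, and then apply the word-level analogue of Proposition~\ref{prop:partial-homeom}; this reduces (ii) to the set-theoretic identity $\alpha\,r(C(\gamma,\delta)\cap C(\alpha,\beta),\beta)=C(\gamma,\alpha\delta')\cap C(\beta,\alpha)$, which unwinds exactly as in part (i) using Lemma~\ref{aguaquecida} and Lemma~\ref{brokenmic}.
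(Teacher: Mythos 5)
Your part (i) is essentially the paper's own argument: the same double inclusion, writing $z=\beta\delta'y$ and checking that $\alpha\delta'y$ lands in $C(\gamma,\alpha\delta')\cap C(\beta,\alpha)$, with the reverse inclusion obtained by running the inverse map. Part (ii), however, takes a genuinely different route. The paper proves (ii) by a direct ultrafilter computation: it uses the explicit formula $\varphih_{\alpha\beta^{-1}}(\xi)=\{A:r(B,\beta)\scj r(A,\alpha)\text{ for some }B\in\xi\}$ from Proposition~\ref{top.partial.action} and reduces everything to the relative-range identity $r(C(\gamma,\delta)\cap C(\alpha,\beta),\beta)=C(\gamma,\delta')\cap C(\alpha,\eword)\cap C(\beta,\eword)=r(C(\gamma,\alpha\delta')\cap C(\beta,\alpha),\alpha)$, obtained from Equation~\eqref{eq:rel.range}. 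You instead transport (i) through the equivariant embedding $\iota$ of Proposition~\ref{pa.extension}, observing that $\iota(A)$ is dense in $O_A$, that both sides of (ii) are clopen, and that they share the common dense subset $\iota\bigl(C(\gamma,\alpha\delta')\cap C(\beta,\alpha)\bigr)$, hence coincide with its closure. Your argument is correct (the density of $\iota(A)$ in $O_A$, the clopenness of both sides, and the preservation of density under the partial homeomorphism all check out, and there is no circularity since Proposition~\ref{pa.extension} precedes the lemma). What it buys is conceptual economy: (ii) becomes a formal consequence of (i) plus the general principle that identities in $\TCB$ witnessed on $\osf$ pass to the Stone dual, with no need to redo the combinatorics at the ultrafilter level; this would apply verbatim to any such identity. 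What it costs is the dependence on Proposition~\ref{pa.extension}, whereas the paper's computation is self-contained and, as a by-product, records the relative-range identity in a form it can reuse. Your closing alternative (factoring $\varphih_{\alpha\beta^{-1}}=\varphih_{\alpha}\circ\varphih_{\beta^{-1}}$ and computing with $O_{r(\cdot,\beta)}$ and $O_{\alpha(\cdot)}$) is essentially the paper's computation done in two steps rather than one.
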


\begin{proof}
(i) Let $x\in C(\gamma,\delta)\cap C(\alpha,\beta)$. Then $x=\delta y=\beta\delta' y$ for some $y\in\osf$ such that $\gamma y,\alpha\delta' y\in\osf$. Applying $\tauh_{\alpha\beta^{-1}}$, we have that
\[\tauh_{\alpha\beta^{-1}}(x) = \alpha\delta' y \in C(\gamma,\alpha\delta')\cap C(\beta,\alpha).\]
Hence, $\tauh_{\alpha\beta^{-1}}(C(\gamma,\delta)\cap C(\alpha,\beta))\subseteq C(\gamma,\alpha\delta')\cap C(\beta,\alpha)$. Analogously, $\tauh_{\beta\alpha^{-1}}(C(\gamma,\alpha\delta')\cap C(\beta,\alpha))\subseteq C(\gamma,\delta)\cap C(\alpha,\beta)$. Applying $\tauh_{\alpha\beta^{-1}}$ to both sides of the latter inclusion, we have that $C(\gamma,\alpha\delta')\cap C(\beta,\alpha)\subseteq \tauh_{\alpha\beta^{-1}}(C(\gamma,\delta)\cap C(\alpha,\beta))$, which proves that $\tauh_{\alpha\beta^{-1}}(C(\gamma,\delta)\cap C(\alpha,\beta))=C(\gamma,\alpha\delta')\cap C(\beta,\alpha)$. 
(ii) Let $\xi\in O_{C(\gamma,\delta)\cap C(\alpha,\beta)}$. Then, \[\varphih_{\alpha\beta^{-1}}(\xi) = \{A\in \TCB: r(B,\beta)\subseteq r(A,\alpha) \text{ for some } B\in\xi\},\] by Proposition~\ref{top.partial.action}. Note that $C(\gamma,\delta)\cap C(\alpha,\beta)\in \xi$. Thus, to demonstrate that $\varphih_{\alpha\beta^{-1}}(O_{C(\gamma,\delta)\cap C(\alpha,\beta)}) \subseteq O_{C(\gamma,\alpha\delta')\cap C(\beta,\alpha)}$, it suffices to show that $r(C(\gamma,\delta)\cap C(\alpha,\beta),\beta) \subseteq r(C(\gamma,\alpha\delta')\cap C(\beta,\alpha),\alpha)$. 
Applying Equation~(\ref{eq:rel.range}) we obtain
\[r(C(\gamma,\delta)\cap C(\alpha,\beta),\beta)=C(\gamma,\delta')\cap C(\alpha,\eword)\cap C(\beta,\eword)=r(C(\gamma,\alpha\delta')\cap C(\beta,\alpha),\alpha).\]
Therefore, we have that $C(\gamma,\alpha\delta')\cap C(\beta,\alpha)\in \varphih_{\alpha\beta^{-1}}(O_{C(\gamma,\delta)\cap C(\alpha,\beta)})$, which implies that $\varphih_{\alpha\beta^{-1}}(O_{C(\gamma,\delta)\cap C(\alpha,\beta)}) \subseteq O_{C(\gamma,\alpha\delta')\cap C(\beta,\alpha)}$. For the reverse inclusion, it suffices to see that $\varphih_{\beta\alpha^{-1}}(O_{C(\gamma,\alpha\delta')\cap C(\beta,\alpha)}) \subseteq O_{C(\gamma,\delta)\cap C(\alpha,\beta)}$. For this it is sufficient to see that  $r(C(\gamma,\alpha\delta')\cap C(\beta,\alpha),\alpha) \subseteq r(C(\gamma,\delta)\cap C(\alpha,\beta),\beta)$. But, this latter inclusion also follows from the observation that $r(C(\gamma,\delta)\cap C(\alpha,\beta),\beta) = r(C(\gamma,\alpha\delta')\cap C(\beta,\alpha),\alpha)$. Hence, $\varphih_{\alpha\beta^{-1}}(O_{C(\gamma,\delta)\cap C(\alpha,\beta)})=O_{C(\gamma,\alpha\delta')\cap C(\beta,\alpha)}$.
\end{proof}

\begin{proposition} \label{isodiagonals}
There exists an isomorphism $\Phi:\udalgshift\to \Lc(\HTCB,R)$ defined by $\Phi(1_A)=1_{O_{A}}$ for every $A\in \TCB$. Moreover, this isomorphism is equivariant with respect to the partial actions $\tau$ on $\udalgshift$ and $\varphi$ on $\Lc(\HTCB,R)$.
\end{proposition}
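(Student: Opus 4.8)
The plan is to construct the inverse of $\Phi$ directly from the embedding $\iota:\osf\to\HTCB$, $\iota(x)=\xi_x$, of Proposition~\ref{pa.extension}, and to deduce the equivariance of $\Phi$ from that of $\iota$. First I would define $\Psi:\Lc(\HTCB,R)\to\CF(\osf,R)$ by $\Psi(g)=g\circ\iota$; since both algebras carry pointwise operations and $\iota$ is a fixed map, $\Psi$ is an $R$-algebra homomorphism. For each $A\in\TCB$ one has $\Psi(1_{O_A})(x)=1_{O_A}(\xi_x)$, which is $1$ precisely when $A\in\xi_x$, i.e.\ when $x\in A$; hence $\Psi(1_{O_A})=1_A$. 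By Stone duality (Theorem~\ref{thm:stone}) the compact-open subsets of $\HTCB$ are exactly the sets $O_A$, so $\Lc(\HTCB,R)$ is generated as an $R$-algebra by $\{1_{O_A}:A\in\TCB\}$, and their images $\{1_A\}$ generate $\udalgshift=\CF_{\TCB}$ by Lemma~\ref{lem:DRX_gen}. Consequently $\Psi$ is a surjection onto $\udalgshift$.

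For injectivity, suppose $\Psi(g)=g\circ\iota=0$ with $g\neq0$, and pick $\xi$ with $g(\xi)=r\neq0$. As $g$ is locally constant (hence continuous for the discrete topology on $R$), $g^{-1}(r)$ is a non-empty open set, and since $\iota(\osf)$ is dense in $\HTCB$ by Proposition~\ref{pa.extension}, there is $x$ with $\iota(x)\in g^{-1}(r)$; then $\Psi(g)(x)=r\neq0$, a contradiction. Thus $\Psi$ is an isomorphism of $\Lc(\HTCB,R)$ onto $\udalgshift$, and I set $\Phi=\Psi^{-1}$. Since $\Psi(1_{O_A})=1_A$, we obtain $\Phi(1_A)=1_{O_A}$, as required.

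It remains to verify equivariance. Writing $t=\alpha\beta^{-1}$ in reduced form, the unit of the ideal $D_t$ is $1_{W_t}=1_{C(\beta,\alpha)}$ (Proposition~\ref{set.partial.action}) while the unit of $I_t$ is $1_{V_t}=1_{O_{C(\beta,\alpha)}}$ (Proposition~\ref{top.partial.action}), so that $V_t=O_{W_t}$ and $\Phi(1_{W_t})=1_{V_t}$; as $D_t$ and $I_t$ are the ideals generated by their respective units, this gives $\Phi(D_t)=I_t$ and $\Psi(I_t)=D_t$. It therefore suffices to show $\Psi\circ\varphi_t=\tau_t\circ\Psi$ on $I_{t^{-1}}$. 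For $g\in I_{t^{-1}}$ and $x\in\osf$, the function $\varphi_t(g)=g\circ\varphih_{t^{-1}}$ is supported on $V_t$, so $\Psi(\varphi_t(g))(x)=g\bigl(\varphih_{t^{-1}}(\iota(x))\bigr)$ when $\iota(x)\in V_t$ and $0$ otherwise, whereas $\tau_t(\Psi(g))(x)=g\bigl(\iota(\tauh_{t^{-1}}(x))\bigr)$ when $x\in W_t$ and $0$ otherwise. The equivalence $x\in W_t=C(\beta,\alpha)\iff\xi_x\in O_{C(\beta,\alpha)}=V_t$ shows the two support conditions agree, and on $W_t$ the equivariance $\varphih_{t^{-1}}(\iota(x))=\iota(\tauh_{t^{-1}}(x))$ of Proposition~\ref{pa.extension} equates the values. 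Hence $\Psi$ intertwines $\varphi$ and $\tau$; applying $\Phi=\Psi^{-1}$ on both sides yields $\varphi_t\circ\Phi=\Phi\circ\tau_t$, completing the argument.

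The isomorphism part is routine once density of $\iota(\osf)$ and local constancy are invoked; the only point demanding care is the equivariance step, where one must track the domain ideals $D_t$ and $I_t$ and observe that $\varphi_t(g)$ and $\tau_t(\Psi(g))$ are supported exactly on $V_t$ and $W_t$, so that equality off these supports is automatic and on them reduces to the equivariance of $\iota$.
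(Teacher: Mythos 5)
Your proof is correct, but it takes a genuinely different route from the paper's, most notably in the equivariance step. For the isomorphism itself, the paper argues forward: Stone duality makes $A\mapsto O_A$ a Boolean algebra isomorphism $\TCB\to\CK(\HTCB)$, and this is transported to the generated function algebras $\mathcal{F}_{\TCB}=\udalgshift$ and $\mathcal{F}_{\CK(\HTCB)}=\Lc(\HTCB,R)$. You instead build the inverse explicitly as the pullback $\Psi(g)=g\circ\iota$ along the embedding of Proposition~\ref{pa.extension} and get injectivity from density of $\iota(\osf)$ together with local constancy; this is slightly longer but has the advantage of making the well-definedness of the correspondence $1_A\leftrightarrow 1_{O_A}$ completely transparent. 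The larger divergence is in the equivariance: the paper reduces to generators $1_{C(\alpha,\beta)\cap C(\gamma,\delta)}$ of the ideal $D_{\beta\alpha^{-1}}$ and invokes the set-level computations of Lemma~\ref{sleep} to match $\tauh_{\alpha\beta^{-1}}$ of such a set with $\varphih_{\alpha\beta^{-1}}$ of the corresponding $O$-set, whereas you bypass Lemma~\ref{sleep} entirely and deduce the intertwining pointwise from the single identity $\varphih_{t^{-1}}(\iota(x))=\iota(\tauh_{t^{-1}}(x))$ of Proposition~\ref{pa.extension}, together with the observation that the supports $W_t$ and $V_t=O_{W_t}$ correspond under $\iota$. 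Your tracking of the ideals $D_t$, $I_t$ via their units $1_{W_t}$, $1_{V_t}$ and the reduction to $t=\alpha\beta^{-1}$ in reduced form (the remaining $t$ having trivial domains) are both handled correctly. What your approach buys is a cleaner, more conceptual equivariance argument that reuses Proposition~\ref{pa.extension}; what the paper's buys is independence from the density statement and an explicit formula (Lemma~\ref{sleep}) that is reused elsewhere.
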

\begin{proof}
Recall that $\CK(\HTCB)$ denotes the set of all compact-open subsets of $\HTCB$ and note that $\Lc(\HTCB,R)=\mathcal{F}_{\CK(\HTCB)}$. By the Stone Duality (Theorem~\ref{thm:stone}), the map $A\mapsto O_{A}$ is a Boolean algebra isomorphism of $\TCB$ onto $\CK(\HTCB)$. Thus, $\mathcal{F}_\TCB$ is isomorphic to $\mathcal{F}_{\CK(\HTCB)} = \Lc(\HTCB,R)$ as $R$-algebras, with the isomorphism given by $\Phi(1_A)= 1_{O_{A}}$, for each $A\in\TCB$. But $\udalgshift= \mathcal{F}_{\TCB}$, by Lemma~\ref{lem:DRX_gen}. Hence, $\Phi$ is an isomorphism of $\udalgshift$ onto $\Lc(\HTCB,R)$.

We now prove that $\Phi$ is equivariant. By Remark~\ref{lem:reduced_forms}, it is enough to consider $t=\alpha\beta^{-1}$ for $\alpha,\beta\in\lang$ such that $C(\beta,\alpha)\neq\emptyset$. We may also assume that $t$ is written in reduced form. For $f\in D_{\beta\alpha^{-1}}$, we need to prove that $\Phi(f)\in I_{\beta\alpha^{-1}}$ and $\Phi(\tau_{\alpha\beta^{-1}}(f))=\varphi_{\alpha\beta^{-1}}(\Phi(f))$. By the definition of $\udalgshift$, we have that $D_{\beta\alpha_{-1}}$ is generated by functions of the form $1_{C(\alpha,\beta)\cap C(\gamma,\delta)}$ for $\gamma,\delta\in\lang$. Because $\Phi$ is an isomorphism and $I_{\alpha\beta^{-1}}$ is an ideal, it is enough to consider $f=1_{C(\alpha,\beta)\cap C(\gamma,\delta)}$ for some $\gamma,\delta\in\lang$. If $C(\alpha,\beta)\cap C(\gamma,\delta)=\emptyset$, then $f=0$ and the result is trivial. We may then assume that $C(\alpha,\beta)\cap C(\gamma,\delta)\neq\emptyset$. In this case, notice that either $\beta=\delta\beta'$ for some $\beta'\in\lang$ or $\delta=\beta\delta'$ for some $\delta'\in\lang$. In the first case, we have that $C(\alpha,\beta)\cap C(\gamma,\delta)=C(\alpha,\beta)\cap C(\gamma\beta',\beta)$, so we may assume without loss of generality that $\delta=\beta\delta'$ for some $\delta'\in\lang$. Firstly,
\[\Phi(f)=\Phi(1_{C(\alpha,\beta)\cap C(\gamma,\delta)})=1_{O_{C(\alpha,\beta)\cap C(\gamma,\delta)}}\in I_{\beta\alpha^{-1}}.\]
Secondly, by Lemma~\ref{sleep}, we have that
\begin{align*}
    \Phi(\tau_{\alpha\beta^{-1}}(f)) &=\Phi(1_{C(\alpha,\beta)\cap C(\gamma,\delta)}\circ\tauh_{\beta\alpha^{-1}}) \\
    &=\Phi(1_{\tauh_{\alpha\beta^{-1}}(C(\alpha,\beta)\cap C(\gamma,\delta))})\\
    &=\Phi(1_{C(\beta,\alpha)\cap C(\gamma,\alpha\delta')}) \\
    &=1_{O_{C(\beta,\alpha)\cap C(\gamma,\alpha\delta')}} \\
    &=1_{\varphih_{\alpha\beta^{-1}}(O_{C(\alpha,\beta)\cap C(\gamma,\delta)})} \\
    &=1_{O_{C(\alpha,\beta)\cap C(\gamma,\delta)}}\circ\varphih_{\beta\alpha^{-1}} \\
    &=\varphi_{\alpha\beta^{-1}}(1_{O_{C(\alpha,\beta)\cap C(\gamma,\delta)}}) \\
    &=\varphi_{\alpha\beta^{-1}}(\Phi(1_{C(\alpha,\beta)\cap C(\gamma,\delta)}))\\
    &=\varphi_{\alpha\beta^{-1}}(\Phi(f)).
\end{align*}
Hence, $\Phi$ is equivariant with respect to $\tau$ and $\varphi$, and the proof is complete.
\end{proof}

\begin{theorem} \label{thm:shift_alg_partil_skew}
   The partial skew group rings $\udalgshift\rtimes_{\tau}\F$ and $\Lc(\HTCB,R)\rtimes_{\varphi}\F$ are isomorphic. In particular, $\ualgshift\cong \Lc(\HTCB,R)\rtimes_{\varphi}\F$ via an isomorphism that sends $s_a$ to $1_{O_{C(\eword,a)}}\delta_a$ and $s^*_a$ to $1_{O_{C(a,\eword)}}\delta_{a^{-1}}$.
\end{theorem}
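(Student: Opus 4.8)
The plan is to promote the equivariant isomorphism $\Phi:\udalgshift\to\Lc(\HTCB,R)$ of Proposition~\ref{isodiagonals} to an isomorphism between the two partial skew group rings, and then to deduce the ``in particular'' assertion by composing with the isomorphism $\ualgshift\cong\udalgshift\rtimes_{\tau}\F$ of Theorem~\ref{thm:set-theoretic-partial-action} and tracking the images of the generators $s_a,s_a^*$.

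First I would record the two features of $\Phi$ that make the lift possible. Since $\Phi(1_A)=1_{O_A}$ and, for $t=\beta\alpha^{-1}$ in reduced form, $W_t=C(\alpha,\beta)$ while $V_t=O_{C(\alpha,\beta)}$, we have $\Phi(1_{W_t})=1_{V_t}$; as $D_t$ is the (unital) ideal generated by $1_{W_t}$ and $I_t$ the one generated by $1_{V_t}$, it follows that $\Phi(D_t)=I_t$ for every $t\in\F$. Together with the equivariance $\Phi\circ\tau_t=\varphi_t\circ\Phi$ from Proposition~\ref{isodiagonals}, this lets me define $\widetilde{\Phi}:\udalgshift\rtimes_{\tau}\F\to\Lc(\HTCB,R)\rtimes_{\varphi}\F$ by $\widetilde{\Phi}\big(\sum_t f_t\delta_t\big)=\sum_t\Phi(f_t)\delta_t$. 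Because $\Phi(D_t)=I_t$, this map is well defined and $R$-linear, it respects the $\F$-grading (it fixes each index $t$), and it is bijective because $\Phi$ is.

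The key step is multiplicativity, which is a formal consequence of equivariance. Using the product formula \eqref{eq:partial_multiplication} on both sides, that $\Phi$ is an algebra homomorphism, and the identities $\Phi\circ\tau_s=\varphi_s\circ\Phi$ and $\Phi\circ\tau_s^{-1}=\varphi_s^{-1}\circ\Phi$, one computes
\begin{align*}
\widetilde{\Phi}\big((f_s\delta_s)(g_t\delta_t)\big)
&=\Phi\big(\tau_s(\tau_s^{-1}(f_s)g_t)\big)\delta_{st}
=\varphi_s\big(\Phi(\tau_s^{-1}(f_s))\,\Phi(g_t)\big)\delta_{st}\\
&=\varphi_s\big(\varphi_s^{-1}(\Phi(f_s))\,\Phi(g_t)\big)\delta_{st}
=\big(\Phi(f_s)\delta_s\big)\big(\Phi(g_t)\delta_t\big).
\end{align*}
Here one should check that $\tau_s^{-1}(f_s)g_t$ genuinely lies in $D_{s^{-1}}$, so that equivariance applies: this holds since $\tau_s^{-1}(f_s)\in D_{s^{-1}}$ and $D_{s^{-1}}$ is an ideal of $\udalgshift$. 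Thus $\widetilde{\Phi}$ is an isomorphism of partial skew group rings, establishing the first assertion.

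For the ``in particular'' statement, I would compose $\widetilde{\Phi}$ with the isomorphism of Theorem~\ref{thm:set-theoretic-partial-action}, under which $s_a\mapsto 1_a\delta_a$ and $s_a^*\mapsto 1_{a^{-1}}\delta_{a^{-1}}$. Since $1_a$ and $1_{a^{-1}}$ are the characteristic functions of $W_a=C(\eword,a)$ and $W_{a^{-1}}=C(a,\eword)$, and $\Phi(1_A)=1_{O_A}$, applying $\widetilde{\Phi}$ yields $s_a\mapsto 1_{O_{C(\eword,a)}}\delta_a$ and $s_a^*\mapsto 1_{O_{C(a,\eword)}}\delta_{a^{-1}}$, as claimed. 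I expect the only real obstacle to be the bookkeeping around the domains $D_t,I_t$ and verifying that every element to which equivariance is applied sits in the correct ideal; once this is in place, the multiplicativity computation is purely formal.
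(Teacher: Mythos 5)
Your proposal is correct and follows essentially the same route as the paper, which simply observes that the first assertion "follows immediately from Proposition~\ref{isodiagonals}" and the second from combining it with Theorem~\ref{thm:set-theoretic-partial-action}; you have merely written out the standard verification that an equivariant isomorphism of the coefficient algebras (together with $\Phi(D_t)=I_t$) lifts degreewise to an isomorphism of the partial skew group rings.
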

\begin{proof}
The first part follows immediately from Proposition~\ref{isodiagonals}, and the second from Theorem~\ref{thm:set-theoretic-partial-action} and the isomorphism found in Proposition~\ref{isodiagonals}.
\end{proof}

\begin{remark}
Since $\CB$ is an ideal of $\TCB$, in the sense of Boolean algebras, there exists an embedding of $\HCB$ into $\HTCB$, whose image is open in $\HTCB$. We can then restrict $\varphih$ to a partial action on $\HCB$, which in turn gives a restriction of $\varphi$ to a partial action $\varphi'$ on $\Lc(\HCB,R)$. One can then show that $\algshift\cong \Lc(\HCB,R)\rtimes_{\varphi'}\F$. Since our main focus is on the unital algebra $\ualgshift$, we will not go through the details of the proof.
\end{remark}

\section{Groupoid models for unital subshift algebras}\label{s:groupoid picture}
In this section, we study the unital subshift algebra as a Steinberg algebra of a groupoid. We define two (isomorphic) groupoids associated with a subshift. The first is the transformation groupoid of the partial action defined in Section \ref{topological.pa}. The second is the Deaconu-Renault groupoid associated with a singly generated dynamical system \cite{Renault00}. The latter is used in Section~\ref{s:conjugacy} to study conjugacy of subshifts.

Recall that a Hausdorff topological groupoid $\mathcal{G}$ is an ample groupoid if its topology has a basis of compact-open bisections. Then the Steinberg algebra associated with $\mathcal{G}$ (see \cite{BenGroupoid}) is defined as 
\[\mathcal{A}_R(\mathcal{G}) = \vecspan_R \{1_{U}: U\subseteq \mathcal{G} \text{ is a compact-open set} \}.\]
Equivalently, $\mathcal{A}_R(\mathcal{G})$ consists of all locally constant functions $f:\mathcal{G}\to R$ with compact support. Multiplication in $\mathcal{A}_R(\mathcal{G})$ is defined by the convolution product 
\[ f*g(\gamma) = \sum_{s(\eta)=s(\gamma)} f(\eta)g(\eta^{-1}\gamma).\]

Having realised $\ualgshift$ as the partial skew group ring $\Lc(\HTCB,R)\rtimes_{\varphi}\F$ (Theorem~\ref{thm:shift_alg_partil_skew}), we now have a groupoid model for $\ualgshift$, given by the transformation groupoid 
\[ \F\ltimes_{\varphi} \HTCB = \{(\xi,t,\eta)\in \HTCB\times\F\times \HTCB: \eta\in V_{t^{-1}} \text{ and } \xi = \varphih_t (\eta) \},\]
where multiplication and inverses are defined by
\[(\xi,s,\eta)(\eta,t,\gamma) = (\xi,st,\gamma) \text{ and } (\xi,t,\eta)^{-1}=(\eta,t^{-1},\xi),\]
respectively. Note that $\F\ltimes_{\varphih} \HTCB$ is an ample groupoid, since the topology on $\HTCB$ has a basis of compact-open sets.

Next, we describe a Deaconu-Renault groupoid associated with a subshift. For the reader's convenience, we recall the key definitions first.

\begin{definition}\label{d:SGDS}
A \emph{singly generated dynamical system} is a pair $(X,\hsig)$ consisting of a locally compact Hausdorff space $X$, and a local homeomorphism $\hsig: \dom(\hsig)\to \Img(\hsig)$ from an open set $\dom (\hsig)\subseteq X$ onto an open set $\Img(\hsig)\subseteq X$. Inductively, define $D_0=X$ and $D_n:=\dom(\hsig^n)=\hsig^{-1}(D_{n-1})$ for $n>0$. The \emph{Deaconu-Renault groupoid} associated with $(X,\hsig)$ is defined as \[\CG(X,\hsig)=\displaystyle\bigcup_{n,m\in \mathbb N} \Big \{(x,n-m,y)\in D_n\times \{n-m\}\times D_m :  \hsig^n(x)=\hsig^m(y)\Big \},\]
equipped with the topology with basic open sets \[\CZ(U,V,n,m):=\big \{(x,n-m,y):x\in U,y\in V,\text{ and }\hsig^n(x)=\hsig^m(y)\big\},\] 
indexed by quadruples $(U,V,n,m)$, where $n,m\in \mathbb N$, $U\subseteq D_n$ and $V\subseteq D_m$ are open and $\hsig^n|_U$ and $\hsig^m|_V$ are homeomorphisms. The operations are given by $(x,k,y)(y,l,z)=(x,k+l,z)$ and $(x,k,y)^{-1}=(y,-k,x)$, for $(x,k,y),(y,l,z)\in \CG(X,\hsig)$. Note that $r(x,k,y)=(x,0,x)$ and $s(x,k,y)=(y,0,y)$. Therefore, we identify the unit space $\CG(X,\hsig)^{(0)}$ with $X$ as topological spaces. Also, there is a one-cocycle $c:\CG(X,\hsig)\to\zn$ given by $c(x,k,y)=k$.
\end{definition}

Given a singly generated dynamical system $(X,\hsig)$, its Steinberg algebra $\CA_R(\CG(X,\hsig))$ has a natural $\zn$-grading given by $\CA_R(\CG(X,\hsig))_n:=\{f\in\CA_R(\CG(X,\hsig)):\supp(f)\scj c^{-1}(n)\}$, see \cite{GroupoidLeavitt}.

In our case, we consider the pair $(\HTCB, \hsig)$, where $\HTCB$ is the Stone dual of $\TCB$ (see Section~\ref{s:stone}), and $\hsig$ has domain  $\dom(\hsig):=\bigcup_{a\in\alf} V_a = \bigcup_{a\in\alf} O_{Z_a}$ and, for $\xi \in V_a$, is defined by \[\hsig(\xi):=\{B\in \TCB: r(A,a)\subseteq B \text{ for some } A\in \xi \}.\]

Before studying the groupoid $\CG(\HTCB,\hsig)$, we first show how the map $\hsig$ relates to the shift map on the OTW subshift $\osf^{OTW}$.  For that we define a map $\pi:\HTCB\to\osf^{OTW}$ by 
\[\pi(\xi)=\begin{cases}
\alpha, &\text{if }\alpha\in \lang\text{ is such that }Z_\alpha\in\xi\text{ and }Z_{\alpha b}\notin\xi\text{ for all }b\in\alf; \\
\alpha, &\text{if }\alpha\in \osf^{inf}\text{ and }Z_{\alpha_{1,n}}\in\xi\text{ for all }n\geq 1; \\
\vec{0}, &\text{if }Z_a\notin\xi\text{ for all }a\in\alf. 
\end{cases}\]
To show that $\pi$ is well-defined we need to show that, in the first case, $\pi(\xi)=\alpha$ indeed belongs to $\osf^{OTW}$ (the other cases are straightforward). Let $L=\{b\in\alf:\exists \eta\in\osf\text{ s.t. }\alpha b\eta\in\osf\}$ and note that $L\neq\emptyset$ because $\alpha\in\lang$. Suppose that $L$ is finite. Then we can write $Z_\alpha$ as the finite union $Z_{\alpha}=\bigcup_{b\in L} Z_{\alpha b}$. Because $\xi$ is an ultrafilter, there exists $b\in\alf$ such that $Z_{\alpha b}\in\xi$, which contradicts the definition of $\pi(\xi)$. This means that $L$ is infinite, that is, $\alpha\in\osf^{fin}$.

\begin{remark}
In $\ualgshift$, there is an inclusion of commutative subalgebras generated by idempotents $\vecspan_R\{s_\alpha s_\alpha^*\}\scj\vecspan_R\{s_\alpha p_As_\alpha^*\}=\vecspan_R\{p_A\}$. When $R$ is an indecomposable ring, via the isomorphisms of Propositions~\ref{algebra OTW} and \ref{ohcoconuts}, we obtain a map as follows. Given $\xi\in\HTCB$, there is corresponding character $\phi_\xi$ on $\vecspan_R\{p_A\}$ such $\phi_\xi(p_A)=1$ if $A\in\xi$ and 0 otherwise. When we restrict $\phi_\xi$ to $\vecspan_R\{s_\alpha s_\alpha^*\}$, using that $s_\alpha s_\alpha^*=p_{Z_\alpha}$ and the isomorphism of Proposition~\ref{algebra OTW}, we see that this map is the same as $\pi$ defined above. We point out that the definition of $\pi$ itself does not depend on $R$.
\end{remark}

\begin{proposition}\label{prop:pi_shift_commute}
For all $\xi\in\dom(\hsig)$, we have that $\pi\circ\hsig(\xi)=\sigma\circ\pi(\xi)$. Moreover, $\pi$ is surjective.
\end{proposition}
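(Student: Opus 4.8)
The plan is to reduce the identity $\pi\circ\hsig=\sigma\circ\pi$ to a single combinatorial equivalence relating the cylinders contained in $\xi$ to those contained in $\hsig(\xi)$, and then to read off both sides from the defining case-split of $\pi$; surjectivity I would then obtain by constructing suitable ultrafilters through finite-intersection and compactness arguments on the Stone dual $\HTCB$ (which is compact since $\TCB$ has top element $\osf$).

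The engine is the following claim: for $\xi\in V_a=O_{Z_a}$ and any $\beta\in\lang$,
\[
Z_\beta\in\hsig(\xi)\iff Z_{a\beta}\in\xi.
\]
To prove it, note that the formula defining $\hsig$ on $V_a$ is literally that of $\varphih_{a^{-1}}$, so Proposition~\ref{prop:partial-homeom} gives $\varphih_a(\hsig(\xi))=\xi$; combined with the defining formula \eqref{eq:top_part_act_1}, namely $\varphih_a(\eta)=\{A\in\TCB: r(A,a)\in\eta\}$, this yields $A\in\xi\iff r(A,a)\in\hsig(\xi)$ for every $A\in\TCB$. Taking $A=Z_{a\beta}$ and using $r(A,\alpha)=\{x\in\osf:\alpha x\in A\}$ one computes $r(Z_{a\beta},a)=Z_\beta\cap F_a$; since $F_a\in\hsig(\xi)$ (because $\hsig(\xi)\in V_{a^{-1}}=O_{F_a}$), membership of $Z_\beta\cap F_a$ in the filter $\hsig(\xi)$ is equivalent to membership of $Z_\beta$, proving the claim.

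For the commuting relation, observe that $\xi\in\dom(\hsig)$ forces a \emph{unique} $a\in\alf$ with $Z_a\in\xi$ (the $Z_b$ are pairwise disjoint), so $\pi(\xi)\neq\vec{0}$ and its first symbol is $a$; write $\pi(\xi)=a\gamma$, so that $\sigma(\pi(\xi))=\gamma$. I would then split into the three clauses of $\pi$. If $\pi(\xi)=a$ (length one), then $Z_{ab}\notin\xi$ for all $b$, whence by the claim $Z_b\notin\hsig(\xi)$ for all $b$, so the third clause gives $\pi(\hsig(\xi))=\vec{0}=\gamma$. If $\pi(\xi)=a\gamma$ with $\gamma$ a nonempty finite word, then $Z_{a\gamma}\in\xi$ and $Z_{a\gamma b}\notin\xi$ for all $b$, so the claim gives $Z_\gamma\in\hsig(\xi)$ and $Z_{\gamma b}\notin\hsig(\xi)$ for all $b$, i.e.\ the first clause returns $\gamma$. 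If $\pi(\xi)=a\gamma$ with $\gamma\in\osf^{inf}$, then all prefixes of $a\gamma$ lie (as cylinders) in $\xi$, so the claim shows all prefixes of $\gamma$ lie in $\hsig(\xi)$ and the second clause returns $\gamma$. Since the three clauses are mutually exclusive (well-definedness of $\pi$, already established), in each case $\pi(\hsig(\xi))=\gamma=\sigma(\pi(\xi))$.

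For surjectivity, given $y\in\osf^{inf}$ the clopens $O_{Z_{y_{1,n}}}$ are nonempty, compact, and nested, so by compactness of $\HTCB$ their intersection contains some $\xi$, and the second clause gives $\pi(\xi)=y$. Given a nonempty finite $y\in\osf^{fin}$, the family $\{O_{Z_y}\}\cup\{\HTCB\setminus O_{Z_{yb}}:b\in\alf\}$ has the finite intersection property, because $Z_y=\bigsqcup_{b}Z_{yb}$ and the membership $y\in\osf^{fin}$ is exactly the statement that infinitely many $Z_{yb}$ are nonempty, so removing finitely many leaves a nonempty set; compactness yields $\xi$ with $Z_y\in\xi$ and $Z_{yb}\notin\xi$ for all $b$, and the first clause gives $\pi(\xi)=y$. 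Finally $\vec{0}\in\osf^{OTW}$ forces infinitely many $Z_a$ to be nonempty, so $\{\HTCB\setminus O_{Z_a}:a\in\alf\}$ has the finite intersection property and compactness produces $\xi$ avoiding every $Z_a$, giving $\pi(\xi)=\vec{0}$. The main obstacle is not the algebra but the bookkeeping: correctly identifying $\hsig|_{V_a}$ with $\varphih_{a^{-1}}$ to get the key equivalence, and matching the finite-intersection conditions in the surjectivity step precisely with the OTW defining condition that a finite word be extendable in infinitely many ways (which is what guarantees the relevant ultrafilters exist and land in the right clause of $\pi$).
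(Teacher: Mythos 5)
Your proof is correct and follows essentially the same strategy as the paper's: a case analysis on the three clauses of $\pi$ driven by a membership computation relating cylinders in $\xi$ to cylinders in $\hsig(\xi)$, followed by an explicit ultrafilter construction for surjectivity. The only differences are cosmetic — your ``engine'' equivalence $Z_\beta\in\hsig(\xi)\iff Z_{a\beta}\in\xi$, obtained from $\varphih_a\circ\varphih_{a^{-1}}=\mathrm{id}$, cleanly packages what the paper proves by a direct inclusion in one direction and a contradiction argument in the other, and your compactness/finite-intersection argument for surjectivity is just the Stone-dual rephrasing of the paper's extension of a filter base to an ultrafilter.
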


\begin{proof}
Let $\xi\in\dom(\hsig)$ and $\alpha=\pi(\xi)$. Notice that $\alpha\neq \vec{0}$, otherwise $\xi \notin \dom(\hsig)$. 
Suppose that $|\alpha|=\infty$. Then, for every $n\in\nn^*$, we have that $r(Z_{\alpha_{1,n}},\alpha_1)=F_{\alpha_1}\cap Z_{\alpha_2,n}\scj Z_{\alpha_2,n}$, by Equation~\ref{eq:rel.range}. Hence $Z_{\alpha_2,n}\in\hsig(\xi)$ for all $n\in\nn^*$, that is, $\pi(\hsig(\xi))=\alpha_{2,\infty}=\sigma(\pi(\xi))$.

Suppose now that $|\alpha|<\infty$. The above argument shows that $Z_{\alpha_{2,|\alpha|}}\in\hsig(\xi)$. Suppose that for some $b\in\alf$, we have that $Z_{\alpha_{2,|\alpha|}b}\in\hsig(\xi)$. This would imply that there exists $A\in\TCB$ such that $r(A,\alpha_1)\scj Z_{\alpha_{2,|\alpha|}b}$ so that $A\scj Z_{\alpha b}$. In this case, $Z_{\alpha b}\in\xi$, which is a contradiction, since $\pi(\xi)=\alpha$. It follows that $\pi(\hsig(\xi))=\alpha_{2,|\alpha|}=\sigma(\pi(\xi))$.

Next, we show that $\pi$ is surjective. Let $\alpha \in \osf^{OTW}$. If $\alpha\in\osf^{inf}$, then $\alpha=\pi(\xi)$, where  $\xi=\{A\in\TCB: \alpha\in A\}$. If $\alpha\in\osf^{fin}$, then $\alpha$ is the image of an ultrafilter that contains the upper set $\uparrow\{Z_\alpha\cap Z_{\alpha b_1}^c\cap \ldots \cap Z_{\alpha b_n}^c: b_i\in \alf, n\in \N \}$ (which exists because the family $\{Z_\alpha\cap Z_{\alpha b_1}^c\cap \ldots \cap Z_{\alpha b_n}^c: b_i\in \alf, n\in \N \}$ is closed under intersection and does not contain the empty set).
\end{proof}

We now prove that the transformation groupoid of the partial action of Section~\ref{topological.pa} and the Deaconu-Renault groupoid of $\hsig$ are isomorphic. As a consequence, we describe the subshift algebra as a Steinberg algebra.

\begin{theorem}\label{isogroupoid}
    Let $\osf$ be a subshift. The map $\Theta:\F\ltimes_{\varphih} \HTCB  \to  \CG(\HTCB,\hsig)$ defined by $\Theta(\xi,\alpha\beta^{-1},\eta)=(\xi,|\alpha|-|\beta|,\eta)$
    is an isomorphism of topological groupoids.
\end{theorem}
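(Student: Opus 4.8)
The plan is to build a dictionary between word length in $\F$ and the cocycle degree in $\CG(\HTCB,\hsig)$, and to relate powers of $\hsig$ to the partial homeomorphisms $\varphih_t$. The crucial preliminary fact I would establish is that $\dom(\hsig^n)=D_n=\bigsqcup_{|\gamma|=n}V_\gamma$ and that $\hsig^n|_{V_\gamma}=\varphih_{\gamma^{-1}}$ for every $\gamma\in\lang$ with $|\gamma|=n$. Since $\hsig|_{V_a}=\varphih_{a^{-1}}$ by \eqref{eq:top_part_act_2} and the sets $V_a=O_{Z_a}$ are pairwise disjoint by orthogonality, this follows by induction on $n$, using the partial action identity $\varphih_s\circ\varphih_t=\varphih_{st}$ (valid whenever $x\in\dom\varphih_t$ and $\varphih_t(x)\in\dom\varphih_s$) together with $\varphih_a(V_{a^{-1}}\cap V_{\gamma'})=V_{a\gamma'}$ and $V_{a\gamma'}\scj V_a$.

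Next I would prove that $\Theta$ is a well-defined groupoid homomorphism. Let $\ell:\F\to\zn$ be the group homomorphism with $\ell(a)=1$ for all $a\in\alf$; then $|\alpha|-|\beta|=\ell(\alpha\beta^{-1})$ depends only on the group element $t=\alpha\beta^{-1}$, so the degree assigned by $\Theta$ is unambiguous, and every $t$ occurring has $V_{t^{-1}}\neq\emptyset$, hence is of the form $\alpha\beta^{-1}$ as in Remark~\ref{lem:reduced_forms}. To see that $\Theta(\xi,t,\eta)\in\CG(\HTCB,\hsig)$, write $t=\alpha\beta^{-1}$ in reduced form; then $\eta\in V_{\beta\alpha^{-1}}=O_{C(\alpha,\beta)}\scj V_\beta$ and $\xi=\varphih_{\alpha\beta^{-1}}(\eta)\in V_{\alpha\beta^{-1}}=O_{C(\beta,\alpha)}\scj V_\alpha$, so by the preliminary fact and the partial action identity $\hsig^{|\alpha|}(\xi)=\varphih_{\alpha^{-1}}(\varphih_{\alpha\beta^{-1}}(\eta))=\varphih_{\beta^{-1}}(\eta)=\hsig^{|\beta|}(\eta)$; thus $(\xi,|\alpha|-|\beta|,\eta)$ lies in the groupoid with witnesses $n=|\alpha|$, $m=|\beta|$. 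The homomorphism property is then immediate: $\Theta$ preserves inverses because $\ell(t^{-1})=-\ell(t)$, and it preserves products because $\ell$ is a homomorphism, while composability in both groupoids is governed by matching the middle unit $\eta$.

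For bijectivity I would construct the inverse $\Psi$ explicitly. Given $(\xi,k,\eta)\in\CG(\HTCB,\hsig)$ with $\hsig^n(\xi)=\hsig^m(\eta)$ and $n-m=k$, the preliminary fact supplies unique $\alpha,\beta\in\lang$ with $\xi\in V_\alpha$, $|\alpha|=n$ and $\eta\in V_\beta$, $|\beta|=m$ (the length-$n$ and length-$m$ prefixes determined by the ultrafilters); set $\Psi(\xi,k,\eta)=(\xi,\alpha\beta^{-1},\eta)$. That this lands in the transformation groupoid again follows from the partial action axioms: writing $\zeta=\hsig^n(\xi)=\hsig^m(\eta)$ one gets $\xi=\varphih_\alpha(\zeta)$ and $\eta=\varphih_\beta(\zeta)$, whence $\varphih_{\alpha\beta^{-1}}(\eta)=\varphih_\alpha(\varphih_{\beta^{-1}}(\eta))=\varphih_\alpha(\zeta)=\xi$ and $\eta\in V_{\beta\alpha^{-1}}$. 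The only subtle point is that $\Psi$ is independent of the choice of witnesses $(n,m)$: replacing $(n,m)$ by $(n+j,m+j)$ changes $\alpha$ and $\beta$ only by appending a common suffix, which cancels in $\alpha\beta^{-1}$. This is exactly where I would invoke Proposition~\ref{prop:pi_shift_commute}: applying $\pi$ and using $\pi\circ\hsig=\sigma\circ\pi$, both appended suffixes equal the length-$j$ prefix of $\pi(\zeta)$, so they coincide. A direct check then gives $\Theta\circ\Psi=\mathrm{id}$ and $\Psi\circ\Theta=\mathrm{id}$ (for the latter, if $t=\alpha_0\beta_0^{-1}$ is reduced, then $\xi\in V_{\alpha_0}$ and $\eta\in V_{\beta_0}$ force the prefixes recovered by $\Psi$ to be exactly $\alpha_0,\beta_0$).

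Finally I would verify that $\Theta$ is a homeomorphism by matching bases. The compact-open bisections $[U,\alpha\beta^{-1}]=\{(\varphih_{\alpha\beta^{-1}}(\eta),\alpha\beta^{-1},\eta):\eta\in U\}$, with $U\scj V_{\beta\alpha^{-1}}$ compact-open, form a basis for $\F\ltimes_{\varphih}\HTCB$, and I claim $\Theta([U,\alpha\beta^{-1}])=\CZ(\varphih_{\alpha\beta^{-1}}(U),U,|\alpha|,|\beta|)$. Here one uses that $\hsig^{|\beta|}|_U=\varphih_{\beta^{-1}}|_U$ and $\hsig^{|\alpha|}|_{\varphih_{\alpha\beta^{-1}}(U)}=\varphih_{\alpha^{-1}}|_{\varphih_{\alpha\beta^{-1}}(U)}$ are injective, being restrictions of homeomorphisms, so the pairing defining $\CZ$ is exactly the graph of $\varphih_{\alpha\beta^{-1}}$ on $U$. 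Since the sets $\CZ(W,U,n,m)$ with $W\scj V_\alpha$, $U\scj V_\beta$ and $|\alpha|=n$, $|\beta|=m$ form a basis of $\CG(\HTCB,\hsig)$ — any open subset of $D_n$ splits along the disjoint decomposition $\bigsqcup_{|\gamma|=n}V_\gamma$ — this shows $\Theta$ carries a basis bijectively onto a basis, hence is a homeomorphism. I expect the main obstacle to be the well-definedness of $\Psi$ (equivalently, the injectivity of $\Theta$): this is the one step that genuinely needs the dynamical intertwining $\pi\circ\hsig=\sigma\circ\pi$, whereas the remaining arguments are bookkeeping with the partial action axioms and the length homomorphism $\ell$.
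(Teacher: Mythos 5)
Your proposal is correct and takes essentially the same route as the paper: the paper's entire proof is the observation that $\hsig|_{V_a}=\varphih_{a^{-1}}$ (your preliminary fact for $n=1$), with the remaining bookkeeping deferred to cited references, and your write-up supplies exactly that bookkeeping — extending the identity to words via semi-saturation, inverting $\Theta$ through the unique length-$n$ prefix determined by an ultrafilter, and matching bases of compact-open bisections.
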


\begin{proof}
    Notice that $\varphih_{a^{-1}}$ (see Equation~(\ref{eq:top_part_act_2})) is the restriction of $\hsig$ to $V_a$. The remainder of the proof is straightforward and follows the same steps of \cite[Theorem~5.5]{GillesDanie}, \cite[Theorem~5.12]{MR4443753} and \cite[Theorem~4.4]{GillesEunJi2}.
\end{proof}

\begin{theorem}\label{Steinberg} Let $\osf$ be a subshift. Then, $\ualgshift\cong \CA_R(\CG(\HTCB,\hsig))$ as $\zn$-graded algebras via an isomorphism that takes $s_a$ to $S_a:=1_{\CZ(O_{Z_a},O_{F_a},1,0)}$ and $s_a^*$ to $S_a^*:=1_{\CZ(O_{F_a}, O_{Z_a},0,1)}$. Moreover, this isomorphism sends the diagonal subalgebra of $\ualgshift$ to $\CA_R(\CG(\HTCB,\hsig)^{(0)})$.
\end{theorem}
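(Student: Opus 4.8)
The plan is to obtain the isomorphism as a composition of three identifications that are essentially already in place, and then to track the generators, the grading, and the diagonal through that composition. First, Theorem~\ref{thm:shift_alg_partil_skew} gives $\ualgshift\cong\Lc(\HTCB,R)\rtimes_{\varphi}\F$, under which $s_a\mapsto 1_{O_{Z_a}}\delta_a$ and $s_a^*\mapsto 1_{O_{F_a}}\delta_{a^{-1}}$. Second, I would invoke the identification of a partial skew group ring over a Stone space with the Steinberg algebra of its transformation groupoid, namely $\Lc(\HTCB,R)\rtimes_{\varphi}\F\cong\CA_R(\F\ltimes_{\varphih}\HTCB)$. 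Third, Theorem~\ref{isogroupoid} provides the topological groupoid isomorphism $\Theta:\F\ltimes_{\varphih}\HTCB\to\CG(\HTCB,\hsig)$, which by functoriality induces $\CA_R(\F\ltimes_{\varphih}\HTCB)\cong\CA_R(\CG(\HTCB,\hsig))$ via $f\mapsto f\circ\Theta^{-1}$ (and transports ampleness and the Hausdorff property, so the target Steinberg algebra is well defined). Composing the three maps yields $\ualgshift\cong\CA_R(\CG(\HTCB,\hsig))$.

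For the second identification, which I expect to be the main obstacle, I would make the map explicit on the spanning set: a generator $f\delta_t$ with $f\in I_t=\Lc(V_t,R)$ is sent to the element of $\CA_R(\F\ltimes_{\varphih}\HTCB)$ supported on the compact-open bisection $\{(\varphih_t(\eta),t,\eta):\eta\in V_{t^{-1}}\}$ taking the value $f(\varphih_t(\eta))$ there. Since $f$ is compactly supported and locally constant and $V_{t^{-1}}$ is compact-open, this lands in the Steinberg algebra; the real work is checking that this assignment is a bijection intertwining the skew-product multiplication \eqref{eq:partial_multiplication} with the convolution product. This matching of the two product structures is routine but is where all the bookkeeping lives, and it follows the same pattern as the skew-ring/Steinberg correspondences cited in the proof of Theorem~\ref{isogroupoid}. (Alternatively, one could bypass this step and argue directly: verify that the elements $S_a$, $S_a^*$ and $P_A:=1_{\{(\xi,0,\xi):\xi\in O_A\}}$ satisfy the relations of Definition~\ref{gelado} by convolution, obtain a surjective graded homomorphism by the universal property of $\ualgshift$, and conclude injectivity from $\eta(rp_A)=rP_A\neq 0$ via the Graded Uniqueness Theorem, Corollary~\ref{GUTshift}.)

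Tracking the generators is then immediate. The image of $s_a\mapsto 1_{O_{Z_a}}\delta_a$ is supported on $\{(\xi,a,\eta):\xi\in O_{Z_a},\ \eta=\varphih_{a^{-1}}(\xi)\}$; applying $\Theta$ and recalling from the proof of Theorem~\ref{isogroupoid} that $\varphih_{a^{-1}}$ is the restriction of $\hsig$ to $V_a=O_{Z_a}$, this set becomes $\{(\xi,1,\eta):\xi\in O_{Z_a},\ \eta\in O_{F_a},\ \hsig(\xi)=\eta\}=\CZ(O_{Z_a},O_{F_a},1,0)$, so $s_a\mapsto S_a$; the computation $s_a^*\mapsto S_a^*$ is symmetric, using $n=0,\ m=1$. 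For the grading, the length homomorphism $\F\to\zn$ indexes the grading of $\Lc(\HTCB,R)\rtimes_{\varphi}\F$ and corresponds under the explicit map to the cocycle grading of the transformation groupoid, while $\Theta$ intertwines that length with the cocycle $c(x,k,y)=k$ of $\CG(\HTCB,\hsig)$ because $\Theta(\xi,\alpha\beta^{-1},\eta)=(\xi,|\alpha|-|\beta|,\eta)$; hence the whole composition is $\zn$-graded.

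Finally, for the diagonal, by the first part of Proposition~\ref{ohcoconuts} the diagonal subalgebra equals $\vecspan_R\{p_A:A\in\TCB\}$, and under the composition $p_A\mapsto 1_{O_A}\delta_{\eword}\mapsto 1_{\{(\xi,0,\xi):\xi\in O_A\}}$, which lies in $\CA_R(\CG(\HTCB,\hsig)^{(0)})$. Since the unit space is identified with $\HTCB$ and, by Stone duality (Theorem~\ref{thm:stone}), the compact-open subsets of $\HTCB$ are precisely the sets $O_A$ with $A\in\TCB$, the functions $1_{O_A}$ span $\CA_R(\CG(\HTCB,\hsig)^{(0)})$. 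Being the restriction of an isomorphism, the map is injective on the diagonal, so the diagonal subalgebra is carried isomorphically onto $\CA_R(\CG(\HTCB,\hsig)^{(0)})$, which completes the proof.
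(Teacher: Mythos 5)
Your proposal is correct and follows essentially the same route as the paper: the paper also composes Theorem~\ref{thm:shift_alg_partil_skew} with the partial-skew-ring/transformation-groupoid Steinberg algebra correspondence (which it obtains by citing an external result rather than sketching the convolution bookkeeping as you do) and then transports along $\Theta$ from Theorem~\ref{isogroupoid}, tracking generators and handling the diagonal via Proposition~\ref{ohcoconuts} exactly as you describe.
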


\begin{proof}
From Theorem~\ref{thm:shift_alg_partil_skew} and \cite[Theorem 3.2]{GonBeu18}, we get an isomorphism between $\ualgshift$ and  $\mathcal{A}_R(\F\ltimes_{\varphih} \HTCB)$ that sends $s_a$ to $1_{V_a\times\{a\}\times V_{a^{-1}}}$ and $s^*_a$ to $1_{V_{a^{-1}}\times\{a^{-1}\}\times V_a}$. Applying the map $\Theta$ from Theorem~\ref{isogroupoid}, we see that $\Theta(V_a\times\{a\}\times V_{a^{-1}})=\CZ(O_{Z_a},O_{F_a},1,0)$ and similarly, $\Theta(V_{a^{-1}}\times\{a^{-1}\}\times V_a)=\CZ(O_{F_a}, O_{Z_a},0,1)$. The result the follows from the fact that we have an isomorphism from $\mathcal{A}_R(\F\ltimes_{\varphih} \HTCB)$ to $\CA_R(\CG(\HTCB,\hsig))$ that sends $f$ to $f\circ\Theta^{-1}$.

The last part follows from Proposition~\ref{ohcoconuts} and the fact that $\CA_R(\CG(\HTCB,\hsig)^{(0)})\cong\Lc(\HTCB,R)$.
\end{proof}

Next, we define a map $\epsilon:\dom(\epsilon)\scj \CG(\HTCB,\hsig)\to\CG(\HTCB,\hsig)$ that will play a important role when discussing conjugacy of OTW-subshifts in Section~\ref{s:conjugacy}. This map is inspired by the one found in \cite{BrixCarlsen} and is defined as follows. The domain of $\epsilon$ is given by
\[\dom(\epsilon)=\{(\xi,n,\eta)\in\CG(\HTCB,\hsig): \xi,\eta\in\dom(\hsig)\}\]
and, for $(\xi,n,\eta)\in\dom(\epsilon)$, set
\[\epsilon(\xi,n,\eta)=(\hsig(\xi),n,\hsig(\eta)).\]

We would like an analogue of \cite[Lemma 4.2]{BrixCarlsen}, in the sense that we want to define a map $\tau:\CA_R(\CG(\HTCB,\hsig))\to\CA_R(\CG(\HTCB,\hsig))$ by $\tau(f)=f\circ\epsilon$. However, if the alphabet $\alf$ is infinite, then there are a few problems that we need to overcome. The first is that $\dom(\epsilon)$ is not necessarily all $\CG(\HTCB,\hsig)$ and, secondly, may fail to be clopen in $\CG(\HTCB,\hsig)$, so that we cannot continuously extend $f\circ\epsilon$ to be zero outside $\dom(\epsilon)$. Lastly, even if $\dom(\epsilon)$ is clopen, there is no guarantee that $f\circ\epsilon$ has compact support. To overcome these issues, we will work with restrictions of $\epsilon$.

Fix $M\scj \alf$ finite and let $V_M=\cup_{a\in M}V_a$. Note that $V_M$ is compact-open in $\HTCB$. We let \begin{equation}\label{dom(epsilonM)}
\dom(\epsilon_M)=\{(\xi,n,\eta)\in\CG(\HTCB,\hsig): \xi,\eta\in V_M\}=s^{-1}(V_M)\cap r^{-1}(V_M),\end{equation}
which is clopen in $\CG(\HTCB,\hsig)$, and we define $\epsilon_M$ as the restriction of $\epsilon$ to $\dom(\epsilon_M)$.

\begin{lemma}\label{epsilonM}
The map $\epsilon_M$ is continuous and proper.
\end{lemma}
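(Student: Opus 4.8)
The plan is to reduce both assertions to a single computation of the preimage under $\epsilon_M$ of a basic open set $\CZ(U,V,p,q)$ of $\CG(\HTCB,\hsig)$. Recall from the proof of Theorem~\ref{isogroupoid} that $\varphih_{a^{-1}}$ is precisely the restriction of $\hsig$ to $V_a$, so that, by Proposition~\ref{prop:partial-homeom}, $\hsig$ restricts to a homeomorphism of the compact-open set $V_a=O_{Z_a}$ onto the compact-open set $V_{a^{-1}}=O_{F_a}$, with inverse $\varphih_a$. Since $V_M=\bigcup_{a\in M}V_a$ is a \emph{finite disjoint} union (the $V_a$ are pairwise disjoint because $Z_a\cap Z_b=\emptyset$ for $a\neq b$), each $\xi\in V_M$ lies in a unique $V_a$. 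First I would record the key identity: for open $U\scj D_p$ and $V\scj D_q$ with $\hsig^p|_U$ and $\hsig^q|_V$ homeomorphisms,
\[\epsilon_M^{-1}(\CZ(U,V,p,q))=\bigsqcup_{a,b\in M}\CZ\big(\hsig^{-1}(U)\cap V_a,\ \hsig^{-1}(V)\cap V_b,\ p+1,\ q+1\big).\]
Indeed, $(\xi,n,\eta)$ lies on the left exactly when $\xi,\eta\in V_M$, $\hsig(\xi)\in U$, $\hsig(\eta)\in V$ and $\hsig^{p}(\hsig(\xi))=\hsig^{q}(\hsig(\eta))$; writing $\xi\in V_a$, $\eta\in V_b$ and using $\hsig^{p}\circ\hsig=\hsig^{p+1}$ rewrites this as the right-hand side. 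The resulting relation $\hsig^{p+1}(\xi)=\hsig^{q+1}(\eta)$ also reconfirms that $\epsilon_M(\xi,n,\eta)$ genuinely lands in $\CG(\HTCB,\hsig)$.

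For continuity I would check that each set on the right above is a legitimate basic open set. This amounts to verifying that $\hsig^{-1}(U)\cap V_a$ is an open subset of $D_{p+1}$ on which $\hsig^{p+1}$ is a homeomorphism: the factorisation $\hsig^{p+1}|_{\hsig^{-1}(U)\cap V_a}=\big(\hsig^{p}|_{U\cap V_{a^{-1}}}\big)\circ\big(\hsig|_{V_a}\big)$ exhibits it as a composition of homeomorphisms onto open sets, and symmetrically for $V$ and $b$. Because $M$ is finite, the displayed preimage is then a finite union of basic open sets, hence open in $\dom(\epsilon_M)$. Since the sets $\CZ(U,V,p,q)$ form a basis for the topology of $\CG(\HTCB,\hsig)$, this proves $\epsilon_M$ is continuous.

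For properness, let $K\scj\CG(\HTCB,\hsig)$ be compact. As $\CG(\HTCB,\hsig)$ is ample, $K$ is covered by finitely many compact-open bisections, which may be taken of the form $\CZ(U_i,V_i,p_i,q_i)$ with $U_i,V_i$ compact-open, $i=1,\dots,k$. Applying the identity to each of them, and noting that $\hsig^{-1}(U_i)\cap V_a=\varphih_a(U_i\cap V_{a^{-1}})$ is compact-open (the image of a compact-open set under the homeomorphism $\varphih_a$), I find that each $\epsilon_M^{-1}(\CZ(U_i,V_i,p_i,q_i))$ is a finite union of compact-open bisections, hence compact. Thus $\epsilon_M^{-1}(K)\scj\bigcup_{i=1}^k\epsilon_M^{-1}(\CZ(U_i,V_i,p_i,q_i))$ sits inside a compact set. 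Finally, since $\dom(\epsilon_M)$ is clopen in $\CG(\HTCB,\hsig)$ and $\epsilon_M$ is continuous, $\epsilon_M^{-1}(K)$ is closed in $\CG(\HTCB,\hsig)$; a closed subset of a compact set is compact, so $\epsilon_M$ is proper.

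The point to watch — and the reason the finiteness of $M$ is indispensable — is that a fixed cocycle value $n$ labels infinitely many bisections of $\CG(\HTCB,\hsig)$, so the crude estimate ``range and source in a compact set, degree bounded'' does \emph{not} by itself trap $\epsilon_M^{-1}(K)$ in a compact superset. Pulling back one bisection at a time circumvents this: it merely shifts the exponents $(p,q)$ to $(p+1,q+1)$ and multiplies the number of pieces by $|M|^2$, while preserving compact-openness thanks to $\hsig|_{V_a}=\varphih_{a^{-1}}$ being a homeomorphism. I expect this bisection-by-bisection bookkeeping to be the only delicate step; everything else is a direct transcription of the definitions.
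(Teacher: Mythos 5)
Your proposal is correct and follows essentially the same route as the paper: both compute $\epsilon_M^{-1}(\CZ(U,V,p,q))$ as a finite union over $a,b\in M$ of basic sets of the form $\CZ(\varphih_a(U\cap V_{a^{-1}}),\varphih_b(V\cap V_{b^{-1}}),p+1,q+1)$, deduce continuity, and then obtain properness from the fact that the preimage of a compact set is a closed subset of a finite union of compact-open basics. Your extra verification that $\hsig^{p+1}$ restricts to a homeomorphism on these sets, and the explicit covering of $K$ by basic bisections, are just more detailed renditions of steps the paper leaves implicit.
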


\begin{proof}
Consider a basic open set $\CZ(U,V,k,l)$ of $\CG(\HTCB,\hsig)$, where $U,V$ are compact-open subsets of $\HTCB$. Then,
\begin{align*}
    \epsilon^{-1}_M&(\CZ(U,V,k,l))=\\ &=\{(\xi,m,\eta)\in\dom(\epsilon_M): \hsig(\xi)\in U,\hsig(\eta)\in V,m=k-l,\hsig^{k+1}(\xi)=\hsig^{l+1}(\eta)\}\\
    &=\bigcup_{a,b\in M}\{(\xi,m,\eta)\in\dom(\epsilon_M): \xi\in\varphih_a(O_{F_a}\cap U),\eta\in \varphih_b(O_{F_b}\cap V),\\
    &\hphantom{\bigcup_{a,b\in M}\{(\xi,m,\eta)\in\dom(\epsilon_M): \xi\in\varphih_a(O_{F_a}\cap U),}m=k-l,\hsig^{k+1}(\xi)=\hsig^{l+1}(\eta)\} \\
    &=\bigcup_{a,b\in M}\CZ(\varphih_a(O_{F_a}\cap U),\varphih_b(O_{F_b}\cap V),k+1,l+1).
\end{align*}

This shows that the preimage of each basic compact-open is a finite union of basic compact-open sets. We immediately get that $\epsilon_M$ is continuous. And because $\CG(\HTCB,\hsig)$ is Hausdorff and $\epsilon_M$ is continuous, the above equality also implies that $\epsilon_M$ is proper. Indeed the preimage of a compact set $K$ by $\epsilon_M$ is then a closed subset contained in a finite union of compact-open basics open sets and therefore $\epsilon_M^{-1}(K)$ is compact.
\end{proof}

\begin{lemma}\label{l:tauM}
The map $\tau_M:\CA_R(\CG(\HTCB,\hsig))\to\CA_R(\CG(\HTCB,\hsig))$ defined by
\[\tau_M(f)(\xi,m,\eta)=\begin{cases}
f(\epsilon_M(\xi,m,\eta)), & \text{if }(\xi,m,\eta)\in\dom(\epsilon_M) \\
0, &\text{otherwise,}
\end{cases}\]
where $f\in\CA_R(\CG(\HTCB,\hsig))$ and $(\xi,m,\eta)\in \CG(\HTCB,\hsig)$, is a well-defined $R$-linear map. Moreover, for every $f\in\CA_R(\CG(\HTCB,\hsig))$, we have that
\[\tau_M(f)=\sum_{a,b\in M}S_a*f*S_b^{*},\]
where the product $*$ is the convolution product in $\CA_R(\CG(\HTCB,\hsig))$, $S_a=1_{\CZ(O_{Z_a},O_{F_a},1,0)}$, and $S_a^*=1_{\CZ(O_{F_a}, O_{Z_a},0,1)}$.
\end{lemma}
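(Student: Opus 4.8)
The plan is to reduce everything to indicator functions of basic compact-open bisections and then leverage the explicit preimage formula already computed inside the proof of Lemma~\ref{epsilonM}. First I would observe that, regarded as a map from $\CA_R(\CG(\HTCB,\hsig))$ into the $R$-module of all $R$-valued functions on $\CG(\HTCB,\hsig)$, the assignment $f\mapsto\tau_M(f)$ is $R$-linear, since $\epsilon_M$ does not depend on $f$ and all operations are pointwise. The map $f\mapsto\sum_{a,b\in M}S_a*f*S_b^{*}$ is manifestly $R$-linear and takes values in $\CA_R(\CG(\HTCB,\hsig))$. Since $\CA_R(\CG(\HTCB,\hsig))$ is spanned by indicator functions $1_{\CZ(U,V,n,m)}$ of basic compact-open bisections, it suffices to prove the identity $\tau_M(f)=\sum_{a,b\in M}S_a*f*S_b^{*}$ for $f=1_{\CZ(U,V,n,m)}$; the well-definedness of $\tau_M$ as a map into $\CA_R(\CG(\HTCB,\hsig))$, and hence its $R$-linearity there, then follows automatically, because the right-hand side lies in the Steinberg algebra.

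For $f=1_B$ with $B=\CZ(U,V,n,m)$, the left-hand side is easy to identify: by definition $\tau_M(1_B)=1_{\epsilon_M^{-1}(B)}$, and the computation carried out in the proof of Lemma~\ref{epsilonM} gives
\[\epsilon_M^{-1}(B)=\bigcup_{a,b\in M}\CZ\big(\varphih_a(O_{F_a}\cap U),\varphih_b(O_{F_b}\cap V),n+1,m+1\big).\]
The core of the argument is to match the right-hand side term by term. Here I would use that $S_a$, $B$, and $S_b^{*}$ are all indicator functions of compact-open bisections, so that $S_a*1_B*S_b^{*}=1_{S_a\cdot B\cdot S_b^{*}}$, the product being taken in the groupoid. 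Recalling that $\hsig$ restricted to $V_a=O_{Z_a}$ equals $\varphih_{a^{-1}}$, I would compute the composable triples directly: an element of $S_a\cdot B\cdot S_b^{*}$ has the form $(\xi,n-m,v)$ with $\xi\in O_{Z_a}$, $\hsig(\xi)\in U$, $v\in O_{Z_b}$, $\hsig(v)\in V$, and $\hsig^{n+1}(\xi)=\hsig^{m+1}(v)$. Since $\varphih_a(O_{F_a}\cap U)=\{\xi\in O_{Z_a}:\hsig(\xi)\in U\}$ and similarly for $b$, this set is exactly $\CZ(\varphih_a(O_{F_a}\cap U),\varphih_b(O_{F_b}\cap V),n+1,m+1)$, so $S_a*1_B*S_b^{*}=1_{W_{a,b}}$, where $W_{a,b}$ denotes the corresponding term above.

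It remains to pass from $1_{\bigcup_{a,b}W_{a,b}}$ to $\sum_{a,b}1_{W_{a,b}}$, and the key point is that the sets $W_{a,b}$ are pairwise disjoint. This is precisely where orthogonality of the partial action enters: the range of $W_{a,b}$ is contained in $V_a=O_{Z_a}$ and its source in $V_b=O_{Z_b}$, and for $a\neq a'$ one has $O_{Z_a}\cap O_{Z_{a'}}=O_{Z_a\cap Z_{a'}}=O_{\emptyset}=\emptyset$ (and likewise for the sources when $b\neq b'$), so $W_{a,b}\cap W_{a',b'}=\emptyset$ whenever $(a,b)\neq(a',b')$. Consequently $1_{\bigcup_{a,b}W_{a,b}}=\sum_{a,b\in M}1_{W_{a,b}}=\sum_{a,b\in M}S_a*1_B*S_b^{*}$, which is the desired identity for $f=1_B$. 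I expect the main obstacle to be the bookkeeping in the product-of-bisections computation: one must carefully track the composability conditions at the two gluing points and confirm that the degree of the composite equals $n-m$ while the ``tail'' equality becomes $\hsig^{n+1}(\xi)=\hsig^{m+1}(v)$. Once this is done correctly, disjointness and linearity finish the proof without further difficulty.
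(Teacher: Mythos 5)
Your proof is correct, but it takes a genuinely different route from the paper's. The paper establishes well-definedness first, directly from Lemma~\ref{epsilonM} (continuity and properness of $\epsilon_M$ give that $\tau_M(f)$ is locally constant with compact support $\epsilon_M^{-1}(\supp f)$), and then verifies the convolution identity \emph{pointwise for arbitrary} $f$: it expands $\sum_{a,b\in M}S_a*f*S_b^{*}(\xi,m,\eta)$ as a sum over composable triples and splits into the cases $(\xi,m,\eta)\notin\dom(\epsilon_M)$ (everything vanishes) and $(\xi,m,\eta)\in\dom(\epsilon_M)$, where the unique $a_0,b_0\in M$ with $\xi\in V_{a_0}$, $\eta\in V_{b_0}$ collapse the double sum to the single term $f(\hsig(\xi),m,\hsig(\eta))$. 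You instead reduce to indicators of basic compact-open bisections, use the bisection calculus $1_{D_1}*1_{D_2}=1_{D_1D_2}$ to identify $S_a*1_B*S_b^{*}$ with $1_{\CZ(\varphih_a(O_{F_a}\cap U),\varphih_b(O_{F_b}\cap V),n+1,m+1)}$, match against the preimage formula computed inside the proof of Lemma~\ref{epsilonM}, and invoke orthogonality ($V_a\cap V_{a'}=\emptyset$) to turn the disjoint union into a sum; well-definedness then comes for free since the right-hand side visibly lies in the Steinberg algebra. Your computation of the triple product and of the degree and tail conditions is accurate, and the disjointness argument is exactly the point where orthogonality of the partial action is used in both proofs. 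What your route buys is a cleaner structural picture (everything reduces to products of bisections) and an automatic well-definedness; what it costs is reliance on two facts you should cite or justify --- that $\CA_R(\CG(\HTCB,\hsig))$ is spanned by the indicators $1_{\CZ(U,V,n,m)}$ of basic compact-open bisections, and that convolution of indicators of compact-open bisections is the indicator of their product --- both standard for Hausdorff ample groupoids but not proved in the paper, whereas the paper's pointwise argument needs neither and handles arbitrary $f$ in one stroke.
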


\begin{proof}
Because $\dom(\epsilon_M)$ is clopen in $\CG(\HTCB,\hsig)$, $\epsilon_M$ is continuous (by Lemma \ref{epsilonM}) and $f$ is locally constant, we have that $\tau_M(f)$ is locally constant. Moreover, Lemma \ref{epsilonM} says that $\epsilon_M$ is proper, so that $\supp(\tau_M(f))=\epsilon_M^{-1}(\supp(f))$ is compact. This implies that $\tau_M$ is well-defined. That $\tau_M$ is $R$-linear follows from the fact that addition and scalar multiplication in $\CA_R(\CG(\HTCB,\hsig))$ are defined pointwise.

For the second part, let $(\xi,m,\eta)\in\CG(\HTCB,\hsig)$. Then
\[\sum_{a,b\in M}S_a*f*S_b^{*}(\xi,m,\eta)=\sum_{a,b\in M}\sum S_a(\xi,k,\zeta) f(\zeta,l,\theta) S_b^*(\theta,n,\eta),\]
where the second summation is over all triples $(\xi,k,\zeta),(\zeta,l,\theta),(\theta,n,\eta)\in\CG(\HTCB,\hsig)$ such that $(\xi,k,\zeta)(\zeta,l,\theta)(\theta,n,\eta)=(\xi,m,\eta)$.
If $(\xi,m,\eta)\notin \dom(\epsilon_M)$, then either $\xi\notin V_M$ or $\eta\notin V_M$. In the first case, we have that $S_a(\xi,k,\zeta)=0$ for all $a\in M$ independently of $k$ and $\zeta$. Similarly, in the second case $S_b^*(\theta,n,\eta)=0$ for all $b\in M$ independently of $n$ and $\theta$. This means that if $(\xi,m,\eta)\notin \dom(\epsilon_M)$, and thus 
\[\sum_{a,b\in M}S_a*f*S_b^{*}(\xi,m,\eta)=0=\tau_M(f)(\xi,m,\eta).\]
Suppose now that $(\xi,m,\eta)\in\dom(\epsilon_M)$. In this case there is a unique $a_0\in M$ such that $\xi\in V_{a_0}$ and a unique $b_0\in M$ such that $\eta\in V_{b_0}$. In this case, we have
\begin{align*}
    \sum_{a,b\in M}S_a*f*S_b^{*}(\xi,m,\eta)&= \sum_{a,b\in M}\sum S_{a}(\xi,k,\zeta) f(\zeta,l,\theta) S_{b}^*(\theta,n,\eta) \\
    &=\sum S_{a_0}(\xi,k,\zeta) f(\zeta,l,\theta) S_{b_0}^*(\theta,n,\eta)\\
    &=S_{a_0}(\xi,1,\hsig(\xi)) f(\hsig(\xi),m,\hsig(\eta)) S_{b_0}^*(\hsig(\eta),-1,\eta)\\
    &=f(\hsig(\xi),m,\hsig(\eta)) \\
    &=f(\epsilon_M(\xi,m,\eta)) \\
    &=\tau_M(f)(\xi,m,\eta),
\end{align*}
and the result then follows.
\end{proof}

\begin{proposition}\label{p:restrictionM}
    Let $\osf_1$ and $\osf_2$ be two subshifts. Consider the corresponding groupoids $\CG(\HTCB_i,\hsig_i)$ with maps $\epsilon^i$ as constructed above, for $i=1,2$. Also, let $\Phi:\CG(\HTCB_1,\hsig_1)\to\CG(\HTCB_2,\hsig_2)$ be an isomorphism of topological groupoids. The following are equivalent:
    \begin{enumerate}[(i)]
        \item\label{itg1} $\Phi(\dom(\epsilon^1))\scj\dom(\epsilon^2)$ and $\epsilon^2(\Phi(\xi,m,\eta))=\Phi(\epsilon^1(\xi,m,\eta))$ for all $(\xi,m,\eta)\in\dom(\epsilon^1)$.
        \item\label{itg2} For every $M\scj\alf_1$ finite there exists $N\scj\alf_2$ finite such that $\Phi(\dom(\epsilon^1_M))\scj\dom(\epsilon^2_N)$ and $\epsilon^2_N(\Phi(\xi,m,\eta))=\Phi(\epsilon^1_M(\xi,m,\eta))$ for all $(\xi,m,\eta)\in\dom(\epsilon^1_M)$.
    \end{enumerate}
\end{proposition}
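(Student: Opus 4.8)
The plan is to exploit that $\dom(\epsilon^i)=\bigcup\{\dom(\epsilon^i_M):M\scj\alf_i\text{ finite}\}$ and that each $\epsilon^i_M$ is, by construction, the restriction of $\epsilon^i$ to the clopen set $\dom(\epsilon^i_M)$, together with the fact that any isomorphism of topological groupoids $\Phi$ restricts to a homeomorphism $\Phi^{(0)}$ of the unit spaces satisfying $s\circ\Phi=\Phi^{(0)}\circ s$ and $r\circ\Phi=\Phi^{(0)}\circ r$. Since the unit space of $\CG(\HTCB_i,\hsig_i)$ is identified with $\HTCB_i$, for every $\gamma=(\xi,m,\eta)$ we have $r(\gamma)=\xi$ and $s(\gamma)=\eta$, so $\Phi^{(0)}$ transports the coordinates $\xi,\eta$ regardless of whether $\Phi$ preserves the integer coordinate.

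For \eqref{itg1}$\Rightarrow$\eqref{itg2}, fix $M\scj\alf_1$ finite. The set $V_M=\bigcup_{a\in M}O_{Z_a}$ is a finite union of compact-open sets, hence compact by Stone duality (Theorem~\ref{thm:stone}), and therefore $\Phi^{(0)}(V_M)$ is compact. I first observe that $\Phi^{(0)}(V_M)\scj\dom(\hsig_2)$: for $\eta\in V_M$ the unit $(\eta,0,\eta)$ lies in $\dom(\epsilon^1_M)\scj\dom(\epsilon^1)$, so by \eqref{itg1} its image $(\Phi^{(0)}(\eta),0,\Phi^{(0)}(\eta))$ lies in $\dom(\epsilon^2)$, forcing $\Phi^{(0)}(\eta)\in\dom(\hsig_2)=\bigcup_{a\in\alf_2}V_a$. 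Since $\Phi^{(0)}(V_M)$ is compact and the $V_a$ are open, I extract a finite $N\scj\alf_2$ with $\Phi^{(0)}(V_M)\scj V_N$. Now for $(\xi,m,\eta)\in\dom(\epsilon^1_M)$ we have $\xi,\eta\in V_M$, so $s(\Phi(\xi,m,\eta))=\Phi^{(0)}(\eta)$ and $r(\Phi(\xi,m,\eta))=\Phi^{(0)}(\xi)$ both lie in $V_N$; by the description of $\dom(\epsilon_N)$ in \eqref{dom(epsilonM)}, this yields $\Phi(\xi,m,\eta)\in\dom(\epsilon^2_N)$, i.e. $\Phi(\dom(\epsilon^1_M))\scj\dom(\epsilon^2_N)$. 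The equivariance is then inherited from \eqref{itg1}: because $\epsilon^2_N$ and $\epsilon^1_M$ are the restrictions of $\epsilon^2$ and $\epsilon^1$, for such $(\xi,m,\eta)$ we get $\epsilon^2_N(\Phi(\xi,m,\eta))=\epsilon^2(\Phi(\xi,m,\eta))=\Phi(\epsilon^1(\xi,m,\eta))=\Phi(\epsilon^1_M(\xi,m,\eta))$.

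For \eqref{itg2}$\Rightarrow$\eqref{itg1}, given $(\xi,m,\eta)\in\dom(\epsilon^1)$, choose $a,b\in\alf_1$ with $\xi\in V_a$ and $\eta\in V_b$ and set $M=\{a,b\}$, so that $(\xi,m,\eta)\in\dom(\epsilon^1_M)$. Applying \eqref{itg2} to this $M$ produces a finite $N$ with $\Phi(\xi,m,\eta)\in\dom(\epsilon^2_N)\scj\dom(\epsilon^2)$, giving $\Phi(\dom(\epsilon^1))\scj\dom(\epsilon^2)$; and the equivariance $\epsilon^2(\Phi(\xi,m,\eta))=\epsilon^2_N(\Phi(\xi,m,\eta))=\Phi(\epsilon^1_M(\xi,m,\eta))=\Phi(\epsilon^1(\xi,m,\eta))$ follows again because the truncated maps are literal restrictions. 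As these statements are pointwise, the dependence of $N$ on the chosen point is harmless. The only nontrivial step is the compactness argument in \eqref{itg1}$\Rightarrow$\eqref{itg2}: the whole content of the equivalence is that finiteness of $N$ can be achieved uniformly over $\dom(\epsilon^1_M)$, and this is exactly where I use that $V_M$ is compact and that $\dom(\hsig_2)$ is covered by the open sets $V_a$. Everything else is bookkeeping with the source and range maps.
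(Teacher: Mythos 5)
Your proof is correct and follows essentially the same route as the paper: both arguments reduce (i)$\Rightarrow$(ii) to the observation that $\dom(\epsilon^i)$ meets the unit space in $\bigcup_a V^i_a$, extract a finite $N$ from the compactness of $V_M$ (equivalently $\Phi^{(0)}(V_M)$), and use that $\Phi$ intertwines $s$ and $r$ together with the description of $\dom(\epsilon^i_M)$ in \eqref{dom(epsilonM)}; the converse direction via $M=\{a,b\}$ is identical. No gaps.
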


\begin{proof}
\eqref{itg1}$\Rightarrow$\eqref{itg2} Note that for each $i=1,2$, using the identification $\CG(\HTCB_i,\hsig_i)^{(0)}=\HTCB_i$, we have that
\[\bigcup_{a\in\alf_i}V^i_a=\dom(\epsilon^i)\cap \CG(\HTCB_i,\hsig_i)^{(0)}.\]
The above equality and the hypothesis that $\Phi(\dom(\epsilon^1))\scj\dom(\epsilon^2)$ imply that for $M\scj\alf_1$ finite, we have that $\Phi(V^1_M)\scj\bigcup_{b\in\alf_2} V^2_b$. Because $V^1_M$ is compact and $\Phi$ is a homeomorphism, there exists a finite set $N\scj\alf_2$ such that $\Phi(V^1_M)\scj V^2_N$. Also, because $\Phi$ preserves $s$ and $r$, by Equation \eqref{dom(epsilonM)}, we get $\Phi(\dom(\epsilon^1_M))\scj\dom(\epsilon^2_N)$. For $(\xi,m,\eta)\in\dom(\epsilon^1_M)$, we have that
\[\epsilon^2_N(\Phi(\xi,m,\eta))=\epsilon^2(\Phi(\xi,m,\eta))=\Phi(\epsilon^1(\xi,m,\eta))=\Phi(\epsilon^1_M(\xi,m,\eta)).\]

\eqref{itg2}$\Rightarrow$\eqref{itg1} Let $(\xi,m,\eta)\in\dom(\epsilon^1)$. Then there exists $a,b\in\alf_1$ such that $\xi\in V_a$ and $\eta\in V_b$. Take $M=\{a,b\}$ and let $N\scj \alf_2$ be as in the hypothesis. Then $\Phi(\xi,m,\eta)\in\dom(\epsilon^2_N)\scj\dom(\epsilon^2)$ and
\[\epsilon^2(\Phi(\xi,m,\eta))=\epsilon^2_N(\Phi(\xi,m,\eta))=\Phi(\epsilon^1_M(\xi,m,\eta))=\Phi(\epsilon^1(\xi,m,\eta)).\]
\end{proof}

\section{Conjugacy of OTW subshifts}\label{s:conjugacy}
In this section, we describe a conjugacy of OTW-subshifts in terms of an isomorphism of the associated groupoids and in terms of an isomorphism of the associated subshift algebras, see Theorem~\ref{theone}. We retain the notation of Section~\ref{s:groupoid picture} and start recalling the definition of a conjugacy between OTW-subshifts. 

\begin{definition}
    Let $ \osf_1^{OTW}$ and $ \osf_2^{OTW}$ be OTW-subshifts over alphabets $\alf_1$ and $\alf_2$, respectively. A map $h:  \osf_1^{OTW}\rightarrow \osf_2^{OTW}$ is a \emph{conjugacy} if it is a homeomorphism, commutes with the shift and is length-preserving.
\end{definition}

We point out that because a conjugacy $h:\osf_1^{OTW}\rightarrow \osf_2^{OTW}$ is length-preserving, we can restrict it to a bijection between the corresponding subshifts $\osf_1$ and $\osf_2$, which we also denote by $h$, see Remark~\ref{computer}.

\begin{remark}
Notice that for OTW-subshifts, a conjugacy is not necessarily given by a usual sliding block code. Instead, one has to use the notion of a generalised sliding block code, see \cite{GSS}.
\end{remark}

Before we prove the main theorem of the section, we need a few auxiliary results.

\begin{lemma}\label{l:hCF}
Let $h:\osf_1^{OTW}\to \osf_2^{OTW}$ be a conjugacy. Let $\alpha\in\CL_{\osf_1}$ and $F\scj\alf_1$ a finite set. Then
\begin{enumerate}[(i)]
    \item\label{i:hcyl} $h(\CZ_1(\alpha,F))=\bigcup_{i=1}^m \CZ_2(\mu^i,M^i)$ for some $\mu^i\in\CL_{\osf_2}$ with $|\mu^i|\geq|\alpha|$ and $M^i\scj \alf_2$ finite, for all $i=1,\ldots,m$. Moreover, if $\alpha\in\osf_1^{fin}$, then there exists $j\in\{1,\ldots,n\}$ such that $\mu^j=h(\alpha)$, and for all $i=1,\ldots,m$ such that $|\mu^i|=|\alpha|$ and $\mu^i\in\osf_2^{fin}$, we have that $\mu^i=h(\alpha)$.
    \item\label{i:hfol} $h(\CF_1(\alpha))=\bigcup_{i=1}^m \CZ_2(\mu^i,M^i)\cap\CF_2(\nu^i)$ for some $\mu^i,\nu^i\in\CL_{\osf_2}$ and $M^i\scj \alf_2$ finite, for all $i=1,\ldots,m$.
    \item\label{i:hcylinf} $h(Z^1_\alpha)\scj \bigcup_{i=1}^m Z^2_{\mu^i}$ for some $\mu^i\in\CL_{\osf_2}$ with $|\mu^i|\geq|\alpha|$, for all $i=1,\ldots,m$. Moreover, if $\alpha\in\osf_1^{fin}$, then there exists $j\in\{1,\ldots,n\}$ such that $\mu^j=h(\alpha)$, and for all $i=1,\ldots,m$ such that $|\mu^i|=|\alpha|$ and $\mu^i\in\osf_2^{fin}$, we have that $\mu^i=h(\alpha)$.
\end{enumerate}
\end{lemma}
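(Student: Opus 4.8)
The plan is to establish \eqref{i:hcyl} first and then obtain \eqref{i:hfol} and \eqref{i:hcylinf} as formal consequences, exploiting that $h$ is a shift‑commuting, length‑preserving homeomorphism together with Lemmas~\ref{lem:backandforth} and~\ref{l:CFZ}. Throughout I will use that, since $h$ preserves lengths, it maps $\osf_1^{inf}$ bijectively onto $\osf_2^{inf}$ and $\osf_1^{fin}$ onto $\osf_2^{fin}$.

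For \eqref{i:hcyl}, I would start from the fact that $\CZ_1(\alpha,F)$ is compact‑open in $\osf_1^{OTW}$; since $h$ is a homeomorphism and $\osf_2^{OTW}$ is a totally disconnected compact Hausdorff space in which the generalised cylinders form a basis of compact‑open sets, $h(\CZ_1(\alpha,F))$ is a \emph{finite} union of generalised cylinders $\CZ_2(\mu^i,M^i)$. The real content is the bound $|\mu^i|\ge|\alpha|$. First I would note that every element of $\CZ_1(\alpha,F)$ has length at least $|\alpha|$, hence by length‑preservation so does every element of $h(\CZ_1(\alpha,F))$. Then I would refine the decomposition: if some $\mu^i$ has $|\mu^i|<|\alpha|$, then $\mu^i\notin\osf_2^{fin}$ (otherwise the finite sequence $\mu^i\in\CZ_2(\mu^i,M^i)$ would be an element of $h(\CZ_1(\alpha,F))$ of length $|\mu^i|<|\alpha|$), so $\mu^i$ admits only finitely many one‑letter extensions in $\CL_{\osf_2}$ and
\[\CZ_2(\mu^i,M^i)=\bigsqcup_{a\notin M^i,\ \mu^i a\in\CL_{\osf_2}}\CZ_2(\mu^i a)\]
is a finite disjoint union of strictly longer cylinders. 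Iterating this finitely many times yields a decomposition with all words of length $\ge|\alpha|$. For the ``moreover'', if $\alpha\in\osf_1^{fin}$ then $\alpha\in\CZ_1(\alpha,F)$, so $h(\alpha)$ lies in some $\CZ_2(\mu^j,M^j)$; length‑preservation gives $|h(\alpha)|=|\alpha|\le|\mu^j|$, forcing $|\mu^j|=|\alpha|$ and $\mu^j=h(\alpha)$. Conversely, if $|\mu^i|=|\alpha|$ and $\mu^i\in\osf_2^{fin}$, then $\mu^i\in\CZ_2(\mu^i,M^i)\scj h(\CZ_1(\alpha,F))$, so $\mu^i=h(x)$ with $|x|=|\alpha|$; as $x$ begins with $\alpha$, this forces $x=\alpha$, whence $\mu^i=h(\alpha)$.

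For \eqref{i:hfol} I would take $F=\emptyset$ and $n=|\alpha|$ in Lemma~\ref{lem:backandforth}\eqref{i:forward} to get $\CF_1(\alpha)=\sigma^{|\alpha|}(\CZ_1(\alpha))$, and then use shift‑equivariance and \eqref{i:hcyl}:
\[h(\CF_1(\alpha))=\sigma^{|\alpha|}\big(h(\CZ_1(\alpha))\big)=\bigcup_i\sigma^{|\alpha|}\big(\CZ_2(\mu^i,M^i)\big).\]
Since $|\mu^i|\ge|\alpha|$, Lemma~\ref{lem:backandforth}\eqref{i:forward} rewrites each term as $\CZ_2(\mu^i_{|\alpha|+1,|\mu^i|},M^i)\cap\CF_2(\mu^i_{1,|\alpha|})$, which is exactly the asserted form. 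For \eqref{i:hcylinf}, Lemma~\ref{l:CFZ}(i) with $F=\emptyset$ gives $Z^1_\alpha=\CZ_1(\alpha)\cap\osf_1^{inf}$, so
\[h(Z^1_\alpha)=h(\CZ_1(\alpha))\cap\osf_2^{inf}=\bigcup_i\Big(Z^2_{\mu^i}\setminus\bigcup_{a\in M^i}Z^2_{\mu^i a}\Big)\scj\bigcup_i Z^2_{\mu^i},\]
again by Lemma~\ref{l:CFZ}(i); here the $\mu^i$ are those of the decomposition of $h(\CZ_1(\alpha))$ from \eqref{i:hcyl}, so the ``moreover'' for \eqref{i:hcylinf} is inherited directly from \eqref{i:hcyl}.

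The main obstacle is the length bound in \eqref{i:hcyl}. The subtle point is that a cylinder appearing in the decomposition can be too short only when its base word fails to lie in $\osf_2^{fin}$, and it is precisely such words that have finitely many admissible one‑letter extensions; this finiteness is what guarantees that the refinement terminates after finitely many steps and keeps the union finite. Everything else is bookkeeping with the equivariance relation $h\circ\sigma=\sigma\circ h$ and the two cited lemmas.
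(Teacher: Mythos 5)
Your proposal is correct and follows essentially the same route as the paper: compactness of generalised cylinders plus length preservation for \eqref{i:hcyl}, then $\CF_1(\alpha)=\sigma^{|\alpha|}(\CZ_1(\alpha))$ together with shift-equivariance and Lemma~\ref{lem:backandforth} for \eqref{i:hfol}, and Lemma~\ref{l:CFZ}(i) for \eqref{i:hcylinf}. The only difference is that where the paper secures the bound $|\mu^i|\geq|\alpha|$ by citing the description of neighbourhood bases in \cite{OTW}, you give a self-contained refinement argument (subdividing any too-short cylinder $\CZ_2(\mu^i,M^i)$ with $\mu^i\notin\osf_2^{fin}$ into the finitely many cylinders over its one-letter extensions), which is a valid and slightly more explicit substitute.
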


\begin{proof}
\eqref{i:hcyl} Since $\CZ_1(\alpha,F)$ is compact-open and $h$ is a conjugacy, we have that $h(\CZ_1(\alpha,F))$ is compact-open in $\osf_2^{OTW}$. The result follows from the fact that $h$ is length-preserving and by the description of a neighbourhood base for a point in $\osf_2^{OTW}$ given in \cite[Theorem~2.15, Remark~3.24]{OTW}. For the second part, suppose that $\alpha\in\osf_1^{fin}$. In this case, if $\mu^i\in\osf_2^{fin}$ and $|\mu^i|=|h^{-1}(\mu^i)|=|\alpha|$, then we must have $h^{-1}(\mu^i)=\alpha$ because $\alpha$ is the only element in $\CZ_1(\alpha,F)$ with length $|\alpha|$. Hence $\mu^i=h(\alpha)$. On the other hand, $\alpha\in \CZ_1(\alpha,F)$ and hence $h(\alpha)\in \CZ_2(\mu^j,M^j)$ for some $j$. Because $|h(\alpha)|=|\alpha|\leq|\mu^j|$, the only possibility for $\mu^j$ is that it is equal to $h(\alpha)$.

\eqref{i:hfol} Notice that $\CF_1(\alpha)=\sigma^{|\alpha|}(\CZ_1(\alpha))$. Because $h$ commutes with the shift, the result follows from Item \eqref{i:hcyl} and Lemma~\ref{lem:backandforth}.

\eqref{i:hcylinf} Using Item \eqref{i:hcyl} with $F=\emptyset$, the fact that $h$ preserves length, and Lemma~\ref{l:CFZ}(i), we obtain that
\[h(Z^1_\alpha)=h(\CZ_1(\alpha)\cap\osf_1^{inf})=\bigcup_{i=1}^m \CZ_2(\mu^i,M^i)\cap\osf_2^{inf}\scj\bigcup_{i=1}^m Z^2_{\mu^i}\]
for some $\mu^i\in\CL_{\osf_2}$ with $|\mu^i|\geq|\alpha|$ and $M^i\scj \alf_2$ finite, for all $i=1,\ldots,m$. The second part follows immediately from Item \eqref{i:hcyl} and the above computation.
\end{proof}

\begin{proposition}\label{p:isob}
Let $h:\osf_1^{OTW}\to \osf_2^{OTW}$ be a conjugacy. The map $\overline{h}:\TCB_1\to\TCB_2$ given by $\overline{h}(A)=h(A)$ is an isomorphism of Boolean algebras.
\end{proposition}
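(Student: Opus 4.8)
The plan is to exploit that a conjugacy $h$ restricts to a bijection $h\colon\osf_1^{inf}\to\osf_2^{inf}$ (by Remark~\ref{computer} and length-preservation) that commutes with the shift. Consequently the set-map $\overline{h}(A)=h(A)$ is automatically an injective Boolean homomorphism at the level of power sets: it preserves $\cup$, $\cap$, relative complements and $\emptyset$ merely because $h$ is a bijection. Hence the only substantive points are that $\overline{h}(\TCB_1)\subseteq\TCB_2$ and, symmetrically, that $\overline{h^{-1}}(\TCB_2)\subseteq\TCB_1$. Since $h^{-1}$ is again a conjugacy, it suffices to prove the first inclusion; and since $\overline{h}$ respects the Boolean operations and $\TCB_1$ is generated by the sets $C(\alpha,\beta)$, it is enough to show $h(C(\alpha,\beta))\in\TCB_2$ for all $\alpha,\beta\in\CL_{\osf_1}$.

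First I would rewrite the generator using Lemma~\ref{l:CFZ}(iii), namely $C(\alpha,\beta)=\CZ_1(\beta)\cap\sigma^{-|\beta|}(\CF_1(\alpha))\cap\osf_1^{inf}$. Applying $h$ and using that it is a bijection commuting with $\sigma$ and sending $\osf_1^{inf}$ onto $\osf_2^{inf}$ yields $h(C(\alpha,\beta))=h(\CZ_1(\beta))\cap\sigma^{-|\beta|}(h(\CF_1(\alpha)))\cap\osf_2^{inf}$. Now Lemma~\ref{l:hCF}\eqref{i:hcyl} expresses $h(\CZ_1(\beta))$ as a finite union of generalised cylinders $\CZ_2(\mu^i,M^i)$ with $|\mu^i|\ge|\beta|$, and Lemma~\ref{l:hCF}\eqref{i:hfol} expresses $h(\CF_1(\alpha))$ as a finite union of sets $\CZ_2(\rho^j,N^j)\cap\CF_2(\nu^j)$. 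Distributing the intersection and splitting $\sigma^{-|\beta|}$ over the last union, each resulting term has the form $\CZ_2(\mu^i,M^i)\cap\sigma^{-|\beta|}(\CZ_2(\rho^j,N^j))\cap\sigma^{-|\beta|}(\CF_2(\nu^j))$.

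The key simplification is to collapse the two cylinder factors via Lemma~\ref{lem:backandforth}\eqref{i:backward}: since $|\beta|\le|\mu^i|$, the set $\CZ_2(\mu^i,M^i)\cap\sigma^{-|\beta|}(\CZ_2(\rho^j,N^j))$ is either empty or equals a single generalised cylinder $\CZ_2(\gamma,H)$, and inspecting the three cases of that lemma one checks $|\gamma|\ge|\mu^i|\ge|\beta|$ in every case. Thus $h(C(\alpha,\beta))$ is a finite union of sets $\CZ_2(\gamma,H)\cap\sigma^{-|\beta|}(\CF_2(\nu^j))\cap\osf_2^{inf}$ with $|\gamma|\ge|\beta|$. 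To land inside $\TCB_2$ I would intersect with $\osf_2^{inf}$ factor by factor: Lemma~\ref{l:CFZ}(i) turns $\CZ_2(\gamma,H)\cap\osf_2^{inf}$ into $Z_\gamma\setminus\bigcup_{a\in H}Z_{\gamma a}$, while Lemma~\ref{l:CFZ}(ii) gives $\sigma^{-|\beta|}(\CF_2(\nu^j))\cap\osf_2^{inf}=\sigma^{-|\beta|}(F_{\nu^j})\cap\osf_2^{inf}$ (because shifting an infinite sequence stays infinite). The crucial elementary identity, valid precisely because $|\gamma|\ge|\beta|$, is that $Z_\gamma\cap\sigma^{-|\beta|}(F_{\nu^j})=C(\nu^j\gamma_{|\beta|+1,|\gamma|},\gamma)$, a single generator of $\TCB_2$ (and likewise for each $Z_{\gamma a}$). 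Hence every term is a finite Boolean combination of sets $C(\cdot,\cdot)$, so lies in $\TCB_2$, and therefore so does the finite union $h(C(\alpha,\beta))$.

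The main obstacle I anticipate is exactly this control of the shift preimage $\sigma^{-|\beta|}$ over a possibly infinite alphabet: a naive expansion of $\sigma^{-|\beta|}(C(\gamma,\delta))$ is an \emph{infinite} union of cylinders and need not lie in $\TCB_2$. What rescues the argument is that the preimage of a follower set is only ever met against a cylinder $Z_\gamma$ whose length is at least $|\beta|$ — guaranteed by the length bound $|\mu^i|\ge|\beta|$ in Lemma~\ref{l:hCF} together with the length bookkeeping in Lemma~\ref{lem:backandforth}\eqref{i:backward} — which forces the would-be infinite union to collapse to the single generator $C(\nu^j\gamma_{|\beta|+1,|\gamma|},\gamma)$. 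Once $\overline{h}(\TCB_1)\subseteq\TCB_2$ is established, applying the identical reasoning to the conjugacy $h^{-1}$ gives $\overline{h^{-1}}(\TCB_2)\subseteq\TCB_1$, so that $\overline{h}$ is a bijection from $\TCB_1$ onto $\TCB_2$, completing the proof that it is an isomorphism of Boolean algebras.
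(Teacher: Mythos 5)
Your proposal is correct and follows essentially the same route as the paper: reduce to the generators $C(\alpha,\beta)$, rewrite them via Lemma~\ref{l:CFZ}(iii), push through $h$ using Lemma~\ref{l:hCF}, collapse the resulting cylinder intersections with Lemma~\ref{lem:backandforth}\eqref{i:backward} using the length bound $|\mu^i|\ge|\beta|$, and express the outcome as a Boolean combination of sets $C_2(\cdot,\cdot)$, then apply the same argument to $h^{-1}$. The only cosmetic difference is that the paper keeps the factor $\CZ_2(\mu_{1,|\beta|})$ separate so as to quote Lemma~\ref{l:CFZ}(iii) verbatim and obtain $C_2(\rho,\mu_{1,|\beta|})$, whereas you absorb the longer cylinder directly into the identity $Z_\gamma\cap\sigma^{-|\beta|}(F_{\nu^j})=C(\nu^j\gamma_{|\beta|+1,|\gamma|},\gamma)$; both computations are equivalent.
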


\begin{proof}
Because $h$ is a bijection, it preserves unions, intersections, and relative complements. To prove that $\overline{h}$ is a homomorphism, it is then sufficient to show that $h(C_1(\alpha,\beta))\in\TCB_2$ for every $\alpha,\beta\in\CL_{\osf_1}$. Because $h$ is length-preserving, by Lemma~\ref{l:CFZ}, we have that
\[h(C_1(\alpha,\beta))=h(\CZ_1(\beta)\cap\sigma^{-|\beta|}(\CF_1(\alpha))\cap\osf_1^{inf})=h(\CZ_1(\beta))\cap h(\sigma^{-|\beta|}(\CF_1(\alpha)))\cap\osf_2^{inf}.\]
Using that $h$ commutes with the shift and Lemma~\ref{l:hCF}, we conclude that $h(C_1(\alpha,\beta))$ is a union of sets of the form $\CZ_2(\mu,M)\cap\sigma^{-|\beta|}(\CZ_2(\nu,N)\cap\CF_2(\rho))\cap \osf_2^{inf}$, where $\mu,\nu,\rho\in\CL_{\osf_2}$ with $|\mu|\geq |\beta|$, and $M,N\scj\alf_2$ are finite sets. By Lemmas~\ref{lem:backandforth} and \ref{l:CFZ}, for some $\tau\in\CL_{\osf_2}$ and $P\scj\alf_2$ finite, we have that
\begin{align*}
    \CZ_2(\mu,M)&\cap\sigma^{-|\beta|}(\CZ_2(\nu,N)\cap\CF_2(\rho))\cap \osf_2^{inf} \\ &=\CZ_2(\mu,M)\cap\sigma^{-|\beta|}(\CZ_2(\nu,N))\cap\CZ_2(\mu_{1,|\beta|})\cap\sigma^{-|\beta|}(\CF_2(\rho))\cap \osf_2^{inf}\\
    &=\CZ_2(\tau,P)\cap\CZ_2(\mu_{1,|\beta|})\cap\sigma^{-|\beta|}(\CF_2(\rho))\cap \osf_2^{inf}\\
    &=\left(C_2(\eword,\tau)\setminus \bigcup_{p\in P}C_2(\eword,\tau p)\right)\cap C_2(\rho,\mu_{1,|\beta|}),
\end{align*}
which is an element of $\TCB_2$. If follows that $h(C_1(\alpha,\beta))$ is a union of element of $\TCB_2$, and therefore it is also in $\TCB_2$.

To see that $\overline{h}$ is an isomorphism, just apply the above argument to $h^{-1}$ so that we obtain $\overline{h}^{-1}=\overline{h^{-1}}$.
\end{proof}

Next, we show that a conjugacy between OTW-subshifts lifts to a homeomorphism between the Stone dual of the associated Boolean algebras which commute with the corresponding $\hsig$.

\begin{proposition}\label{hlifts}
Let $h:\osf_1^{OTW}\to \osf_2^{OTW}$ be a conjugacy. The map $\hh:\widehat{\TCB_1}\to \widehat{\TCB_2}$ given by $\hh(\xi)=\{h(A): A\in\xi\}$ is a homeomorphism such that $\hh(\dom(\hsig_1))=\dom(\hsig_2)$, $\hh\circ\hsig_1=\hsig_2\circ\hh|_{\dom(\hsig_1)}$, and $h\circ\pi_1=\pi_2\circ\hh$.
\end{proposition}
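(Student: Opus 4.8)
The plan is to deduce everything from the Stone dual description and then reduce the three displayed identities to facts that already hold on the dense copy of $\osf_1$ inside $\widehat{\TCB_1}$. First, since $\overline{h}\colon\TCB_1\to\TCB_2$ is an isomorphism of Boolean algebras by Proposition~\ref{p:isob}, Stone duality (Theorem~\ref{thm:stone}) applied to $\overline{h^{-1}}\colon\TCB_2\to\TCB_1$ shows that $\widehat{\overline{h^{-1}}}\colon\widehat{\TCB_1}\to\widehat{\TCB_2}$, $\xi\mapsto\{B\in\TCB_2:h^{-1}(B)\in\xi\}=\{h(A):A\in\xi\}$, is a homeomorphism; this map is exactly $\hh$, so $\hh$ is a homeomorphism. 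Recall $\iota_i\colon\osf_i\to\widehat{\TCB_i}$, $\iota_i(x)=\{A\in\TCB_i:x\in A\}$, whose image is dense (Proposition~\ref{pa.extension}). I will use three elementary facts on this dense set. (a) $\pi_i\circ\iota_i=\mathrm{id}_{\osf_i}$: for $x\in\osf_i=\osf_i^{inf}$ every $Z_{x_{1,n}}$ lies in $\iota_i(x)$, so the second clause defining $\pi_i$ gives $\pi_i(\iota_i(x))=x$. (b) $\hh\circ\iota_1=\iota_2\circ h$ on $\osf_1$: as $h$ is a length-preserving bijection it maps $\osf_1$ onto $\osf_2$, and $h(A)\ni h(x)\iff x\in A$ yields $\{h(A):x\in A\}=\{B\in\TCB_2:h(x)\in B\}$. (c) $\hsig_i\circ\iota_i=\iota_i\circ\sigma_i$ on $\osf_i$: since $\hsig_i$ restricts to $\varphih_{a^{-1}}$ on $V_a=O_{Z_a}$ and $\iota_i$ is equivariant (Proposition~\ref{pa.extension}), for $x\in\osf_i$ with $x_1=a$ we get $\hsig_i(\iota_i(x))=\varphih_{a^{-1}}(\iota_i(x))=\iota_i(\tauh_{a^{-1}}(x))=\iota_i(\sigma_i(x))$.

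I also need each $\pi_i$ to be continuous. This follows from the identity $\pi_i^{-1}(\CZ(\gamma,G))=O_{Z_\gamma}\setminus\bigcup_{b\in G}O_{Z_{\gamma b}}$, valid for every $\gamma\in\CL_{\osf_i}$ and finite $G\scj\alf_i$: one checks directly from the definition of $\pi_i$ that $Z_\gamma\in\xi$ if and only if $\gamma$ is a prefix of $\pi_i(\xi)$, using that $\xi$ is an ultrafilter (if $\gamma$ strictly extended the finite word $\pi_i(\xi)=\alpha$, then $Z_\gamma\subseteq Z_{\alpha b}\in\xi$ for $b=\gamma_{|\alpha|+1}$, contradicting the first clause). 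Since the generalised cylinders generate the topology of $\osf_i^{OTW}$, this makes $\pi_i$ continuous. (Alternatively, invoke the identification of $\pi_i$ with a restriction of characters noted after Proposition~\ref{ohcoconuts}.)

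For $h\circ\pi_1=\pi_2\circ\hh$, both sides are continuous maps $\widehat{\TCB_1}\to\osf_2^{OTW}$ (continuity of $\pi_1,\pi_2,h,\hh$), and on the dense set $\iota_1(\osf_1)$ they agree: $h(\pi_1(\iota_1(x)))=h(x)$ by (a), while $\pi_2(\hh(\iota_1(x)))=\pi_2(\iota_2(h(x)))=h(x)$ by (b) and (a). As $\osf_2^{OTW}$ is Hausdorff, the two maps coincide. Next, $\dom(\hsig_i)=\bigcup_{a}O_{Z_a}=\pi_i^{-1}(\osf_i^{OTW}\setminus\{\vec{0}_i\})$, because $\pi_i(\xi)=\vec{0}_i$ precisely when no single-letter cylinder belongs to $\xi$. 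Since $h$ is a length-preserving bijection we have $h(\vec{0}_1)=\vec{0}_2$ and $h^{-1}(\vec{0}_2)=\vec{0}_1$; combining this with $h\circ\pi_1=\pi_2\circ\hh$ gives $\hh(\xi)\in\dom(\hsig_2)\iff\pi_2(\hh(\xi))\neq\vec{0}_2\iff\pi_1(\xi)\neq\vec{0}_1\iff\xi\in\dom(\hsig_1)$, so $\hh(\dom(\hsig_1))=\dom(\hsig_2)$.

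Finally, for $\hh\circ\hsig_1=\hsig_2\circ\hh$ on $\dom(\hsig_1)$ (well defined by the previous step), both sides are continuous on the open set $\dom(\hsig_1)$, in which $\iota_1(\osf_1)$ is dense (it is dense in $\widehat{\TCB_1}$ and contained in $\dom(\hsig_1)$). On $\iota_1(x)$, using (c), (b) and the conjugacy relation $h\circ\sigma_1=\sigma_2\circ h$, we get $\hh(\hsig_1(\iota_1(x)))=\hh(\iota_1(\sigma_1(x)))=\iota_2(h(\sigma_1(x)))=\iota_2(\sigma_2(h(x)))$, whereas $\hsig_2(\hh(\iota_1(x)))=\hsig_2(\iota_2(h(x)))=\iota_2(\sigma_2(h(x)))$; the two coincide, and by density and the Hausdorffness of $\widehat{\TCB_2}$ they coincide on all of $\dom(\hsig_1)$. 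The main obstacle is exactly this last identity: a direct computation on an arbitrary ultrafilter would force one to track the relative ranges $r(A,a)$ through $\overline{h}$, where the correspondence between the alphabets $\alf_1$ and $\alf_2$ is not letter-by-letter and the bookkeeping for finite words is delicate. The density argument sidesteps this by pushing the identity back to infinite sequences, where it is precisely the statement that the conjugacy $h$ intertwines the shifts.
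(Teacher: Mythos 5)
Your proof is correct, but it takes a genuinely different route from the paper's for the three displayed identities. The paper argues directly at the level of arbitrary ultrafilters: it first shows $|\pi_1(\xi)|=|\pi_2(\hh(\xi))|$ using Lemma~\ref{l:hCF}\eqref{i:hcylinf} and primeness of ultrafilters (which gives $\hh(\dom(\hsig_1))=\dom(\hsig_2)$), then verifies $\hh\circ\hsig_1=\hsig_2\circ\hh$ by chasing the relative ranges $r(A,a)$ through $\overline{h}$ after replacing $A$ by $A\cap Z^1_a\cap h^{-1}(Z^2_b)$, and finally proves $h\circ\pi_1=\pi_2\circ\hh$ by a case analysis on the length of $\pi_1(\xi)$, invoking continuity of $h$ only in the infinite case. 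You instead reduce all three identities to the dense copy $\iota_1(\osf_1)$ from Proposition~\ref{pa.extension}, where they collapse to the equivariance of $\iota$ and the relation $h\circ\sigma_1=\sigma_2\circ h$, and then extend by continuity and Hausdorffness. The price is that you must prove $\pi_i$ is continuous, a fact the paper neither states nor needs; your argument for it (that $Z_\gamma\in\xi$ iff $\gamma$ is a prefix of $\pi_i(\xi)$, whence $\pi_i^{-1}(\CZ(\gamma,G))=O_{Z_\gamma}\setminus\bigcup_{b\in G}O_{Z_{\gamma b}}$) is correct and is a worthwhile observation in its own right. The gain is that you bypass the ultrafilter-level bookkeeping almost entirely: Lemma~\ref{l:hCF} enters only through Proposition~\ref{p:isob}, and the awkward fact that the two alphabets do not correspond letter-by-letter is absorbed by the density argument. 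Two points you leave implicit are easily supplied: $\hsig_i$ is continuous on $\dom(\hsig_i)$ because it restricts to the homeomorphism $\varphih_{a^{-1}}$ on each member $V_a$ of an open cover of its (open) domain, and in your formula for $\pi_i^{-1}(\CZ(\gamma,G))$ the terms $O_{Z_{\gamma b}}$ with $\gamma b\notin\CL_{\osf_i}$ should be read as $\emptyset$.
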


\begin{proof}
By Proposition~\ref{p:isob} and the Stone duality (Theorem~\ref{thm:stone}), we have that $\hh$ is a well-defined homeomorphism.

Given $\xi\in\widehat{\TCB_1}$, we claim that $\pi_1(\xi)$ and $\pi_2\circ\hh(\xi)$ have the same length. By Lemma~\ref{l:hCF}\eqref{i:hcylinf}, the fact that ultrafilters are prime filters in Boolean algebras and the definitions of $\pi_1$ and $\pi_2$, we have that $|\pi_1(\xi)|\leq|\pi_2\circ\hh(\xi)|$. Similarly, since $\xi=\{h^{-1}(B):B\in \hh(\xi)\}$, using the same argument, we obtain that $|\pi_1(\xi)|\geq|\pi_2\circ\hh(\xi)|$. It follows that $\xi\in\dom(\hsig_1)$ if and only if $\hh(\xi)\in\dom(\hsig_2)$.

We show that $\hh\circ\hsig_1=\hsig_2\circ\hh|_{\dom(\hsig_1)}$. Let $\xi\in\dom(\hsig_1)$, $a\in\alf_1$ such that $Z^1_a\in\xi$ and $b\in\alf_2$ such that $Z^2_b\in\hh(\xi)$. Then
\[\hh(\hsig_1(\xi))=\{h(B): B\in\TCB_1\text{ and }r(A,a)\scj B\text{ for some }A\in\xi\}\]
and
\[\hsig_2(\hh(\xi))=\{C\in \TCB_2: r(h(A),b)\scj C\text{ for some }A\in\xi\}.\]
Take $B\in\TCB_1$ such that $r(A,a)\scj B$ for some $A\in\xi$. Note that $A\cap Z^1_a\cap h^{-1}(Z^2_b)\in\xi$, so we may assume without loss of generality that $A\scj Z^1_a\cap h^{-1}(Z^2_b)$. In this case, we have that $r(A,a)=\sigma_1(A)$ and $r(h(A),b)=\sigma_2(h(A))$. Because $h$ is shift commuting, we conclude that
\[r(h(A),b)=\sigma_2(h(A))=h(\sigma_1(A))=h(r(A,a))\scj h(B)\]
and hence $h(B)\in\hsig_2(\hh(\xi))$. Since we are dealing with ultrafilters, it follows that $\hh(\hsig_1(\xi))=\hsig_2(\hh(\xi))$.

Finally, we prove that $h\circ\pi_1=\pi_2\circ\hh$. Let $\xi\in\widehat{\TCB_1}$. As proved above, $\pi_1(\xi)$ and $\pi_2\circ\hh(\xi)$ have the same length. Since $\vec{0}_2$ is the only element of $\osf_2^{OTW}$ with length zero, if $\pi_1(\xi)=\vec{0}_1$ then $h(\pi_1(\xi))=h(\vec{0}_1)=\vec{0}_2=\pi_2(\hh(\xi))$. Suppose now that $\pi_1(\xi)=\alpha$ with $0<|\alpha|<\infty$. Then, by Lemma~\ref{l:hCF}\eqref{i:hcylinf}, the fact that $\hh(\xi)$ is an ultrafilter, and the definition of $\pi_2$, we have that $\pi_2(\hh(\xi))=h(\alpha)$. Lastly, suppose that $\pi_1(\xi)=\alpha$ with $|\alpha|=\infty$, so that $\beta=\pi_2(\hh(\xi))$ is such that $|\beta|=\infty$. By the definition of $\pi_1$, we have that $Z^1_{\alpha_{1,n}}\in\xi$ for each $n\in\nn^*$.  By Lemma~\ref{l:hCF}\eqref{i:hcylinf}, $h(Z^1_{\alpha_{1,n}})\scj \bigcup_{i=1}^m Z^2_{\mu^i}$ for some $\mu^i\in\CL_{\osf_2}$ with $|\mu^i|\geq n$, for all $i=1,\ldots,m$. Because $\hh(\xi)$ is an ultrafilter, by the definition of $\pi_2$, there exists $j\in\{1,\ldots,m\}$ such that $\mu^j$ is a beginning of $\beta$. This implies that for $m_n:=|\mu^j|\geq n$ we can find $x^n\in\osf_1^{inf}$ and $y^n\in\osf_2^{inf}$ such that $h(\alpha_{1,n}x^n)=\beta_{1,m_n}y^n$. By the continuity of $h$, taking the limit as $n$ goes to infinity, we get $h(\pi_1(\xi))=h(\alpha)=\beta=\pi_2(\hh(\xi))$.
\end{proof}

For a subshift $\osf$ over an alphabet $\alf$ and $M\scj\alf$ finite, we define
\[e_M=\sum_{a\in M}s_as_a^*\in\ualgshift.\]
Observe that $e_M$ is idempotent and, via the isomorphism given in Theorem~\ref{Steinberg}, it corresponds to the characteristic function $1_{V_M}$, where $V_M$ is the same as in Section \ref{s:groupoid picture}. Using Theorem~\ref{Steinberg} and $\tau_M$ of Lemma~\ref{l:tauM} we obtain a map from $\TCA_R(\osf)$ to $\TCA_R(\osf)$, also denoted by $\tau_M$, given by  
\[\tau_M(f)=\sum_{a,b\in M}s_afs_b^*.\]
Both $e_M$ and $\tau_M$ will play an important role in the algebraic characterisation of conjugacy for OTW-subshifts, as we see below.

In order to simplify notation, we use the isomorphism of Theorem~\ref{Steinberg} as an equality in the next theorem. Also, recall that an isomorphism $\Psi:\CA_R(\CG_1)\to\CA_R(\CG_2)$ between Steinberg algebras is said to be diagonal-preserving if $\Psi(\CA_R(\CG_1^{(0)}))=\CA_R(\CG_2^{(0)})$.

\begin{theorem}\label{theone}
    Let $h:\osf_1^{OTW}\to \osf_2^{OTW}$ be a homeomorphism and suppose that $R$ is also an indecomposable ring. The following are equivalent:
    \begin{enumerate}[(i)]
        \item\label{iconj} $h$ is a conjugacy.
        \item\label{ihh} There exists a homeomorphism $\hh:\widehat{\TCB_1}\to \widehat{\TCB_2}$ such that $h\circ\pi_1=\pi_2\circ\hh$, $\hh(\dom(\hsig_1))=\dom(\hsig_2)$ and $\hh\circ\hsig_1=\hsig_2\circ\hh|_{\dom(\hsig_1)}$.
        \item\label{iisogrpcocyle} There exists an isomorphism of topological groupoids $\Phi:\CG(\HTCB_1,\hsig_1)\to\CG(\HTCB_2,\hsig_2)$ such that $c_2\circ\Phi=c_1$, $\pi_2\circ \Phi^{(0)}=h\circ \pi_1$,  $\Phi(\dom(\epsilon^1))=\dom(\epsilon^2)$ and $\epsilon^2\circ\Phi|_{\dom(\epsilon^1)}=\Phi\circ\epsilon^1$, 
        where $\Phi^{(0)}$ is the restriction of $\Phi$ to the unit spaces, and $c_1$ and $c_2$ are the one-cocycles from Definition~\ref{d:SGDS}.
        \item\label{iisogrp} There exists an isomorphism of topological groupoids $\Phi:\CG(\HTCB_1,\hsig_1)\to\CG(\HTCB_2,\hsig_2)$ such that $\pi_2\circ \Phi^{(0)}=h\circ \pi_1$, $\Phi(\dom(\epsilon^1))=\dom(\epsilon^2)$ and $\epsilon^2\circ\Phi|_{\dom(\epsilon^1)}=\Phi\circ\epsilon^1$, where $\Phi^{(0)}$ is the restriction of $\Phi$ to the unit spaces.
        \item\label{iisoalggraded} There exists a $\zn$-graded diagonal-preserving isomorphism $\Psi:\TCA_R(\osf_1)\to \TCA_R(\osf_2)$ such that  \begin{itemize}
            \item $\Psi(f\circ\pi_1)=f\circ h^{-1}\circ\pi_2$ for all $f\in\Lc(\osf_1^{OTW},R)$,
            \item for all finite $M\scj\alf_1$, there exists a finite $N\scj\alf_2$ such that $\Psi(e_M)e_N=\Psi(e_M)$ and
            \[\Psi(\tau_M(f))=\Psi(e_M)\tau_N(\Psi(f))\Psi(e_M)\]
            for every $f\in\TCA_R(\osf_1)$,
            \item for all $N'\scj\alf_2$ finite, there exists $M'\scj\alf_1$ finite such that $\Psi^{-1}(e_{N'})e_{M'}=\Psi^{-1}(e_{N'})$.
        \end{itemize}
        \item\label{iisoalg} There exists a diagonal-preserving isomorphism $\Psi:\TCA_R(\osf_1)\to \TCA_R(\osf_2)$ such that \begin{itemize}
            \item $\Psi(f\circ\pi_1)=f\circ h^{-1}\circ\pi_2$ for all $f\in\Lc(\osf_1^{OTW},R)$,
            \item for all finite $M\scj\alf_1$, there exists a finite $N\scj\alf_2$ such that $\Psi(e_M)e_N=\Psi(e_M)$ and
            \[\Psi(\tau_M(f))=\Psi(e_M)\tau_N(\Psi(f))\Psi(e_M)\]
            for every $f\in\TCA_R(\osf_1)$,
            \item for all $N'\scj\alf_2$ finite, there exists $M'\scj\alf_1$ finite such that $\Psi^{-1}(e_{N'})e_{M'}=\Psi^{-1}(e_{N'})$.
        \end{itemize}
    \end{enumerate}
\end{theorem}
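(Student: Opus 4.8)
The plan is to prove the six statements equivalent through the cycle $(i)\Leftrightarrow(ii)$, $(ii)\Rightarrow(iii)\Rightarrow(iv)\Rightarrow(ii)$, and $(iii)\Rightarrow(v)\Rightarrow(vi)\Rightarrow(ii)$, where the implications $(iii)\Rightarrow(iv)$ and $(v)\Rightarrow(vi)$ are trivial (one simply drops the cocycle, resp.\ grading, requirement). The implication $(i)\Rightarrow(ii)$ is exactly Proposition~\ref{hlifts}. For the converse $(ii)\Rightarrow(i)$, since $h$ is already assumed to be a homeomorphism, it remains to check that $h$ commutes with the shift and preserves length. Both follow from the surjectivity of $\pi_i$ (Proposition~\ref{prop:pi_shift_commute}) together with the identities $\sigma_i\circ\pi_i=\pi_i\circ\hsig_i$ and $\hh\circ\hsig_1=\hsig_2\circ\hh$: shift-commutation is read off from $h\circ\pi_1=\pi_2\circ\hh$ by a direct computation, and length-preservation follows because $\hh(\dom(\hsig_1))=\dom(\hsig_2)$ and $\hh\circ\hsig_1=\hsig_2\circ\hh$ force $\hh(D_n^1)=D_n^2$ for every $n$ (where $D_n^i=\dom(\hsig_i^n)$ as in Definition~\ref{d:SGDS}), and the length of $\pi_i(\xi)$ is determined by which domains $D_n^i$ contain $\xi$.

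For $(ii)\Rightarrow(iii)$ I would define $\Phi(\xi,n,\eta)=(\hh(\xi),n,\hh(\eta))$. The intertwining $\hh\circ\hsig_1=\hsig_2\circ\hh$ shows this lands in $\CG(\HTCB_2,\hsig_2)$ and is an isomorphism of topological groupoids, with $c_2\circ\Phi=c_1$ since the middle coordinate is preserved, $\pi_2\circ\Phi^{(0)}=\pi_2\circ\hh=h\circ\pi_1$, and, using $\hh(\dom\hsig_1)=\dom\hsig_2$, both $\Phi(\dom\epsilon^1)=\dom\epsilon^2$ and $\epsilon^2\circ\Phi=\Phi\circ\epsilon^1$. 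For $(iv)\Rightarrow(ii)$ set $\hh=\Phi^{(0)}$, a homeomorphism $\HTCB_1\to\HTCB_2$ under the identification of unit spaces with the Stone duals. Evaluating the $\epsilon$-condition on the units $(\xi,0,\xi)$ with $\xi\in\dom(\hsig_1)$, and using that $\epsilon^i$ restricted to units is $\hsig_i$, yields at once $\hh(\dom\hsig_1)=\dom\hsig_2$ and $\hh\circ\hsig_1=\hsig_2\circ\hh$; the condition $\pi_2\circ\Phi^{(0)}=h\circ\pi_1$ is precisely $h\circ\pi_1=\pi_2\circ\hh$. This closes $(i)$--$(iv)$.

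For $(iii)\Rightarrow(v)$ I would put $\Psi(f)=f\circ\Phi^{-1}$. This is an isomorphism $\TCA_R(\osf_1)\to\TCA_R(\osf_2)$ (via Theorem~\ref{Steinberg}), diagonal-preserving because $\Phi(\CG_1^{(0)})=\CG_2^{(0)}$ and graded because $c_2\circ\Phi=c_1$. The first bullet is the identity $\pi_1\circ(\Phi^{(0)})^{-1}=h^{-1}\circ\pi_2$ rewritten via $\Psi(g)=g\circ(\Phi^{(0)})^{-1}$ on the diagonal, recalling that precomposition with $\pi_1$ realises the inclusion $\Lc(\osf_1^{OTW},R)\hookrightarrow\Lc(\HTCB_1,R)$ of the diagonal (Propositions~\ref{algebra OTW} and~\ref{ohcoconuts}). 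The remaining bullets follow from Proposition~\ref{p:restrictionM} and Lemma~\ref{l:tauM}: compactness of $V_M$ together with $\Phi(\dom\epsilon^1)=\dom\epsilon^2$ give, for each finite $M$, a finite $N$ with $\hh(V_M^1)\scj V_N^2$, hence $\Psi(e_M)e_N=\Psi(e_M)$; the global $\epsilon$-intertwining, translated by Proposition~\ref{p:restrictionM} to the level of $\epsilon_M,\epsilon_N$ and combined with $\tau_M(f)=f\circ\epsilon_M$ (Lemma~\ref{l:tauM}), gives the conjugation identity $\Psi(\tau_M(f))=\Psi(e_M)\tau_N(\Psi(f))\Psi(e_M)$; the third bullet is the symmetric statement for $\Phi^{-1}$.

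The main obstacle is $(vi)\Rightarrow(ii)$, where from a merely diagonal-preserving isomorphism $\Psi$ (no grading, no a priori groupoid map) one must reconstruct $\hh$ and, crucially, recover the dynamics $\hsig$. Since $R$ is indecomposable, $\Psi$ restricts to an isomorphism of the diagonals, so Proposition~\ref{givemeanh} (with Proposition~\ref{ohcoconuts}) produces a homeomorphism $\hh\colon\HTCB_1\to\HTCB_2$ with $\Psi(g)=g\circ\hh^{-1}$ for diagonal $g$. Feeding diagonal elements $f=1_B$ through bullet~1 and separating points of $\osf_1^{OTW}$ gives $h\circ\pi_1=\pi_2\circ\hh$, while the first conditions in bullets~2 and~3 give $\hh(\dom\hsig_1)\scj\dom\hsig_2$ and $\hh^{-1}(\dom\hsig_2)\scj\dom\hsig_1$, hence $\hh(\dom\hsig_1)=\dom\hsig_2$. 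The delicate point is $\hh\circ\hsig_1=\hsig_2\circ\hh$, which a priori cannot be read off the diagonal because $\tau_M(1_B)$ carries off-diagonal support whenever $\hsig$ is non-injective on $V_M$. The key trick is to take $M=\{a\}$ a singleton: since $\hsig_1$ is injective on $V_a^1$, the element $\tau_{\{a\}}(1_B)=1_{V_a^1\cap\hsig_1^{-1}(B)}$ is \emph{diagonal}. Applying $\Psi$ and using the conjugation identity of bullet~2, the right-hand side $\Psi(e_{\{a\}})\tau_N(\Psi(1_B))\Psi(e_{\{a\}})$ must then also be diagonal; its potential off-diagonal part is supported on pairs $(\zeta,0,\theta)$ with $\zeta\neq\theta$ in $\hh(V_a^1)$ and $\hsig_2\zeta=\hsig_2\theta\in\hh(B)$, so letting $B$ range over compact-opens forces $\hsig_2$ to be injective on $\hh(V_a^1)$ and yields the equality of diagonals $\hh(V_a^1\cap\hsig_1^{-1}(B))=\hh(V_a^1)\cap\hsig_2^{-1}(\hh(B))$. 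Separating points of $\HTCB_1$ by compact-opens then gives $\hh\circ\hsig_1=\hsig_2\circ\hh$ on each $V_a^1$, hence on $\dom\hsig_1$. With $\hh$ satisfying all conditions of $(ii)$, the already established implication $(ii)\Rightarrow(iv)$ (through $(iii)$) completes the cycle.
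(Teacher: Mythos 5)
Your proposal is correct and follows essentially the same route as the paper: the same implication cycle (up to routing (iv) back through (ii) rather than directly to (i), which merely repackages the paper's argument for (iv)$\Rightarrow$(i) as (iv)$\Rightarrow$(ii) plus (ii)$\Rightarrow$(i)), the same key ingredients (Propositions~\ref{hlifts}, \ref{givemeanh}, \ref{ohcoconuts}, \ref{p:restrictionM} and Lemma~\ref{l:tauM}), and the same singleton trick $M=\{a\}$ for recovering $\hh\circ\hsig_1=\hsig_2\circ\hh$ in (vi)$\Rightarrow$(ii). Your explicit observation that $\tau_{\{a\}}(f)$ is diagonal because $\hsig_1$ is injective on $V^1_a$, forcing the off-diagonal part of $\Psi(e_{\{a\}})\tau_N(\Psi(f))\Psi(e_{\{a\}})$ to vanish, is a point the paper leaves implicit, but the substance of the argument is identical.
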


\begin{proof}
\eqref{iconj}$\Rightarrow$\eqref{ihh} This follows from Proposition~\ref{hlifts}.

\eqref{ihh}$\Rightarrow$\eqref{iisogrpcocyle} Supposing the existence of $\hh$ as in \eqref{ihh}, it is straightforward to check that the map $\Phi:\CG(\HTCB_1,\hsig_1)\to\CG(\HTCB_2,\hsig_2)$ given by $\Phi(\xi,m,\eta)=(\hh(\xi),m,\hh(\eta))$ is a well-defined isomorphism of topological groupoids satisfying the conditions of \eqref{iisogrpcocyle}.

\eqref{iisogrpcocyle}$\Rightarrow$\eqref{iisogrp} It is immediate.

\eqref{iisogrp}$\Rightarrow$\eqref{iconj} Let $x\in\osf_1^{OTW}$. Using the surjectivity of $\pi_1$ given by Proposition~\ref{prop:pi_shift_commute}, choose $\xi\in\HTCB_1$ such that $\pi_1(\xi)=x$. By the definitions of $\pi_1$ and $\hsig_1$, we see that $|x|$ is exactly the number of times we can apply $\epsilon_1$ to $\xi$ (via the identification with $(\xi,0,\xi)$).
Analogously, $|\pi_2(\Phi^{(0)}(\xi))|$ is the number of times we can apply $\epsilon_2$ to $\Phi^{(0)}(\xi)$. By hypothesis, $|\pi_2(\Phi^{(0)}(\xi))|=|h(\pi_1(\xi))|=|h(x)|$. Since $\Phi(\dom(\epsilon^1))=\dom(\epsilon^2)$ and $\epsilon^2\circ\Phi|_{\dom(\epsilon^1)}=\Phi\circ\epsilon^1$, the number of times we can apply $\epsilon_1$ to $\xi$ is the same as the number of times we can apply $\epsilon_2$ to $\Phi^{(0)}(\xi)$. Hence $|x|=|h(x)|$.

Now suppose that $x\neq\vec{0}_1$. In this case $\xi\in\dom(\hsig_1)$ and $(\xi,0,\xi)\in\dom(\epsilon_1)$. With the identification of $\HTCB_i$ and the unit space of $\CG(\HTCB_i,\hsig_i)$, for $i=1,2$, we see that the hypothesis on $\epsilon_i$ implies that $\hsig_2(\Phi^{(0)}(\xi))=\Phi^{(0)}(\hsig_1(\xi))$. Then
\begin{align*}
    \sigma_2(h(x))&=\sigma_2(h(\pi_1(\xi)))=\sigma_2(\pi_2(\Phi^{(0)}(\xi)))=\pi_2(\hsig_2(\Phi^{(0)}(\xi)))\\ &=\pi_2(\Phi^{(0)}(\hsig_1(\xi)))=h(\pi_1(\hsig_1(\xi)))=h(\sigma_1(\pi_1(\xi)))=h(\sigma_1(x)).
\end{align*}
Hence $h$ is a length-preserving, shift-commuting homeomorphism, that is, $h$ is a conjugacy.

\eqref{iisogrpcocyle}$\Rightarrow$\eqref{iisoalggraded} We use the Steinberg algebra picture for the subshift algebras as in Theorem \ref{Steinberg}. Below, we use $*$ for the convolution product and $\cdot$ for the pointwise product. If $\Phi$ is an isomorphism of the groupoids, then it is easy to see that the map $\Psi:\TCA_R(\osf_1)\to\TCA(\osf_2)$ given by $\Psi(f)=f\circ \Phi^{-1}$ is a diagonal-preserving algebra isomorphism. By Theorem \ref{Steinberg} and the hypothesis that $c_2\circ \Phi=c_1$, we have that this isomorphism is $\zn$-graded. As explained in the beginning of Section \ref{s:groupoid picture}, we view $\Lc(\osf_i^{OTW},R)$ inside $\TCA_R(\osf_i)$ via the map $\pi_i$, for $i=1,2$. Then, because $\Phi^{(0)}$ and $h$ are homeomorphisms, we have that
\[\Psi(f\circ\pi_1)=f\circ\pi_1\circ\Phi^{-1}=f\circ\pi_1\circ(\Phi^{(0)})^{-1}=f\circ h^{-1}\circ\pi_2.\]

Now, let $M\scj\alf_1$ be finite and let $N\scj\alf_2$ be as in Proposition \ref{p:restrictionM}. Notice that $e_M=1_{V^1_M}$ and $e_N=1_{V^2_N}$. Also, $V^1_M=s(\dom(\epsilon^1_M))=r(\dom(\epsilon^1_M))$, and similarly for $V^2_N$. Then
\[\Psi(e_M)e_N=(1_{V^1_M}\circ\Phi^{-1})1_{V^2_N}=1_{\Phi(V^1_M)}1_{V^2_N}=1_{\Phi(V^1_M)}=\Psi(e_M),\]
where the second to last equality follows from the fact that $\Phi(\dom(\epsilon^1_M))\scj\dom(\epsilon^2_N)$. Because $\dom(\epsilon^1_M)=s^{-1}(V^1_M)\cap r^{-1}(V^1_M)$, for any $g\in\TCA_R(\osf_2)$ and $(\xi,m,\eta)\in \CG(\HTCB_2,\hsig_2)$, we have that
\begin{align*}
    \Psi(e_M)*g*\Psi(e_M)(\xi,m,\eta)&=1_{\Phi(V^1_M)}*g*1_{\Phi(V^1_M)}(\xi,m,\eta)\\
    &=1_{\Phi(V^1_M)}(\xi)g(\xi,m,\eta)1_{\Phi(V^1_M)}(\eta)\\
    &=g(\xi,m,\eta)1_{\Phi(\dom(\epsilon^1_M))}(\xi,m,\eta),
\end{align*}
that is, $\Psi(e_M)*g*\Psi(e_M)=g\cdot 1_{\Phi(\dom(\epsilon^1_M))}$, where multiplication on the right hand side is pointwise. Consider now $f\in\TCA_R(\osf_1)$. On the one hand, we have
\[\Psi(\tau_M(f))=(f\circ\epsilon^1_M\circ\Phi^{-1})\cdot 1_{\Phi(\dom(\epsilon^1_M))}.\]
On the other hand
\begin{align*}
    \Psi(e_M)*(\tau_N(\Psi(f))*\Psi(e_M)&=\Psi(e_M)*((f\circ\Phi^{-1}\circ\epsilon^2_N)\cdot 1_{\dom(\epsilon^2_N)})*\Psi(e_M)\\
    &=(f\circ\Phi^{-1}\circ\epsilon^2_N)\cdot 1_{\dom(\epsilon^2_N)})\cdot 1_{\Phi(\dom(\epsilon^1_M))}\\
    &=(f\circ\Phi^{-1}\circ\epsilon^2_N)\cdot  1_{\Phi(\dom(\epsilon^1_M))},
\end{align*}
where the last equality follows from the inclusion $\Phi(\dom(\epsilon^1_M))\scj\dom(\epsilon^2_N)$.

\eqref{iisoalggraded}$\Rightarrow$\eqref{iisoalg} It is immediate.

\eqref{iisoalg}$\Rightarrow$\eqref{ihh} 
By hypothesis, the isomorphism $\Psi$ restricts to an isomorphism between its diagonal subalgebras, which in turn, by Proposition~\ref{ohcoconuts} gives an isomorphism $\Psi:\Lc(\HTCB_1,R)\to\Lc(\HTCB_2,R)$. By Proposition~\ref{givemeanh},
there exists a homeomorphism $\hh:\HTCB_1\to\HTCB_2$ such that $\Psi(f)=f\circ\hh^{-1}$ for all $f\in\Lc(\HTCB_1,R)$. Then, for all $g\in\Lc(\osf_1^{OTW},R)$, we have
\[g\circ h^{-1}\circ\pi_2=\Psi(g\circ\pi_1)=g\circ\pi_1\circ\hh^{-1}.\]
This implies that $h^{-1}\circ\pi_2=\pi_1\circ\hh^{-1}$ because $\Lc(\osf_1^{OTW},R)$ separates the points of $\osf_1^{OTW}$.

Let $M\scj\alf_1$ finite and choose $N\scj\alf_2$ finite such that $\Psi(e_M)e_N=\Psi(e_M)$. As observed above $e_M=1_{V^1_M}$ so that $\Psi(e_M)=1_{V^1_M}\circ\hh^{-1}=1_{\hh(V^1_M)}$. We then get that $\hh(V^1_M)\scj V^2_N$ and hence $\hh(\dom(\hsig_1))\scj\dom\hsig_2$. Similarly, we prove that $\hh^{-1}(\dom(\hsig_2))\scj\dom(\hsig_1)$ so that $\hh(\dom(\hsig_1))=\dom(\hsig_2)$.

For each $a\in\alf_1$, $f\in\Lc(\HTCB_1,R)$ and $\xi\in\HTCB_1$, we have that
\begin{align*}
    \tau_{\{a\}}(f)(\xi)&=s_a*f*s_a^*(\xi,0,\xi)\\
    &=s_a(\xi,1,\hsig_1(\xi))f(\hsig_1(\xi))s_a^*(\hsig_1(\xi),-1,\xi)\\
    &=1_{V_a^1}(\xi)\cdot f(\hsig_1(\xi)).
\end{align*}
Hence
\begin{equation}\label{epsi1}
    \Psi(\tau_{\{a\}}(f))=(f\circ\hsig_1\circ\hh^{-1})1_{\hh(V_a^1)}.
\end{equation}
On the other hand, there exists $N\scj\alf_2$ finite such that
\begin{align}
    \begin{split}\label{epsi2}
        \Psi(\tau_{\{a\}}(f))&=\Psi(e_{\{a\}})\tau_N(\Psi(f))\Psi(e_{\{a\}}) \\
        &=1_{\hh(V_a^1)}*(f\circ\hh^{-1}\circ\epsilon^2_N)*1_{\hh(V_a^1)}\\
        &=(f\circ\hh^{-1}\circ\hsig_2)\cdot 1_{\hh(V_a^1)}.
    \end{split}
\end{align}
By comparing \eqref{epsi1} and \eqref{epsi2}, and using that $\Lc(\HTCB_1,R)$ separates the points of $\HTCB_1$, we conclude that $\hsig_1\circ \hh^{-1}|_{\hh(V_a^1)}=\hh^{-1}\circ\hsig_2|_{\hh(V_a^1)}$, or equivalently, $\hh\circ\hsig_1|_{V_a^1}=\hsig_2\circ h|_{V_a^1}$. Taking the union over all $a\in\alf_1$, we then obtain $\hh\circ\hsig_1=\hsig_2\circ\hh|_{\dom(\hsig_1)}$.
\end{proof}

\begin{remark}
    Steinberg proved in \cite[Theorem~5.6]{BenDiagonal} that for graded groupoids $\CG_1$ and $\CG_2$ satisfying the local bisection hypothesis and $R$ an indecomposable ring, there is a graded isomorphism between $\CG_1$ and $\CG_2$ if, and only if, there is a diagonal-preserving graded isomorphism between $\CA_R(\CG_1)$ and $\CA_R(\CG_2)$. That the groupoid $\CG(\HTCB,\hsig)$ associated with a subshift satisfies the local bisection hypothesis follows from \cite[Corollary~9.4]{Reconstruction}. Although the equivalences \eqref{iisogrpcocyle}$\Leftrightarrow$\eqref{iisoalggraded} and \eqref{iisogrp}$\Leftrightarrow$\eqref{iisoalg} of Theorem~\ref{theone} are certainly connected with Steinberg's result, we need more than the existence of isomorphisms to obtain our results, since we need to keep track of the map $h$ and the subalgebras $\Lc(\osf_1^{OTW},R)$ and $\Lc(\osf_2^{OTW},R)$, which might be smaller than the diagonals of $\CG(\HTCB_1,\hsig_1)$ and $\CG(\HTCB_2,\hsig_2)$, respectively (see Propositions~\ref{algebra OTW} and \ref{ohcoconuts}). 
\end{remark}

\bibliographystyle{abbrv}
\bibliography{ref}

\end{document}